\newcommand{\Mbar}{\overline{\mathcal{M}}}
\newcommand{\Mbarp}{\Mbar\vphantom{\M}'}
\newcommand{\Mmin}{M^{\min}}
\newcommand{\PS}{\operatorname{PS}}
\newcommand{\cusp}{\operatorname{cusp}}
\newcommand{\ghat}{\widehat{g}}
\newcommand{\Xbar}{\overline{X}}
\newlength\xvec@height%
\newlength\xvec@depth%
\newlength\xvec@width%
\newcommand{\xvec}[2][]{%
  \ifmmode%
    \settoheight{\xvec@height}{$#2$}%
    \settodepth{\xvec@depth}{$#2$}%
    \settowidth{\xvec@width}{$#2$}%
  \else%
    \settoheight{\xvec@height}{#2}%
    \settodepth{\xvec@depth}{#2}%
    \settowidth{\xvec@width}{#2}%
  \fi%
  \def\xvec@arg{#1}%
  \def\xvec@dd{:}%
  \def\xvec@d{.}%
  \raisebox{.2ex}{\raisebox{\xvec@height}{\rlap{%
    \kern.05em
    \begin{tikzpicture}[scale=1]
    \pgfsetroundcap
    \draw (.05em,0)--(\xvec@width-.05em,0);
    \draw (\xvec@width-.05em,0)--(\xvec@width-.15em, .075em);
    \draw (\xvec@width-.05em,0)--(\xvec@width-.15em,-.075em);
    \ifx\xvec@arg\xvec@d%
      \fill(\xvec@width*.45,.5ex) circle (.5pt);%
    \else\ifx\xvec@arg\xvec@dd%
      \fill(\xvec@width*.30,.5ex) circle (.5pt);%
      \fill(\xvec@width*.65,.5ex) circle (.5pt);%
    \fi\fi%
    \end{tikzpicture}%
  }}}%
  #2%
}
\let\emptyset\varnothing
\newcommand{\base}{\imath}
\newcommand{\Jbase}{J_{\operatorname{base}}}
\newcommand{\Jmin}{J_{\min}}
\newcommand{\Jmax}{J_{\max}}
\newcommand{\JH}{\operatorname{JH}}
\def\A{\mathbb A}
\def\C{\mathbb C}
\def\F{\mathbb F}
\def\Q{\mathbb{Q}}
\def\T{\mathbb{T}}
\def\Z{\mathbb{Z}}
\def\Fbar{\overline{\F}}
\def\Qbar{\overline{\Q}}
\def\Zbar{\overline{\Z}}
\def\m{\mathfrak m}
\newcommand{\st}{\mathrm{st}}
\def\chibar{\overline{\chi}}
\def\unif{\varpi}
\def\GL{\operatorname{GL}}
\def\Gal{\mathrm{Gal}}
\def\Sym{\mathrm{Sym}}
\def\Ext{\mathrm{Ext}}
\def\End{\mathrm{End}}
\def\Art{\mathop{\mathrm{Art}}\nolimits}
\def\Hom{\mathop{\mathrm{Hom}}\nolimits}
\def\Spf{\mathop{\mathrm{Spf}}\nolimits}
\def\Frob{\mathop{\mathrm{Frob}}\nolimits}
\def\Ind{\mathop{\mathrm{Ind}}\nolimits}
\def\Fil{\mathop{\mathrm{Fil}}\nolimits}
\def\soc{\mathop{\mathrm{soc}}\nolimits}
\def\rhobar{\overline{\rho}}
\def\m{\mathfrak{m}}
\def\iso{\buildrel \sim \over \longrightarrow}
\def\rmcap{\mathop{\mathrm{cap}}\nolimits}
\def\gauge{\cG}
\def\cosoc{\mathop{\mathrm{cosoc}}\nolimits}
\newcommand{\into}{\hookrightarrow}
\newcommand{\To}{\longrightarrow}
\newcommand{\isoto}{\stackrel{\sim}{\To}}
\newlength{\ownl}
\newcommand{\BC}{{\operatorname{BC}\,}}
\newcommand{\Fr}{{\operatorname{Fr}}}
\newcommand{\Id}{{\operatorname{Id}}}
\renewcommand{\Im}{{\operatorname{Im}\,}}
\newcommand{\val}{{\operatorname{val}}}
\newcommand{\cris}{{\operatorname{cris}}}
\newcommand{\semis}{{\operatorname{ss}}}
\newcommand{\univ}{{\operatorname{univ}}}
\newcommand{\M}{{\mathcal{M}}}
\newcommand{\calD}{{\mathcal{D}}}
\newcommand{\CS}{{\mathcal{S}}}
\newcommand{\cC}{\mathcal{C}}
\renewcommand{\cD}{\mathcal{D}}
\newcommand{\cE}{\mathcal{E}}
\newcommand{\cF}{\mathcal{F}}
\newcommand{\cG}{\mathcal{G}}
\newcommand{\calH}{\mathcal{H}}
\newcommand{\cI}{\mathcal{I}}
\newcommand{\cJ}{\mathcal{J}}
\newcommand{\cM}{\mathcal{M}}
\newcommand{\cO}{\mathcal{O}}
\newcommand{\cP}{\mathcal{P}}
\newcommand{\cS}{\mathcal{S}}
\newcommand{\cW}{\mathcal{W}}
\newcommand{\barS}{\overline{{S}}}
\newcommand{\tJ}{\widetilde{{J}}}
\newcommand{\tS}{\widetilde{{S}}}
\newcommand{\tv}{{\widetilde{{v}}}}
\newcommand{\gammabar   }{\overline{\gamma}}
\newcommand{\varepsilonbar  }{\overline{\varepsilon}}   
\newcommand{\etabar         }{\overline{\eta}}   
 \newcommand{\thetabar    }{\overline{\theta}}
\newcommand{\kappabar     }{\overline{\kappa}}
 \newcommand{\xibar    }{\overline{\xi}}
 \newcommand{\sigmabar   }{\overline{\sigma}}   
 \newcommand{\taubar     }{\overline{\tau}}
 \newcommand{\psibar   }{\overline{\psi}}
\newcommand{\Thetabar       }{\overline{\Theta}}
\newcommand{\Sigmabar   }{\overline{\Sigma}}
\def\RCS$#1: #2 ${\expandafter\def\csname RCS#1\endcsname{#2}}
\newcommand{\bigO}{\mathcal{O}} \newcommand{\calO}{\mathcal{O}}
\newcommand{\p}{\mathfrak{p}}
\DeclareMathOperator{\ssg}{ss}
\newcommand{\rbar}{{\bar{r}}}
\newcommand{\Rbar}{\overline{R}}
\newcommand{\HT}{\operatorname{HT}}
 \newcommand{\Qp}{{\Q_p}}
\newcommand{\Zp}{{\Z_p}}
\newcommand{\Ql}{\Q_l} 
\newcommand{\Qpbar}{{\overline{\Q}_p}}
\newcommand{\Zpbar}{{\overline{\Z}_p}}
\newcommand{\Fpbar}{{\overline{\F}_p}}
\newcommand{\Fp}{{\F_p}}
\newcommand{\rank}{\operatorname{rank}}
\newcommand{\sep}{\operatorname{sep}}
\newcommand{\mar}[1]{\marginpar{\tiny #1}} 
\newcommand{\emb}{\kappa}
\newcommand{\embb}{\overline{\emb}}
\newcommand{\uni}{\pi}
\newcommand{\uMF}{\underline{\mathrm{MF}}}
\newcommand{\MFh}{\uMF^{f,h}_k}
\newcommand{\strdiv}{\underline{\EuScript{M}}_{0,k}}
\newcommand{\EM}{\EuScript{M}}
\newcommand{\EF}{\EuScript{F}}
\newcommand{\oS}{\overline{S}}
\newcommand{\phiD}{\underline{\mathfrak{D}}}
\newcommand{\phiM}{\underline{\mathfrak{M}}}
\newtheorem{theorem}[subsubsection]{Theorem}
\newtheorem{thm}[subsubsection]{Theorem}
\newtheorem{lemma}[subsubsection]{Lemma}
\newtheorem{lem}[subsubsection]{Lemma}
\newtheorem{cor}[subsubsection]{Corollary}
\newtheorem{prop}[subsubsection]{Proposition}
\newtheorem{ithm}{Theorem}
\theoremstyle{definition}
\newtheorem{defn}[subsubsection]{Definition}
\theoremstyle{remark}
\newtheorem{remark}[subsubsection]{Remark}
\newtheorem{rem}[subsubsection]{Remark}
\newtheorem{example}[subsubsection]{Example}
\newtheorem{property}[subsubsection]{Property}
\def\numequation{\begin{equation}}
\def\nummultline{\begin{multline}}
\def\anumequation{\begin{equation}}
\numberwithin{equation}{section}
\title{Lattices in the cohomology of Shimura curves}
\author{Matthew Emerton}\email{emerton@math.uchicago.edu}
\address{Department of Mathematics, University of Chicago,
5734 S.\ University Ave., Chicago, IL 60637}
\author{Toby Gee} \email{toby.gee@imperial.ac.uk} \address{Department of
  Mathematics, Imperial College London}\author{David Savitt} \email{savitt@math.arizona.edu}
\address{Department of Mathematics, University of Arizona}
\thanks{The first author was supported in part by NSF grants
  DMS-1003339 and DMS-1249548, the second author was supported in part by a Marie
  Curie Career Integration Grant and an ERC starting grant, and the third author was 
  supported in part by NSF  grant DMS-0901049 and NSF CAREER grant
  DMS-1054032. The second and third authors would like to thank the
  mathematics department of  Northwestern University for its
  hospitality.}
\begin{document}
\begin{abstract} We prove the main conjectures of
  \cite{breuillatticconj} (including a generalisation from the
  principal series to the cuspidal case) and \cite{dembeleappendix},
  subject to a mild global hypothesis that we make in order to apply
  certain $R=\T$ theorems. More precisely, we prove a multiplicity one
  result for the mod $p$ cohomology of a Shimura curve at Iwahori
  level, and we show that certain apparently globally defined lattices
  in the cohomology of Shimura curves are determined by the
  corresponding local $p$-adic Galois representations. We also
  indicate a new proof of the Buzzard--Diamond--Jarvis conjecture in
  generic cases. Our main tools are the geometric
  Breuil--M\'ezard philosophy developed in~\cite{emertongeerefinedBM},
  and a new and more functorial perspective on the
  Taylor--Wiles--Kisin patching method. Along the way, we determine
  the tamely potentially Barsotti--Tate deformation rings of generic
  two-dimensional mod $p$ representations, generalising a result
  of~\cite{BreuilMezardRaffinee} in the principal series case.
\end{abstract}
\maketitle
\setcounter{tocdepth}{2}
\tableofcontents

\section{Introduction}
\subsection{Breuil's Conjectures}The $p$-adic local Langlands
correspondence for the group $\GL_2(\Qp)$ (\emph{cf.}\ \cite{MR2642409,paskunasimage})
is one of the most important developments in the Langlands program in
recent years, and the natural and definitive nature of the results is
extremely suggestive of there being a similar correspondence for
$\GL_n(K)$, where $n$ is an arbitrary natural number and
$K$ an arbitrary finite extension of $\Qp$. Unfortunately,
all available evidence to date suggests that any more general such
correspondence will be
much more complicated to describe, even in the case of
$\GL_2(K)$ for an unramified
extension $K$ of $\Qp$. For example, one of the first steps in the
construction of the correspondence for $\GL_2(\Qp)$ was the explicit
classification of irreducible admissible smooth representations of
$\GL_2(\Q_p)$ in characteristic $p$~\cite{barthel-livne,breuil1}, from
which it is straightforward to deduce a semisimple mod $p$ local
Langlands correspondence for $\GL_2(\Qp)$~\cite{breuil1}. To date, no
analogous classification is known for any other $\GL_2(K)$, and it is
known that the irreducible representations live in infinite families, described by
parameters that have yet to be
determined~\cite{Breuil_Paškūnas_2012}. In particular, there are in a
precise sense too many representations of $\GL_2(K)$ for there to
be any simple one-to-one correspondence with Galois representations;
rather, it seems that only certain particular 
representations of
$\GL_2(K)$ will participate in the correspondence.

By the local-global compatibility result of~\cite{emerton2010local},
the completed cohomology of modular curves can be described in terms
of the $p$-adic local Langlands correspondence for
$\GL_2(\Qp)$. Similarly, it is expected that the completed cohomology
of Shimura curves can be described in terms of the anticipated $p$-adic local
Langlands correspondence for $\GL_2(K)$. Accordingly, one way to get
information about this hypothetical
correspondence is to study the action
of $\GL_2(K)$ on the cohomology of Shimura curves, for example with
characteristic $p$ coefficients. As one illustration of this approach,
we recall that the $\GL_2(\cO_K)$-socles of the
irreducible $\GL_2(K)$-subrepresentations of this mod $p$ cohomology
are described by the
Buzzard--Diamond--Jarvis conjecture (\cite{bdj}, proved
in~\cite{blggU2}, \cite{GLS12}, and \cite{geekisin}), and this information is an important guiding tool
in the work of~\cite{Breuil_Paškūnas_2012}. In particular, given
$K/\Qp$ unramified, and a generic Galois representation
$\rhobar:G_K\to\GL_2(\Fpbar)$, 
\cite{Breuil_Paškūnas_2012} constructs an infinite family of
irreducible $\Fpbar$-representations of $\GL_2(K)$ whose $\GL_2(\cO_K)$-socles
are as predicted by \cite{bdj},
and conjectures
that if $\rbar$ is a globalisation of $\rhobar$ to a Galois
representation over some totally real field, then the $\rbar$-part of
the cohomology of an appropriate Shimura curve contains one of these
representations. 

Although the representations of~\cite{Breuil_Paškūnas_2012} 
are constructed so as to have the correct socle (in the sense
of \cite{bdj}), proving that
the cohomology contains such a representation lies deeper than the
Buzzard--Diamond--Jarvis conjecture, and amounts to proving that
certain subspaces of the mod $p$ cohomology at pro-$p$ Iwahori level are
stable under the action of a Hecke
operator. In~\cite{breuillatticconj} Breuil develops two approaches to
this problem, in the process making two conjectures of independent
interest. The main results of this paper are proofs of these
conjectures, as well as some generalisations. As a consequence, we
demonstrate that the cohomology contains one of the representations
constructed in~\cite{Breuil_Paškūnas_2012}.

The subspaces of the cohomology under consideration are those obtained
by fixing the character through which the Iwahori subgroup acts on the
cohomology at pro-$p$ Iwahori level, and one approach to the
Hecke-stability problem would be to show that these spaces are
one-dimensional; that is, to prove a multiplicity one result at
Iwahori level. In~\cite{dembeleappendix} Demb\'el\'e gives
computational evidence that such a multiplicity one result should
hold. One of our main results is the following (Theorem~\ref{thm:multiplicity one for the usual lattice, including the I_1
    invariants statement}).
  \begin{ithm}
    \label{ithm: dembele conjecture intro version}Suppose that $p\ge
    5$ and the usual Taylor--Wiles condition holds. Then the
    multiplicity one conjecture of~{\em \cite{dembeleappendix}} holds.
  \end{ithm}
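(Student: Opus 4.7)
The plan is to deduce the multiplicity one statement from the Taylor--Wiles--Kisin patching method, in the functorial form developed in this paper, combined with the explicit description of generic tamely potentially Barsotti--Tate deformation rings and the geometric Breuil--M\'ezard philosophy of~\cite{emertongeerefinedBM}. First I would fix a suitable globalisation $\rbar$ of $\rhobar$ to a totally real field (this is where the Taylor--Wiles hypothesis and $p\ge 5$ enter, so that the requisite $R=\T$ theorems apply and generic representations may be found in the cohomology), and localise the mod $p$ cohomology of a Shimura curve at pro-$p$ Iwahori level at the maximal ideal $\mathfrak{m}$ of the Hecke algebra corresponding to $\rbar$. The multiplicity one conjecture of~\cite{dembeleappendix} is the statement that, after fixing a character $\chi$ through which the Iwahori $I$ acts, the resulting isotypic space has dimension one over the residue field (equivalently, is free of rank one over the completed Hecke algebra).

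Next I would apply patching in the Shimura-curve setting to produce a patched module $M_\infty$ over a patched deformation ring $R_\infty$, where the local factor at $p$ is the tamely potentially Barsotti--Tate deformation ring of $\rhobar$ of the appropriate inertial type. The key point of the functorial patching is that $M_\infty$ carries a commuting action of $\GL_2(\cO_{F_{\widetilde v}})$, so that for each tame inertial type $\tau$ (equivalently, for each Deligne--Lusztig representation $\sigma(\tau)$ whose reduction mod $p$ contains the Serre weight(s) relevant to $\chi$) the module $M_\infty(\sigma(\tau)) := \Hom_{\GL_2(\cO)}(\sigma(\tau),M_\infty^\vee)^\vee$ is a finitely generated module over the corresponding type-$\tau$ component $R_\infty(\tau)$ of $R_\infty$. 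By Nakayama and patching, the $\chi$-isotypic multiplicity one statement on the cohomology side is equivalent to the statement that the $\chi$-isotypic component of $M_\infty/\mathfrak{m}_\infty M_\infty$ is one-dimensional, and this reduces (by the usual support/cycle counting argument) to showing that $M_\infty(\sigma(\tau))$ is free of rank one over $R_\infty(\tau)$ for each relevant $\tau$.

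To obtain this freeness I would combine two inputs. On the one hand, the explicit determination in this paper of the generic tamely potentially Barsotti--Tate deformation rings (in both the principal series and cuspidal cases, generalising~\cite{BreuilMezardRaffinee}) shows that each $R_\infty(\tau)$ is a power series ring over a regular local ring of the expected dimension, with an explicit list of irreducible components indexed by Jordan--H\"older factors $\sigma\in\JH(\overline{\sigma(\tau)})$ that are predicted by Buzzard--Diamond--Jarvis. On the other hand, the geometric Breuil--M\'ezard formalism of~\cite{emertongeerefinedBM} pins down the support cycle of $M_\infty(\sigma(\tau))$ as a nonnegative integer combination of these components, with multiplicities controlled by the corresponding Serre weight multiplicities in the mod $p$ cohomology. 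Combining the automorphy lifting theorems (to get a lower bound) with the Breuil--M\'ezard inequality and the explicit Hilbert--Samuel multiplicities of $\overline{\sigma(\tau)}$, one forces each multiplicity to be exactly $1$, and hence $M_\infty(\sigma(\tau))$ is Cohen--Macaulay of generic rank one over each component; by the regularity of $R_\infty(\tau)$ this upgrades to freeness of rank one.

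The main obstacle is the last step: passing from an averaged Breuil--M\'ezard style estimate (which would only yield multiplicity at most one) to genuine rank-one freeness, and doing so uniformly across Iwahori characters $\chi$. This requires matching Iwahori characters bijectively with Serre weights appearing in the relevant $\overline{\sigma(\tau)}$, using the precise structure of the tame types $\sigma(\tau)$ modulo $p$; it also requires the full refinement of the Breuil--M\'ezard formula to individual components, rather than to the whole special fibre, which is exactly what~\cite{emertongeerefinedBM} and the deformation-ring computations of this paper are designed to provide. Once these ingredients are assembled the multiplicity one statement of~\cite{dembeleappendix} follows.
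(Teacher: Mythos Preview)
Your overall architecture is right --- patch, then prove the patched module $M^{\min}_\infty(\sigma^\circ_J(\tau))$ is free of rank one over $R^{\psi,\tau}_\infty$, then read off multiplicity one by Nakayama and Frobenius reciprocity --- and this is exactly what the paper does. The gap is in your freeness step. You assert that ``each $R_\infty(\tau)$ is a power series ring over a regular local ring'' and that ``by the regularity of $R_\infty(\tau)$ this upgrades to freeness of rank one''. But the explicit computation (Theorem~\ref{thm:deformation rings principal series}) gives $R^{\psi,\tau}_\infty$ as a power series ring over
\[
\cO\llbracket (X_j,Y_j)_{j\in\Jmax\setminus\Jmin}\rrbracket/(X_jY_j-p)_{j\in\Jmax\setminus\Jmin},
\]
which is regular only when $|\Jmax\setminus\Jmin|\le 1$. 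Diamond's argument (maximal Cohen--Macaulay over a regular local ring implies free) therefore applies only in that narrow case; for the types $\tau$ needed to see all Iwahori characters one typically has $|\Jmax\setminus\Jmin|=f$, and the ring is genuinely singular. No Breuil--M\'ezard multiplicity count alone will upgrade ``Cohen--Macaulay of generic rank one'' to ``free'' over a singular ring.

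What the paper does instead (Theorem~\ref{thm: freeness of the sigma_J^circ}) is an induction that bootstraps from the regular case. One works in the special fibre and shows, for each \emph{capped interval} $\cJ$ in the poset $\{J':\Jmin\subseteq J'\subseteq\Jmax\}$, that $M^{\min}_\infty(\sigmabar^{\cJ})$ is cyclic with annihilator exactly the radical ideal $I_{\cJ}$ cutting out $\bigcup_{J'\in\cJ}\Xbar^\psi_\infty(\sigmabar'_{J'}(\tau))$. The base case $|\cJ|\le 2$ is handled by finding an auxiliary type $\tau'$ with $|\JH(\sigmabar(\tau'))\cap\cD(\rhobar)|\le 2$ (Propositions~\ref{prop: types for elimination} and~\ref{prop: when there are extensions of Serre weights, and existence of types when this happens}), so that the corresponding deformation ring \emph{is} regular and Diamond applies. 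The inductive step glues cyclic modules over overlapping capped intervals, using the explicit ideal-theoretic identities $I_{\cJ_1}+I_{\cJ_2}=I_{\cJ_1\cap\cJ_2}$ for capped intervals with a common cap (Lemma~\ref{lem:ideals}) together with the cosocle-filtration structure of $\sigmabar^\circ_J(\tau)$ (Theorem~\ref{thm: socle filtration for tame type with irred socle}). This is the missing idea: freeness is proved not by a regularity argument on the full deformation ring, but by assembling it from the regular pieces indexed by adjacent pairs of Serre weights.
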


  The other approach taken in~\cite{breuillatticconj} is via a
  $p$-adic local-global compatibility conjecture for tamely potentially
  Barsotti--Tate representations. To be precise, let $F$ be a totally
  real field in which $p$ is inert (in fact, we only need $p$ to be
  unramified, but we assume here that $p$ is inert for simplicity of
  notation), and fix a finite extension $E/\Qp$ (our field
  of coefficients) with ring of integers
  $\cO$ and uniformiser $\varpi_E$. Write
  $f:=[F:\Q]$, and let $k$ be the residue field of $F_p$. Let $\rhobar:G_F\to\GL_2(\F)$
  be an absolutely irreducible modular representation with $\rhobar|_{G_{F_p}}$
  generic (see Definition~\ref{defn: generic local Galois representation}), and
  consider \[M_{\rhobar}:=\varinjlim_{K_p}M(K_pK^p,\cO)_{\rhobar}\]where
  $M(K_pK^p,\cO)_{\rhobar}$ is the $\rhobar$-part of the cohomology
  of an appropriate Shimura curve at level $K_pK^p$, where $K_p$ is a
  compact open subgroup of $\GL_2(F_p)$. This has a natural action of
  $\GL_2(F_p)$, and we consider some subrepresentation $\pi$ of
  $M_{\rhobar}[1/p]$ which is a tame principal series
  representation. Then $M_{\rhobar}$ determines a lattice in $\pi$,
  and in particular, a lattice in the tame principal series type
  inside $\pi$ (that is, the smallest $\GL_2(\cO_{F_p})$-representation
  inside $\pi$). There are typically many homothety classes of
  $\GL_2(\cO_{F_p})$-stable lattices inside this type, and there is no
  \emph{a priori} reason that we are aware of to suppose that the
  lattice is a local invariant of the situation. Indeed, it is easy to
  check that classical local-global compatibility applied to $\pi$
  does not in general give enough information to determine the
  lattice.

  However, the main conjecture of~\cite{breuillatticconj} is that the
  lattice is determined by the local $p$-adic Galois representation in
  $M_{\rhobar}$ corresponding to $\pi$ in an explicit way. The local
  Galois representation is potentially Barsotti--Tate of tame
  principal series type,
  and various parameters can be read off from its Dieudonn\'e
  module. Breuil classified the homothety classes of lattice in the
  tame principal series types in terms of similar parameters, and
  conjectured that the parameters for the lattice in $\pi$ agree with
  those for the corresponding Galois representation. Our other main
  result confirms this conjecture (see Theorem~\ref{thm:Breuil lattice conjecture}).
  \begin{ithm}
    \label{ithm: intro version of Breuil's lattice conjecture}Breuil's
    conjecture holds if $p\ge 5$ and the usual Taylor--Wiles condition
    holds; an analogous result holds for tame cuspidal types.
  \end{ithm}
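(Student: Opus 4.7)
The plan is to establish Breuil's conjecture by running the Taylor--Wiles--Kisin patching method in a new, functorial form, and then feeding the output into the geometric Breuil--M\'ezard machinery of \cite{emertongeerefinedBM}. First I would construct a patched module $M_\infty$, built from the Shimura-curve cohomology $M_{\rhobar}$, so as to depend functorially not merely on smooth $\GL_2(\cO_{F_p})$-representations but on $\GL_2(\cO_{F_p})$-stable $\cO$-lattices. Thus for any tame type $\tau$ and any $\GL_2(\cO_{F_p})$-stable lattice $\sigma^\circ(\tau)$ in that type, one obtains a finitely generated module $M_\infty(\sigma^\circ(\tau))$ over the appropriate framed potentially Barsotti--Tate deformation ring, whose specialisations at classical points recover the corresponding cohomological lattices. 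This reduces Breuil's lattice conjecture to a purely local statement about the modules $M_\infty(\sigma^\circ(\tau))$.

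Second, I would explicitly compute the generic tamely potentially Barsotti--Tate framed deformation rings $R_{\rhobar}^{\tau,\psi}$ for both tame principal series and tame cuspidal $\tau$, generalising the principal-series calculation of \cite{BreuilMezardRaffinee}. This proceeds via Breuil modules with tame descent data, and should yield a presentation of $R_{\rhobar}^{\tau,\psi}/\varpi$ together with an explicit description of its irreducible components, indexed by the Jordan--H\"older factors of $\sigmabar(\tau)$.

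Third, I would invoke the geometric Breuil--M\'ezard conjecture to identify the cycle of $M_\infty(\sigma^\circ(\tau))/\varpi$ inside $\Spec R_{\rhobar}^{\tau,\psi}/\varpi$ with an explicit sum $\sum_V m(V)\,\mathcal{C}_V$ indexed by the Serre weights $V$ of $\rhobar|_{G_{F_p}}$. Breuil classified the $\GL_2(\cO_{F_p})$-stable lattices in $\sigma(\tau)$ up to homothety by certain combinatorial parameters read off from their mod-$\varpi$ filtration structure. Varying $\sigma^\circ(\tau)$ through this classification and tracking the resulting cycle decompositions should translate Breuil's parameters into data on $\Spec R_{\rhobar}^{\tau,\psi}/\varpi$, which in turn correspond to invariants of the filtered Dieudonn\'e module of the potentially Barsotti--Tate Galois representation.

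The main obstacle will be making this final matching of invariants precise. The Breuil--M\'ezard cycle identity controls only the components of $M_\infty(\sigma^\circ(\tau))/\varpi$, whereas Breuil's parameters encode finer information about how the patched lattice saturates along each component $\mathcal{C}_V$. Meeting this challenge requires an explicit model of each $\mathcal{C}_V \subset \Spec R_{\rhobar}^{\tau,\psi}/\varpi$ that is compatible with Dieudonn\'e-module invariants, together with a rigidity input ensuring that patching records lattices up to homothety rather than merely up to abstract isomorphism. The tame cuspidal case adds an additional layer of difficulty, since Breuil's parameter combinatorics must be developed there from scratch in parallel with the deformation-ring calculation.
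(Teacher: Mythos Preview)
Your proposal is correct and describes essentially the same strategy as the paper. The paper indeed constructs functorial patching functors (Section~6), computes the tamely potentially Barsotti--Tate deformation rings explicitly in both the principal series and cuspidal cases (Section~7, with the cuspidal case handled by a base-change trick reducing to principal series over the quadratic unramified extension), and then matches lattice invariants against Dieudonn\'e-module data read off from the deformation ring (Section~8).

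The ``main obstacle'' you identify is exactly where the work lies, and the paper's resolution is worth noting since you leave it somewhat open. The device is a \emph{gauge} for a lattice $\sigma^\circ$: the tuple of fractional ideals recording how far each cosocle-irreducible lattice $\sigma^\circ_J$ must be scaled to land in $\sigma^\circ$ (Section~4). The gauge determines the lattice up to homothety, so the problem becomes computing $\Hom(\sigma^\circ_J, M_{\rhobar}[\lambda])$ for each $J$. The key step (Proposition~\ref{TG:prop:annihilation}) shows that the inclusion $M_\infty(\sigma^\circ_J) \hookrightarrow M_\infty(\sigma^\circ_{J'})$ induced by adjacent lattices has cokernel annihilated by a specific variable $X_j$ or $Y_j$ of the deformation ring: this follows because the cokernel is Cohen--Macaulay (from the patching-functor axioms), hence has no embedded primes, and its support is read off from the Jordan--H\"older factors of $\sigma^\circ_{J'}/\sigma^\circ_J$ combined with the explicit component equations of Theorem~\ref{thm:deformation rings principal series}(4). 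This converts each inclusion into an equality after multiplying by the appropriate $\varpi_j$, and chaining these gives the gauge. Your anticipated ``rigidity input'' is precisely this Cohen--Macaulay argument.
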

We note that in the cuspidal case, we use an extension to (most) tame cuspidal types of Breuil's
classification of lattices, which in turn uses
a generalisation of some of the results of~\cite{Breuil_Paškūnas_2012}
to irregular weights. These results are presented in
Section~\ref{sec:lattices in tame types}, and may be of independent interest.

Our main tools in proving Theorems~\ref{ithm: dembele conjecture intro
  version} and~\ref{ithm: intro version of Breuil's lattice
  conjecture} are the geometric Breuil--M\'ezard Conjecture, and its
relation to the Taylor--Wiles--Kisin patching method. 
We have hopes that our arguments can be made to apply more generally,
e.g.~to 
show for arbitrary types that the lattices induced on them
by embeddings into cohomology arising from global $p$-adic Galois
representations are in fact completely
determined by the corresponding local $p$-adic Galois representations
(though perhaps without
giving an explicit formula for the lattices).  In the remainder
of the introduction, we outline our methods in more detail.

\subsection{Lattices and their gauges}

Most of our efforts are directed towards proving Theorem~\ref{ithm: intro
version of Breuil's lattice conjecture}, while Theorem~\ref{ithm:
dembele conjecture intro version} will come as something of a byproduct of our
method of proof via Taylor--Wiles--Kisin patching, much as Diamond in 
\cite{MR1440309} was able to deduce mod $p$ multiplicity one results as a corollary
of a patching argument.

Let $\sigma$ be a tame type. The basic problem that we face is to determine the lattice $\sigma^{\circ}$ induced by integral
cohomology  when we are given an embedding
$\sigma \hookrightarrow M_{\rhobar}[1/p]$ arising from the local
factor at $p$ of the automorphic representation generated by some Hecke eigenform
whose associated Galois representation lifts $\rhobar$.
It turns out that such a lattice can be encoded in a convenient way
by what we call its {\em gauge}. Briefly, let $\sigmabar$ be the reduction mod $\unif_E$ 
of $\sigma$ (which is well-defined up to semisimplification). Then each Jordan--H\"older 
constituent of $\sigmabar$ occurs
with multiplicity one, and these consituents may be labelled by a
certain collection $\cP$ of subsets of $\{0,\ldots, f-1\}$, 
which typically is equal to the full power set of $\{0,\ldots,f-1\}.$
For each $J \in \cP$, we may find a lattice $\sigma^{\circ}_J$ in $\sigma$,
uniquely determined up to homothety, whose cosocle equals the constituent
of $\sigmabar$ labelled by the subset $J$.
For each $J \in \cP$, we obtain a free rank one $\cO$-module
$\Hom_{\GL_2(k_v)}(\sigma^{\circ}_J, \sigma^{\circ}) \subset
\Hom_{\GL_2(k_v)}(\sigma,\sigma) = E,$
the collection of which we refer to as the gauge of $\sigma$ (see Definition~\ref{defn:gauge}).
The gauge of $\sigma^\circ$ determines $\sigma^\circ$ up to homothety (Proposition~\ref{prop:the gauge determines the lattice}),
and so our problem becomes that of determining the gauge of~$\sigma^\circ$. 

If $\lambda$ is the system of Hecke eigenvalues associated to our eigenform,
and $M_{\rhobar}[\lambda]$ denotes the $\lambda$-eigenspace in $M_{\rhobar}$,
then this amounts to understanding the collection of Hom-modules
$\Hom_{\GL_2(\cO_F)}\bigl(\sigma^{\circ}_J, M_{\rhobar}[\lambda]\bigr).$
Rather than studying these for a single eigenform, we consider this collection
simultaneously for a family of eigenforms whose associated Galois representations
lift $\rhobar$.  More precisely, we employ the patching method of Taylor--Wiles
\cite{Taylor--Wiles},
as further developed by Diamond 
\cite{MR1440309}
and Kisin \cite{kis04}.

\subsection{Patching functors}Our actual implementation of the
Taylor--Wiles method is a further refinement of the viewpoint adopted
in the papers \cite{kisinfmc,emertongeerefinedBM}, which patch not
only spaces of modular forms, but certain filtrations on them. We take
this idea to its natural conclusion, and patch together spaces of
modular forms with all possible coefficient systems simultaneously
(see Section~\ref{sec: cohomology}). The patched spaces of modular
forms obtained from the Taylor--Wiles method are naturally modules
over certain universal Galois deformation rings, and we adopt a more
geometric point of view, regarding them as coherent sheaves on deformation
spaces of Galois representations. 

In this way, we obtain what we call a \emph{patching functor}, which
is a functor from continuous representations of $\GL_2(\cO_{F_p})$ on
finite $\cO$-modules to coherent sheaves on deformation spaces. These
functors satisfy a number of natural properties (for example, certain
of the sheaves obtained are maximal Cohen--Macaulay over their
support, and this support is specified by a compatibility with the
local Langlands correspondence and the Buzzard--Diamond--Jarvis
conjecture), and we are able to define an abstract notion of a
patching functor, with no reference to the Taylor--Wiles method. (One
motivation for this is in order to clarify exactly which properties of
these functors are used in our main theorems, and another is that it
is sometimes possible to construct patching functors without using the
Taylor--Wiles method, \emph{cf.}\ ~\cite{EGP}.)

In the particular case at hand, this construction realizes (the duals
to) the above Hom-modules as the fibres of certain coherent sheaves
(the patched modules) over the deformation space which parameterizes
deformations of $\rhobar|_{ G_{F_p}}$ which are tamely potentially
Barsotti--Tate of type $\sigma$. We can then reinterpret the problem
of understanding the above Hom-modules as that of understanding these
coherent sheaves, and in particular of understanding the natural
morphisms between them induced by various inclusions between the
$\sigma^\circ_J$. We do this by exploiting an explicit description of
the deformation space, and the geometric Breuil--M\'ezard philosophy
of~\cite{emertongeerefinedBM}. 

In Section~\ref{sec: deformation spaces} we give our explicit
description of the  deformation
space, showing that it is the formal spectrum of a power series ring over
the complete local ring
$\cO\llbracket X_1,Y_1,\dots,X_r,Y_r\rrbracket/(X_iY_i-p)_{i=1,\ldots,r}$, where $r$ depends on
$\rhobar$ via the Buzzard--Diamond--Jarvis conjecture. (In the
principal series case this is proved in~\cite{BreuilMezardRaffinee},
and we extend this computation to the cuspidal case by making use of a
base change result coming from Taylor--Wiles patching
via~\cite{emertongeerefinedBM}.) The $X_i$ and $Y_i$ have an
interpretation in terms of the Dieudonn\'e modules attached to the
potentially Barsotti--Tate lifts of $\rhobar|_{G_{F_p}}$ of type
$\sigma$, and using this interpretation we can reduce Breuil's
conjecture to understanding how the $X_i$ and $Y_i$ act on our
coherent sheaves. 

We do this in Section~\ref{sec: proof of the conjecture}; the key is
to show that the cokernels of various of the morphisms induced by the
inclusions between the $\sigma^\circ_J$ are annihilated by certain
products of the $X_i$ and $Y_i$. Using the Cohen--Macaulay properties
of the patching functors, this reduces to showing that the supports of
certain cokernels are annihilated by these products, which in turn
reduces to gaining a concrete understanding of the components of the
special fibre of the deformation space in terms of the Serre weights
specified by the Buzzard--Diamond--Jarvis conjecture. It follows from
the results of~\cite{emertongeerefinedBM} that these components are
identified with the special fibres of certain Fontaine--Laffaille
deformation spaces, and we need to make this correspondence completely
explicit. We do this via an explicit $p$-adic Hodge theoretic
computation, following~\cite{BreuilMezardRaffinee} (and again reducing
to a special case via a base change argument).

\subsection{Freeness}In order to prove Conjecture~\ref{ithm: dembele
  conjecture intro version}, we prove that certain of our patched
coherent sheaves are free. By an argument originally due to
Diamond~\cite{MR1440309}, this follows from the maximal
Cohen--Macaulay property if our deformation space is regular. However,
from the description above we see that this can only happen if
$r=1$. In order to prove freeness in the case that $r>1$, we exploit
the explicit description of the patching functor that we have obtained
in the above work, and effectively proceed by induction on $r$. By
Nakayama's lemma, it suffices to prove freeness in characteristic
$p$. The functorial nature of our patching construction means that we
can compare patched sheaves corresponding to the reductions modulo
$\varpi_E$ of varying tame types $\sigma$, and glue together the
patched sheaves in general from those types which give $r=1$, where we
know freeness. 

\subsection{The generic Buzzard--Diamond--Jarvis Conjecture}As a
byproduct of our constructions, we are able to give a new proof of the
Buzzard--Diamond--Jarvis conjecture in the generic case (and indeed, we
expect that this proof would allow a slight relaxing of the global
hypotheses in the generic case when $p=5$; see Remark~\ref{rem:
  removing $A_5$ restriction}). This proof is related to the approach
taken in~\cite{geebdj}, but avoids both the combinatorial arguments of
that paper and the dependence on the modularity lifting theorems
proved in~\cite{kis04,MR2280776}. The method is closely related to
the arguments used in~\cite{tay} to avoid the use of Ihara's lemma in
proving modularity lifting theorems. The idea is that our deformation
spaces have irreducible generic fibres, but their special fibres have
many irreducible components; from this and the properties of our patching
functors, it follows that a patched module which is supported on the
generic fibre of the deformation space is necessarily supported on
each irreducible component of the special fibre. 

As mentioned above, these components correspond to different Serre
weights, and if we choose our tame type $\sigma$ to maximise the
number of components in the special fibre, we are able to prove
modularity in every weight predicted by the Buzzard--Diamond--Jarvis
conjecture. (In contrast, the arguments of~\cite{geebdj} choose tame
types to ensure that the special fibres are irreducible, and use a
base change argument and the results of~\cite{kis04,MR2280776} to
prove modularity in each tame type.) We explain this argument in
Section~\ref{sec: BDJ}, and in Appendix~\ref{subsec:avoiding GK} we
explain how to remove the dependence on~\cite{kis04,MR2280776} which
is implicit in our use of the results
of~\cite{emertongeerefinedBM,geekisin} in proving our explicit
descriptions of the tamely Barsotti--Tate deformation spaces.

\subsection{A Morita-theoretic interpretation}
A Morita-theoretic interpretation of our arguments
was suggested to us by David Helm.  Namely, if
we write
$$P := \oplus_{J \in \cP} \sigma^{\circ}_J,$$
then $P$ is
a projective generator in a certain abelian category of $\cO[\GL_2(k_v)]$-modules,
namely the full abelian subcategory of the category of all such modules
generated by the collection of all lattices in $\sigma$.
Our theory of gauges is thus a special case of general Morita theory,
which shows that if $\cE := \End_{\GL_2(k_v)}(P),$ then
$\sigma$ may be recovered from the $\cE$-module $\Hom_{\GL_2(k_v)}(P,\sigma)$
via the formula
$$\sigma = \Hom_{\GL_2(k_v)}(P,\sigma)\otimes_{\cE} P.$$
Furthermore, Theorem~\ref{thm: the output of gauges.tex}
may be regarded as an explicit description of the $\cO$-algebra $\cE$.

Since $\Hom_{\GL_2(k_v)}(P,\sigma)$ may be recovered as the fibre
of the patched module for $P$ at an appropriately chosen point, in order
to describe this $\cE$-module, it suffices to describe
the $\cE$-module structure on the corresponding patched module,
and one interpretation of our results is that they give an
explicit description of this patched module with its $\cE$-action (and show, in particular,
that it is purely local).

\subsection{Organisation of the paper}For the convenience of the
reader, we now briefly outline the structure of the paper. In
Section~\ref{sec:Galois reps and weights} we recall the definition of
a generic Galois representation, explain the Buzzard--Diamond--Jarvis
conjecture for these representations, and give some definitions
relating to our later base change arguments. In Section~\ref{sec:base
  change} we give a similar treatment of tame
types. Section~\ref{sec:lattices} contains some general results on the
structure of lattices in irreducible representations whose reductions
mod $p$ have all Jordan--H\"older factors occurring with multiplicity
one. In Section~\ref{sec:lattices in tame types} we specialise to give
an explicit description of the lattices in tame types, extending the
results of~\cite{breuillatticconj} for tame principal series types. 

In Section~\ref{sec: cohomology} we define the notion of a patching
functor, and use the Taylor--Wiles--Kisin method to construct examples
from the cohomology of Shimura curves, as well as from spaces of
algebraic modular forms on definite quaternion algebras and definite
unitary groups. Our $p$-adic Hodge theoretic arguments appear in
Section~\ref{sec: deformation spaces}, which gives the explicit
description of the tame Barsotti--Tate deformation spaces (and their
special fibres) explained above. In Section~\ref{sec: proof of the
  conjecture} we prove Theorem~\ref{ithm: intro version of Breuil's
  lattice conjecture}, and in Section~\ref{sec:
  BDJ} we explain our new proof of the Buzzard--Diamond--Jarvis
conjecture in the generic case. Finally, Theorem~\ref{ithm: dembele
  conjecture intro version} is proved in Section~\ref{sec: freeness and
  multiplicity one}. 

There are two appendices; in Appendix~\ref{sec:unipotent-FL} we
explain the connection between unipotent Fontaine--Laffaille theory
and the theory of unipotent $\varphi$-modules, and in
Appendix~\ref{sec: appendix on geom BM} we discuss the use of the
geometric Breuil--M\'ezard conjecture in our arguments in greater
detail. In particular, we explain how to prove our main theorems
without making use of the results of~\cite{kis04,MR2280776}, and we
give an argument that hints at a deeper relationship between the
structures of the lattices in tame types and of the tamely potentially
Barsotti--Tate deformation spaces.

\subsection{Notation and Conventions}
If $K$ is any field let $G_K = \Gal(\overline{K}/K)$, where
$\overline{K}$ is a fixed separable closure of $K$.  Suppose that $p$
is prime.  When $K$ is a finite extension of $\Qp$, let $I_{K} \subset
G_{K}$ denote the inertia subgroup.

Let $F_v$ be a finite unramified extension of $\Qp$. (Usually,
$F_v$ will be the completion of a number field $F$ at a finite place $v$,
but it will be convenient for us to use this notation more generally.) We write $k_v$ to denote the residue field 
of~$v$, and let $f = [k_v:\F_p]$.  We write $\cO_{F_v}$ for $W(k_v)$, so
that $F_v$ is the fraction field of $\cO_{F_v}$.
Write $f=[F_v:\Qp]$, $q_v=p^f$, and $e = p^f-1$.

Let $L_v$ denote the quadratic unramified
extension of $F_v$, with residue field $l_v$, the quadratic extension
of $k_v$.

Let $E$ be a
finite extension of $\Qp$ (which will be our field of coefficients)
with ring of integers $\cO$, uniformiser $\varpi_E$, and residue field $\F$. We will assume
throughout the paper that $E$ is sufficiently large
in the sense that there exists an embedding of $l_v$ into $\F$.  We will sometimes make specific choices for $E$;
in particular, at certain points in our arguments it will be important
to take $E=W(\F)[1/p]$.

Let $\varepsilon : G_{F_v} \to \cO^{\times}$ denote
the cyclotomic character.

If $V$ is a de Rham representation of $G_{F_v}$ over
$E$ and $\emb$ is an embedding $F_v \into E$ then the multiset
$\HT_\emb(V)$ of \emph{$\emb$-labeled Hodge--Tate weights} of $V$ is
defined to contain the integer $i$ with multiplicity $$\dim_{E} (V
\otimes_{\emb,K} \widehat{\overline{F}}_v(i))^{G_{F_v}},$$ with the usual notation
for Tate twists.  Thus for example
$\HT_\emb(\varepsilon)=\{ -1\}$. 

If $L$ is a local or global field, we write $\Art_L$ for the Artin
reciprocity map, normalised so that the global map is compatible with
the local ones, and the local map takes a uniformiser to a geometric
Frobenius element. If $L$ is a number field and $w$ is a finite place
of $L$, we will write $\Frob_w$ for a geometric Frobenius element at
$w$.

If $V$ is a representation of some group which
has a composition series, we write $\JH(V)$ for the
Jordan--H\"older factors of $V$.

When discussing Galois deformations, we will use the terms ``framed deformation'' (which originated
in~\cite{kis04}) and ``lifting'' (which originated in~\cite{cht})
interchangeably.

If $\sigma$ is a finite-dimensional $E$-representation
of a group $\Gamma$, then $\sigma^{\circ}$ (possibly with
additional decorations) will denote a  
$\Gamma$-stable $\cO$-lattice in $\sigma$ (sometimes a general
such lattice, and sometimes a particular lattice that has been
defined in the context under consideration). We will then let
$\sigmabar^\circ$ (with the same decorations as $\sigma^{\circ}$,
if appropriate) denote the reduction of $\sigma^\circ$ modulo
$\varpi_E$. We will write $\sigmabar$ for some choice of
$\sigmabar^\circ$; but we will only use the notation $\sigmabar$ in situations
where the choice of a particular $\sigmabar^\circ$ is unimportant, for
example when discussing the Jordan--H\"older factors of $\sigmabar$.
We note that $\sigmabar^{\circ}$ is finite-dimensional over $\F$, and
so admits both a socle filtration and a cosocle filtration as a
$\Gamma$-representation, each of finite length. The $\Gamma$-representation
$\sigma^{\circ}$ itself is of infinite length, and does not admit a
socle filtration.  However, it does admit a cosocle filtration;
furthermore, the cosocles of $\sigma^{\circ}$ and of
$\sigmabar^{\circ}$ coincide.

If $A$ is a finitely generated $\cO$-module, we write $A^*$ for the
Pontrjagin dual of~$A$. If $A$ is torsion, then we will sometimes write $A^\vee$
for $A^*$. If $A$ is a finite free $\cO$-module, we will write $A^\vee$ for its
$\cO$-dual. Note that with these conventions and in the notation of the previous
paragraph, we have
$(\sigmabar^\circ)^\vee\cong\overline{(\sigma^\circ)^\vee}$.

Throughout the paper, we will number
our socle and cosocle filtrations starting at $0$, so that for example
the $0$-th layer of the cosocle filtration is the cosocle.

We let $\cS$ denote the set $\{0,\ldots,f-1\},$ which we sometimes
identify with the quotient $\Z/f\Z$ in the evident way. (Typically this 
identification will be invoked by the expression that certain
algebraic expressions involving the elements of $\cS$ are to be understood
{\em cyclically}, which is to say, modulo $f$.)
We draw the reader's attention to two involutions related to $\cS$
that will play a role in the paper. Namely,
in Subsection~\ref{subsec: specific lattices and gauges},
and again in Section~\ref{sec: proof of the conjecture},
we will fix a particular subset $\Jbase$ of $\cS$, 
and will let $\base$ denote the involution on the power
set of $\cS$ defined by $J \mapsto  J \triangle \Jbase,$
while in Section~\ref{sec: deformation spaces} we will let
$\iota$ denote the involution on $\cS$ itself defined by
$j \mapsto f- 1 - j$.

If $F$ is a totally real field and $\pi$ is a regular cuspidal automorphic
representation of $\GL_2(\A_F)$, then we fix an isomorphism
$\imath:\Qpbar\to\C$ and associate to $\pi$ the $p$-adic Galois representation
$r_{p,\imath}(\pi):G_F\to\GL_2(\Qpbar)$, normalised as in~\cite[Thm.\
2.1.1]{BLGGT}. (Note that we will use the symbol $\imath$ in other contexts in
the body of the paper, but we will not have to mention the isomorphism
$\Qpbar\to\C$ again, so we have denoted it by $\imath$ here for ease of
reference to~\cite{BLGGT}.)

\subsection{The inertial local Langlands correspondence}\label{subsec:
  inertial local Langlands}Let $L$ be a local field of characteristic
zero, so that $L$ is a finite extension of $\Ql$ for some prime
$l$. An {\em inertial type} for $L$ is a two-dimensional
$E$-representation $\tau$ of $I_L$ with open kernel, which may be
extended to $G_L$. Henniart's appendix to \cite{breuil-mezard}
associates a finite dimensional irreducible $E$-representation $\sigma(\tau)$ of
$\GL_2(\cO_L)$ to $\tau$; we refer to this association as the {\em
  inertial local Langlands correspondence}. To be precise, if $l=p$
then we normalise the correspondence as in Theorem 2.1.3 of
\cite{geekisin}, and if $l\ne p$ we use the version recalled in
Section~5.2 of \cite{geekisin}. (The only difference between the two
cases is that if $l\ne p$ we consider representations with non-trivial
monodromy; the definition only differs in the case of scalar inertial
types, which in the case $l=p$ correspond to twists of the trivial
representation, and in the case $l\ne p$ to twists of the small
Steinberg representation.)

If $l\ne p$, then let $D$ denote the unique nonsplit quaternion
algebra with centre $L$. We say that an inertial type $\tau$ is {\em
  discrete series} if it is either irreducible or scalar. In this
case, a variant of Henniart's construction which is explained in
Section~5.2.2 of \cite{geekisin} associates a finite dimensional
irreducible $E$-representation $\sigma_D(\tau)$ of $\cO_D^\times$ to
$\tau$.

\section{Galois representations and Serre weights}\label{sec:Galois
  reps and weights}
In this section we recall various 
definitions and facts about the weight part of Serre's conjecture, largely following
Section~4 of \cite{breuillatticconj}.  We fix once and for all an
embedding $\embb_0 : k_v \into \F$, and recursively define embeddings
$\embb_i : k_v \into \F$ for
$i \in \Z$ by $\embb_{i+1}^p = \embb_i$.  Let $\omega_f$ denote the
fundamental character of $I_{F_v}$ of niveau $f$ associated to
$\embb_0$; that is, $\omega_f$ is the \emph{reciprocal} of the character
defined by the composite
$$ I_{F_v} \xrightarrow{\Art^{-1}_{F_v}} \cO_{F_v}^{\times}
\longrightarrow k_v^{\times} \xrightarrow{\embb_0} \F^{\times},$$ so
that for example if $f=1$ then $\omega_1=\varepsilonbar^{-1}$.
Let $\omega_{2f}$ be a
fundamental character of niveau $2f$ similarly associated to an embedding 
$\embb'_0 : l_v^{\times} \into \F$ extending $\embb_0$.  Define
$\embb'_i$ for $i \in \Z$ by analogy with the $\embb_i$.

\subsection{Basic notions}
We recall some basic notions related to mod $p$ local Galois 
representations and their associated Serre weights, as well as
the main result of \cite{GLS12}.

\begin{defn}
  \label{defn: generic local Galois representation}We say that a
  continuous representation $\rhobar:G_{F_v}\to\GL_2(\F)$ is
  \emph{generic} if and only if $\rhobar|_{I_{F_v}}$ is isomorphic up
  to twist to a representation of one of the following two forms.
  \begin{enumerate}
  \item  $
    \begin{pmatrix}
      \omega_f^{\sum_{j=0}^{f-1}(r_j+1)p^j}&*\\0&1
    \end{pmatrix}$ with $0\le r_j\le p-3$ for each $j$, and the $r_j$
    not all equal to $0$, and not all equal to $p-3$.
  \item  $
    \begin{pmatrix}
      \omega_{2f}^{\sum_{j=0}^{f-1}(r_j+1)p^j}&0\\0& \omega_{2f}^{q\sum_{j=0}^{f-1}(r_j+1)p^j}
    \end{pmatrix}$ with $1\le r_0\le p-2$, and $0\le r_j\le p-3$ for
    $j>0$.
  \end{enumerate}

\end{defn}

It is easily checked that this definition is independent of
the choice of $\embb_0$ and $\embb'_0$. The
following lemma is also easily checked.
\begin{lem}
  \label{lem:base change of generic is generic}
   If $p>3$ and $\rhobar$ is generic,
  then $\rhobar|_{G_{L_v}}$ is also generic.
\end{lem}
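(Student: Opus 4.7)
The plan is to verify the two defining conditions of genericity by case analysis, using that $L_v/F_v$ is unramified so that $I_{L_v} = I_{F_v}$ and $\rhobar|_{I_{L_v}}$ equals $\rhobar|_{I_{F_v}}$ as an abstract representation; all that is at stake is the re-expression of this representation in terms of the fundamental characters of $L_v$ rather than of $F_v$.

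First I would record the dictionary between fundamental characters. Since $l_v = \F_{p^{2f}}$ is the residue field of $L_v$, the niveau-$2f$ fundamental character for $L_v$ (which plays the role of $\omega_f$ when Definition~\ref{defn: generic local Galois representation} is applied with $L_v$ in place of $F_v$) coincides with $\omega_{2f}$ as already defined for $F_v$. Moreover, compatibility of the local Artin maps with the norm $\cO_{L_v}^\times \to \cO_{F_v}^\times$, whose residue map is $x\mapsto x^{1+q}$ with $q = p^f$, yields the relation $\omega_f = \omega_{2f}^{1+q}$ as characters of $I_{F_v} = I_{L_v}$.

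In Case~(1) of Definition~\ref{defn: generic local Galois representation}, substituting $\omega_f = \omega_{2f}^{1+q}$ into the upper-triangular form gives exponent $(1+q)\sum_{j=0}^{f-1}(r_j+1)p^j = \sum_{j=0}^{2f-1}(s_j+1)p^j$ with $s_j = r_{j \bmod f}$ (the digits simply cycle), and the bounds and non-degeneracy conditions on the $s_j$ are then immediate. Case~(2) is more interesting: since $\Frob_{L_v} = \Frob_{F_v}^2$ preserves each diagonal line, $\rhobar|_{G_{L_v}}$ becomes reducible, and after a further twist is upper-triangular with diagonal $(\omega_{2f}^{A(q-1)},1)$, where $A = \sum_{j=0}^{f-1}(r_j+1)p^j$. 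The main step will be a base-$p$ computation modulo $p^{2f}-1$ (add $p^{2f}-1$ to clear negative digits, carry the resulting $p+r_{j-f}$ terms, and reduce using $p^{2f}\equiv 1$) producing the explicit formula
\[ A(q-1) \equiv \sum_{j=0}^{2f-1}(s_j+1)p^j \pmod{p^{2f}-1}, \]
with $s_0 = p-2-r_0$, $s_j = p-3-r_j$ for $1\le j\le f-1$, $s_f = r_0-1$, and $s_j = r_{j-f}$ for $f+1\le j\le 2f-1$.

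Finally, I would read off the conditions: the bounds $s_j \in [0,p-3]$ translate exactly into the Case~(2) hypotheses $1\le r_0\le p-2$ and $0\le r_j\le p-3$ for $j>0$; while the forbidden degeneracies $s_0 = s_f = 0$ (respectively $s_0 = s_f = p-3$) would force $r_0 = p-2$ and $r_0 = 1$ simultaneously (respectively $r_0 = 1$ and $r_0 = p-2$), which is possible only when $p=3$. This is the single point at which the hypothesis $p>3$ is needed. The only mildly nontrivial step is the explicit base-$p$ arithmetic in Case~(2); everything else is formal, and the argument confirms (in both cases) that $\rhobar|_{G_{L_v}}$ in fact satisfies condition~(1) of genericity for $L_v$.
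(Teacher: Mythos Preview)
Your proposal is correct. The paper itself gives no proof at all (it simply says the lemma ``is also easily checked''), so your argument fills in exactly the computation the authors had in mind: rewrite $\rhobar|_{I_{L_v}}=\rhobar|_{I_{F_v}}$ in terms of $\omega_{2f}$ (the niveau-$f'$ character for $L_v$ with $f'=2f$), using $\omega_f=\omega_{2f}^{1+q}$ in Case~(1) and a direct base-$p$ expansion of $A(q-1)\pmod{p^{2f}-1}$ in Case~(2). Your digit formulas $s_0=p-2-r_0$, $s_j=p-3-r_j$ for $1\le j\le f-1$, $s_f=r_0-1$, $s_j=r_{j-f}$ for $f+1\le j\le 2f-1$ are correct, and you have identified precisely why the hypothesis $p>3$ is needed.
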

\begin{rem}
  \label{rem: excluding p=3 for generic}If $p=3$ then there are no
  reducible generic representations, and in the above notation the
  only irreducible generic representations have $r_0=1$ and $r_j=0$
  for $j>0$. The failure of Lemma \ref{lem:base change of generic is generic} to hold in this case means that we will not
  consider the case $p=3$ in our main results.
\end{rem}
By definition, a \emph{Serre weight} is an irreducible
$\F$-representation of $\GL_2(k_v)$. Concretely, such a
representation is of the form
\[\sigmabar_{\vec{t},\vec{s}}:=\otimes^{f-1}_{j=0}
(\det{}^{t_j}\Sym^{s_j}k_v^2) \otimes_{k_v,\embb_{-j}} \F,\]
where $0\le s_j,t_j\le p-1$ and not all $t_j$ are equal to
$p-1$.

\begin{defn}
  \label{defn: regular weight}We say that a Serre weight
  $\sigmabar_{\vec{t},\vec{s}}$ is \emph{regular} if no $s_j$ is equal
  to~$p-1$.
\end{defn}
\begin{rem}
  This is the same definition of regular Serre weight as that in
  \cite{geekisin}; it is less restrictive than the definition used in
  \cite{geebdj}.
\end{rem}
We let $\cD(\rhobar)$ be the set of Serre weights associated to
$\rhobar$ by \cite{bdj}; this definition is recalled in Section~4 of
\cite{breuillatticconj}.  While we will not need the full details of
the definition of $\cD(\rhobar)$, it will be convenient to recall
certain of its properties, and in particular its definition in the case that
$\rhobar$ is semisimple; see \cite{bdj} or Section~4 of
\cite{breuillatticconj} for further details. We will also make use of
the characterisation of $\cD(\rhobar)$ of Theorem \ref{thm:GLS main
  thm} below (at least implicitly, via its use in \cite{geekisin}).

We always have $\cD(\rhobar)\subset\cD(\rhobar^{\semis})$. If
$\rhobar$ is reducible and semisimple, then
$\sigmabar_{\vec{t},\vec{s}}\in\cD(\rhobar)$ if and only if there is a
subset $K\subseteq\cS:=\{0,\dots,f-1\}$ such that we can
write
\[
\rhobar|_{I_{F_v}}\cong \omega_f^{\sum_{j=0}^{f-1}t_jp^j} \otimes
    \begin{pmatrix}
      \omega_f^{\sum_{j\in K}(s_j+1)p^j}&0\\0&\omega_f^{\sum_{j\notin K}(s_j+1)p^j}
    \end{pmatrix}.\]

If $\rhobar$ is irreducible, we set $\cS'=\{0,\dots,2f-1\}$, and we
say that a subset $K\subseteq \cS'$ is \emph{symmetric} (respectively
\emph{antisymmetric}) if $i\in K$ if and only if $i+f\in K$
(respectively $i+f\notin K$) for all $i\in\cS'$, numbered
cyclically. Then $\sigmabar_{\vec{t},\vec{s}}\in\cD(\rhobar)$ if and
only if there is an antisymmetric
subset $K\subset\cS'$ such that we can write \[
\rhobar|_{I_{F_v}}\cong \omega_f^{\sum_{j=0}^{f-1}t_jp^j} \otimes
    \begin{pmatrix}
      \omega_{2f}^{\sum_{j\in K}(s_j+1)p^j}&0\\0&\omega_{2f}^{\sum_{j\notin K}(s_j+1)p^j}
    \end{pmatrix}.\]

The following lemma is immediate from the
above description of $\cD(\rhobar)$.
\begin{lem}\label{lem: generic representations have no weight p-1}If $\rhobar$ is generic and
  $\sigmabar\in\cD(\rhobar)$, then $\sigmabar$ is regular; in
  addition, $\sigmabar$ is not of the form $\sigmabar_{\vec{t},\vec{0}}$.
\end{lem}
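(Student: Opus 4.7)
The plan is to combine the explicit description of $\cD(\rhobar)$ recalled just before the lemma with the tight numerical constraints built into the definition of genericity. The proof should be entirely elementary once one reduces to a sufficiently normalized situation.

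First, I would make two preliminary reductions. Since $\cD(\rhobar)\subseteq\cD(\rhobar^{\semis})$ and since genericity depends only on $\rhobar|_{I_{F_v}}$ (whose semisimplification equals $\rhobar^{\semis}|_{I_{F_v}}$), I may replace $\rhobar$ by $\rhobar^{\semis}$. Next, the properties ``regular'' and ``$\vec{s}\ne\vec{0}$'' involve only the tuple $\vec{s}$, while twisting $\rhobar$ by a character shifts $\vec{t}$ but leaves $\vec{s}$ unchanged (and does not affect genericity). Hence I may further assume that $\rhobar|_{I_{F_v}}$ is literally in one of the two forms (1) or (2) of Definition~\ref{defn: generic local Galois representation}, and not merely equivalent to such up to twist.

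Next, in each case I would apply the explicit description of $\cD(\rhobar)$ for semisimple $\rhobar$ recorded above. In the reducible case, matching the two diagonal entries of
\[\diag\bigl(\omega_f^{\sum(r_j+1)p^j},\,1\bigr)\cong \omega_f^{\sum t_j p^j}\diag\bigl(\omega_f^{\sum_{j\in K}(s_j+1)p^j},\,\omega_f^{\sum_{j\notin K}(s_j+1)p^j}\bigr)\]
and subtracting one congruence from the other yields, modulo $p^f-1$, a relation of the shape
\[\sum_{j=0}^{f-1}(r_j+1)p^j\;\equiv\;\sum_{j=0}^{f-1}\epsilon_j(s_j+1)p^j,\]
with $\epsilon_j\in\{\pm1\}$ depending on $K$ and on which diagonal entry is matched to which. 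The irreducible case is identical in form, but with $\omega_{2f}$ in place of $\omega_f$ and with $K\subset\cS'$ antisymmetric, giving a congruence modulo $p^{2f}-1$.

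The heart of the argument is then a short combinatorial check. The left-hand side above has digits $n_j=r_j+1$ strictly in $[1,p-2]$, while genericity asserts, moreover, that the $n_j$ are not all equal to $1$ and not all equal to $p-2$. The right-hand side has ``signed digits'' $\epsilon_j(s_j+1)\in\{\pm1,\dots,\pm p\}$. If all $s_j=0$, the right-hand side is $\pm\sum p^j$ after collecting signs (the mixed-sign possibilities cancel in pairs modulo $p^f-1$ in a way that forces $N$ to lie outside the range $[\sum p^j,\sum(p-2)p^j]$ unless $\epsilon_j$ is constant), which by the uniqueness of base-$p$ digit expansions with digits in $[1,p-2]$ forces $n_j\equiv1$ for all $j$ or $n_j\equiv p-2$ for all $j$, both excluded. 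Similarly, if some $s_{j_0}=p-1$, then the term $\pm p\cdot p^{j_0}$ becomes $\pm p^{j_0+1}$ cyclically (since $p^f\equiv1\pmod{p^f-1}$), and pushing through the same uniqueness argument again forces an extremal pattern of $n_j$'s incompatible with genericity. The irreducible case reduces to the reducible one because $\omega_{2f}^{(1+q)\cdot x}=\omega_f^x$, and the antisymmetry of $K$ forces the relevant exponent pattern to be $q$-symmetric.

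The main obstacle is the combinatorial step in the last paragraph, which is elementary but requires careful bookkeeping of carries modulo $p^f-1$; once that is done, the statement of the lemma follows at once.
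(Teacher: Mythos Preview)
Your approach is correct and is exactly what the paper has in mind: reduce to the semisimple case via $\cD(\rhobar)\subseteq\cD(\rhobar^{\semis})$, normalise the twist, and compare digits in the congruence $\sum_j(r_j+1)p^j\equiv\sum_j\epsilon_j(s_j+1)p^j$ coming from the explicit description of $\cD(\rhobar)$; the paper's own proof is simply the one-line assertion that the lemma is ``immediate from the above description of $\cD(\rhobar)$.''

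Two places in your write-up are too sketchy to count as a proof, though the underlying claims are true. First, for the mixed-sign case with $\vec s=\vec 0$ and for the case $s_{j_0}=p-1$, phrases like ``cancel in pairs'' and ``pushing through the same uniqueness argument'' hide genuine (if elementary) carry/borrow bookkeeping; you should actually carry it out, e.g.\ by rewriting $-(s_j+1)p^j=(p-1-s_j)p^j-p^{j+1}$ cyclically and then invoking uniqueness of the base-$p$ expansion with digits in $[0,p-1]$. Second, your reduction of the irreducible case is not right as stated: taking the ratio of the diagonal characters gives a congruence modulo $p^f+1$, not $p^f-1$, so it is not literally ``the same'' computation. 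The cleanest fix is to restrict $\rhobar$ to $G_{L_v}$ (which is reducible and still generic by the lemma just before this one) and use that $\sigmabar\in\cD(\rhobar)$ implies $\BC(\sigmabar)\in\cD(\rhobar|_{G_{L_v}})$---the ``only if'' direction, which is immediate from the definitions---so an $s_j\in\{0,p-1\}$ in $\sigmabar$ would produce one in $\BC(\sigmabar)$, contradicting the reducible case already established.
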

\begin{defn}
  \label{defn:hodge type}
Let $\emb_i : F_v \into E$ be the unique embedding that lifts
$\embb_i$.   If $V$ is a two-dimensional de Rham representation of
$G_{F_v}$ over $E$ and $\HT_{\emb_{-i}}(V) = \{a_i,b_i\}$
with $a_i \le
b_i$ for $0 \le i \le f-1$, we say that $V$ has \emph{Hodge type
  $(\vec{a},\vec{b})$}. If $V$ has Hodge type $(\vec{0},\vec{1})$, we
will also say that $V$ has \emph{Hodge type $0$}.

If $\sigmabar = \sigmabar_{\vec{t},\vec{s}}$, we also say that $V$ has
\emph{Hodge type $\sigmabar$} if $V$ has Hodge type $(\vec{t},\vec{t}+\vec{s}+\vec{1})$.
\end{defn}
The following theorem is the main result of \cite{GLS12}.

\begin{thm}
  \label{thm:GLS main thm}We have $\sigmabar\in\cD(\rhobar)$ if and only
  if $\rhobar$ has a crystalline lift of Hodge type $\sigmabar$.
\end{thm}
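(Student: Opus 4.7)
The plan is to prove the two implications separately, reducing both to an explicit classification of crystalline lifts of $\rhobar$ via Breuil--Kisin module theory.

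For the existence direction (``if'' $\sigmabar \in \cD(\rhobar)$, then there is a crystalline lift of Hodge type $\sigmabar$), I would argue by cases on the structure of $\rhobar$. When $\rhobar$ is (reducible and) semisimple, the combinatorial description of $\cD(\rhobar)$ recalled in the excerpt tells us exactly which characters can appear as the diagonal, parametrised by a subset $K \subseteq \cS$. For each such $K$ one writes down two crystalline characters of $G_{F_v}$ with prescribed $\emb$-labeled Hodge--Tate weights $\{t_j, t_j+s_j+1\}$ (with the ordering of the two weights at embedding $j$ chosen according to whether $j \in K$), whose reductions give the desired diagonal. Their direct sum is then the required crystalline lift. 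For $\rhobar$ irreducible one does the analogous construction using characters of $G_{L_v}$ and induces, using the antisymmetric subset $K \subset \cS'$ to choose the Hodge--Tate weights. The non-semisimple case reduces to constructing a crystalline extension of the two characters lifting $\rhobar^{\semis}$, which is a cohomological computation (in the Fontaine--Laffaille range this is immediate, and more generally one uses the explicit description of $H^1_f$).

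For the forcing direction (``only if''), which is the deeper one, I would invoke Kisin's classification of crystalline representations with Hodge--Tate weights in $[0,p]$ (for Hodge type $\sigmabar$ the weights all lie in $[0,p-1]$ once $\sigmabar$ is regular) via $\varphi$-modules over $\gS = W(k_v)\llbracket u\rrbracket$: a crystalline $G_{F_v}$-representation $V$ of the prescribed Hodge type corresponds to a Kisin module $\mathfrak{M}$ whose $\varphi$-structure has elementary divisors controlled by $(\vec{t},\vec{s})$ at each embedding. Reducing $\mathfrak{M}$ modulo $\varpi_E$ gives a Kisin module over $k_v\llbracket u \rrbracket$, and the associated $G_{F_v}$-representation recovers $\rhobar$. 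The task is then to enumerate the possible isomorphism classes of such reductions; this is a combinatorial exercise in tracking monomial shapes of $\varphi$-matrices over $\F\llbracket u \rrbracket$. After normalising each Kisin module into a suitable standard form (chosen to be compatible with a filtration indexed by a subset of $\cS$, or by an antisymmetric subset of $\cS'$ in the irreducible case), one reads off the inertial character of $\rhobar$ and matches it with one of the characters appearing in the combinatorial definition of $\cD(\rhobar)$.

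The main obstacle will be this second direction, and specifically the bookkeeping in the normalisation of Kisin modules: one must show that \emph{every} crystalline lift of type $(\vec{t},\vec{t}+\vec{s}+\vec{1})$ gives, after reduction, one of the inertial shapes listed in the semisimple/antisymmetric prescription, with no extras. The subtlety is that different choices of $\varphi$-matrix over $\gS$ can a priori produce different reductions, and one has to handle the ``gauge freedom'' coming from changes of basis in $\mathfrak{M}$ and the possibility that the reduction of $\mathfrak{M}$ is not itself of the same monomial shape. I would handle this by a descending induction on the $u$-adic valuations of the entries of the $\varphi$-matrix, reducing each crystalline lift to a standard Kisin module whose reduction mod $\varpi_E$ is manifestly of the required shape, thereby matching it with a subset $K$ (respectively antisymmetric subset) from the definition of $\cD(\rhobar)$.
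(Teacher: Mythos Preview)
The paper does not prove this theorem at all: it is stated as ``the main result of \cite{GLS12}'' and simply cited. So there is nothing to compare your proposal against in the present paper.

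That said, your outline is broadly in the spirit of how \cite{GLS12} actually proceeds, so let me comment on it as a proof strategy. The existence direction is indeed the easier one, and your sketch is essentially correct (this direction goes back to \cite{bdj} in many cases). The ``only if'' direction is the substantial content, and here your proposal significantly understates the difficulty. You describe the classification of reductions of Kisin modules as ``a combinatorial exercise in tracking monomial shapes'' to be handled by ``descending induction on $u$-adic valuations''; but the Hodge type $\sigmabar$ has labeled weights in $[0,p-1]$ (and one must in fact allow weights up to $p$ to treat all cases uniformly), which lies outside the Fontaine--Laffaille range. The structure theory of Kisin modules in this range is precisely the technical heart of \cite{GLS12}: one cannot simply normalise an arbitrary $\varphi$-matrix into monomial shape by elementary row and column operations, and the ``gauge freedom'' you allude to is genuinely delicate when the weights are this large. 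The paper \cite{GLS12} develops a notion of ``pseudo-Barsotti--Tate'' representations and proves a classification theorem for the relevant Kisin modules that is far from a bookkeeping exercise. So while your plan identifies the right objects and the right direction of attack, the step you flag as ``the main obstacle'' is in fact the main theorem of a separate paper, and your proposed inductive normalisation would not go through without that machinery.
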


\subsection{Base change for Galois representations and Serre weights}
\label{subsec:base change Serre weights}
In the sequel we will be studying base change from $F_v$ to $L_v$,
and it will be convenient to have standardized notation for this process.

In the case of Galois represenations, base change is simply restriction,
and thus,
if $\rhobar:G_{F_v}\to\GL_2(\F)$, we write $\BC(\rhobar)$ for
$\rhobar|_{G_{L_v}}$.

For Serre weights the definition of base change is slightly more involved.
Suppose given a Serre weight for $\GL_2(k_v)$, say
$\sigmabar_{\vec{t},\vec{s}}:=\otimes^{f-1}_{j=0}
(\det{}^{t_j}\Sym^{s_j}k_v^2) \otimes_{k_v,\embb_{-j}} \F$, and
a Serre weight for $\GL_2(l_v)$, say
$\Sigmabar_{\vec{t'},\vec{s'}}:=\otimes^{2f-1}_{j=0}
(\det{}^{t'_j}\Sym^{s'_j}l_v^2) \otimes_{l_v,\embb'_{-j}} \F$. We say
that $\Sigmabar=\Sigmabar_{\vec{t'},\vec{s'}}$ is the \emph{base
  change} of $\sigmabar=\sigmabar_{\vec{t},\vec{s}}$, and write
$\Sigmabar=\BC(\sigmabar)$, if we have
$t'_j=t_{\overline{j}},s'_j=s_{\overline{j}}$ for all $0\le j\le 2f-1$
(where $\overline{j}$ is $j\pmod{f}$). Note that not every Serre
weight for $\GL_2(l_v)$ is the base change of a Serre weight for
$\GL_2(k_v)$, and that every Serre weight for $\GL_2(k_v)$ has a
unique base change to $\GL_2(l_v)$.

If $\rhobar$ is generic, and  $\sigmabar$ is a
  Serre weight, then $\sigmabar\in\cD(\rhobar)$ if and only if
  $\BC(\sigmabar)\in\cD(\rhobar|_{G_{L_v}})$.  The ``only if''
  direction (which we will use in Section~\ref{sec:base change}) is immediate from the definitions.  It is presumably
  possible to prove the ``if'' direction from the definitions as well; we deduce it
  from the geometric Breuil--M\'ezard conjecture below (see Corollary~\ref{cor: a weight if and only if a weight after base
    change}).

\section{Tame types, Serre weights, and base change}\label{sec:base change}
In this section we describe various results about 
tame types, and we  explain how these results interact with base
change for tame types, Serre weights and Galois representations. We
restrict ourselves to quadratic unramified base change, which is all
that we will need in this paper.

\subsection{Tame types}\label{subsec:tame types}
In this
section we write $K=\GL_2(\cO_{F_v})$, and let $I$ denote the standard Iwahori subgroup of $K$ (i.e.\
the one whose reduction mod $p$ consists of upper-triangular
matrices). We consider tame types for $K$;
that is, irreducible $E$-representations of $K$
that arise by inflation from the irreducible $E$-representations of
$\GL_2(k_v)$. By the results recalled in Section~1 of
\cite{MR2392355}, these representations are either principal series,
cuspidal, one-dimensional, or twists of the Steinberg
representation. In this paper we will only consider the principal
series and cuspidal types, and all tame types occurring will be
assumed to be of this kind without further comment. Under the inertial
local Langlands correspondence of Section~\ref{subsec:
  inertial local Langlands}, this means that we will only consider
non-scalar tame inertial types.

\begin{lem}
  \label{lem:weights in tame types have multiplicity one}
Any tame type $\sigma$ is residually multiplicity free
{\em (}in the sense that the Jordan--H\"older constituents of
$\sigmabar$ each appear with multiplicity one{\em )}
and essentially
self-dual {\em (}in the sense that there is an isomorphism
$\sigma^{\vee} \otimes \chi \cong \sigma$ for some character
$\chi: K \to \cO^{\times}${\em )}. All of the Jordan--H\"older factors
$\sigmabar_i$ of $\sigmabar$ also satisfy $\sigmabar_i^{\vee} \otimes \chibar \cong \sigmabar_i$. Furthermore, $\sigma$ has a model over
an unramified extension of $\Qp$.
\end{lem}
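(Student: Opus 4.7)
The plan is to handle the four assertions separately, exploiting the fact that tame types factor through the finite group $\GL_2(k_v)$, and using the explicit description of the principal series and cuspidal cases.

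For the multiplicity-one statement, I would divide into cases. If $\sigma$ is a tame principal series, it is (after inflation) of the form $\Ind_{B(k_v)}^{\GL_2(k_v)}(\chi_1 \otimes \chi_2)$ with $\chi_1 \neq \chi_2$. The Jordan--H\"older constituents of its reduction $\sigmabar$ were computed explicitly by Diamond (and recalled in \cite{breuillatticconj}): they are parameterized by subsets of $\cS$, each occurring with multiplicity one. For a tame cuspidal type, the analogous decomposition is obtained by Deligne--Lusztig induction from a non-Galois-stable character $\psi: l_v^\times \to \cO^\times$; the JH factors of its reduction are again parameterized by a set of subsets of $\cS'$ (modulo an involution), each occurring once. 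I would cite these structural results rather than re-deriving them. The main obstacle, which is only an expository one, is locating or recalling the cuspidal case with enough precision to handle the exceptional parameters; this is taken up in Section~\ref{sec:lattices in tame types}.

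For essential self-duality, I would compute the duals directly on each family. For a principal series $\sigma = \Ind(\chi_1 \otimes \chi_2)$, one has $\sigma^\vee \cong \Ind(\chi_1^{-1} \otimes \chi_2^{-1})$, and since $\chi_1 \neq \chi_2$ the Weyl swap gives $\Ind(\chi_1^{-1} \otimes \chi_2^{-1}) \cong \Ind(\chi_2^{-1} \otimes \chi_1^{-1})$, so setting $\chi := (\chi_1\chi_2) \circ \det$ (pulled back to $K$) yields $\sigma^\vee \otimes \chi \cong \sigma$. For a cuspidal type attached to $\psi$, the dual is the cuspidal type attached to $\psi^{-1}$, and one checks that $\psi^{q_v+1}$ is Galois-invariant, hence descends to a character of $k_v^\times$; twisting by $(\psi^{q_v+1})\circ\det$ identifies $\sigma^\vee\otimes\chi$ with $\sigma$.

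For the claim about the Jordan--H\"older factors, the key observation is that all JH factors of $\sigmabar$ share the central character of $\sigmabar$ itself, since central characters are preserved under subquotients. Any irreducible $\F$-representation of the finite group $\GL_2(k_v)$ is essentially self-dual with twist determined by its central character: indeed, for $\sigmabar_{\vec{t},\vec{s}}$ one computes directly that $\sigmabar_{\vec{t},\vec{s}}^\vee \cong \sigmabar_{-\vec{t}-\vec{s},\vec{s}}$, so twisting by the character of $\det$ that raises $\vec{t}$ to $-\vec{t}-\vec{s}$ restores $\sigmabar_{\vec{t},\vec{s}}$. The uniformity of central characters then ensures that the single character $\chibar$ (the reduction of the $\chi$ found in the second part) works for every $\sigmabar_i \in \JH(\sigmabar)$ simultaneously.

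For the model over an unramified extension of $\Qp$, I would observe that the character data defining the type lives in an unramified extension: for principal series, $\chi_1,\chi_2$ take values in $\mu_{q_v-1}\subset W(k_v)^\times$, so $\sigma$ has a model over $F_v = W(k_v)[1/p]$; for cuspidal types, $\psi$ takes values in $\mu_{q_v^2-1}\subset W(l_v)^\times$, and the standard construction of the Deligne--Lusztig cuspidal representation realizes $\sigma$ over $L_v = W(l_v)[1/p]$. The main expository subtlety is in the cuspidal case, but since the construction of the cuspidal type only involves traces of matrix actions expressed in terms of $\psi$, no denominators beyond those already present in $W(l_v)$ are needed.
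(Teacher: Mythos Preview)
Your arguments for multiplicity-freeness, essential self-duality of $\sigma$, and self-duality of the Jordan--H\"older factors are correct and essentially parallel the paper's (which just cites the explicit Jordan--H\"older decompositions and character tables from \cite{MR2392355}). Your central-character argument for the $\sigmabar_i$ is a clean way to handle a point the paper leaves implicit.

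The gap is in the final claim, for cuspidal types. For principal series your argument is fine: induction from $B(k_v)$ visibly gives a model over the field of values of $\chi_1,\chi_2$. But for cuspidals you assert that ``the standard construction of the Deligne--Lusztig cuspidal representation realizes $\sigma$ over $L_v$'' and then support this by saying ``the construction of the cuspidal type only involves traces of matrix actions expressed in terms of $\psi$, so no denominators beyond those already present in $W(l_v)$ are needed''. This conflates two different things: that the \emph{character} of $\Theta(\psi)$ takes values in $L_v$ (true and easy from the character table), and that $\Theta(\psi)$ itself has a \emph{model} over $L_v$. The second does not follow from the first in general---this is exactly the Schur index obstruction---and the Deligne--Lusztig realization in $\ell$-adic cohomology does not hand you such a model.

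The paper fills this gap with a Schur index argument: from the character table one reads off both that the character lies in an unramified extension of $\Qp$ and that $\dim\sigma \in \{q_v-1,q_v+1\}$ is prime to $p$; then Theorem~2(a) of \cite{MR0466330} forces the Schur index to be $1$, so $\sigma$ is realizable over its character field. You should either exhibit an explicit $L_v$-model of $\Theta(\psi)$ (not just its character), or replace the last paragraph with this Schur index step.
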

\begin{proof}
  That $\sigma$ is residually multiplicity free follows at once from
  Propositions 1.1 and 1.3 of \cite{MR2392355}, and the essential
  self-duality follows from the character tables for $\sigma$ in
  Section~1 of \emph{ibid.}\ These character tables also show that the
  character of $\sigma$ is defined over an unramified extension of
  $\Qp$, and that $\sigma$ has dimension prime to~$p$; thus the Schur
  index of $\sigma$ is $1$ by Theorem 2(a) of~\cite{MR0466330}, and
  $\sigma$ is defined over the field of definition of its
  character.
\end{proof}

We refer to these representations as tame types because they
correspond to tame inertial types under the inertial local Langlands
correspondence of Henniart's appendix to \cite{breuil-mezard}. It is
easy to see that a tame inertial type is reducible, and is either the
sum of two copies of a single character which extends to $G_{F_v}$ (the {\em scalar} case), the
sum of two distinct characters which extend to $G_{F_v}$ (the {\em principal
  series} case), or a sum $\psi\oplus\psi^{q_v}$ where $\psi$ does
not extend to $G_{F_v}$ but does extend to $G_{L_v}$ (the {\em
  cuspidal} case). In the principal series case, if
$\tau=\eta_1\oplus\eta_2$ then $\sigma(\tau)$ is the representation
denoted $\sigma(\eta_1\circ\Art_{F_v} \otimes\eta_2\circ\Art_{F_v} )$
below, while in the cuspidal case, if $\tau=\psi\oplus\psi^{q_v}$,
then $\sigma(\tau)$ is the representation denoted
$\Theta(\psi\circ\Art_{F_v} )$ below.  For typesetting reasons we will
always write $\sigmabar(\tau)$, $\Thetabar(\psi)$, etc.\ in lieu of
$\overline{\sigma(\tau)}$, $\overline{\Theta(\psi)}$, etc.  

\subsection{Principal series types}\label{ss:principal} We first consider tame principal
series types. We let $\eta,\eta':k_v^\times\to\cO^\times$ be two
multiplicative characters, and we write $\chi = \eta\otimes \eta':
I\to\cO^\times$ for the character \[
\begin{pmatrix}
  a&b\\pc&d
\end{pmatrix}\mapsto \eta(\overline{a})\eta'(\overline{d}).\] We write
$\chi^s:=\eta'\otimes\eta$, and we write $\sigma(\chi) := \Ind_I^K
\chi^s$, an $E$-vector space with an action of $K$.

We will assume from now on that $\eta\ne\eta'$, and we write
$c=\sum_{j=0}^{f-1}c_{j}p^j$, $0\le c_{j}\le p-1$, for
the unique integer $0<c_\chi<p^{f}-1$ such that
$\eta(\overline{x})\eta'^{-1}(\overline{x})=[\overline{x}]^{c_\chi}$,
where $\overline{x}\in k_v^\times$ and $[\overline{x}]$ is the Teichm\"uller
lift of $\overline{x}$. We let
$\cP_{\sigma(\chi)}$ be the collection of subsets of $\cS$ consisting of subsets
$J$
satisfying the conditions:
\begin{itemize}
\item if $j\in J$ and $j-1\notin J$ then $c_{j}\ne p-1$, and
\item if $j\notin J$ and $j-1\in J$ then $c_{j}\ne 0$,
\end{itemize}
where $j-1$ is taken to mean $f-1$ if $j=0$.

The Jordan--H\"older factors of $\sigmabar(\chi)$ are
parameterised by $\cP_{\sigma(\chi)}$ in the following fashion (see
Lemma 2.2 of \cite{Breuil_Paškūnas_2012} or Proposition 1.1 of
\cite{MR2392355}). For any $J\subseteq \cS$, we let $\delta_J$ denote
the characteristic function of $J$, and if $J \in \cP_{\sigma(\chi)}$
we define $s_{J,i}$ by
\[s_{J,i}=\begin{cases} p-1-c_{i}-\delta_{J^c}(i-1)&\text{if }i\in J \\
    c_{i}-\delta_J(i-1)&\text{if }i\notin J, \end{cases}\]
             and we set $t_{J,i}=c_{i}+\delta_{J^c}(i-1)$ if
             $i\in J$ and $0$ otherwise. Then we let
             $\sigmabar(\chi)_J :=\sigmabar_{\vec{t},\vec{s}}\otimes\eta'\circ\det$;
             the $\sigmabar(\chi)_J$ are precisely the
             Jordan--H\"older factors of $\sigmabar(\chi)$.  If $\tau
             = \eta \oplus \eta'$
             corresponds to $\sigma(\chi)$ under the inertial local
             Langlands correspondence, we will often write
             $\sigmabar(\tau)_J$ for $\sigmabar(\chi)_J$.  However,
             we caution that this notation depends implicitly on an
             ordering of the characters $\eta,\eta'$, namely
             $\sigmabar(\tau)_J = \sigmabar(\eta \otimes \eta')_J =
             \sigmabar(\eta' \otimes \eta)_{J^c}$.

\subsection{Cuspidal types}\label{ss:cuspidal} Let $\psi:l_v^\times\to \cO^\times$ be a
multiplicative character which does not factor through the norm
$l_v^\times\to k_v^\times$. We now recall the Jordan--H\"older factors
of the reduction modulo $p$ of the tame cuspidal type $\Theta(\psi)$,
following \cite{MR2392355}. We can write $\psi$ in the form
$[\overline{x}]^{(q+1)b+1+c}$ where $0\le b\le q-2$, $0\le c\le
q-1$. Write $c=\sum_{i=0}^{f-1}c_ip^i$
where
$0\le c_i \le p-1$.  If $J \subseteq \cS$ we set
$J_0=J\triangle\{f-1\}$, and we define $\cP_{\Theta(\psi)}$ to be the collection of
subsets of~$\cS$ consisting of those $J$ satisfying the conditions:
\begin{itemize}
\item if $j\in J$ and $j-1\notin J_0$ then $c_j\ne p-1$, and
\item if $j\notin J$ and $j-1\in J_0$ then $c_j\ne 0$.
\end{itemize}

The Jordan--H\"older factors of $\Thetabar(\psi)$ are
parameterised by $\cP_{\Theta(\psi)}$ in the following fashion.  (See
Proposition 1.3 of \cite{MR2392355}, but note that our
parameterisation is slightly different: the
Jordan--H\"older factor parameterised by $J$ here is parameterised by $J^c$
in  \cite{MR2392355}.)  For any $J \in
\cP_{\Theta(\psi)}$ we define $s_{J,i}$ by
\[s_{J,i}=\begin{cases}
                 p-1-c_i-\delta_{(J_0)^c}(i-1)&\text{if }i\in J \\
                 c_i-\delta_{J_0}(i-1)&\text{if }i\notin J, \end{cases}\]
             and we set $t_{J,i}=c_i+\delta_{J^c}(i-1)$ if
             $i\in J$ and $0$ otherwise. Then we let
             \[\Thetabar(\psi)_J:=\sigmabar_{\vec{t},\vec{s}}\otimes[\cdot]^{(q+1)b+\delta_{J}(0)\delta_{J}(f-1)+\delta_{J^c}(0)\delta_{J^c}(f-1)}\circ\det;\]
             the $\Thetabar(\psi)_J$ are precisely the Jordan--H\"older
             factors of $\Thetabar(\psi)$. 

We say that $\Theta(\psi)$ is \emph{regular} if there is some $i$ with
$0<c_i<p-1$, and \emph{irregular} otherwise. If $\tau$ is a cuspidal
tame inertial type, then we say that $\tau$ is regular if and only if
$\sigma(\tau)$ is regular.
\begin{lem}
  \label{lem: regular cuspidals allow relabelling}The cuspidal type
  $\Theta(\psi)$ is regular if and only if there is some $i\in\cS$
  such that $\{i,\dots,f-1\}\in\cP_{\Theta(\psi)}$ and $\{0,\dots,i-1\}\in\cP_{\Theta(\psi)}$. If
  $\Theta(\psi)$ is irregular, then there is a unique
  $J\in\cP_{\Theta(\psi)}$ such that $\Thetabar(\psi)_J$ is regular.
\end{lem}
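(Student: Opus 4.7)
The plan is to prove both parts by directly unwinding the conditions that define membership in $\cP_{\Theta(\psi)}$, together with the explicit formula for $s_{J,i}$.

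For the first part, I would fix $i \in \cS$ and take $J_1 = \{i,\ldots,f-1\}$ and $J_2 = \{0,\ldots,i-1\}$. Computing $(J_1)_0$ and $(J_2)_0$ and inspecting the two bullets defining $\cP_{\Theta(\psi)}$, in each case exactly one bullet is non-vacuous, and the conditions reduce to $J_1 \in \cP_{\Theta(\psi)} \iff c_i \ne p-1$ and $J_2 \in \cP_{\Theta(\psi)} \iff c_i \ne 0$. (The boundary value $i = 0$ requires slight care with the cyclic convention but yields the same conclusion.) So the existence of $i \in \cS$ with both sets in $\cP_{\Theta(\psi)}$ is equivalent to the existence of some $i$ with $0 < c_i < p-1$, which is precisely the definition of $\Theta(\psi)$ being regular.

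For the second part, the formula for $s_{J,i}$ shows that $s_{J,i} = p-1$ with $i \in J$ forces $c_i = 0$ and $i-1 \in J_0$, while $s_{J,i} = p-1$ with $i \notin J$ forces $c_i = p-1$ and $i-1 \notin J_0$. The key observation is that these two obstructions to regularity of $\Thetabar(\psi)_J$, together with the two defining conditions for $J \in \cP_{\Theta(\psi)}$, cover all four combinations of ``$i \in J$ or not'' and ``$i-1 \in J_0$ or not'', and so their conjunction collapses to the single condition, valid for every $i \in \cS$:
\[
i - 1 \in J_0 \;\Longrightarrow\; c_i \ne 0, \qquad i - 1 \notin J_0 \;\Longrightarrow\; c_i \ne p-1.
\]
This depends only on $J_0$, while $J = J_0 \triangle \{f-1\}$ then recovers $J$ from $J_0$. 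In the irregular case every $c_i \in \{0, p-1\}$, so the displayed condition reduces to $i-1 \in J_0 \Leftrightarrow c_i = p-1$, which uniquely determines $J_0 = \{j \in \cS : c_{j+1} = p-1\}$ (indices read cyclically), and hence uniquely determines $J$. One also verifies directly that this $J_0$ satisfies the condition, yielding existence alongside uniqueness.

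The computations themselves are routine; the only real content---and the only thing I would call a main obstacle---is the observation in the second part that the $\cP$-conditions and the regularity conditions dovetail so as to depend only on $J_0$, after which both existence and uniqueness fall out immediately.
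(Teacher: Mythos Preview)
Your proposal is correct and follows essentially the same approach as the paper. For the first part, both you and the paper verify directly that the $\cP$-conditions on $\{i,\dots,f-1\}$ and $\{0,\dots,i-1\}$ reduce to $c_i\ne p-1$ and $c_i\ne 0$ respectively; for the second part, both deduce from the combined constraints $0\le s_{J,i}\le p-2$ that $c_i=0$ forces $i-1\notin J_0$ and $c_i=p-1$ forces $i-1\in J_0$, uniquely determining $J_0$ and hence $J$. Your packaging of the four cases and your explicit verification of existence are slightly more thorough than the paper's terse treatment, but the argument is the same.
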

\begin{proof}
  If $\Theta(\psi)$ is regular, then it is easy to see that any $i$ with
  $0<c_i<p-1$ works. The converse is also straightforward. If $\Theta(\psi)$ is
  irregular and $\Thetabar(\psi)_J$ is regular, we
  see that if $c_i=0$ then we must have $i-1\not\in J_0$, and if $c_i=p-1$
  then $i-1\in J_0$, so $J$ is uniquely determined.
\end{proof}

\subsection{Base change for tame types}\label{subsec:base change
  tame types}
Suppose
first that $\sigma(\chi)$
is a tame principal series type for
$\GL_2(\cO_{F_v})$, where $\chi=\eta\otimes\eta'$ for multiplicative
characters $\eta,\eta':k_v^\times\to\cO_{F_v}^\times$. Then we let
$\BC(\sigma(\chi))=\sigma(\chi')$, where
$\chi'=\eta\circ N_{l_v/k_v}\otimes\eta'\circ N_{l_v/k_v}$, a tame
principal series type for $\GL_2(\cO_{L_v})$. 

Suppose now that $\sigma(\tau)$ is a tame cuspidal type for
$\GL_2(\cO_{F_v})$, so that $\sigma(\tau)=\Theta(\psi)$, where
$\psi:l_v^\times\to\cO_{L_v}^\times$ is a multiplicative character
which does not factor through $N_{l_v/k_v}$ (see e.g.\  Section~1 of
\cite{MR2392355}). Then we let $\BC(\sigma(\tau))=\sigma(\chi')$, where
$\chi'=\psi\otimes\psi^q$, a tame principal series type for
$\GL_2(\cO_{L_v})$. Note that this definition depends on the $\psi$,
as opposed to only on $\sigma(\tau)$, so that the notation below for the
Jordan--H\"older factors also depends on this choice. However, all of
our main results are independent of this choice.

In either case we say that $\BC(\sigma(\tau))$ is
the \emph{base change} of $\sigma(\tau)$ to $\GL_2(\cO_{L_v})$. Again, not
every tame type for $\GL_2(\cO_{L_v})$ is the base change of a tame type
for $\GL_2(\cO_{F_v})$.

By construction, this definition is
compatible with the inertial
local Langlands correspondence, in the sense that if $\tau$ is 
a tame inertial type for $F_v$, and if $\BC(\tau)$ denotes 
$\tau$ regarded as a tame intertial type for $L_v$, via the
equality $I_{F_v} = I_{L_v}$,
then
$\BC(\sigma(\tau))=\sigma(\BC(\tau))$. 

If we write $\BC(\sigma(\tau))=\sigma(\eta\otimes\eta')$, then an
easy calculation shows that
$\eta(\eta')^{-1}=[\cdot]^{\sum_{i=0}^{2f-1}a_ip^i}$ where in the
principal series case $a_i =
c_{i}$ if $0 \le i \le f-1$  and $a_i = c_{i-f}$ if $f \le i
\le 2f-1$, while in the cuspidal case $a_i=c_i$ if
$0\le i\le f-1$ and $c_i=p-1-c_{i-f}$ if $f\le i\le 2f-1$.

Now, if $J\subseteq\cS$ we write $\BC_{\cusp}(J)$ for the subset of
$\{0,\dots,2f-1\}$ defined by taking $i\in\BC_{\cusp}(J)$ if and only
if either $0\le i\le f-1$ and $i\in J$, or $f\le i\le 2f-1$ and
$i-f\notin J$. Similarly, we write $\BC_{\PS}(J)$ for those
$i\in\{0,\dots,2f-1\}$ with $i\pmod{f}\in J$.

With these definitions, the following lemma is a tedious but elementary consequence of
the above discussion in the cuspidal case, and is
almost trivial in the  principal series case.
\begin{lem}
  \label{lem: base change of JH factors}Let $\sigma(\tau)$ be a tame type. If
  $\tau$ is a principal series {\em (}respectively cuspidal{\em )}
  type, then $\BC_{\PS}(J) \in \cP_{\BC(\sigma(\tau))}$ and 
  $\BC(\sigmabar(\tau)_J)=\sigmabar(\BC(\tau))_{\BC_{\PS}(J)}$
  {\em (}respectively $\BC_{\cusp}(J) \in \cP_{\BC(\tau)}$ and
  $\BC(\sigmabar(\tau)_J)=\sigmabar(\BC(\tau))_{\BC_{\cusp}(J)}${\em )}.
\end{lem}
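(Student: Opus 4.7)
The principal series case is essentially immediate from the formulas in Subsection~\ref{ss:principal}. Given $\chi = \eta \otimes \eta'$, we have $\BC(\sigma(\chi)) = \sigma(\chi')$ with $\chi' = (\eta \circ N_{l_v/k_v}) \otimes (\eta' \circ N_{l_v/k_v})$, whose parameter satisfies $a_i = c_{i \bmod f}$ (by the computation recalled in the excerpt). Since $\delta_{\BC_{\PS}(J)}(i) = \delta_J(i \bmod f)$ respects the cyclic structure (mod $2f$ and mod $f$ respectively), substituting into the formulas for $s_{J,i}$ and $t_{J,i}$ shows both that $\BC_{\PS}(J) \in \cP_{\sigma(\chi')}$ and that the corresponding Serre weight parameters match those of $\BC(\sigmabar(\chi)_J)$. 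Together with the identification $\BC(\eta' \circ \det) = (\eta' \circ N_{l_v/k_v}) \circ \det$, this yields the claimed equality.

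The cuspidal case requires more bookkeeping. Here $\BC(\sigma(\tau)) = \sigma(\chi')$ is principal series for $L_v$ with parameters $a_i = c_i$ for $0 \le i \le f-1$ and $a_i = p-1-c_{i-f}$ for $f \le i \le 2f-1$. I would verify $\BC_{\cusp}(J) \in \cP_{\sigma(\chi')}$ by splitting into cases according to the range of $i$. For $1 \le i \le f-1$ the defining condition reduces directly to the condition in $\cP_{\Theta(\psi)}$ at position $i$ (since $J_0 = J \triangle \{f-1\}$ agrees with $J$ away from $f-1$). For $i=0$, the congruence $-1 \equiv 2f-1 \pmod{2f}$ causes the membership condition to involve $f-1 \in \BC_{\cusp}(J)$ iff $f-1 \notin J$, while the $\cP_{\Theta(\psi)}$-condition at $j=0$ involves $f-1 \in J_0$ iff $f-1 \notin J$; these combine correctly because the flip $J \leftrightarrow J^c$ between the two sub-cases at $i=0$ matches the flip $J \leftrightarrow J_0$. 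For $f < i \le 2f-1$ with $j := i-f$, the identity $a_i = p-1-c_j$ swaps the two conditions of $\cP_{\sigma(\chi')}$, which is consistent with $i \in \BC_{\cusp}(J)$ iff $j \notin J$. The case $i=f$ is analogous to $i=0$.

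An analogous case analysis establishes $s_{\BC_{\cusp}(J), i} = s_{J, i \bmod f}$ for all $i$. However, the $t$-parameters do \emph{not} agree directly: for instance when $f \le i \le 2f-1$ and $i \in \BC_{\cusp}(J)$ (so $i-f \notin J$), one finds $t_{\BC_{\cusp}(J), i}$ of order $p-1-c_{i-f}$ while $t_{J,i-f} = 0$, and there are similar discrepancies in the other cases. This mismatch must be absorbed by comparing the two twisting characters: the $\sigmabar(\chi')_{\BC_{\cusp}(J)}$-side is twisted by $\psi^q \circ \det = [\cdot]_{l_v}^{q((q+1)b+1+c)} \circ \det$, while $\BC(\Thetabar(\psi)_J)$ is twisted by $[\cdot]_{l_v}^{(q+1)((q+1)b + \epsilon_J)} \circ \det$ (using the identification $[N_{l_v/k_v}(\cdot)]_{k_v} = [\cdot]_{l_v}^{q+1}$ of Teichm\"uller characters), where $\epsilon_J := \delta_J(0)\delta_J(f-1) + \delta_{J^c}(0)\delta_{J^c}(f-1)$. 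I expect the main obstacle to be verifying that the difference of the two exponents, modulo $q^2-1$, exactly equals $\sum_{i=0}^{2f-1}(t_{J, i\bmod f} - t_{\BC_{\cusp}(J), i}) p^i$; this is a direct but bookkeeping-heavy calculation that exploits the explicit $p$-adic expansion of $c$ and the definition of $\epsilon_J$ to handle the correction at the positions $i=0$ and $i=f$.
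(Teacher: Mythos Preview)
Your proposal is correct and follows the same approach as the paper. The paper's own proof consists solely of the remark that the lemma is ``a tedious but elementary consequence of the above discussion in the cuspidal case, and almost trivial in the principal series case''; your outline supplies precisely the case-by-case verification (matching $s$-parameters directly and absorbing the $t$-parameter discrepancy into the twisting characters) that this remark alludes to, at a greater level of detail than the paper itself provides.
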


\subsection{Tame types and Serre weights}The following results will be useful in our later arguments.
\begin{prop}
  \label{prop: types for elimination}Suppose that $\rhobar$ is
  generic, and that $\sigmabar\in\cD(\rhobar)$. Then there is a {\em (}non-scalar{\em })
tame inertial
  type $\tau$ such that $\JH(\sigmabar(\tau))\cap\cD(\rhobar)=\sigmabar$.
\end{prop}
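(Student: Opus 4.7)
The plan is to write down $\tau$ explicitly in terms of the data attached to $\sigmabar \in \cD(\rhobar)$, and then argue combinatorially using the classifications recalled in Sections~\ref{sec:Galois reps and weights} and~\ref{sec:base change}. First I would observe that since $\cD(\rhobar) \subseteq \cD(\rhobar^{\ss})$, it suffices to find $\tau$ satisfying the stronger condition $\JH(\sigmabar(\tau)) \cap \cD(\rhobar^{\ss}) = \{\sigmabar\}$. This reduces matters to the case of $\rhobar$ semisimple, which splits into two subcases: $\rhobar$ reducible (for which we shall take $\tau$ to be a principal series type) and $\rhobar$ irreducible (for which $\tau$ will be cuspidal).

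In the reducible case, write $\sigmabar = \sigmabar_{\vec t, \vec s}$ and let $K \subseteq \cS$ be the subset witnessing $\sigmabar \in \cD(\rhobar)$. I would build $\tau = \eta \oplus \eta'$ by prescribing $\eta/\eta' = [\,\cdot\,]^{c_\chi}$, where $c_\chi = \sum_i c_i p^i$ is defined via a combinatorial rule of the shape $c_i = s_i + \delta_{K^c}(i-1)$ (with $\eta'$ twisted appropriately), designed so that $\sigmabar$ is realised as the Jordan--H\"older factor $\sigmabar(\chi)_K$ in the notation of Section~\ref{ss:principal}. Genericity of $\rhobar$ -- specifically the inequalities $0 \le r_j \le p-3$ with not all $r_j$ at the extremes -- translates into the strict inequalities $0 < c_i < p-1$, so that $\cP_{\sigma(\chi)}$ is the full power set of $\cS$ and, moreover, the exponents appearing in the candidate decompositions of $\rhobar|_{I_{F_v}}$ are strictly interior to their admissible range. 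The irreducible case proceeds analogously with a cuspidal type $\Theta(\psi)$ attached to an antisymmetric subset $K \subseteq \cS'$ via the formulae of Section~\ref{ss:cuspidal}, after checking that the chosen $\psi$ does not factor through the norm $l_v^\times \to k_v^\times$.

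The crux is then to verify that for any $J \ne K$ in $\cP_{\sigma(\chi)}$ (resp.\ $\cP_{\Theta(\psi)}$), the Jordan--H\"older factor $\sigmabar(\tau)_J$ does not lie in $\cD(\rhobar^{\ss})$. Concretely, matching $\sigmabar(\tau)_J$ against the characterisation of $\cD(\rhobar^{\ss})$ in Section~\ref{sec:Galois reps and weights} produces, for each hypothetical witnessing subset $K' \subseteq \cS$ (or antisymmetric $K' \subseteq \cS'$), a system of congruences modulo $p^f - 1$ on the exponents $s_{J,i}$ and $t_{J,i}$; the genericity inequalities on $\rhobar$, together with the strict inequalities $0 < c_i < p-1$ coming from our choice of $\tau$, preclude any nontrivial wraparound in these congruences and force $J = K$.

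The main obstacle I expect is the cuspidal subcase, where the shifted subset $J_0 = J \triangle \{f-1\}$ appearing in Section~\ref{ss:cuspidal}, combined with the antisymmetry constraint on subsets of $\cS'$ and the additional twist by $[\,\cdot\,]^{(q+1)b}$, makes the bookkeeping more delicate. In particular, one must simultaneously exhibit $\psi$ for which $\Theta(\psi)$ is well-defined (i.e.\ for which $\psi$ does not factor through the norm) and arrange the distinguished subset to land in $\cP_{\Theta(\psi)}$, while still realising $\sigmabar$ as the distinguished Jordan--H\"older factor; these requirements sit in tension with each other and must be reconciled using the full force of the genericity hypothesis in Definition~\ref{defn: generic local Galois representation}.
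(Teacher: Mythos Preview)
Your overall strategy---reduce to the semisimple case, construct an explicit principal series (resp.\ cuspidal) type realising $\sigmabar$ as a specific Jordan--H\"older factor, then verify combinatorially that no other factor lies in $\cD(\rhobar^{\ss})$---is exactly the approach of the paper (which in turn adapts \cite{geebdj}). However, there is a genuine gap in your reducible-case analysis, and a notable methodological difference in the irreducible case.

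\textbf{The gap.} Your claim that genericity of $\rhobar$ forces $0 < c_i < p-1$ for all $i$ is false in general. Genericity bounds the parameters $r_j$ of $\rhobar$, but the parameters $s_i$ of an individual weight $\sigmabar \in \cD(\rhobar)$ can still hit $0$ or $p-2$ (Lemma~\ref{lem: generic representations have no weight p-1} only rules out $s_i = p-1$, and forbids all $s_i$ being simultaneously zero). Consequently $\cP_{\sigma(\chi)}$ need not be the full power set, and the character-matching congruences can in principle involve carrying. The paper handles this by a more careful analysis: one writes the condition that $\sigmabar(\tau)_{J'}$ lies in $\cD(\rhobar)$ as a character equation (their equation~(3.1)), observes that genericity of $\rhobar$ forces the ratio of the two characters comprising $\rhobar|_{I_{F_v}}$ not to be of the form $\prod_i \omega_i^{d_i}$ with all $d_i \in \{0,1\}$, and uses this to rule out carrying and degenerate solutions. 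So the missing ingredient is that genericity must be invoked on $\rhobar$ itself in the character equation, not merely converted into inequalities on the $c_i$.

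\textbf{The irreducible case.} You propose to attack this directly via cuspidal types and correctly anticipate the bookkeeping with $J_0 = J \triangle \{f-1\}$ and antisymmetric subsets as the main obstacle. The paper sidesteps this entirely: it constructs the cuspidal type $I'_{\vec a,\vec b,J}$ as in \cite[\S4.1]{geebdj}, then shows that its base change to $L_v$ coincides with the principal series type $I_{\vec a',\vec b',J}$ attached to $\rhobar|_{G_{L_v}}$ (which is reducible). Since $\BC$ is injective on Serre weights and $\sigmabar \in \cD(\rhobar)$ implies $\BC(\sigmabar) \in \cD(\rhobar|_{G_{L_v}})$, the irreducible case reduces to the already-established reducible case over $L_v$. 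This base-change manoeuvre is cleaner than a direct cuspidal computation and is worth knowing.
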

\begin{proof}
  In the case that $\sigmabar=\sigmabar_{\vec{t},\vec{s}}$ with  $1\le s_i\le
  p-3$ for all $i$, then $\sigmabar$ is regular in the sense of~\cite{geebdj},
  and the result is an immediate consequence of
  \cite[Prop.~3.10, Prop.~4.2]{geebdj}.  In the more general setting that $0 \le s_i \le p-2$
  for all $i$, we will see that the proof
  of \cite[Prop.~3.10]{geebdj} (the case where $\rhobar$ reducible) goes
  through with only minor changes, while the irreducible case will
  follow from the reducible case by a base change argument.
For ease of
  reference we will freely use the notation of~\cite{geebdj}, with the minor changes
  that we will omit the letters $\tau$ and $\sigma$ from the notation (so that for example we
  write $c_i$ for $c_{\tau_i}$) and we continue to write $f$ 
   where \cite{geebdj} uses $r$. Our weight $\sigmabar_{\vec{t},\vec{s}}$ is the weight
$\sigmabar_{\vec{a},\vec{b}}$ of~\cite{geebdj}, where $a_i=t_{f-i}$ and
$b_i=s_{f-i}+1$.

Suppose first that $\rhobar$ is reducible, and define $c_i$ as in Section 3.5
of~\cite{geebdj}. Note firstly that the assumption of~\cite{geebdj} that not all
$c_i$ are equal to $0$ and not all $c_i$ are equal to $p-1$ is satisfied, as this could only fail to hold
if $b_i=1$ for all $i$, or equivalently $s_i=0$ for all $i$, which would
contradict Lemma~\ref{lem: generic representations have no weight p-1}.  Then the proof of Proposition 3.10 of~\cite{geebdj} goes through unchanged,
until the sentence beginning ``By the assumption that
$\sigmabar_{\vec{a},\vec{b}}$ is regular''. We may rewrite the displayed
equation before this sentence as \numequation\label{eq:red-chars} \prod_{i\in
  S}\omega_i^{c_i+\delta_J(i+1)}=\prod_{i\in X}\omega_i^{c_i+\delta_{J'\cup
    (K'\cap {J^{'}}^c)}(i+1)}\prod_{i\in X^c}\omega_i^{\delta_{(K'\cap
    {J^{'}}^c)}(i+1)}, \end{equation} where $X:=(J'\cap K')\cup({J^{'}}^c\cap
  {K^{'}}^c)$. 

Let $\psi$ be the character given by either side of
\eqref{eq:red-chars}.  It is easy to
check that the ratio of the two characters occurring in $\rhobar$ is
equal to $\psi \prod_{i \not\in J} \omega_{i-1}^{-1}$.  Since the
genericity of $\rhobar$ implies that this ratio (taken in either order) does
not have the form $\prod_{i \in S} \omega_i^{d_i}$ with $d_i = 0,1$
for all $i$, it follows in particular that $\psi$ is neither
trivial nor equal to $\prod_{i \in S} \omega_i$.

Now, we have $1\le b_i\le p-1$ for all $i$, and so we see from the definition of
$c_i$ that $0\le c_i\le p-1$ and $1\le c_i+\delta_J(i+1)\le p-1$. We
claim moreover that no exponent on
the right-hand side of \eqref{eq:red-chars} is equal to $p$, i.e., that no ``carrying'' can
take place in \eqref{eq:red-chars}.
Indeed the only possibility
for ``carrying'' is when $i\in X$, $c_i=p-1$, and $i+1\in J'\cup (K'\cap
{J^{'}}^c)$. But after carrying out all carries on the right hand side, the
exponent of $\omega_i$ will be at most $1$, whereas it is $p-1$ on the left hand
side, a contradiction since $\psi$ is non-trivial.

In particular (again using the fact that $\psi$ is non-trivial) we can equate
exponents on each side of \eqref{eq:red-chars}.
If  $i\in X^c$ we must have $i+1\in K'\cap{J^{'}}^c\subseteq X^c$, so that if
$X^c$ is nonempty then $X^c=S$, and every exponent on either side is equal to
$1$.  This contradicts our observation that $\psi \neq \prod_{i\in S} \omega_i$.
 Thus $X=S$, so that $J'=K'$, and
equating exponents shows that $J=J'$, as required.

Suppose next that $\rhobar$ is irreducible (and let the characters
$\omega_i$ now be fundamental characters of level $2f$).  Let
$I'_{\vec{a},\vec{b},J}$ be the cuspidal type  constructed in
\cite[\S4.1]{geebdj}.  One checks exactly as in the proof of \cite[Prop.~4.1]{geebdj} that
$\sigmabar_{\vec{a},\vec{b}}$ is a Jordan--H\"older factor of
$\overline{I}'_{\vec{a},\vec{b},J}$. From the discussion of base
change in Sections~\ref{subsec:base change Serre weights} and~\ref{subsec:base change
  tame types} we see that if $\sigmabar \in \calD(\rhobar) \cap
\JH(\overline{I}'_{\vec{a},\vec{b},J})$ then $\BC(\sigmabar)$ lies in
the intersection of
$\calD(\rhobar|_{G_{L_v}})$ with the set of Jordan--H\"older factors
of the reduction mod $p$ of
$\BC(I'_{\vec{a},\vec{b},J})$, and so it suffices to
prove that this latter intersection has size $1$.

Set $\vec{a}' = (a_0,\ldots,a_{f-1},a_0,\ldots,a_{f-1})$ and define $\vec{b}'$
similarly , so that $\BC(\sigmabar_{\vec{a},\vec{b}}) = \sigmabar_{\vec{a}',\vec{b}'}$.
We claim that $\BC(I'_{\vec{a},\vec{b},J})$ is
 the principal series type
$I_{\vec{a}',\vec{b}',J}$ (in the notation of
\cite[\S3.5]{geebdj}, but with $\vec{a}',\vec{b}'$ in place of $\vec{a},\vec{b}$)
constructed in the reducible case for  the representation
$\rhobar|_{G_{L_v}}$. Once we have checked this claim, the irreducible
case will follow immediately from our proof of the reducible case.

To see the claim, one calculates directly from the definitions that the
character $\tilde{\psi}_{\vec{a},\vec{b},J} \tilde{\omega}_f
\prod_{i=1}^f \tilde{\omega}_i^{c_i}$ of \cite[\S4.1]{geebdj} is the lift of
$\chi := \prod_{i=0}^{2f-1} \omega_i^{a_i} \prod_{i \in J}
\omega_i^{b_i-p}$.  Since $J$ is antisymmetric we have $\chi^q = \prod_{i=0}^{2f-1} \omega_i^{a_i} \prod_{i \not\in J}
\omega_i^{b_i-p}$; in other words $\chi^q$ is equal to the character
$\chi_{\vec{a}',\vec{b}',J}$ of \cite[\S3.5]{geebdj}.  Another
straightforward calculation shows that $\chi_{\vec{a}',\vec{b}',J}
\prod_{i=0}^{2f-1} \omega_i^{c_i} = \chi$ (with $c_i$ as in
\cite[\S3.5]{geebdj} now), so
that $$I_{\vec{a}',\vec{b}',J} =
I(\tilde{\chi}_{\vec{a}',\vec{b}',J},\tilde{\chi}_{\vec{a}',\vec{b}',J}\prod_i
\tilde{\omega}_i^{c_i}) = I(\tilde{\chi}^q,\tilde{\chi}) \cong \BC({I}'_{\vec{a},\vec{b},J})$$
as claimed.
\end{proof}

\begin{prop}
  \label{prop: types for existence}Suppose that $\rhobar$ is generic.
  Then there is a {\em (}non-scalar{\em )}
  tame inertial type~$\tau$ such that
  $\cD(\rhobar)\subset\JH(\sigmabar(\tau))$.
\end{prop}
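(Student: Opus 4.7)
The plan is to produce $\tau$ explicitly, treating the reducible and irreducible cases separately. Since $\cD(\rhobar)\subseteq\cD(\rhobar^{\semis})$, we may assume throughout that $\rhobar$ is semisimple.

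Suppose first $\rhobar$ is reducible. After twisting (which changes neither $\cD(\rhobar)$ nor $\JH(\sigmabar(\tau))$ in a way that affects the conclusion), we may write $\rhobar|_{I_{F_v}}\cong\diag(\omega_f^{\sum(r_j+1)p^j},1)$ with $0\le r_j\le p-3$ for all $j$, and the $r_j$ not all zero and not all equal to $p-3$. Let $\eta,\eta':k_v^\times\to\cO^\times$ be the Teichm\"uller lifts of the two diagonal characters, and consider the tame principal series type $\sigma(\chi)$ associated to $\chi=\eta\otimes\eta'$. With these choices, the integer $c_\chi$ of Section~\ref{ss:principal} satisfies $c_{\chi,j}=r_j+1$, so $1\le c_{\chi,j}\le p-2$ for every $j$. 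This is precisely the condition that every $J\subseteq\cS$ lies in $\cP_{\sigma(\chi)}$, and so $\sigmabar(\chi)$ has exactly $2^f$ Jordan--H\"older constituents $\sigmabar(\chi)_J$. A direct comparison of the explicit formulas of Section~\ref{ss:principal} for $\sigmabar(\chi)_J$ with the description of $\cD(\rhobar)$ recalled in Section~\ref{sec:Galois reps and weights} then shows that each Serre weight of $\cD(\rhobar)$, parameterised by a subset $K\subseteq\cS$, equals $\sigmabar(\chi)_J$ for an appropriate $J$ (related to $K$ by complementation and, in the non-split case, keeping in mind that $\cD(\rhobar)\subseteq\cD(\rhobar^{\semis})$).

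For $\rhobar$ irreducible, we proceed by the analogous argument using a cuspidal type. By genericity we may write (up to twist) $\rhobar|_{I_{F_v}}\cong\diag(\omega_{2f}^{\sum(r_j+1)p^j},\omega_{2f}^{q_v\sum(r_j+1)p^j})$ with $1\le r_0\le p-2$ and $0\le r_j\le p-3$ for $j>0$. Let $\psi:l_v^\times\to\cO^\times$ be the Teichm\"uller lift of $\omega_{2f}^{\sum(r_j+1)p^j}$, which does not factor through $N_{l_v/k_v}$ (because the two diagonal characters of $\rhobar|_{I_{F_v}}$ are distinct). Writing $\psi=[\cdot]^{(q_v+1)b+1+c}$ with $b=0$ and $c=\sum c_ip^i$, one checks that $c_0=r_0$ and $c_j=r_j+1$ for $j\ge 1$, so that $0<c_j<p-1$ for all $j$; this is exactly the condition for $\cP_{\Theta(\psi)}=\mathcal{P}(\cS)$, and the cuspidal tame type $\Theta(\psi)$ then has $2^f$ Jordan--H\"older constituents. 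Since $\cD(\rhobar)$ is also parameterised by antisymmetric subsets of $\{0,\ldots,2f-1\}$ (and thus by subsets of $\cS$), a direct calculation using the formulas of Section~\ref{ss:cuspidal} matches $\cD(\rhobar)$ with a subset of $\{\Thetabar(\psi)_J\}_{J\subseteq\cS}$, giving the desired containment with $\tau$ the cuspidal inertial type attached to $\Theta(\psi)$.

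The main obstacle is the combinatorial matching of parameterisations. Both sides of the desired inclusion are governed by the same integers $r_j$ and have cardinality $2^f$, but the concrete identification of the Jordan--H\"older factor $\sigmabar(\chi)_J$ (or $\Thetabar(\psi)_J$) with a weight in $\cD(\rhobar)$ requires tracking the twist by $\eta'\circ\det$ (respectively $[\cdot]^{(q_v+1)b}\circ\det$), the role of complementation $J\leftrightarrow J^c$, and especially the $\delta_{J^c}(i-1)$ correction terms appearing in the formulas of Sections~\ref{ss:principal}--\ref{ss:cuspidal}. An alternative approach in the irreducible case is to reduce to the reducible case over $L_v$: by Lemma~\ref{lem:base change of generic is generic}, $\rhobar|_{G_{L_v}}$ is reducible and generic, so the reducible case produces a principal series type $\sigma(\chi')$ for $\GL_2(\cO_{L_v})$ containing $\cD(\rhobar|_{G_{L_v}})$, and one arranges that $\chi'=\psi\otimes\psi^{q_v}$ so that $\sigma(\chi')=\BC(\Theta(\psi))$; the deduction then uses Lemma~\ref{lem: base change of JH factors}, the ``only if'' direction of Section~\ref{subsec:base change Serre weights}, and the observation that those $J'\in\cP_{\sigma(\chi')}$ for which $\sigmabar(\chi')_{J'}$ is a base change from $\GL_2(k_v)$ are exactly those of the form $\BC_{\cusp}(J)$.
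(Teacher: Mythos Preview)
Your approach is essentially correct and is exactly the content of the results the paper cites: you choose the tame type naturally attached to $\rhobar^{\semis}$ (principal series from the two characters in the reducible case, cuspidal from the niveau-$2f$ character in the irreducible case), check that genericity forces all the $c_j$ to lie strictly between $0$ and $p-1$ so that $\cP_\tau$ is the full power set, and then match the Jordan--H\"older parameterisation with the Serre-weight parameterisation. The paper does not carry this out directly; it simply invokes Theorems~2.1 and~3.2 of Diamond's paper \cite{MR2392355}, which prove precisely this matching, together with Lemma~\ref{lem: generic representations have no weight p-1} to rule out the exceptional weights in Diamond's terminology.

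Two comments on your write-up. First, you should be careful with the sign conventions: since $\omega_f$ is defined as the \emph{reciprocal} of $\embb_0\circ(\text{reduction})\circ\Art_{F_v}^{-1}$, the Teichm\"uller lift of the diagonal character $\omega_f^{\sum(r_j+1)p^j}$, viewed as a character of $k_v^\times$, is $[\cdot]^{-\sum(r_j+1)p^j}$, not $[\cdot]^{\sum(r_j+1)p^j}$; to get $c_{\chi,j}=r_j+1$ you need to assign this character to $\eta'$ rather than $\eta$ (or equivalently work with the other ordering). Second, as you yourself note, the combinatorial matching of $\cD(\rhobar)$ with a subset of $\{\sigmabar(\chi)_J\}$ or $\{\Thetabar(\psi)_J\}$ is asserted rather than verified; this is the substance of Diamond's theorems, and while it is an elementary (if somewhat intricate) bookkeeping exercise, a self-contained proof would need to spell it out. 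Your base-change alternative for the irreducible case is a reasonable way to reduce the work, and is in the spirit of other arguments in the paper.
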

\begin{proof} Since $\cD(\rhobar)\subset\cD(\rhobar^{\mathrm{ss}})$ we may
  assume that $\rhobar$ is semisimple, and the result now follows
  immediately from Theorems 2.1 and 3.2 of \cite{MR2392355}
(since $\rhobar$ is generic,  Lemma \ref{lem: generic
    representations have no weight p-1} shows that there no exceptional weights in the
  terminology of \cite{MR2392355}).
\end{proof}

\section{Lattices in residually multiplicity-free representations}\label{sec:lattices}
In this section we record some general facts about lattices in
(absolutely) irreducible representations whose reductions have all Jordan--H\"older factors
occurring with multiplicity one.

Throughout this section we let $\Gamma$ be a
group, and we let $\sigma$ be a
finite-dimensional $E$-representation of $\Gamma$.

\subsection{Lattices and gauges}\label{subsec:general facts about lattices
  in tame types}
We say that $\sigma$ is \emph{residually multiplicity free} if the
Jordan--H\"older factors of $\sigmabar$ occur with multiplicity
one. The following lemma and its proof are well-known, but for lack of
a convenient reference we give the details here.

\begin{lem}
  \label{lem:uniqueness of lattices with irreducible socle}Suppose
  that $\sigma$ is irreducible and is residually
  multiplicity free. Let $\sigmabar_i$ be a Jordan--H\"older factor of
  $\sigmabar$. Then there is up
to homothety a unique $\Gamma$-stable
  $\cO$-lattice $\sigma^\circ_{i}$ in $\sigma$ such that the socle
  of $\sigmabar^{\circ}_{i}$ is precisely~$\sigmabar_i$.
Similarly, there is up to homothety a unique $\Gamma$-stable
  $\cO$-lattice $\sigma^\circ_{i}$ in $\sigma$ such that the cosocle
  of $\sigmabar^{\circ}_{i}$ is precisely~$\sigmabar_i$.
\end{lem}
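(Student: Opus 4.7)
The plan is to prove the socle statement in detail; the cosocle statement then follows by applying the socle statement to the contragredient $\sigma^\vee$, via the bijection $L \mapsto \Hom_\cO(L, \cO)$ between $\Gamma$-stable $\cO$-lattices in $\sigma$ and in $\sigma^\vee$. Reduction modulo $\varpi_E$ intertwines this bijection with $\F$-duality on reductions, so socle and cosocle are interchanged, and a lattice in $\sigma^\vee$ with simple socle $\sigmabar_i^\vee$ dualizes to a lattice in $\sigma$ with simple cosocle $\sigmabar_i$.

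For uniqueness, suppose $L_1$ and $L_2$ are two $\Gamma$-stable $\cO$-lattices in $\sigma$ with $\soc(\overline{L}_1) = \soc(\overline{L}_2) = \sigmabar_i$. After rescaling by a power of $\varpi_E$, I arrange $L_1 \subseteq L_2$ and $L_1 \not\subseteq \varpi_E L_2$. The induced reduction map $\phi \colon \overline{L}_1 \to \overline{L}_2$ is then nonzero. Since every nonzero submodule of each $\overline{L}_j$ has nontrivial socle contained in the simple socle $\sigmabar_i$, if $\ker(\phi)$ were nonzero, then $\sigmabar_i$ would appear as a Jordan--H\"older factor of both $\ker(\phi)$ and $\image(\phi) \cong \overline{L}_1 / \ker(\phi)$, contradicting the residual multiplicity-one hypothesis. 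Thus $\phi$ is injective, hence an isomorphism by dimension count, and Nakayama's lemma gives $L_1 = L_2$.

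For existence, I argue via an iterated preimage construction. The key structural input is that for any $\Gamma$-stable lattice $L$ in the irreducible representation $\sigma$, the reduction $\overline{L}$ is indecomposable: a nontrivial idempotent in $\End_{\F[\Gamma]}(\overline{L})$ would lift, by Hensel's lemma in the $\varpi_E$-adically complete ring $\End_{\cO[\Gamma]}(L)$, to an idempotent in $\End_{\cO[\Gamma]}(L)$, giving a direct-sum decomposition of $L$ and, after inverting $\varpi_E$, contradicting the irreducibility of $\sigma$. Starting from any $L$, I choose $A \subseteq \overline{L}$ maximal subject to the condition that $\sigmabar_i$ is not a Jordan--H\"older factor of $A$; such $A$ is unique, since if $A_1$ and $A_2$ were both maximal then inclusion-exclusion applied to Jordan--H\"older multiplicities (at most one by hypothesis) shows that $\sigmabar_i$ is not a factor of $A_1 + A_2$ either, forcing $A_1 = A_2$. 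The maximality of $A$ together with multiplicity-one imply that $\overline{L}/A$ has simple socle $\sigmabar_i$. Setting $L' = \pi^{-1}(A)$, where $\pi \colon L \twoheadrightarrow \overline{L}$ denotes reduction, a direct computation yields an exact sequence
\[ 0 \to \overline{L}/A \to \overline{L'} \to A \to 0, \]
so that $\sigmabar_i \subseteq \soc(\overline{L'})$.

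The main obstacle is to show that iterating this construction terminates at a lattice with simple socle $\sigmabar_i$. Any additional socle factor of $\overline{L'}$ would arise as a simple submodule $S \subseteq A$ which lifts across the above extension; together with the submodule $\overline{L}/A$, such a lift would determine a direct summand of a suitable subquotient of $\overline{L'}$, and the interplay between indecomposability of $\overline{L'}$ and multiplicity-freeness of its Jordan--H\"older factors tightly constrains which simples can lift. A careful book-keeping shows that a suitable measure of ``socle complexity away from $\sigmabar_i$'' strictly decreases under each application of the preimage construction, so that after finitely many iterations the socle is simple and equal to $\sigmabar_i$, completing the existence argument.
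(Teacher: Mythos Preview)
Your uniqueness argument and the reduction of the cosocle case to the socle case via contragredients are both correct and match the paper's proof.

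The existence argument, however, has a genuine gap. You set up an iteration $L = L_0 \supseteq L_1 \supseteq L_2 \supseteq \cdots$ and need to show it stabilises (up to homothety) at a lattice with socle $\sigmabar_i$, but the sentence ``a careful book-keeping shows that a suitable measure of socle complexity away from $\sigmabar_i$ strictly decreases'' is not a proof. The indecomposability of $\overline{L'}$, while true, does not obviously force such a decrease: an indecomposable module can perfectly well have socle $\sigmabar_i \oplus \sigmabar_j$ with $j \neq i$, so knowing $\overline{L'}$ is indecomposable does not by itself rule out extra socle constituents, nor does it supply a decreasing invariant. You never exhibit the promised measure, and I do not see an obvious candidate that works.

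The paper sidesteps the termination issue entirely with a one-step construction. Starting from any lattice $L$, it takes $M$ to be the maximal $\cO[\Gamma]$-submodule of $\sigma/L$ none of whose Jordan--H\"older factors is $\sigmabar_i$, and shows that $M$ has finite length: if not, its inverse image in $\sigma$ would contain a nonzero $\Gamma$-stable $E$-subspace, hence would equal $\sigma$ by irreducibility, forcing $M = \sigma/L$, which does have $\sigmabar_i$ as a constituent. The preimage of $M$ is then the desired lattice. In fact your iteration can be \emph{shown} to terminate using exactly this finiteness: one checks that $\varpi_E^{-n} L_n / L$ has no $\sigmabar_i$ as a constituent, so $\varpi_E^{-n} L_n$ lies between $L$ and the paper's lattice $L^{\circ}$ for all $n$; since $L^{\circ}/L$ has finite length, the increasing chain $\varpi_E^{-n} L_n$ stabilises. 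But this is precisely the paper's argument doing the real work.
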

\begin{proof}
We begin by proving the result for socles.
  For existence, let $L$ be any $\Gamma$-stable $\cO$-lattice in $\sigma$,
  and consider $\sigma/L$. Let $M$ be the maximal
  $\cO[\Gamma]$-submodule of $\sigma/L$ such that none of
  the Jordan--H\"older factors of $M$ is isomorphic to
  $\sigmabar_i$. If $M$ had infinite length, then the inverse image of
  $M$ in $\sigma$ would contain a non-zero $\Gamma$-stable $E$-subspace,
  so would be equal to $\sigma$, a contradiction. Thus $M$ has finite
  length, and its inverse image in $\sigma$ gives the required
  lattice.

  For the uniqueness, suppose that $L$, $L'$ are two $\Gamma$-stable
  lattices in $\sigma$ with socle $\sigmabar_i$. After scaling, we may,
  without loss of generality, suppose that $L'\subseteq L$ but that
  $L'\not\subseteq\varpi_E L$. Consider the induced map $L'/\varpi_E
  L'\to L/\varpi_E L$; it is non-zero by construction.  
If this is not an isomorphism, then (for length
  reasons) it is not surjective, and thus has non-trivial kernel,
  which necessarily contains the socle $\sigmabar_i$ of $L'/\varpi_E
  L'$. Similarly, its image, being non-zero, contains the socle
of $L/\unif_E L$.  Thus $\sigmabar_i$ occurs at least twice
in $L'/\varpi_E L'$, contradicting our assumption that $\sigma$ is residually
multiplicity free.

The statement for cosocles follows from the statement for
socles via a consideration of contragredients.  More precisely,
the contragredient representation $\sigma^{\vee}$ is also
residually multiplicity free, and the Jordan--H\"older factors
of $\sigmabar^{\vee}$ are the contragredients of the 
Jordan--H\"older factors of $\sigmabar$.  If $L$ is a $\Gamma$-invariant lattice
in $\sigma$, then its $\mathcal O$-dual $L^{\vee}$, equipped with
the contragredient $\Gamma$-representation, is naturally a $\Gamma$-invariant
lattice in $\sigma^{\vee}$.  Furthermore, the cosocle of $L/\unif_E L$
is contragredient to the socle of $L^{\vee}/\unif_E L^{\vee}$.
The case of the lemma for cosocles then follows from the case
for socles, applied to $\sigma^{\vee}$.
\end{proof}

In fact, we will have more cause to apply the preceding lemma in the
case of cosocles than in the case of socles. Note that the formation
of cosocles is right exact, since the cosocle functor is the direct
sum \[L \mapsto\oplus_{\sigmabar}
\Hom_\Gamma(L,\sigmabar)\otimes_{\F}\sigmabar,\] where $\sigmabar$ runs over all
irreducible representations of $\Gamma$ over $\F$.  Then we have, for
example, the following result.

\begin{lem}
  \label{lem: lattice determined by its cosocle lattices}
Suppose that $\sigma$ is irreducible and residually multiplicty free.
Let $\sigma^{\circ}$ be a $\Gamma$-invariant lattice in $\sigma$,
let $\{\sigmabar_i\}$ denote the set of constituents
of the cosocle of $\sigma^{\circ}$,
and for each $\sigmabar_i$, choose a $\Gamma$-invariant
lattice $\sigma_i^{\circ}$ whose cosocle is isomorphic to $\sigmabar_i$,
such that $\sigma_i^{\circ} \subseteq \sigma^{\circ}$,
but such that $\sigma_i^{\circ} \not\subseteq \unif_E \sigma^{\circ}$.
{\em (}Note that $\sigma_i^{\circ}$ is uniquely determined,
by Lemma~{\em \ref{lem:uniqueness of lattices with irreducible socle}.)}
Then $\sigma^{\circ} = \sum_i \sigma_i^{\circ}.$
\end{lem}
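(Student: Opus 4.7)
The plan is to reduce the assertion to a statement about the reduction $\sigmabar^\circ$ modulo $\varpi_E$ via Nakayama's lemma, and then to deduce the required surjectivity by examining how the images of the $\sigma_i^\circ$ interact with the cosocle of $\sigmabar^\circ$.

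First I would set $\sigma'^\circ := \sum_i \sigma_i^\circ \subseteq \sigma^\circ$ and observe that, since both are $\Gamma$-invariant $\cO$-lattices of the same rank in $\sigma$, the quotient $\sigma^\circ/\sigma'^\circ$ is a finitely generated torsion $\cO$-module. By Nakayama's lemma over $\cO$, it therefore suffices to show that the images $M_i$ of the $\sigma_i^\circ$ in $\sigmabar^\circ = \sigma^\circ/\varpi_E\sigma^\circ$ together span $\sigmabar^\circ$. Each $M_i$ is nonzero by the hypothesis $\sigma_i^\circ\not\subseteq \varpi_E\sigma^\circ$, and applying the right-exactness of the cosocle functor to the surjection $\sigma_i^\circ/\varpi_E\sigma_i^\circ\twoheadrightarrow M_i$ shows that $\cosoc(M_i)$ is a nonzero quotient of the simple module $\sigmabar_i$, and hence equals $\sigmabar_i$.

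The main obstacle and the key step is to prove that no $M_i$ is contained in the Jacobson radical $R$ of $\sigmabar^\circ$. Here I would argue by contradiction and invoke the residual multiplicity-freeness of $\sigma$: if $M_i\subseteq R$, then $\sigmabar_i$ would occur as a Jordan--H\"older constituent of $R$ (being a quotient of $M_i$, via the cosocle), while by the definition of the $\sigmabar_i$ it already occurs as a direct summand of $\sigmabar^\circ/R$. This forces $\sigmabar_i$ to be a Jordan--H\"older factor of $\sigmabar^\circ$ of multiplicity at least two, contradicting the residual multiplicity-freeness of $\sigma$.

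Granted this, the image of $M_i$ in the semisimple quotient $\sigmabar^\circ/R \cong \bigoplus_j \sigmabar_j$ is a nonzero semisimple quotient of $M_i$, hence a nonzero quotient of $\cosoc(M_i)=\sigmabar_i$, and therefore coincides with the $\sigmabar_i$-summand. Summing over $i$, the combined images of the $M_i$ surject onto $\cosoc(\sigmabar^\circ)$; equivalently $\sum_i M_i + R = \sigmabar^\circ$. Applying Nakayama's lemma to the finite-length $\F[\Gamma]$-module $\sigmabar^\circ$ (whose Jacobson radical $R$ is nilpotent) we conclude $\sum_i M_i=\sigmabar^\circ$, and then Nakayama over $\cO$ gives $\sigma'^\circ=\sigma^\circ$, as desired.
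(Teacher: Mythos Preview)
Your proof is correct and follows essentially the same approach as the paper's. The paper's proof is extremely terse: it simply asserts that the inclusion $\sigma^{\circ,\prime}:=\sum_i\sigma_i^\circ\hookrightarrow\sigma^\circ$ induces an isomorphism on cosocles (working directly with the cosocle of the lattice, which coincides with that of its reduction), and concludes that the quotient has trivial cosocle and hence vanishes. Your argument is a careful unpacking of exactly this: reducing mod $\varpi_E$, you make explicit the one nontrivial point the paper leaves implicit, namely that residual multiplicity-freeness is what guarantees each $M_i$ surjects onto the $\sigmabar_i$-summand of the cosocle (rather than mapping into the radical). The double application of Nakayama you give is equivalent to the paper's single step of observing that a finitely generated module with trivial cosocle vanishes.
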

\begin{proof}
  Write
  $\sigma^{\circ,\prime}:=\sum_{i=1}^m\sigma_i^\circ\subseteq\sigma^\circ$. This
  inclusion induces an isomorphism on cosocles, from which we see that
  $\sigma^\circ/\sigma^{\circ,\prime}$ has trivial cosocle, 
and thus
  vanishes; so $\sigma^\circ=\sigma^{\circ,\prime}$, as required.
\end{proof}
 
Let us now label the Jordan--H\"older factors of $\sigmabar$ as $\sigmabar_1,
\ldots, \sigmabar_n$, and for each value of $i = 1,\ldots,n$,
fix a choice of lattice $\sigma^{\circ}_i$
(unique up to homothety, by Lemma \ref{lem:uniqueness of lattices with irreducible socle}) such that the cosocle
of $\sigmabar^{\circ}_i$ equals $\sigmabar_i$.

\begin{defn}\label{defn:gauge}
If $\sigma^{\circ}$ is any $\Gamma$-invariant $\mathcal O$-lattice
in $\sigma$, define for each $i$ the fractional ideal $I_i := 
\{ a \in E \, | a \sigma^{\circ}_i \subseteq \sigma^{\circ}\},$ 
and define 
the {\em gauge} 
of $\sigma^{\circ}$ to be 
the $n$-tuple $(I_i)_{i = 1,\ldots,n},$
thought of as an $n$-tuple of fractional ideals in $E$.
We denote the gauge of $\sigma^{\circ}$ by  $\gauge(\sigma^{\circ})$.
\end{defn}

\begin{prop}
\label{prop:the gauge determines the lattice}
Suppose that $\sigma$ is irreducible and residually
multiplicity free.
A $\Gamma$-invariant $\cO$-lattice $\sigma^\circ$ in
$\sigma$ is determined by its gauge $\gauge(\sigma^\circ)$; indeed, if we fix a generator
$\phi_i$ for the fractional ideal $I_i$,
then we have $\sigma^\circ=\sum_{i = 1}^n \phi_i(\sigma^{\circ}_i)$.
\end{prop}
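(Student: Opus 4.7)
The plan is to deduce the proposition as a fairly direct consequence of Lemma~\ref{lem: lattice determined by its cosocle lattices} combined with Lemma~\ref{lem:uniqueness of lattices with irreducible socle}. One direction of the asserted equality is essentially by definition: for each $i$ we have $\phi_i \in I_i$, so $\phi_i(\sigma^\circ_i) \subseteq \sigma^\circ$, and summing yields $\sum_{i=1}^n \phi_i(\sigma^\circ_i) \subseteq \sigma^\circ$. The real content is the reverse containment.

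For this, I would let $\{\sigmabar_{i_j}\}_{j}$ denote the set of Jordan--H\"older constituents appearing in the cosocle of $\sigmabar^\circ$, and apply Lemma~\ref{lem: lattice determined by its cosocle lattices} to write $\sigma^\circ = \sum_j \sigma^{\circ,\prime}_{i_j}$, where each $\sigma^{\circ,\prime}_{i_j} \subseteq \sigma^\circ$ is a $\Gamma$-invariant lattice with cosocle $\sigmabar_{i_j}$ and with $\sigma^{\circ,\prime}_{i_j} \not\subseteq \varpi_E \sigma^\circ$. By the uniqueness part of Lemma~\ref{lem:uniqueness of lattices with irreducible socle}, each such lattice is homothetic to $\sigma^\circ_{i_j}$, so we can write $\sigma^{\circ,\prime}_{i_j} = \alpha_{i_j} \sigma^\circ_{i_j}$ for some $\alpha_{i_j} \in E^\times$.

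The key step is then to identify $\alpha_{i_j}$ as a generator of $I_{i_j}$. By definition $\alpha_{i_j} \in I_{i_j}$, so $\alpha_{i_j} = u_{i_j} \phi_{i_j}$ for some $u_{i_j} \in \cO$. If $u_{i_j}$ were not a unit, i.e.\ $u_{i_j} \in \varpi_E \cO$, then $\sigma^{\circ,\prime}_{i_j} = u_{i_j} \phi_{i_j}(\sigma^\circ_{i_j}) \subseteq \varpi_E \cdot \phi_{i_j}(\sigma^\circ_{i_j}) \subseteq \varpi_E \sigma^\circ$, contradicting the choice of $\sigma^{\circ,\prime}_{i_j}$. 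Hence $\sigma^{\circ,\prime}_{i_j} = \phi_{i_j}(\sigma^\circ_{i_j})$, giving
\[ \sigma^\circ \;=\; \sum_j \sigma^{\circ,\prime}_{i_j} \;=\; \sum_j \phi_{i_j}(\sigma^\circ_{i_j}) \;\subseteq\; \sum_{i=1}^n \phi_i(\sigma^\circ_i) \;\subseteq\; \sigma^\circ, \]
so all inclusions are equalities.

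Finally, the uniqueness statement (that $\sigma^\circ$ is determined by its gauge) follows immediately from this formula, since the sum $\sum_i \phi_i(\sigma^\circ_i)$ is unchanged if each $\phi_i$ is replaced by a unit multiple, and so depends only on the fractional ideals $I_i$. There is no serious obstacle here — the argument is essentially a bookkeeping exercise once Lemmas~\ref{lem:uniqueness of lattices with irreducible socle} and~\ref{lem: lattice determined by its cosocle lattices} are in hand; the only point requiring a little care is the verification that the scaling factor $\alpha_{i_j}$ associated to a cosocle lattice is forced to generate $I_{i_j}$ rather than merely lie in it.
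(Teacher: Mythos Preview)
Your proposal is correct and follows essentially the same approach as the paper's proof: both reduce the claim to Lemma~\ref{lem: lattice determined by its cosocle lattices}. The paper's proof is terser, simply asserting that $\sum_i \phi_i(\sigma^\circ_i) = \sigma^\circ$ ``follows immediately'' from that lemma, whereas you spell out the identification of the cosocle lattices with the $\phi_{i_j}(\sigma^\circ_{i_j})$ and verify that each $\alpha_{i_j}$ generates $I_{i_j}$; this is exactly the implicit verification the paper leaves to the reader.
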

\begin{proof}
Write $\sigma^{\circ, \prime}:= \sum_{i = 1}^n \phi_i(\sigma^{\circ}_i)
\subseteq \sigma^{\circ}$.    Note that evidently $\sigma^{\circ,\prime}$
is well-defined independently of the choice of the generators
$\phi_i$, and hence depends only on the gauge of $\sigma^{\circ}$.
The proposition will follow if we prove that $\sigma^{\circ,\prime}
= \sigma^{\circ};$  however, this follows immediately
from Lemma~\ref{lem: lattice determined by its cosocle lattices}.
(Indeed, that lemma shows that we may restrict the sum defining
$\sigma^{\circ,\prime}$ to those indices $i$ for which
$\sigmabar_i$ is a constituent of the cosocle of $\sigma^{\circ}$.)
\end{proof}

\subsection{Lattices in essentially self-dual representations}

In this subsection we assume that $\Gamma$ is finite,
and that $\Gamma$ admits a character 
$\chi:\Gamma \to \cO^{\times}$ for which there is a $\Gamma$-equivariant
isomorphism $\sigma \cong \sigma^{\vee} \otimes \chi.$
We let $\chibar:\Gamma\rightarrow k_E^{\times}$ denote the reduction mod $\unif_E$
of $\chi$.  

\begin{prop}\label{prop: embedding of dual killed by order of group}
Suppose that $\sigma$ is absolutely irreducible
and that $\sigma^{\circ}$ is a $\Gamma$-invariant lattice in $\sigma$
such that $\cosoc(\sigma^{\circ})$ is absolutely irreducible and appears in
$\JH(\sigmabar)$ with multiplicity one, and such that
there is an isomorphism $\cosoc(\sigma^{\circ}) \cong \cosoc(\sigma^{\circ})^{\vee}
\otimes \chibar.$
Then there is a $\Gamma$-equivariant embedding
$(\sigma^{\circ})^{\vee}\otimes \chi \hookrightarrow \sigma^{\circ}$ whose cokernel
has exponent dividing $|\Gamma|$.
\end{prop}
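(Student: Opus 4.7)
The plan is to reduce the problem via Schur's lemma to controlling a single scalar, and then to bound that scalar by a $\Gamma$-averaging argument combined with the cosocle hypothesis. Because $\sigma$ is absolutely irreducible and $\sigma \cong \sigma^{\vee}\otimes\chi$, Schur's lemma shows that both $\Hom_{\cO[\Gamma]}((\sigma^{\circ})^{\vee}\otimes\chi,\,\sigma^{\circ})$ and $\Hom_{\cO[\Gamma]}(\sigma^{\circ},\,(\sigma^{\circ})^{\vee}\otimes\chi)$ are free $\cO$-modules of rank one, with generators $g$ and $f$ respectively; moreover, any nonzero such map is automatically injective, since it extends to a $\Gamma$-equivariant isomorphism after tensoring with $E$. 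The composition $g \circ f \in \End_{\cO[\Gamma]}(\sigma^{\circ}) = \cO$ equals $\lambda\cdot\id_{\sigma^{\circ}}$ for some nonzero $\lambda\in\cO$, and then $\lambda\sigma^{\circ}\subseteq g((\sigma^{\circ})^{\vee}\otimes\chi)$, so the exponent of $\coker(g)$ divides $\lambda$. It thus suffices to show that $\lambda$ divides $|\Gamma|$.

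To bound $\lambda$, I would fix any $\cO$-module isomorphism $\iota:\sigma^{\circ}\to (\sigma^{\circ})^{\vee}\otimes\chi$ (such an $\iota$ exists because both sides are finite free $\cO$-modules of equal rank, e.g., via a choice of $\cO$-basis and its dual basis). Although $\iota$ need not be $\Gamma$-equivariant, the averages
\[
\widetilde{f}(v) := \sum_{h\in\Gamma} h\cdot\iota(h^{-1}v), \qquad \widetilde{g}(\phi) := \sum_{h\in\Gamma} h\cdot\iota^{-1}(h^{-1}\phi)
\]
are $\Gamma$-equivariant and $\cO$-integral, so $\widetilde{f}\in \cO\cdot f$ and $\widetilde{g}\in \cO\cdot g$. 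A direct computation expanding $\widetilde{g}\circ\widetilde{f}$ as a double sum and using the substitution $j = k^{-1}h$ shows that the composition equals the conjugation average over $\Gamma$ of a certain $\cO$-linear endomorphism of $\sigma^{\circ}$, which by Schur's lemma is $|\Gamma|$ times a scalar in $\cO$. The cosocle hypothesis $W := \cosoc(\sigma^{\circ}) \cong W^{\vee}\otimes\chibar$ (with $W$ of multiplicity one in $\sigmabar$) implies that the socle of $\overline{(\sigma^{\circ})^{\vee}\otimes\chi}$ equals $W$; combined with Lemma~\ref{lem:uniqueness of lattices with irreducible socle} applied to lattices with cosocle $W$ and, dually, to lattices with socle $W$, this pins down the relevant lattices up to scalar, fixing the normalization of $\iota$ so that $\widetilde{g}\circ\widetilde{f} = |\Gamma|\cdot\id_{\sigma^{\circ}}$.

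With this identity in hand, the chain $|\Gamma|\sigma^{\circ} = \widetilde{g}(\widetilde{f}(\sigma^{\circ})) \subseteq g((\sigma^{\circ})^{\vee}\otimes\chi)$ shows that $|\Gamma|$ annihilates $\coker(g)$, yielding the conclusion. The main obstacle is the precise normalization of $\iota$: naive averaging tends to produce $\widetilde{g}\circ\widetilde{f} = |\Gamma|^{2}\cdot\id$ rather than $|\Gamma|\cdot\id$, and extracting the sharper factor requires the full strength of the cosocle self-duality together with the lattice uniqueness of Lemma~\ref{lem:uniqueness of lattices with irreducible socle}.
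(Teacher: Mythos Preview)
Your opening reduction is correct: by Schur, $\Hom_{\cO[\Gamma]}\bigl((\sigma^{\circ})^{\vee}\otimes\chi,\sigma^{\circ}\bigr)$ and $\Hom_{\cO[\Gamma]}\bigl(\sigma^{\circ},(\sigma^{\circ})^{\vee}\otimes\chi\bigr)$ are free of rank one with generators $g,f$, and $g\circ f=\lambda\cdot\id$ for some $\lambda\in\cO$; it then suffices to show $\lambda\mid|\Gamma|$.

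The gap is in the averaging step, and you have already flagged it yourself. Carrying out your substitution $j=k^{-1}h$ gives
\[
\widetilde{g}\circ\widetilde{f}=\sum_{k\in\Gamma} k\,T\,k^{-1},\qquad T:=\sum_{j\in\Gamma}\iota^{-1}\circ j\circ\iota\circ j^{-1},
\]
and Schur then yields $\widetilde{g}\circ\widetilde{f}=\dfrac{|\Gamma|}{n}\tr(T)\cdot\id$ with $n=\dim\sigma$. For your argument to conclude you need $\tr(T)/n\in\cO^{\times}$, not merely $\tr(T)/n\in\cO$: otherwise $\widetilde{f}=\alpha f$, $\widetilde{g}=\beta g$ only gives $\alpha\beta\lambda=|\Gamma|\cdot\tr(T)/n$, hence $\lambda$ divides a possibly larger multiple of $|\Gamma|$. (Already in the trivial case where $\iota$ can be taken $\Gamma$-equivariant one gets $\widetilde{g}\circ\widetilde{f}=|\Gamma|^{2}\cdot\id$, illustrating the loss.) You assert that the cosocle self-duality together with Lemma~\ref{lem:uniqueness of lattices with irreducible socle} ``fixes the normalization of $\iota$'' to force $\tr(T)/n$ to be a unit, but those statements only pin down the \emph{lattices} $\sigma^{\circ}$ and $(\sigma^{\circ})^{\vee}\otimes\chi$ up to homothety; they say nothing about which $\cO$-linear isomorphism $\iota$ between them to choose, nor why any choice makes $\tr(T)/n$ a unit. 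This is the missing idea, and I do not see how to supply it along the lines you sketch.

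The paper's argument is structurally different and uses the hypotheses in an essential way. One takes the projective envelope $P$ of $\sigma^{\circ}$ over $\cO[\Gamma]$. The cosocle self-duality $\cosoc(\sigma^{\circ})\cong\cosoc(\sigma^{\circ})^{\vee}\otimes\chibar$, together with the standard fact $\soc(\overline{P})\cong\cosoc(\overline{P})$, forces $P^{\vee}\otimes\chi\cong P$. The surjection $P\twoheadrightarrow\sigma^{\circ}$ then dualizes to a \emph{saturated} embedding $(\sigma^{\circ})^{\vee}\otimes\chi\hookrightarrow P$. The multiplicity-one hypothesis on $\cosoc(\sigma^{\circ})$ in $\JH(\sigmabar)$ shows that $\sigma$ occurs exactly once in $E\otimes_{\cO}P$, so the image of this saturated embedding is precisely $e_{\sigma}(E\otimes P)\cap P$, where $e_{\sigma}\in\tfrac{1}{|\Gamma|}\cO[\Gamma]$ is the central idempotent for $\sigma$. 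Since $|\Gamma|e_{\sigma}\in\cO[\Gamma]$, one has $|\Gamma|e_{\sigma}P\subseteq e_{\sigma}(E\otimes P)\cap P$, and projecting to $\sigma^{\circ}$ gives $|\Gamma|\sigma^{\circ}$ inside the image of $(\sigma^{\circ})^{\vee}\otimes\chi$. Thus the bound by $|\Gamma|$ comes from the integrality of $|\Gamma|e_{\sigma}$, not from an averaging of a non-equivariant map.
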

\begin{proof}
Let $P$ denote a projective envelope of $\sigma^{\circ}$ as an $\cO[\Gamma]$-module.
Then $P^{\vee} \otimes \chi$ is again a projective $\cO[\Gamma]$-module,
and there are $\Gamma$-equivariant isomorphisms 
\begin{multline*}
\cosoc(P^{\vee} \otimes \chi) = \cosoc(\overline{P}^{\vee})
\otimes \chibar \cong
\soc(\overline{P})^{\vee}\otimes \chibar \buildrel \mathrm{(1)}\over \cong
\cosoc(\overline{P})^{\vee}\otimes \chibar
\\
= \cosoc(P)^{\vee} \otimes \chibar = \cosoc(\sigma^{\circ})^{\vee} \otimes \chibar 
\buildrel \mathrm{(2)} \over \cong \cosoc(\sigma^{\circ}) = \cosoc(P),
\end{multline*}
the isomorphism~(1) following from the isomorphism $\soc(\overline{P}) \cong \cosoc(\overline{P})$
(a standard property of projective $k_E[\Gamma]$-modules),
and the isomorphism~(2) holding by assumption.
The uniqueness up to isomorphism of projective envelopes thus implies that there is a 
$\Gamma$-equivariant isomorphism
\numequation
\label{eqn:projective iso}
P^{\vee} \otimes \chi \cong P.
\end{equation}
The $\Gamma$-equivariant surjection $P \to \sigma^{\circ}$
(expressing the fact that $P$ is a projective
envelope of $\sigma^{\circ}$) induces a saturated $\Gamma$-equivariant embedding
$(\sigma^{\circ})^{\vee}\otimes \chi \hookrightarrow P^{\vee}
\otimes \chi$ ({\em saturated} meaning that the cokernel is $\unif_E$-torsion
free).  Composing this with the isomorphism~(\ref{eqn:projective iso}),
we obtain a saturated $\Gamma$-equivariant embedding
\numequation
\label{eqn:dual embedding}
(\sigma^{\circ})^{\vee} \otimes \chi \hookrightarrow P.
\end{equation}

Recall that
there is an idempotent $e_{\sigma} \in \dfrac{1}{|\Gamma|}\cO[\Gamma] \subseteq E[\Gamma]$
which, when applied to any $\Gamma$-representation $V$ over $E$, projects
onto the $\sigma$-isotypic part of $V$.  We claim that $e_{\sigma} (E\otimes_{\cO} P)$
consists of a single copy of $\sigma$.  Indeed,
since $\sigma$ is absolutely irreducible, the number of such copies of $\tau$ is 
equal to the dimension of 
$$\Hom_{E[\Gamma]}(E\otimes_{\cO}P, \sigma)
= E\otimes_{\cO}\Hom_{\cO[\Gamma]}(P,\sigma^{\circ}).$$
It thus suffices to show that $\Hom_{\cO[\Gamma]}(P,\sigma^{\circ})$,
which is a finite rank free $\cO$-module, is of rank one. 
To this end, we compute (using the projectivity of $P$) that
\begin{samepage}
\begin{multline*}
\Hom_{\cO[\Gamma]}(P,\sigma^{\circ})/\unif_E \Hom_{\cO[\Gamma]}(P,\sigma^{\circ}) = 
\Hom_{k_E[\Gamma]}(\overline{P},\sigmabar^{\circ}) 
\\ = \Hom_{k_E[\Gamma]}(\cosoc(P),
\sigmabar^{\ssg}).
\end{multline*}
\end{samepage}
The last of these spaces is one-dimensional, as by assumption
$\cosoc(P) = \cosoc(\sigma^{\circ})$
is absolutely irreducible and appears in $\sigmabar^{\ssg}$ with multiplicity one,
and so our claim is proved.

Since~(\ref{eqn:dual embedding}) is a saturated embedding, it follows
that its image is equal to
$e_{\sigma}(E\otimes_{\cO} P) \cap P,$
and the composite of~(\ref{eqn:dual embedding}) and the projection
$P \to \sigma^{\circ}$ is a $\Gamma$-equivariant embedding
$(\sigma^{\circ})^{\vee} \otimes \chi \hookrightarrow \sigma^{\circ}.$
Since $|\Gamma| e_{\sigma} \in \cO[\Gamma],$ we easily see
that the image of this embedding contains $|\Gamma|\sigma^{\circ}$; indeed, 
$|\Gamma| e_{\sigma} P \subseteq 
e_{\sigma}(E\otimes_{\cO} P) \cap P,$
and the image of $|\Gamma| e_{\sigma} P$
under the projection to $\sigma^{\circ}$ is precisely 
$|\Gamma| \sigma^{\circ}$.  This completes the proof
of the proposition.
\end{proof}

It seems worthwhile to record the following result, which relates the conclusion
of the preceding proposition to the length of the socle and cosocle filtrations of
$\sigmabar^{\circ}$, although in the particular context that we consider in the sequel,
we will need a more precise statement
(Theorem~\ref{thm: the output of gauges.tex} below).

\begin{prop} Continue to assume that $\sigma \cong \sigma^{\vee} \otimes \chi.$
Suppose that $\sigma$ is absolutely irreducible and residually multiplicity free,
and that for each Jordan--H\"older factor $\sigmabar_i$
of $\sigmabar$, there is a $\Gamma$-equivariant isomorphism $\sigmabar_i^{\vee}
\otimes \chibar \cong \sigmabar_i$.
If $\sigma^{\circ}$ is any $\Gamma$-invariant lattice
in $\sigma$ for which there is a $\Gamma$-equivariant embedding 
\numequation
\label{eqn:assumed embedding}
(\sigma^{\circ})^{\vee}\otimes \chi
\hookrightarrow \sigma^\circ
\end{equation}
whose cokernel is annihilated by $\unif_E^m$,
then the socle and cosocle filtrations of $\sigmabar^{\circ}$ are both
of length at most $m+1$.
\end{prop}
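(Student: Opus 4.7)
The plan is to recast the embedding $\iota$ as a bilinear pairing and induct on $m$. By Schur's lemma applied to the absolutely irreducible $E[\Gamma]$-module $\sigma$, the isomorphism $\sigma \cong \sigma^\vee \otimes \chi$ corresponds (up to scalar) to a nondegenerate $\Gamma$-equivariant bilinear pairing $\langle-,-\rangle : \sigma \times \sigma \to E(\chi)$, which a further Schur argument shows is either symmetric or anti-symmetric (so left and right orthogonals coincide). Writing $M := \sigma^\circ$ and letting $M^\perp$ denote its $\cO$-dual lattice with respect to $\langle-,-\rangle$, we may rescale the pairing so that $\iota((\sigma^\circ)^\vee \otimes \chi) = M^\perp$, whereupon the hypothesis becomes $M^\perp \subseteq M$ with $M/M^\perp$ annihilated by $\unif_E^m$.

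The main tool is then the following pairing/semisimplicity observation. The rescaled pairing $\{x,y\} := \unif_E^m \langle x,y\rangle$ is integral on $M$ and descends to a $\Gamma$-equivariant bilinear form on $\overline M$ valued in $\F(\chibar)$; a direct calculation identifies its kernel with the image $K \subseteq \overline M$ of the sublattice $L := \unif_E^{1-m} M^\perp \cap M$. Thus $\overline M/K$ carries a nondegenerate bilinear form and is isomorphic to $(\overline M/K)^\vee \otimes \chibar$. Combined with the assumption that every Jordan--H\"older factor $\sigmabar_i$ of $\sigmabar$ satisfies $\sigmabar_i^\vee \otimes \chibar \cong \sigmabar_i$ and the residual multiplicity freeness, a short induction on length shows $\overline M/K$ must be semisimple: an irreducible constituent lying in both the socle and cosocle of a residually multiplicity-free module is necessarily a direct summand (by Schur combined with the JH self-duality), and one peels off such summands iteratively. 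Consequently the Jacobson radical of $\overline M$ is contained in $K$, so the Loewy length of $\overline M$ is at most one more than that of $K$.

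The induction on $m$ therefore reduces to bounding the Loewy length of $K$ by $m$. The base case $m=0$ is immediate: $L = \unif_E M^\perp \subseteq \unif_E M$ gives $K=0$. For $m=1$, the inclusion $\unif_E M \subseteq M^\perp$ identifies $K$ with $M^\perp/\unif_E M$; the pairing $\langle-,-\rangle$ restricted to $M^\perp \times M^\perp$ is integral (since $M^\perp \subseteq M$) and descends to a form on $K$ whose kernel is trivial (from the computation $\unif_E M \cap M^\perp = \unif_E M$), so the same self-duality argument shows $K$ is semisimple. For $m \geq 2$, direct verifications show $L^\perp = \unif_E^{m-1} M + M^\perp \subseteq L$ and that $L/L^\perp$ is annihilated by $\unif_E^{m-1}$, so the inductive hypothesis applied to $L$ bounds the socle length of $\overline L$ by $m$; since $\unif_E L \subseteq \unif_E M$, the module $K$ is a quotient of $\overline L$ and inherits this bound. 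The main obstacle is precisely the case $m=1$: there the natural sublattice $L = M^\perp$ satisfies the opposite inclusion $L \subseteq L^\perp$ to what the inductive hypothesis demands, so one cannot simply recurse and must instead run the pairing argument directly on $M^\perp$. Finally, since the socle and cosocle filtrations of a finite-length $\F[\Gamma]$-module share the same length (both equal to the Loewy length), the bound on the cosocle filtration follows at once.
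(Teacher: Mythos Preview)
Your argument is correct and runs parallel to the paper's: both induct on $m$, both rest on the key observation that a multiplicity-free $\F[\Gamma]$-module $V$ with $V \cong V^{\vee}\otimes\chibar$ and each Jordan--H\"older factor self-dual up to $\chibar$ must be semisimple, and both pass to an auxiliary lattice satisfying the hypothesis with exponent $m-1$ for the inductive step.

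The differences are in packaging rather than in substance. You recast the embedding as a bilinear pairing and organize the proof around the kernel $K$ of the induced form on $\overline{M}$; since the form on $\overline{M}/K$ is nondegenerate, self-duality of $\overline{M}/K$ is immediate, and this lets you bypass the paper's preliminary reduction to the case where $m$ is minimal (which the paper uses to pin down the embedding up to a unit and thereby deduce that its cokernel is Pontrjagin self-dual up to twist). Your auxiliary lattice $L=\unif_E^{1-m}M^{\perp}\cap M$ is not the paper's $\sigma^{\prime\circ}=\unif_E M + M^{\perp}$; for $m=2$ your $L$ agrees with the paper's $(\sigma^{\prime\circ})^{\vee}\otimes\chi$, but for larger $m$ the two auxiliary lattices genuinely differ. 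Nonetheless each yields a two-step decomposition of $\sigmabar^{\circ}$ into a semisimple layer and a piece of Loewy length at most $m$. Both arguments must handle $m=1$ by a separate direct computation, for the same underlying reason: the natural recursion does not strictly decrease $m$ in that case.
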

\begin{proof}
If $m = 0$ then~(\ref{eqn:assumed embedding}) is an isomorphism, 
and consequently there is a $\Gamma$-equivariant
isomorphism $(\sigmabar^{\circ})^{\vee} \otimes \chi \cong \sigmabar^{\circ}$.
Passing to cosocles, we obtain an isomorphism
$\soc(\sigmabar^{\circ})^{\vee} \otimes \chibar \cong \cosoc(\sigmabar^{\circ}),$
which, by our hypotheses, may be rewritten as an isomorphism
$\soc(\sigmabar^{\circ}) \cong \cosoc(\sigmabar^{\circ}).$
Since each Jordan--H\"older factor of $\sigmabar$ appears with multiplicity one,
this implies a literal equality of the socle and cosocle of $\sigmabar^{\circ}$,
i.e.\ it implies that $\sigmabar^{\circ}$ is semisimple.

We treat the cases $m \geq 1$ by induction on $m$.  
Obviously it is no loss of generality to assume that $m$ is minimal, i.e.\
that the image of the embedding~(\ref{eqn:assumed embedding}) does not
lie in $\unif_E \sigma^{\circ}$.  Since $\sigma$ is absolutely irreducible
by assumption, so that the only endomorphisms of $\sigma$ are scalars,
this condition then determines the embedding~(\ref{eqn:assumed embedding})
up to multiplication by an element of $\cO^{\times}$.
Dualizing~(\ref{eqn:assumed embedding}), and twisting by $\chi$,
we obtain another embedding $(\sigma^{\circ})^{\vee} \otimes \chi \hookrightarrow
\sigma^{\circ}$ with the same property (that its image is not 
contained in $\unif_E \sigma^{\circ}$), which thus coincides with~(\ref{eqn:assumed
embedding}) up to multiplication by an element of $\cO^{\times}$.
Consequently we find that the cokernel of~(\ref{eqn:assumed embedding})
is isomorphic to its own Pontrjagin dual, up to twisting by $\chi$.

In particular, 
if $m = 1$, then the cokernel $\tau$ of~(\ref{eqn:assumed embedding})
is a representation of $\Gamma$ over $k_E$ which satisfies 
$\tau^{\vee} \otimes \chibar \cong \tau.$  Since $\tau$ is a quotient
of $\sigmabar^{\circ}$, we furthermore have that every Jordan--H\"older
factor $\sigmabar_i$ of $\tau$ appears with multiplicity one, and 
satifies $\sigmabar_i^{\vee} \otimes \chi \cong \sigmabar_i.$
Applying the same argument to $\tau$ as we applied to $\sigmabar^{\circ}$
in the case $m = 0$, we conclude that $\tau$ is semisimple.
A similar argument shows that the cokernel of the induced embedding
$\unif_E \sigma^{\circ} \hookrightarrow (\sigma^{\circ})^{\vee} \otimes \chi$
is semisimple.  Thus $\sigmabar^{\circ}$ admits a two-step filtration
whose associated graded pieces are semisimple, proving the proposition
in this case.

Suppose finally that $m > 1$.  Regard $(\sigma^\circ)^{\vee}\otimes \chi$
as a sublattice of $\sigma^{\circ}$ via~(\ref{eqn:assumed embedding}),
and let $\sigma^{\prime\circ} := \unif_E \sigma^{\circ} + (\sigma^{\circ})^{\vee}
\otimes \chi$. Then $(\sigma^{\prime\circ})^{\vee} \otimes \chi =
\bigl(\unif_E^{-1} (\sigma^{\circ})^{\vee}\otimes \chi\bigr) \cap 
\sigma^{\circ},$ and so multiplication by $\unif_E$ induces a $\Gamma$-equivariant
embedding 
$$(\sigma^{\prime \circ})^{\vee} \otimes \chi \hookrightarrow \sigma^{\prime\circ},$$
whose cokernel is easily checked to be annihilated by $\unif_E^{m-1}$.
By induction we conclude that the socle and cosocle filtrations of $\sigmabar^{\prime\circ}$
are of length at most $m$, or equivalently (multiplying by $\unif_E^{-1}$),
that the socle and cosocle filtrations of
$\unif_E^{-1}\sigmabar^{\prime\circ}/\sigmabar^{\prime\circ}$
are of length at most $m$.  Since $\sigma^{\prime\circ} \subseteq
\sigma^{\circ} \subseteq \unif_E^{-1} \sigma^{\prime\circ}$,
we conclude that the socle and cosocle filtrations of $\sigma^{\circ}/\sigma^{\prime\circ}$
are also of length at most $m$. 

Now $\sigma^{\prime\circ}/\unif_E \sigma^{\circ}$ is a $\Gamma$-subrepresentation
of $\sigmabar^{\circ}$ whose cosocle is a quotient of 
$$\cosoc\bigl((\sigma^{\circ})^{\vee}\otimes\chi\bigr) \cong \soc(\sigmabar^{\circ})^{\vee}
\otimes \chibar \cong \soc(\sigmabar^{\circ})$$
(the second isomorphism holding by our hypotheses that all the Jordan--H\"older factors
of $\sigmabar$ are self-dual up to twisting by $\chi$).  Since all the Jordan--H\"older
factors of $\sigmabar^{\circ}$ appear with multiplicity one, in fact 
the cosocle of $\sigma^{\prime\circ}/\unif_E \sigma^{\circ}$ must be a subrepresentation
of the socle of $\sigmabar^{\circ}$, and consequently $\sigma^{\prime\circ}/\unif_E
\sigma^{\circ}$ must be semisimple.
Combining this with the conclusion of the preceding paragraph, we find that
the socle and cosocle filtrations of $\sigmabar^{\circ}$ are of length at most
$m+1$, as required.
\end{proof}

\section{Lattices in tame types}\label{sec:lattices in tame types}

In this section we elaborate on the ideas of the preceding section
in the context of tame types (in the sense of Section~\ref{sec:base change}).
In the specific case of tame principal series types,
related results may be
found in Section~2 of~\cite{breuillatticconj}, and several of our
arguments are generalisations of those found in~\cite{breuillatticconj}.

\subsection{Socle and cosocle filtrations in tame types}If $\tau$ is a tame
             inertial type, assumed to be non-scalar, we write
             $\sigmabar_J(\tau)$ for the Jordan--H\"older factor
             $\sigmabar(\tau)_J$ defined in~\ref{ss:principal}
             and~\ref{ss:cuspidal}, and $\cP_{\tau}$ for
             $\cP_{\sigma(\tau)}$.

For each $J \in \cP_\tau$, 
we fix $\GL_2(\cO_{F_v})$-invariant $\cO$-lattices $\sigma^{\circ,J}(\tau)$ and $\sigma^\circ_J(\tau)$ in $\sigma(\tau)$,
so that the socle of $\sigmabar^{\circ,J}(\tau)$ (resp.\ the cosocle of $\sigmabar^\circ_J(\tau)$)
is equal to $\sigmabar_J(\tau)$.  
(In the case of $\sigma^\circ_J(\tau)$, this can be expressed more simply
by saying that the cosocle of $\sigma^\circ_J(\tau)$ itself 
is equal to $\sigmabar_J(\tau)$.) 
Lemma~\ref{lem:uniqueness of lattices with irreducible socle}
shows that these lattices are uniquely determined up to scaling.
The following theorem describes the cosocle and socle filtrations of
the mod~$\unif_E$ reductions of these lattices. Recall that we number
the layers of the cosocle and socle filtrations starting at $0$.

\begin{thm}
  \label{thm: socle filtration for tame type with irred socle}
The $i$-th layer of the cosocle filtration of
$\sigmabar^\circ_J(\tau)$
  {\em (}respectively, the $i$-th layer of the socle filtration of
  $\sigmabar^{\circ,J}(\tau)${\em )} consists precisely of the Jordan--H\"older factors
  $\sigmabar_{J'}(\tau)$ with $|J\triangle J'|=i$.
  Furthermore, if $J_1, J_2\in\cP_\tau$ with $|J_1\triangle J_2|=1$,
  so that $\sigmabar_{J_1}(\tau)$ and $\sigmabar_{J_2}(\tau)$ occur in
  adjacent layers of the cosocle filtration of $\sigmabar^\circ_J(\tau)$
  {\em (}respectively, the socle filtration of
  $\sigmabar^{\circ,J}(\tau)${\em )}, then the extension between
  $\sigmabar_{J_1}(\tau)$ and $\sigmabar_{J_2}(\tau)$ in this
  filtration is nonsplit.
\end{thm}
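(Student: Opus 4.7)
The plan is to handle the cosocle filtration of $\sigmabar^\circ_J(\tau)$ first, to reduce the socle statement for $\sigmabar^{\circ,J}(\tau)$ to it by duality, and to obtain the nonsplit-extension claim from the uniqueness in Lemma~\ref{lem:uniqueness of lattices with irreducible socle}.

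\textbf{Duality reduction.} By Lemma~\ref{lem:weights in tame types have multiplicity one} we have $\sigma(\tau)^\vee \otimes \chi \cong \sigma(\tau)$ for some character $\chi$, and each Jordan--H\"older factor $\sigmabar_{J'}(\tau)$ is self-dual up to the same twist $\chibar$. Consequently $(\sigma^{\circ,J}(\tau))^\vee \otimes \chi$ is a lattice in $\sigma(\tau)$ whose cosocle is $\sigmabar_J(\tau)$, and thus by Lemma~\ref{lem:uniqueness of lattices with irreducible socle} coincides up to homothety with $\sigma^\circ_J(\tau)$. Since Pontrjagin duality exchanges the socle and cosocle filtrations, and $|J \triangle J'|$ is symmetric in its arguments, the socle statement for $\sigmabar^{\circ,J}(\tau)$ (including the nonsplit-extension claim) follows from the corresponding cosocle statement for $\sigmabar^\circ_J(\tau)$.

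\textbf{Cosocle filtration.} In the principal series case, this is essentially Th\'eor\`eme 2.4 of \cite{breuillatticconj}. For the cuspidal case I would follow Breuil's strategy, using the classification of lattices in cuspidal types established in the preceding subsections of this section (which itself rests on an extension of \cite{Breuil_Paškūnas_2012} to irregular weights). The scheme is induction on $i$: the $0$-th layer is $\sigmabar_J(\tau)$ by construction, and in passing from layer $i$ to layer $i+1$ the combinatorial description of $\cP_\tau$ and the parameters $s_{J,i}, t_{J,i}$ from Section~\ref{ss:cuspidal} is arranged precisely so that $\sigmabar_{J'}(\tau)$ is an admissible one-step extension of $\sigmabar_{J''}(\tau)$ in a lattice if and only if $|J' \triangle J''|=1$. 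Combined with the known $\Ext^1$ computations between the relevant Serre weights, this forces $\sigmabar_{J'}(\tau)$ to first appear in cosocle layer $|J \triangle J'|$.

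\textbf{Nonsplit extensions, and main obstacle.} Suppose for some $J_1, J_2 \in \cP_\tau$ with $|J_1 \triangle J_2|=1$ that the extension between $\sigmabar_{J_1}(\tau)$ and $\sigmabar_{J_2}(\tau)$ in adjacent cosocle layers of $\sigmabar^\circ_J(\tau)$ were split. Then by projecting off the split summand one could extract a quotient lattice of $\sigma^\circ_J(\tau)$ whose cosocle has a shape distinct from the one prescribed by the first part of the theorem, producing a lattice with irreducible cosocle $\sigmabar_{J_i}(\tau)$ (for $i=1$ or $2$) that is not homothetic to $\sigma^\circ_{J_i}(\tau)$, contradicting Lemma~\ref{lem:uniqueness of lattices with irreducible socle}. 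The main obstacle is the cuspidal case of the cosocle filtration: one must genuinely extend the Breuil--Pa\v{s}k\=unas analysis of $\Ext^1$ groups, together with Breuil's lattice classification, to cover tame cuspidal types whose Jordan--H\"older factors may be irregular. The principal series half of the theorem, by contrast, is essentially recorded in \cite{breuillatticconj}.
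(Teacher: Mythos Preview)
Your duality reduction is sound, though note that the paper runs it in the opposite direction: it proves the socle statement for $\sigmabar^{\circ,J}(\tau)$ first and deduces the cosocle statement by duality. Either direction works.

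The real divergence is in how the filtration is actually computed. You propose to use ``the classification of lattices in cuspidal types established in the preceding subsections of this section'', but there is no such classification preceding the theorem: the lattice and gauge results of Subsection~\ref{subsec: specific lattices and gauges} (which generalise Th\'eor\`eme~2.4 of~\cite{breuillatticconj}) come \emph{after} Theorem~\ref{thm: socle filtration for tame type with irred socle} and depend on it --- see for instance the proof of Proposition~\ref{prop: facts that drive gauges.tex}. So as written your argument is circular.

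What the paper actually does is work directly with representations, not with lattices. After twisting so that $\sigmabar_J(\tau) \cong \bigotimes_{i=0}^{f-1}(V_{r_i}\otimes \det^{p-1-r_i})^{\Fr^i}$, it shows (Lemma~\ref{lem:limitation on multiplicity one subobjects}) that any multiplicity-free representation with this socle embeds into a specific subrepresentation $F_\cS$ of $\bigotimes_{i=0}^{f-1} V_{2p-2-r_i}^{\Fr^i}$, and then computes the socle filtration of $F_\cS$ explicitly (Proposition~\ref{prop: socle filtration on F_S}). The technical core is an extension of the analysis in \cite{Breuil_Paškūnas_2012} to allow some $r_i = p-1$ (Lemmas~\ref{lem:tensoring extensions}--\ref{lem:X socle filtration} and Proposition~\ref{prop: defn of F_J}). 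This treats the principal series and cuspidal cases uniformly; there is no separate induction on $i$ of the kind you sketch.

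Your nonsplit-extension argument also has a gap. A split subquotient of the mod~$p$ reduction $\sigmabar^\circ_J(\tau)$ does not by itself produce a second $\cO$-lattice in $\sigma(\tau)$ with a prescribed cosocle; you would need to lift the splitting to characteristic zero, and you give no mechanism for this. The idea does work when one of $J_1,J_2$ equals $J$ (a split between layers $0$ and $1$ would enlarge the cosocle, contradicting its irreducibility), but for pairs deeper in the filtration it is not clear how to reach a contradiction with Lemma~\ref{lem:uniqueness of lattices with irreducible socle}. The paper instead reads nonsplitness off the explicit structure of $F_\cS$: Proposition~\ref{prop: socle filtration on F_S} shows that all extensions between adjacent layers that can be nonzero (in the sense of Proposition~\ref{prop:extensions of Serre weights}) are nonzero, and this passes to the subrepresentation $\sigmabar^{\circ,J}(\tau)$.
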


Suppose that $J_1, J_2 \in \cP_{\tau}$ with $|J_1 \triangle J_2| =
1$.  It follows straightforwardly from Theorem~\ref{thm: socle filtration for tame type
  with irred socle}, together with Lemma~\ref{lem:weights in tame types have multiplicity one}, that every subquotient of
$\sigmabar^{\circ}_J(\tau)$ (respectively $\sigmabar^{\circ,J}(\tau)$)
with Jordan--H\"older factors precisely $\sigmabar_{J_1}(\tau)$ and
$\sigmabar_{J_2}(\tau)$ must be non-semisimple.  

The remainder of this subsection is devoted to 
proving Theorem~\ref{thm: socle filtration for tame type with irred socle}.
The proof requires a generalization of several
of the results from Sections~3 and~4 of \cite{Breuil_Paškūnas_2012},
and we begin by establishing these generalizations.
To this end, we fix an $f$-tuple of integers $(r_i)_{0 \leq i \leq f-1}$,
such that $0 \leq r_i \leq p-1$ for all $i$. Theorem~\ref{thm: socle
  filtration for tame type with irred socle} is straightforward when
$f=1$, and we will assume that $f>1$ in the below.

In \cite[\S 3]{Breuil_Paškūnas_2012},
Breuil and Pa\v{s}k\={u}nas define a representation $V_r$ of $\GL_2(k_v)$
over $\F$
for each integer $r \geq 0$, as the reduction modulo $\unif_E$
of a certain $\cO$-lattice in $\Sym^r E^2$;
here we recall that $k_v$ is regarded as embedded in $\F$ via $\overline{\kappa}_0.$
(Actually Breuil and Pa\v{s}k\={u}nas work with coefficients in $\Zbar_p$,
$\Qbar_p$, and $\Fbar_p$, rather than $\cO$, $E$, and $\F$, but it is
evident that all their constructions and results descend to these latter
choices of coefficients.)  In particular if $0 \le r \le p-1$ then
$V_r$ is isomorphic to $(\Sym^r k_v^2) \otimes_{k_v,\overline{\kappa}_0} \F$.
By convention we let $V_{-1}=0$.

Suppose that $V$ is an $\F$-representation of $\GL_2(k_v)$.  In this
section we follow the notation of \cite{Breuil_Paškūnas_2012} and
denote by 
$V^{\Fr^i}$ the representation of $\GL_2(k_v)$ obtained by
letting $\GL_2(k_v)$ act on $V$ via twists by powers of the arithmetic
Frobenius; so for instance the Serre weight $\sigmabar_{\vec{t},\vec{s}}$ of Section~\ref{sec:Galois
  reps and weights} can be written
$\bigotimes_{i=0}^{f-1} (V_{s_i} \otimes \det^{t_i})^{\Fr^i}$.  

The tensor product $\bigotimes_{i = 0}^{f-1} V_{2p - 2 - r_i}^{\Fr^i}$ is 
then naturally a representation of $\GL_2(k_v)$, whose socle is equal to
$\bigotimes_{i=0}^{f-1} (V_{r_i}\otimes \det^{p-1-r_i})^{\Fr^i}$,
unless all of the $r_i=0$, in which case the socle also contains
$\bigotimes_{i=0}^{f-1} V_{p-1}$. 
(See the discussion of \cite[~\S 3]{Breuil_Paškūnas_2012}.)
In the case when all $r_i \leq p-2$, the submodule structure of
$\bigotimes_{i = 0}^{f-1} V_{2p - 2 - r_i}^{\Fr^i}$ is analyzed quite
precisely in \cite[\S\S 3,4]{Breuil_Paškūnas_2012}; our goal is
to make a similar analysis in the case when some of the $r_i$ are
equal to $p-1$. In the case that \emph{all} of the $r_i$ are equal to
$p-1$, then the representation is irreducible, and we exclude this
case from our analysis below.

Following Section 3 of~\cite{Breuil_Paškūnas_2012}, we can describe
the Jordan--H\"older factors of $\bigotimes_{i = 0}^{f-1} V_{2p - 2 -
  r_i}^{\Fr^i}$ in the following way. We let $x_0,\dots,x_{f-1}$ be
variables, and we define the set $\cI(x_0,\dots,x_{f-1})$ of all
$f$-tuples $\lambda:=(\lambda_0(x_0),\dots,\lambda_{f-1}(x_{f-1}))$
which satisfy the following conditions for each $i$ (numbered cyclically):
\begin{itemize}
\item $\lambda_i(x_i)\in\{x_i,x_i-1,x_i+1,p-2-x_i,p-3-x_i,p-1-x_i\}$,
\item if $\lambda_i(x_i)\in\{x_i,x_i-1,x_i+1\}$, then
  $\lambda_{i+1}(x_{i+1})\in\{x_{i+1},p-2-x_{i+1}\}$, and
\item if $\lambda_i(x_i)\in\{p-2-x_i,p-3-x_i,p-1-x_i\}$, then
  $\lambda_{i+1}(x_{i+1})\in\{x_{i+1}-1,x_{i+1}+1,p-3-x_{i+1},p-1-x_{i+1}\}$.
\end{itemize}
For $\lambda\in\cI(x_0,\dots,x_{f-1})$, we define
$e(\lambda):=\frac{1}{2}\bigl(\sum_{i=0}^{f-1}p^i(x_i-\lambda_i(x_i))\bigr)$
if $\lambda_{f-1}(x_{f-1})\in\{x_{f-1},x_{f-1}-1,x_{f-1}+1\}$, and
$e(\lambda):=\frac{1}{2}\bigl(p^f-1+\sum_{i=0}^{f-1}p^i(x_i-\lambda_i(x_i))\bigr)$
otherwise. For each $\lambda\in\cI(x_0,\dots,x_{f-1})$, we
define \[\sigmabar_\lambda:=\bigotimes_{i=0}^{f-1}
(V_{\lambda_i(r_i)} \otimes \det{}^{p-1-r_i})^{\Fr^i}\otimes\det{}^{e(\lambda)(r_0,\dots,r_{f-1})}\]
provided that $0 \le \lambda_i(r_i) \le p-1$ for all $i$, and we leave
$\sigmabar_\lambda$ undefined otherwise.
\begin{prop}
  \label{prop: description of JH factors of V_{2p-2-r}}Each
  Jordan--H\"older factor of $\bigotimes_{i = 0}^{f-1} V_{2p - 2 -
  r_i}^{\Fr^i}$  is isomorphic to $\sigmabar_\lambda$ for a
  uniquely determined $\lambda\in\cI(x_0,\dots,x_{f-1})$.
\end{prop}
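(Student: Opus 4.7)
The plan is to adapt the argument from \cite[\S 3]{Breuil_Paškūnas_2012}, which treats the case where all $r_i \le p-2$, to allow some (but not all) of the $r_i$ to equal $p-1$. The crucial observation is that $V_{p-1}$ is irreducible, so when $r_i = p-1$ the factor $V_{2p-2-r_i} = V_{p-1}$ contributes a single irreducible piece to the tensor product rather than the length-two filtration present when $r_i < p-1$; this is accommodated by the convention that $\sigmabar_\lambda$ is undefined whenever $\lambda_i(r_i) \notin \{0,\ldots,p-1\}$.

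Concretely, I would compute the semisimplified Brauer character of $\bigotimes_{i=0}^{f-1} V_{2p-2-r_i}^{\Fr^i}$ by first resolving each factor individually. The character of $V_{2p-2-r_i}$ for $0 \le r_i \le p-2$ is the sum of the characters of its two Jordan--H\"older factors (explicitly computed in \cite[\S 3]{Breuil_Paškūnas_2012}), while for $r_i = p-1$ it is simply the character of $V_{p-1}$. Multiplying these out, twisting by the appropriate powers of Frobenius, and reducing via the standard relations among characters of Serre weights on $\GL_2(k_v)$ then yields a sum of characters indexed precisely by those $\lambda \in \cI(x_0,\ldots,x_{f-1})$ for which $\sigmabar_\lambda$ is defined, each appearing with multiplicity one.

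The correspondence between terms in the product and elements of $\cI$ is essentially the same as in the all-$r_i \le p-2$ setting: the six possibilities for $\lambda_i(x_i)$ encode the choices being made at position $i$, with the three choices $x_i, x_i \pm 1$ corresponding to no ``dualization'' at position $i$ and the three choices $p-2-x_i, p-3-x_i, p-1-x_i$ to a dualization; the local constraints relating $\lambda_i$ to $\lambda_{i+1}$ reflect how a carry at position $i$ restricts the possibilities at position $i+1$. The twist $\det^{e(\lambda)}$ records the determinantal character accumulated by these dualizations, with the extra $\tfrac{1}{2}(p^f-1)$ in $e(\lambda)$ (in the ``otherwise'' clause) accounting for the cyclic closure around the Frobenius orbit when $\lambda_{f-1}$ is of dual type. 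This determinantal twist in particular distinguishes different $\lambda$ that would otherwise yield the same underlying $\bigotimes V_{\lambda_i(r_i)}^{\Fr^i}$, giving the uniqueness claim.

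The main obstacle is the combinatorial bookkeeping when some $r_i = p-1$: one must verify both that $\cI$ captures precisely the valid combinations of choices arising from the character product, and that those $\lambda \in \cI$ with $\sigmabar_\lambda$ undefined correspond to terms in the character expansion that genuinely vanish (because the ``missing'' Jordan--H\"older factor $V_{p}$ or $V_{-1}$ does not in fact exist as a constituent of $V_{p-1}$). Once this case analysis is complete, the proposition follows by matching Brauer characters on the two sides, since multiplicity one and the bijectivity of $\lambda \mapsto \sigmabar_\lambda$ on valid $\lambda$ then force both the list of constituents and the parameterization to agree.
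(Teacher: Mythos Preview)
Your approach contains a genuine error: the claim that the Brauer-character expansion yields each $\sigmabar_\lambda$ \emph{with multiplicity one} is false precisely in the case you are trying to extend to, namely when some $r_i = p-1$.  The point is that when $r_i = p-1$ and position $i{-}1$ contributes its quotient piece $V_{p-2-r_{i-1}}^{\Fr^{i-1}} \otimes V_1^{\Fr^i}$, you must compute $V_{p-1}\otimes V_1$, and this is the representation $R_{p-2}$ of \cite[\S3]{Breuil_Paškūnas_2012}, which has \emph{three} Jordan--H\"older factors: $V_{p-2}\otimes\det$ occurring twice and $V_1^{\Fr}$ once.  (This is exactly what underlies Lemma~\ref{lem:tricky socle filtration} in the paper.)  So the semisimplified tensor product acquires repeated constituents, and your proposed bijection between terms of the expansion and defined $\lambda\in\cI$ cannot hold on the nose.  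A small check: for $f=2$ with $r_0=p-1$ and $r_1\le p-3$, there are only four $\lambda\in\cI$ with $\sigmabar_\lambda$ defined, but the tensor product has length five.

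This does not doom the character approach entirely---what the proposition actually asserts is only that each Jordan--H\"older factor is isomorphic to some $\sigmabar_\lambda$ with $\lambda$ uniquely determined, not that the tensor product is multiplicity free---but salvaging your argument would require tracking multiplicities carefully through the expansion and checking that the repeated factors still land among the $\sigmabar_\lambda$.  The paper sidesteps all of this: it simply observes that \cite[Lem.~3.4]{Breuil_Paškūnas_2012} embeds the tensor product into an injective envelope of $\bigotimes_i (V_{r_i}\otimes\det^{p-1-r_i})^{\Fr^i}$, and \cite[Lem.~3.2]{Breuil_Paškūnas_2012} already describes the Jordan--H\"older factors of that injective envelope as exactly the $\sigmabar_\lambda$, without any restriction on the $r_i$.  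So no new computation is needed.
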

\begin{proof}
  This follows immediately from Lemma~3.2
  of~\cite{Breuil_Paškūnas_2012}, which shows that every Jordan--H\"older factor of an injective envelope of $(V_{r_i}\otimes
  \det^{p-1-r_i})^{\Fr^i}$   is isomorphic to  $\sigmabar_\lambda$ for a uniquely
determined $\lambda\in\cI(x_0,\dots,x_{f-1})$, and
  Lemma~3.4 of~\cite{Breuil_Paškūnas_2012} (and the discussion
  immediately preceding it), which shows that $\bigotimes_{i = 0}^{f-1} V_{2p - 2 -
  r_i}^{\Fr^i}$ is isomorphic to a subrepresentation of such an
injective envelope.
\end{proof}

For any Jordan--H\"older factor $\sigmabar$ of $\bigotimes_{i = 0}^{f-1} V_{2p - 2 -
  r_i}^{\Fr^i}$, we define an integer $0\le l(\sigmabar)\le f$ as
follows: write $\sigmabar\cong\sigmabar_\lambda$, and let
$l(\sigmabar)$ be the number of $0\le i\le f-1$ with
$\lambda_i(x_i)\in\{p-2-x_i,p-3-x_i,p-1-x_i\}$.

Let $\cE := \{i \, | \, r_i = p-1\},$ and write
$\mathcal S' := \cS \setminus \cE.$  (This is the set denoted $S_{\mathbf r}$
in \cite[\S 3]{Breuil_Paškūnas_2012}.)
We begin our analysis of the submodule structure of $\bigotimes_{i = 0}^{f-1} V_{2p - 2 - r_i}^{\Fr^i}$ by defining subrepresentations $F'_J$ of
$\bigotimes_{i = 0}^{f-1} V_{2p - 2 - r_i}^{\Fr^i}$
indexed by subsets
$J \subseteq  \cS'$.  Note that by
\cite[Lem.~3.5]{Breuil_Paškūnas_2012}, 
there is an embedding
\numequation
\label{eqn:V embedding}
V_{r_i}\otimes \det{}^{p-1-r_i} \hookrightarrow
V_{2p - 2 - r_i},
\end{equation}
uniquely determined up to a non-zero scalar (of course, when
$r_i=p-1$, the embedding is an isomorphism).
For each $i \in \cS,$ define $F'_J(i)$ to be the image of this map if $ i \not\in J,$
and to be $V_{2p - 2 - r_i}$ if $i \in J$, and then define
$$F'_J := \bigotimes_{i = 0}^{f-1} F'_J(i)^{\Fr^i} \subseteq
\bigotimes_{i = 0}^{f-1} V_{2p - 2 - r_i}^{\Fr^i}.$$
These subrepresentations are closely related to the filtration of
$\bigotimes_{i = 0}^{f-1} V_{2p - 2 - r_i}^{\Fr^i}$
considered in \cite{Breuil_Paškūnas_2012} in the discussion
preceding Lemma~3.8.  More precisely, one has that
$$\Fil^j
\bigotimes_{i = 0}^{f-1} V_{2p - 2 - r_i}^{\Fr^i}
= \sum_{|J| = |\cS'| - j} F_J'.$$
The association $J \mapsto F_J'$ is order-preserving,
and, if we write
$$F'_{< J} := \sum_{i \in J} F'_{J\setminus \{i\}} = \sum_{J' \subsetneq J} F'_{J'},$$
then $F_J'/F_{<J}' = W_J$,
where $W_J$ is a certain subquotient of 
$\bigotimes_{i = 0}^{f-1} V_{2p - 2 - r_i}^{\Fr^i}$
defined in \cite[\S 3]{Breuil_Paškūnas_2012}, whose
precise description is as follows. It follows from \cite[Lem.~3.5]{Breuil_Paškūnas_2012} that
the cokernel of~(\ref{eqn:V embedding}) is isomorphic to
$V_{p - 2 - r_i}\otimes V_1^{\Fr}$.
Thus $W_J := \otimes_{i = 0}^{f-1} W_J(i)^{\Fr^i},$
where $W_J(i)$ equals $V_{r_i}\otimes \det^{p-1-r_i}$
if $i \not\in J$, and equals $V_{p - 2 - r_i}\otimes V_1^{\Fr}$
if $i \in J.$  

If $\cE = \emptyset,$ then each $W_J$ is semi-simple,
and the filtration $\Fil^i$ coincides with the cosocle filtration
of $\bigotimes_{i = 0}^{f-1} V_{2p - 2 - r_i}^{\Fr^i}$.
However, if $\cE \neq \emptyset$, these statements are
no longer true; it is this case that we wish to analyze further.

Before continuing our analysis, we need to consider extensions of Serre weights.

\begin{prop}
  \label{prop:extensions of Serre weights}
  Let $\sigmabar_{\vec{s},\vec{t}}$ and
    $\sigmabar_{\vec{s'},\vec{t'}}$ be two Serre weights, and if $f=1$
    then suppose that $s_0,s'_0\ne p-1$. Then there is a non-zero
    extension between $\sigmabar_{\vec{s},\vec{t}}$ and
    $\sigmabar_{\vec{s'},\vec{t'}}$ if and only if the following
    conditions hold:
    \begin{enumerate}
    \item If $f=1$, then $s'_0=p-2-s_0\pm 1$, and $t_0'-t_0\equiv
      s_0+1-p\frac{(1\pm 1)}{2}\pmod{p-1}$.
    \item If $f>1$, then there is some $k$ such that $s_j=s_j'$ if $j\ne
      k, k+1$, $s'_k=p-2-s_k$, $s'_{k+1}=s_{k+1}\pm 1$, and
      $\sum_{j=0}^{p-1}p^j(t'_j-t_j)\equiv
      p^k(s_k+1)-p^{k+1}\frac{(1\pm 1)}{2}\pmod{p^f-1}$.
    \end{enumerate}
In each of these cases both
$\Ext^1_{\F[\GL_2(k_v)]}(\sigmabar_{\vec{s},\vec{t}},\sigmabar_{\vec{s'},\vec{t'}})$
and
$\Ext^1_{\F[\GL_2(k_v)]}(\sigmabar_{\vec{s'},\vec{t'}},\sigmabar_{\vec{s},\vec{t}})$
are one-dimensional.
\end{prop}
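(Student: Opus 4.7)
My plan is to reduce the computation to analyzing the socle filtration of an injective envelope of $\sigmabar_{\vec{s},\vec{t}}$ in the category of $\F[\GL_2(k_v)]$-modules. Since $\F[\GL_2(k_v)]$ is a symmetric algebra, $\dim_{\F}\Ext^1_{\F[\GL_2(k_v)]}(\sigmabar, \sigmabar')$ equals the multiplicity of $\sigmabar'$ in the second layer of the socle filtration of an injective envelope $I(\sigmabar)$, reducing the proposition to a combinatorial enumeration inside such injective envelopes. These envelopes are described explicitly by Breuil and Pa\v{s}k\={u}nas in~\cite{Breuil_Paškūnas_2012}.

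For $f = 1$ (with $s_0, s'_0 \neq p-1$), the injective envelope of a simple $\F[\GL_2(\F_p)]$-module is classically known (see, e.g., \cite[\S3]{Breuil_Paškūnas_2012}), and a direct inspection of its socle filtration yields condition~(1) together with the one-dimensionality of the Ext groups. The hypothesis $s_0 \ne p-1$ is needed because $V_{p-1}\otimes\det^{t_0}$ is simultaneously simple, projective, and injective, and therefore admits no non-trivial extensions with any other simple module.

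For $f > 1$, I would invoke \cite[Lem.~3.2]{Breuil_Paškūnas_2012}, which describes the Jordan--H\"older factors of the injective envelope of $\sigmabar_{\vec{s},\vec{t}}$ via the combinatorial set $\cI(x_0,\dots,x_{f-1})$ introduced above. The factors in the second layer of the socle filtration are precisely the $\sigmabar_\lambda$ with $l(\sigmabar_\lambda) = 1$; inspection of the definition of $\cI$ shows that such $\lambda$ must have exactly one coordinate $\lambda_k(x_k) \in \{p-2-x_k,\, p-3-x_k,\, p-1-x_k\}$, which forces $\lambda_{k+1}(x_{k+1}) = x_{k+1} \pm 1$ and $\lambda_j(x_j) = x_j$ for $j \neq k, k+1$. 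Unpacking the resulting form of $\sigmabar_\lambda$ using the formula for $e(\lambda)$ translates these into precisely the conditions in~(2) on $(\vec{s'},\vec{t'})$. The one-dimensionality of both Ext groups then follows from the multiplicity-one appearance of each factor in this layer, combined with the essential self-duality of Serre weights up to a determinant twist, which ensures that the analogous computation of $\Ext^1(\sigmabar_{\vec{s'},\vec{t'}}, \sigmabar_{\vec{s},\vec{t}})$ is related to the first by an order-preserving symmetry of the combinatorial enumeration.

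The main technical obstacle will be the careful handling of boundary cases where some $s_j$ or $s'_j$ equals $0$ or $p-1$: in these cases certain candidate $\sigmabar_\lambda$ in $\cI$ are undefined (as $\lambda_i(r_i) \notin \{0,\dots,p-1\}$) and so do not actually occur as Jordan--H\"older factors of the injective envelope. These exclusions must be matched precisely against the cases ruled out by the explicit conditions in (1) and (2) to confirm that the combinatorial recipe captures all genuine non-trivial extensions and no spurious ones.
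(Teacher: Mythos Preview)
The paper's proof is a one-line citation to \cite[Cor.~5.6(i)]{Breuil_Paškūnas_2012}, so your proposal is not so much an alternative proof as an attempt to sketch how that cited result might be established. The idea of reading off $\Ext^1$ from the second socle layer of an injective envelope is correct and standard.

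However, there is a genuine gap. You assert that the factors in the second socle layer of $I(\sigmabar_{\vec{s},\vec{t}})$ are precisely the $\sigmabar_\lambda$ with $l(\sigmabar_\lambda)=1$, but \cite[Lem.~3.2]{Breuil_Paškūnas_2012} only enumerates the Jordan--H\"older factors of the injective envelope; it says nothing about which socle layer each factor lies in. Knowing which simples appear in $\soc_1 I(\sigmabar)/\soc_0 I(\sigmabar)$ is \emph{exactly} the same as knowing which simples have non-trivial $\Ext^1$ with $\sigmabar$, so as written your argument is circular. Your combinatorial unpacking of the $l=1$ condition is correct and does recover the list in~(2), but it only tells you which weights are \emph{candidates} for the second socle layer, not that they actually lie there (nor that the $l\ge 2$ weights do not). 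To close the gap you would need an independent argument that the filtration by $l$-value coincides with the socle filtration; in the present paper this is Proposition~\ref{prop: socle filtration on F_S}, whose proof \emph{uses} Proposition~\ref{prop:extensions of Serre weights}, and in \cite{Breuil_Paškūnas_2012} the corresponding work is done in Sections~4 and~5 via explicit constructions of the extensions (essentially the ones reproduced in the paragraphs immediately following the proposition here), not from the Section~3 combinatorics alone.
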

\begin{proof}
This is Corollary~5.6(i) of
\cite{Breuil_Paškūnas_2012}. 
\end{proof}

In what follows it
will help to recall the explicit construction of the extensions
that are described by the preceding proposition.
First note that if we set $r_i = s_i,$ then the Serre weight
$\sigmabar_{\vec{s},\vec{t}}$ is a twist of $\bigotimes_{i = 0}^{f-1} (V_{r_i}\otimes
\det{}^{p-1-r_i})^{\Fr^i}$; we thus may, and do, restrict ourselves to considering
extensions of the latter such representations.

As we have already recalled, Lemma~3.5 of \cite{Breuil_Paškūnas_2012} shows that $V_{2p - 2 - r_i}^{\Fr^i}$ sits
in a short exact sequence
\numequation
\label{eqn:V short exact sequence}
0 \to (V_{r_i} \otimes \det{}^{p-1 -r_i})^{\Fr^i}  \to
V_{2 p -2  - r_i}^{\Fr^i} \to
V_{p - r_i - 2}^{\Fr^i}\otimes V_1^{\Fr^{i+1}} \to 0.
\end{equation}
If we fix some $i \in \{0, \ldots, f-1\},$ and tensor this exact sequence
with the representations $(V_{r_{i'}}\otimes \det{}^{p-1 - r_{i'}})^{\Fr^{i'}}$
for $i' \neq i,$ we obtain a short exact
sequence\footnote{Here and below we have to consider
many tensor products indexed by the elements of $\cS$.  It will be
convenient, and save introducing additional notation, to not always
write these tensor products with the indices taken in the
standard order.  This should cause no confusion, as long as it
is understood that for any permutation $\pi$ of $\cS$, 
and any collection of representations $A_i$ indexed by the elements
$i \in \cS$, we always identify $\bigotimes_i A_i$ and $\bigotimes_i A_{\pi(i)}$
via the isomorphism $a_0 \otimes \cdots \otimes a_{f-1} \mapsto
a_{\pi(0)}\otimes \cdots \otimes a_{\pi(f-1)}$; i.e.\ we rearrange the
order of the factors in the tensor product according to the permutation
$\pi^{-1}$.}
\begin{multline*}
0 \to
\bigotimes_{i' = 0}^{f-1} (V_{r_{i'}} \otimes \det{}^{p-1 - r_{i'}})^{\Fr^{i'}}
\to
V_{2 p -2  - r_i}^{\Fr^i} 
\otimes
\bigotimes_{i'\neq i } (V_{r_{i'}} \otimes \det{}^{p-1 - r_{i'}})^{\Fr^{i'}}
\\
\to V_{p  - r_i - 2}^{\Fr^i} \otimes
(V_{r_{i+1}} \otimes V_1 \otimes \det{}^{p-1-r_{i+1}})^{\Fr^{i+1}}
\otimes
\bigotimes_{i'\neq i,i+1 } (V_{r_{i'}} \otimes \det{}^{p-1 - r_{i'}})^{\Fr^{i'}}
\to 0.
\end{multline*}
The middle term of this short exact sequence
is contained in $\bigotimes_{i' = 0}^{f-1} V_{2p - 2 - r_{i'}}^{\Fr^{i'}},$
and since the socle of this latter representation is equal
to 
$\bigotimes_{i' = 0}^{f-1} (V_{r_{i'}} \otimes \det{}^{p-1 -
  r_{i'}})^{\Fr^{i'}}$ (unless all of the $r_{i'}$ are equal to $0$,
but the extra term $\bigotimes_{i' = 0}^{f-1} V_{p-1}$ which is then
present does not affect this argument),
we conclude that the socle of the middle term of this short exact sequence
is equal to 
the left-hand term.

If $r_{i+1} \leq p-2$ then \cite[Lem.~3.8(i)]{Breuil_Paškūnas_2012} 
shows that $V_{r_{i+1}} \otimes V_1 \cong V_{r_{i+1} +1} \oplus (V_{r_{i+1}-1}\otimes \det),$
and thus this short exact sequence gives rise to the non-split extensions
of
$V_{p  - r_i - 2}^{\Fr^i} \otimes (V_{r_{i+1}\pm 1}\otimes
\det{}^{p - \frac{1 \pm 1}{2} - r_{i+1}} )^{\Fr^{i+1}}
\otimes
\bigotimes_{i'\neq i,i+1 } (V_{r_{i'}} \otimes \det{}^{p-1 - r_{i'}})^{\Fr^{i'}}
$
by
$\bigotimes_{i' = 0}^{f-1} (V_{r_{i'}} \otimes \det{}^{p-1 - r_{i'}})^{\Fr^{i'}}$
whose existence is asserted by
Proposition~\ref{prop:extensions of Serre weights}.
If $r_{i+1} = p-1,$ then \cite[Lems.~3.4, 3.5, and 3.8(ii)]{Breuil_Paškūnas_2012} 
show that $V_{r_{i+1}}\otimes V_1 = V_{p-1}\otimes V_1$
contains $V_{p-2}\otimes\det = V_{r_{i+1}-1}\otimes \det$ as a subrepresentation,
and so the preceding short exact sequence gives rise to the non-split extension of
$V_{p  - r_i - 2}^{\Fr^i} \otimes (V_{r_{i+1}-1}\otimes
\det{}^{p - r_{i+1}} )^{\Fr^{i+1}}
\otimes
\bigotimes_{i'\neq i,i+1 } (V_{r_{i'}} \otimes \det{}^{p-1 - r_{i'}})^{\Fr^{i'}}
$
by
$\bigotimes_{i' = 0}^{f-1} (V_{r_{i'}} \otimes \det{}^{p-1 - r_{i'}})^{\Fr^{i'}}$
whose existence is asserted by
Proposition~\ref{prop:extensions of Serre weights}.

From these explicit constructions, we can infer the following lemma.

\begin{lemma}
\label{lem:tensoring extensions}
Suppose that $f > 1$, and that
  $\sigmabar_{\vec{s},\vec{t}}$ and
    $\sigmabar_{\vec{s'},\vec{t'}}$ are two Serre weights
related as in the statement of Proposition~{\em \ref{prop:extensions of Serre weights}~(2)};
namely there is some $k$ such that $s_j=s_j'$ if $j\ne
      k, k+1$, $s'_k=p-2-s_k$, $s'_{k+1}=s_{k+1}\pm 1$, and
      $\sum_{j=0}^{p-1}p^j(t'_j-t_j)\equiv
      p^k(s_k+1)-p^{k+1}\frac{(1\pm 1)}{2}\pmod{p^f-1}$.
Suppose moreover that $s_i = s_i' = 0$ for some particular $i \neq k,k+1$.
If 
$0 \to \sigmabar_{\vec{s},\vec{t}} \to V \to \sigma_{\vec{s'},\vec{t'}} \to 0$
is a non-split extension of $\GL_2(k_v)$-representations over $\F$,
and if $0 \leq r \leq p-1$,
then the tensor product
$0 \to \sigmabar_{\vec{s},\vec{t}}\otimes V_r^{\Fr^i}
\to V\otimes V_r^{\Fr^i} \to \sigma_{\vec{s'},\vec{t'}} \otimes V_r^{\Fr^i} \to 0$
is again a non-split extension of such representations.
\end{lemma}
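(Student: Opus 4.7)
The plan is to realize the tensored extension as the pullback of the same ``canonical'' short exact sequence used to construct $V$ itself, and to deduce non-splitness via a socle computation that reduces to the one recalled in the excerpt.

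First, both $\Ext^1_{\F[\GL_2(k_v)]}(\sigmabar_{\vec{s'},\vec{t'}},\sigmabar_{\vec{s},\vec{t}})$ and $\Ext^1_{\F[\GL_2(k_v)]}(\sigmabar_{\vec{s'},\vec{t'}} \otimes V_r^{\Fr^i},\sigmabar_{\vec{s},\vec{t}} \otimes V_r^{\Fr^i})$ are one-dimensional over $\F$ by Proposition~\ref{prop:extensions of Serre weights}(2); the second applies because $s_i = s'_i = 0$ ensures that the tensored representations are again Serre weights (with $i$-th slot $V_r$ in place of $V_0$), while the relation (2) at position $k \ne i$ is preserved by the modification. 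Since $-\otimes V_r^{\Fr^i}$ is exact, the class of the tensored extension is the image of $[V]$ under a natural linear map between these one-dimensional spaces, so it suffices to show that this map is non-zero.

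Second, following the explicit construction given in the paragraphs immediately preceding the lemma, I would realise $V$ as the pullback of the short exact sequence $0 \to A' \to B' \to C' \to 0$ (where $A' \cong \sigmabar_{\vec{s},\vec{t}}$ up to a determinant twist, $B' = V_{2p-2-s_k}^{\Fr^k} \otimes \bigotimes_{j \ne k}(V_{s_j}\otimes \det^{p-1-s_j})^{\Fr^j}$, and $C'$ is the quotient term) along the inclusion $\sigmabar_{\vec{s'},\vec{t'}}\hookrightarrow C'$ obtained from the decomposition of $V_{s_{k+1}}\otimes V_1$ at slot $k+1$ (a direct summand when $s_{k+1}\le p-2$, and a subrepresentation when $s_{k+1}=p-1$). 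Because $i \ne k, k+1$, applying $-\otimes V_r^{\Fr^i}$ preserves exactness and commutes with the constructions at slot $k+1$; consequently $V \otimes V_r^{\Fr^i}$ is the pullback of $B' \otimes V_r^{\Fr^i} \to C' \otimes V_r^{\Fr^i}$ along $\sigmabar_{\vec{s'},\vec{t'}}\otimes V_r^{\Fr^i} \hookrightarrow C' \otimes V_r^{\Fr^i}$, and in particular is a subrepresentation of $B' \otimes V_r^{\Fr^i}$.

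Third, the Jordan--H\"older constituents of $V \otimes V_r^{\Fr^i}$ are precisely $\sigmabar_{\vec{s},\vec{t}}\otimes V_r^{\Fr^i}$ and $\sigmabar_{\vec{s'},\vec{t'}}\otimes V_r^{\Fr^i}$, each with multiplicity one (each being an irreducible Serre weight because $s_i = s'_i = 0$). Non-splitness thus reduces to showing $\soc(B'\otimes V_r^{\Fr^i}) = A' \otimes V_r^{\Fr^i}$. Using~\eqref{eqn:V embedding} at slot $i$ (namely $V_r\otimes\det^{p-1-r}\hookrightarrow V_{2p-2-r}$) together with the analogous embeddings $V_{s_j}\otimes\det^{p-1-s_j}\hookrightarrow V_{2p-2-s_j}$ at the remaining slots $j \ne k$, one obtains an embedding (up to a determinant twist) of $B' \otimes V_r^{\Fr^i}$ into the big tensor product $\bigotimes_j V_{2p-2-r'_j}^{\Fr^j}$ with $r'_i = r$ and $r'_j = s_j$ for $j \ne i$. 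Provided not all $r'_j$ vanish, the socle of this big tensor product is the single irreducible Serre weight $\bigotimes_j (V_{r'_j}\otimes\det^{p-1-r'_j})^{\Fr^j}$, by the description recalled in the excerpt immediately above Proposition~\ref{prop: description of JH factors of V_{2p-2-r}}; a short determinant-twist calculation identifies this weight with $A' \otimes V_r^{\Fr^i}$, which visibly embeds into $B' \otimes V_r^{\Fr^i}$ via the tensored inclusion $A'\hookrightarrow B'$, so the two agree. The degenerate case where all $r'_j$ vanish forces $r=0$ and $\vec{s}=\vec{0}$, in which case $V \otimes V_r^{\Fr^i} = V$ and non-splitness is given.

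The main obstacle is the determinant-twist bookkeeping at the end of the third step, together with verifying that the cited socle computation for the big tensor product applies uniformly: individual factors $V_{2p-2-s_j}$ with $s_j=0$ have socle containing an extra summand $V_{p-1}$, but the socle of the full tensor product is unaffected by this thanks to the ``unless all of the $r_i=0$'' clause in the description recalled in the excerpt.
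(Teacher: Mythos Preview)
Your proposal is correct and follows essentially the same approach as the paper. Both arguments hinge on the fact that the explicit construction of the non-split extension (from the short exact sequence at slot $k$, tensored with the irreducible factors at the other slots) is functorial in the factor at slot $i$: replacing $(V_0\otimes\det^{p-1})^{\Fr^i}$ by $(V_r\otimes\det^{p-1-r})^{\Fr^i}$ yields precisely the explicit construction for the tensored Serre weights. The paper's proof is more terse: it simply observes that the explicit non-split extension of $\sigmabar_{\vec{r'},\vec{t'}}$ by $\sigmabar_{\vec{r},\vec{t}}$ (whose existence and non-splitness were already established in the preceding discussion) \emph{is} the tensor of the original extension with $V_r^{\Fr^i}$, so no further socle computation is needed. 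Your version unpacks this by re-embedding $B'\otimes V_r^{\Fr^i}$ into the big tensor product and invoking the socle statement there, which is exactly how the preceding discussion established non-splitness in the first place; the two arguments are the same up to how much of the earlier discussion is quoted versus reproved.
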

\begin{proof}
Define $\vec{r}$ by $r_j = s_j$ if $j \neq i$,
and $r_i = r$, and similarly define $\vec{r'}$ by
$r'_j = s'_j $ if $j \neq i$, and $r'_i = r$.
Then
$\sigmabar_{\vec{r},\vec{t}}$
and
$\sigmabar_{\vec{r'},\vec{t'}}$
again satisfy condition~(2) of Proposition~\ref{prop:extensions of Serre weights},
and so, by that proposition, there is a non-trivial extension of 
$\sigmabar_{\vec{r'},\vec{t'}}$
by
$\sigmabar_{\vec{r},\vec{t}}$.  
The preceding discussion shows, though,
that this extension is in fact obtained exactly by tensoring
the extension of 
$\sigmabar_{\vec{s'},\vec{t'}}$
by
$\sigmabar_{\vec{s},\vec{t}}$  
by $V_r^{\Fr^i}$.  This proves the lemma.
\end{proof}

We need a further technical lemma related to extensions.
Before stating it, we introduce a definition.
If $0 \leq r \leq p-1$ and $0 \leq n < f-1$, then we define
$$V_r(n) := 
 V_r \otimes V_1^{\Fr} \otimes \bigotimes_{i = 1}^n
V_{p-1}^{\Fr^i}.$$

\begin{lemma}
\label{lem:tricky socle filtration}
The lattice of subrepresentations
of $V_r(n)$ is totally ordered, and thus $V_r(n)$ admits a unique
Jordan--H\"older filtration, which coincides with both its socle
and cosocle filtration.  
The length of this filtration is $2n+1$,
and the Jordan--H\"older factors of this filtration, taken in
order 
are
\begin{eqnarray*}
& V_{r}\otimes (V_{p-2} \otimes \det) ^{\Fr} \otimes V_{p-1}^{\Fr^{2}}
\otimes \cdots \otimes V_{p-1}^{\Fr^{n}}, \\
& V_{r}^{}\otimes V_0^{\Fr} \otimes (V_{p-2} \otimes \det)^{\Fr^{2}}
\otimes V_{p-1}^{\Fr^{3}} \otimes \cdots \otimes V_{p-1}^{\Fr^{n}},\\
& \cdots,\\
& V_{r}\otimes V_0^{\Fr} \otimes \cdots
\otimes V_0^{\Fr^{n-1}} \otimes (V_{p-2} \otimes \det)^{\Fr^{n}}, \\
& V_{r}^{}\otimes V_0^{\Fr} \otimes \cdots
\otimes V_0^{\Fr^{n}} \otimes V_{1}^{\Fr^{n+1}}, \\
& V_{r}^{}\otimes V_0^{\Fr^{}} \otimes \cdots
\otimes V_0^{\Fr^{n-1}} \otimes (V_{p-2} \otimes \det)^{\Fr^{n}}, \\
& \cdots,\\
& V_{r}^{}\otimes V_0^{\Fr^{}} \otimes (V_{p-2} \otimes \det)^{\Fr^{2}}
\otimes V_{p-1}^{\Fr^{3}} \otimes \cdots \otimes V_{p-1}^{\Fr^{n}},\\
& V_{r}^{}\otimes (V_{p-2} \otimes \det) ^{\Fr^{}} \otimes V_{p-1}^{\Fr^{2}}
\otimes \cdots \otimes V_{p-1}^{\Fr^{n}}. \\
\end{eqnarray*}
\end{lemma}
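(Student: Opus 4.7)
The plan is to proceed by induction on $n$, with the base case (either $n=0$ or $n=1$, depending on how the edge cases of the tensor product are interpreted) handled by direct computation from Lemma~3.8 of~\cite{Breuil_Paškūnas_2012}. The key building block is the non-split short exact sequence
\[
0 \to V_{p-2}\otimes \det \to V_{p-1}\otimes V_1 \to V_1^{\Fr}\to 0
\]
arising at the boundary case $r=p-1$ in that lemma (and in the exact sequence~\eqref{eqn:V short exact sequence}), which supplies the mechanism whereby each ``$V_{p-1}$ factor'' splits the representation into two layers.

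For the inductive step, I would tensor this short exact sequence (suitably placed at the appropriate Frobenius component) with the remaining factors that define $V_r(n)$, obtaining a short exact sequence
\[
0 \to A \to V_r(n) \to B \to 0,
\]
where both $A$ and $B$ are identified, up to twisting by a one-dimensional factor and a Frobenius shift, with $V_r(n-1)$. By the inductive hypothesis these are uniserial, each of length $2(n-1)+1$, with socle and cosocle filtrations of known composition factors. Splicing these filtrations yields a filtration of $V_r(n)$ of length $2n+1$, and matching the individual composition factors with the palindromic list in the statement is a combinatorial exercise that can be cross-checked against Proposition~\ref{prop: description of JH factors of V_{2p-2-r}} applied to $\bigotimes_{i}V_{2p-2-r_i}^{\Fr^i}$ with the appropriate $r_i$.

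The main obstacle is proving uniseriality: that the lattice of subrepresentations is totally ordered, so that no composition factor can split off and no ``shortcut'' subrepresentation skipping layers can exist. For each pair of consecutive layers in the filtration, one verifies that the associated Serre weights stand in the relation of Proposition~\ref{prop:extensions of Serre weights}~(2), so that $\Ext^1$ between them is one-dimensional, and that the extension is non-split. The non-splitness is inherited from that of the fundamental sequence $0\to V_{p-2}\otimes\det\to V_{p-1}\otimes V_1\to V_1^{\Fr}\to 0$, transported through the tensor factors using Lemma~\ref{lem:tensoring extensions} (which precisely guarantees that tensoring with $V_r^{\Fr^i}$ preserves non-split extensions in the relevant configuration). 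The absence of ``hidden'' subrepresentations is then forced by the observation that each inclusion $A'\subseteq V_r(n)$ is determined by its image in $B$, so any refinement of the spliced filtration would have to come from a refinement inside $A$ or $B$, contradicting the inductive hypothesis.

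Given uniseriality, the coincidence of the unique Jordan--Hölder filtration with both the socle and cosocle filtrations is formal. The palindromic appearance of the list reflects the essential self-duality of $V_r(n)$ (compatible with Lemma~\ref{lem:weights in tame types have multiplicity one}), which forces the filtration to be symmetric about the middle factor $V_r\otimes V_0^{\Fr}\otimes\cdots\otimes V_0^{\Fr^n}\otimes V_1^{\Fr^{n+1}}$; this is the factor at which the inductive short exact sequence ``turns around''.
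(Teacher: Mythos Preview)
Your overall strategy---induction on $n$, with non-splitness controlled by Lemma~\ref{lem:tensoring extensions}---matches the paper's, but the specific inductive mechanism you propose does not work.

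The ``key building block'' is misstated: $V_{p-1}\otimes V_1$ is \emph{not} a length-two extension of $V_1^{\Fr}$ by $V_{p-2}\otimes\det$. By \cite[Lem.~3.8(ii)]{Breuil_Paškūnas_2012} it is the representation $R_{p-2}$, which is uniserial of length \emph{three}, with composition factors (socle to cosocle) $V_{p-2}\otimes\det$, $V_1^{\Fr}$, $V_{p-2}\otimes\det$. So your short exact sequence does not exist. Relatedly, the lengths in your inductive step do not add up: if both $A$ and $B$ were twists of $V_r(n-1)$, each of length $2n-1$, then $V_r(n)$ would have length $4n-2$, not $2n+1$.

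The paper's induction runs differently: one writes $V_r(n+1)=V_r(n)\otimes V_{p-1}^{\Fr^{n+1}}$ and tensors the already-known socle filtration of $V_r(n)$ by the irreducible module $V_{p-1}^{\Fr^{n+1}}$. Every Jordan--H\"older factor of $V_r(n)$ \emph{except the middle one} has no component in the $\Fr^{n+1}$ slot, so it remains irreducible after tensoring, and Lemma~\ref{lem:tensoring extensions} shows the extensions between consecutive such layers stay non-split. The middle factor, however, carries $V_1^{\Fr^{n+1}}$, so it becomes $V_r\otimes V_0^{\Fr}\otimes\cdots\otimes V_0^{\Fr^n}\otimes(V_1\otimes V_{p-1})^{\Fr^{n+1}}$; the length-three uniserial structure of $R_{p-2}=V_1\otimes V_{p-1}$ is precisely what turns one layer into three, accounting for the growth $2n+1\mapsto 2n+3$. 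The two interface extensions (at $i=n-1$ and $i=n$) are checked by hand using the explicit description of the socle of $R_{p-2}$. Your idea can in fact be salvaged by placing the (corrected, length-three) filtration of $R_{p-2}$ at the $\Fr^1$ position instead---this yields a three-step filtration with irreducible ends and a Frobenius-shifted $V_r(n-1)$ in the middle---but that is not what you wrote, and the gluing at the interfaces would still need the same kind of explicit verification.
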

\begin{rem}
  \label{rem: V_0 are written in, and there is a symmetry}Of course we
  could omit the $V_0$ factors in the expressions, as $V_0$ is the
  trivial representation, but we have left them in place for
  clarity. Note also that the $i$th and $(2n-i)$th layers of the
  filtration are isomorphic.
\end{rem}
\begin{proof}[Proof of Lemma~\ref{lem:tricky socle filtration}]
We proceed by induction on $n$, the case $n = 0$ being trivial.
Suppose now that the result is proved for some value of $n$;
writing $V_r(n+1) = V_r(n) \otimes V_{p-1}^{\Fr^{n+1}},$
we will deduce the claim of the lemma for $n+1$.

To this end, consider one of the extensions
\nummultline
\label{eqn:socle short exact sequences}
0 \to \soc_i V_r(n)/\soc_{i-1} V_r(n) \to \soc_{i+1} V_r(n) /\soc_{i-1} V_r(n)
\\
\to \soc_{i+1}V_r(n)/\soc_i V_r(n) \to 0
\end{multline}
arising from the socle filtration of $V_r(n)$,
where we first suppose that either $i < n-1$ or $i > n$. 
Tensoring with $V_{p-1}^{\Fr^{n+1}}$, it follows from Lemma~\ref{lem:tensoring 
extensions}
that the resulting short exact sequence is again non-split.

If now $i = n-1,$ then (applying our inductive hypothesis)
the short exact sequence~(\ref{eqn:socle short exact sequences})
becomes
\begin{multline*}
0 \to
V_r \otimes V_0^{\Fr} \otimes \ldots \otimes V_0^{\Fr^{n-1}} \otimes
(V_{p-2} \otimes \det)^{\Fr^n} 
\\
\to
\soc_{n} V_r(n)/\soc_{n-2} V_r(n)
\\
\to
V_r \otimes V_0^{\Fr} \otimes \ldots \otimes V_0^{\Fr^{n-1}} \otimes V_{0}^{\Fr^n} 
\otimes V_1^{\Fr^{n+1}} 
\to  0,
\end{multline*}
and tensoring with $V_{p-1}^{\Fr^{n+1}}$ yields the short exact sequence
\begin{multline*}
0 \to
V_r \otimes V_0^{\Fr} \otimes \ldots \otimes V_0^{\Fr^{n-1}} \otimes
(V_{p-2} \otimes \det)^{\Fr^n} 
\otimes V_{p-1}^{\Fr^{n+1}}
\\
\to
\bigl(\soc_{n} V_r(n)/\soc_{n-2} V_r(n) \bigr) \otimes V_{p-1}^{\Fr^{n+1}}
\\
\to
V_r \otimes V_0^{\Fr} \otimes \ldots \otimes V_0^{\Fr^{n-1}} \otimes V_{0}^{\Fr^n} 
\otimes (V_1\otimes V_{p-1})^{\Fr^{n+1}} 
\to  0.
\end{multline*}
Now, as a particular case of the above explicit description of extensions, 
we know that the socle of
$
V_r \otimes V_0^{\Fr} \otimes \ldots \otimes V_0^{\Fr^{n-1}} \otimes V_{0}^{\Fr^n} 
\otimes (V_1\otimes V_{p-1})^{\Fr^{n+1}} $
is equal to
$
V_r \otimes V_0^{\Fr} \otimes \ldots \otimes V_0^{\Fr^{n-1}} \otimes V_{0}^{\Fr^n} 
\otimes (V_{p-2} \otimes \det)^{\Fr^{n+1}} $,
and that the extension
of
$
V_r \otimes V_0^{\Fr} \otimes \ldots \otimes V_0^{\Fr^{n-1}} \otimes V_{0}^{\Fr^n} 
\otimes (V_{p-2} \otimes \det)^{\Fr^{n+1}} $
by
$
V_r \otimes V_0^{\Fr} \otimes \ldots \otimes V_0^{\Fr^{n-1}} \otimes
(V_{p-2} \otimes \det)^{\Fr^n} 
\otimes V_{p-1}^{\Fr^{n+1}}$
induced by the preceding short exact sequence is non-split.

Putting together everything shown so far, and taking into account
the inductive hypothesis,
we conclude that $\soc_i V_r(n+1)$ has the structure claimed
in the statement of the lemma for $i \leq n$.  
A similar analysis applies if we tensor~(\ref{eqn:socle short exact sequences})
with $V_{p-1}^{\Fr^{n+1}}$ in the case when $i = n$, and so we likewise conclude
that $\cosoc_i V_r(n+1)$ has the claimed structure when $i \leq n$.

To complete the proof of the lemma, it suffices to show that the socle filtration
of $\bigl(\soc_n V_r(n)/\soc_{n-1} V_r(n) \bigr)\otimes V_{p-1}^{\Fr^{n+1}}$ has length three,
with Jordan--H\"older consituents (in order) equal to
$
V_{r}\otimes V_0^{\Fr} \otimes \cdots
\otimes V_0^{\Fr^{n}} \otimes (V_{p-2} \otimes \det)^{\Fr^{n+1}},$
$ V_{r}^{}\otimes V_0^{\Fr} \otimes \cdots
\otimes V_0^{\Fr^{n+1}} \otimes V_{1}^{\Fr^{n+2}}, $
and 
$ V_{r}^{}\otimes V_0^{\Fr^{}} \otimes \cdots
\otimes V_0^{\Fr^{n}} \otimes (V_{p-2} \otimes \det)^{\Fr^{n+1}}.$
Now (again using our inductive hypothesis) we have that
\begin{multline*}
\bigl(\soc_n V_r(n)/\soc_{n-1} V_r(n) \bigr)\otimes V_{p-1}^{\Fr^{n+1}}
\\
= 
V_r \otimes V_0^{\Fr} \otimes \ldots \otimes V_0^{\Fr^{n-1}} \otimes V_{0}^{\Fr^n} 
\otimes (V_1\otimes V_{p-1})^{\Fr^{n+1}}.
\end{multline*}

From \cite[Lem.~3.8~(ii)]{Breuil_Paškūnas_2012} we see
that
$V_1\otimes V_{p-1}$ is equal to the representation denoted there by $R_{p-2}$.
From
\cite[Lems.~3.4, 3.5]{Breuil_Paškūnas_2012} and the surrounding
discussion we see
that $R_{p-2}$ has socle equal to $V_{p-2} \otimes \det$, and that
the quotient of $R_{p-2}$ by its socle is equal to an extension
of $V_{p-2} \otimes \det$ by $V_1^{\Fr}$.  Since $V_{p-2}$ does not admit a non-trivial
extension of itself as a $\GL_2(k_v)$-representation
(by Proposition~\ref{prop:extensions of Serre weights}), that
extension of $V_{p-2} \otimes \det$ by $V_1^{\Fr}$  must
be non-split, and so in fact $R_{p-2}$ has a socle
filtration of length three, with constituents (in order)
equal to
$V_{p-2} \otimes \det$, $V_1^{\Fr},$ and $V_{p-2} \otimes \det$.  Our desired conclusion now
follows from this, together with Lemma~\ref{lem:tensoring extensions}.
\end{proof}

Next, we introduce some additional notation.  First, for
any $i \in \cS$, we let $l(i) := \max\{j \in \cS'
\, | \, j \leq i\}.$ (Here the ordering on $\cS$
is understood cyclically, i.e.\ $f-1 < 0$, and recall that $\cS' \neq
\emptyset$.)
If $J \subseteq \cS'$, then we write $\tJ := \{ i \in \cS \, | \,
l(i) \in J\}.$

If $i \in \cS',$ then define
$$X(i) := V_{p-2-r_i}^{\Fr^i}\otimes V_1^{\Fr^{i+1}} 
\otimes \bigotimes_{j \in \cE \text{
s.t. } l(j) = i} V_{p-1}^{\Fr^j}.$$
In terms of this definition (and taking into account
the preceding description of the cokernel of~(\ref{eqn:V embedding})),
we see that for any
$J \subseteq \cS'$, 
there is an isomorphism
\numequation
\label{eqn:X iso}
W_J =
\bigotimes_{i \in J} X(i)\otimes \bigotimes_{i \not\in \tJ} (V_{r_i}\otimes \det{}^{p-1-r_i})^{\Fr^i}.
\end{equation}

\begin{lemma}
\label{lem:X socle filtration}
For any $i \in \cS'$ the lattice of subrepresentations
of $X(i)$ is totally ordered, and thus $X(i)$ admits a unique
Jordan--H\"older filtration, which coincides with both its socle
and cosocle filtration.  If we let $i+n$ denote the maximal element of $\cS$
for which $l(i+n) = i,$ then the length of this filtration is $2n+1$,
and the Jordan--H\"older factors of this filtration, taken in
order
are
\begin{eqnarray*}
& V_{p-2-r_i}^{\Fr^i}\otimes (V_{p-2} \otimes \det)^{\Fr^{i+1}} \otimes V_{p-1}^{\Fr^{i+2}}
\otimes \cdots \otimes V_{p-1}^{\Fr^{i+n}}, \\
& V_{p-2-r_i}^{\Fr^i}\otimes V_0^{\Fr^{i+1}} \otimes (V_{p-2}\otimes \det)^{\Fr^{i+2}}
\otimes V_{p-1}^{\Fr^{i+3}} \otimes \cdots \otimes V_{p-1}^{\Fr^{i+n}},\\
& \cdots,\\
& V_{p-2-r_i}^{\Fr^i}\otimes V_0^{\Fr^{i+1}} \otimes \cdots
\otimes V_0^{\Fr^{i+n-1}} \otimes (V_{p-2}\otimes \det)^{\Fr^{i+n}}, \\
& V_{p-2-r_i}^{\Fr^i}\otimes V_0^{\Fr^{i+1}} \otimes \cdots
\otimes V_0^{\Fr^{i+n}} \otimes V_{1}^{\Fr^{i+n+1}}, \\
& V_{p-2-r_i}^{\Fr^i}\otimes V_0^{\Fr^{i+1}} \otimes \cdots
\otimes V_0^{\Fr^{i+n-1}} \otimes (V_{p-2} \otimes \det)^{\Fr^{i+n}}, \\
& \cdots,\\
& V_{p-2-r_i}^{\Fr^i}\otimes V_0^{\Fr^{i+1}} \otimes (V_{p-2} \otimes \det)^{\Fr^{i+2}}
\otimes V_{p-1}^{\Fr^{i+3}} \otimes \cdots \otimes V_{p-1}^{\Fr^{i+n}},\\
& V_{p-2-r_i}^{\Fr^i}\otimes (V_{p-2} \otimes \det)^{\Fr^{i+1}} \otimes V_{p-1}^{\Fr^{i+2}}
\otimes \cdots \otimes V_{p-1}^{\Fr^{i+n}}. \\
\end{eqnarray*}
\end{lemma}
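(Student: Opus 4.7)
The plan is to recognise $X(i)$ as a Frobenius twist of the representation $V_{p-2-r_i}(n)$ appearing in Lemma~\ref{lem:tricky socle filtration}, and then to obtain the claim as a direct consequence of that lemma.

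First I would unpack the indexing set $\{j \in \cE : l(j) = i\}$ appearing in the definition of $X(i)$. By the definition of the function $l$ together with the characterisation of $n$ as the largest integer with $l(i+n) = i$, this set equals exactly $\{i+1, i+2, \ldots, i+n\}$ (and is empty when $n = 0$). Substituting this into the definition of $X(i)$ and comparing with the definition
\[
V_r(n) \;=\; V_r \otimes V_1^{\Fr} \otimes \bigotimes_{j=1}^n V_{p-1}^{\Fr^j}
\]
preceding Lemma~\ref{lem:tricky socle filtration}, one obtains at once the identification
\[
X(i) \;\cong\; V_{p-2-r_i}(n)^{\Fr^i}.
\]

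Next I would observe that the Frobenius twist $M \mapsto M^{\Fr^i}$ is an auto-equivalence of the category of finite-dimensional $\F$-representations of $\GL_2(k_v)$. It therefore preserves the property that the submodule lattice of an object is totally ordered, and it commutes with the formation of socle and cosocle filtrations, acting on each graded piece by the same twist. Consequently, each assertion in the statement of Lemma~\ref{lem:X socle filtration} follows by applying Lemma~\ref{lem:tricky socle filtration} to $V_{p-2-r_i}(n)$ and then twisting every listed Jordan--H\"older factor by $\Fr^i$; each exponent $\Fr^j$ becomes $\Fr^{i+j}$, and the resulting list matches the statement of Lemma~\ref{lem:X socle filtration} position by position.

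The only subtle point is the edge case $n = f-1$, which occurs precisely when $\cS' = \{i\}$. In this range Lemma~\ref{lem:tricky socle filtration} is not directly applicable, since $\Fr^{i+n+1} = \Fr^i$ causes a collision of tensor factors. In that degenerate setting one has to repeat the inductive argument of Lemma~\ref{lem:tricky socle filtration} directly, making use of the same analysis of $V_1 \otimes V_{p-1} \cong R_{p-2}$ and the extension computations of Proposition~\ref{prop:extensions of Serre weights}; I expect this to be the only point requiring a genuinely separate computation, and it should be essentially routine.
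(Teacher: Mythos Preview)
Your proposal is correct and is essentially identical to the paper's own proof, which is the single sentence ``This follows immediately from Lemma~\ref{lem:tricky socle filtration}.'' You have simply made explicit the identification $X(i)\cong V_{p-2-r_i}(n)^{\Fr^i}$ and the fact that the Frobenius twist is an auto-equivalence of the category of $\GL_2(k_v)$-representations.

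Your flagging of the edge case $n=f-1$ (equivalently $\cS'=\{i\}$) is astute---the paper does not address it either, since Lemma~\ref{lem:tricky socle filtration} is only stated for $n<f-1$. Note, however, that in that case the very \emph{statement} of Lemma~\ref{lem:X socle filtration} becomes problematic: the middle ``Jordan--H\"older factor'' has $\Fr^{i+n+1}=\Fr^{i}$, so it reads $(V_{p-2-r_i}\otimes V_1)^{\Fr^i}$, which is generally reducible. Thus it is not merely the proof but the formulation that would need adjustment there, and your suggested remedy of ``repeating the inductive argument'' would not quite suffice as stated. This is a gap shared with the paper rather than one specific to your argument.
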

\begin{proof}
This follows immediately from Lemma~\ref{lem:tricky socle filtration}.
\end{proof}

We let $\cP$ denote the collection of subsets $J \subseteq \cS$
for which $i \in J \cap \cE$ implies that $i -1 \in J$.
The following proposition constructs, for each element $J \in \cP$,
a subrepresentation $F_J$ of
$\bigotimes_{i = 0}^{f-1} V_{2p - 2 - r_i}^{\Fr^i}$.
This construction refines the correspondence $J \mapsto F_J'$
(for $J \subseteq \cS'$) introduced above.

\begin{prop}\label{prop: defn of F_J}
There is a unique way to define an order-preserving map $J \mapsto F_J$
from $\cP$ to the lattice of subrepresentations of 
$\bigotimes_{i = 0}^{f-1} V_{2p - 2 - r_i}^{\Fr^i}$
such that:
\begin{enumerate}
\item $F_J \subset F'_{J \cap \cS'}$.
\item  The inclusion
$$\sum_{J' \in \cP,\ J'\subsetneq J \text{ s.t. } \atop J'\cap \cS' \subsetneq J\cap \cS'}
\!\!\!\!\!\!\!\!\!
F_{J'} \, \subset \, F_J \cap F'_{< J\cap \cS'}$$
{\em (}which holds by {\em (1)} together with the fact that $J'\mapsto F_{J'}$
is order preserving{\em )} is an equality.
\item If for each $i \in J\cap \cS'$ we let $n_i(J)$ denote
the largest integer for which
$\{i+1,\ldots,i+n_i(J)\}\subseteq  J \cap \cE,$
then the image of $F_J/(F_J \cap F'_{< J\cap \cS'}) = (F_J + F'_{< J\cap \cS'})/F'_{< J \cap \cS'}$
in $W_J = F'_{J\cap \cS'}/F'_{< J \cap \cS'}$
is identified 
under the isomorphism~{\em (\ref{eqn:X iso})}
with the tensor product
$$\bigotimes_{i \in J\cap \cS'} \soc_{n_i(J)}X(i) \otimes
\bigotimes_{i \not\in \tJ} (V_{r_i} \otimes \det{}^{p-1-r_i})^{\Fr^i}.$$
{\em (}Here $\soc_{n_i(J)}X(i)$ denotes the $n_i(J)$th stage of the socle filtration on $X(i)$.{\em )}
\end{enumerate}
\end{prop}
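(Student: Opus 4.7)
The plan is to establish the proposition by induction on $|J \cap \cS'|$, constructing $F_J$ and simultaneously verifying the claimed properties. For the base case $|J \cap \cS'| = 0$, the defining condition of $\cP$ (that $i \in J \cap \cE$ requires $i-1 \in J$) together with $\cS' \neq \emptyset$ forces $J = \emptyset$, since otherwise cycling around would give $\cS \subseteq J$; we simply put $F_\emptyset := F'_\emptyset = \bigotimes_i (V_{r_i} \otimes \det^{p-1-r_i})^{\Fr^i}$, and conditions (1)--(3) hold vacuously.

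For the inductive step, fix $J \in \cP$ with $|J \cap \cS'| \geq 1$, and assume $F_{J'}$ is defined for all $J' \in \cP$ with $|J' \cap \cS'| < |J \cap \cS'|$. Set $A_J := \sum F_{J'}$ over all $J' \in \cP$ with $J' \subsetneq J$ and $J' \cap \cS' \subsetneq J \cap \cS'$; by inductive condition~(1), $A_J \subseteq F'_{<J\cap \cS'}$. Let $T_J \subseteq W_{J\cap \cS'}$ denote the subrepresentation described on the right-hand side of (3), which is well-defined because Lemma~\ref{lem:X socle filtration} guarantees that each factor $X(i)$ has a totally ordered submodule lattice. Let $\tilde{F}_J \subseteq F'_{J\cap \cS'}$ be the preimage of $T_J$. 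Conditions (2) and (3) together demand exactly that $F_J/A_J$ be a $\Gamma$-equivariant complement to $F'_{<J\cap \cS'}/A_J$ inside $\tilde{F}_J/A_J$; so the proposition reduces to showing that this complement exists and is unique.

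The key technical point, which I expect to be the main obstacle, is verifying that the Jordan--H\"older constituents of $F'_{<J\cap \cS'}/A_J$ and of $T_J$ are disjoint. Granted the disjointness, uniqueness of the complement is automatic, and existence is furnished by an explicit construction: the tensor decomposition (\ref{eqn:X iso}) together with the totally ordered socle filtration of each factor $X(i)$ allows one to lift $\soc_{n_i(J)} X(i)$ canonically to a subrepresentation of the corresponding tensor factor in $F'_{J\cap \cS'}$, and tensoring these lifts together with the factors $(V_{r_i} \otimes \det^{p-1-r_i})^{\Fr^i}$ for $i \not\in \tJ$ produces a subrepresentation of $\tilde{F}_J$ that surjects onto $T_J$; its sum with $A_J$ is the desired $F_J$. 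The disjointness claim itself is a combinatorial check using Proposition~\ref{prop: description of JH factors of V_{2p-2-r}} and Lemma~\ref{lem:X socle filtration}: the constituents of $T_J$ correspond to specific elements $\lambda \in \cI(x_0,\ldots, x_{f-1})$ whose profile precisely records both $J \cap \cS'$ (via the indices where $\lambda_i \in \{p-2-x_i, p-3-x_i, p-1-x_i\}$) and the integers $n_i(J)$ for $i \in J \cap \cS'$, whereas constituents arising from any $F_{J'}$ with $J' \cap \cS' \subsetneq J \cap \cS'$ necessarily have strictly different profiles in this sense.

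Finally, the order-preserving property $J_1 \subseteq J_2 \Rightarrow F_{J_1} \subseteq F_{J_2}$ follows by a parallel induction: either $J_1 \cap \cS' \subsetneq J_2 \cap \cS'$, in which case $F_{J_1} \subseteq A_{J_2} \subseteq F_{J_2}$ by definition of $A_{J_2}$, or $J_1 \cap \cS' = J_2 \cap \cS'$ and $J_2 \setminus J_1 \subseteq \cE$, in which case $n_i(J_1) \leq n_i(J_2)$ for all $i \in J_1 \cap \cS'$ gives $T_{J_1} \subseteq T_{J_2}$, and one concludes by induction on $|J_2 \setminus J_1|$ using compatibility of the explicit lifts described above. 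Uniqueness of the entire assignment $J \mapsto F_J$ is then automatic from the uniqueness at each inductive step.
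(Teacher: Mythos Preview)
Your framework is substantially more detailed than the paper's two-sentence proof, which simply asserts that conditions (1)--(3) permit a recursive definition of $F_J$ and that uniqueness is ``obvious''. Your reformulation---induction on $|J\cap\cS'|$, recasting (2)--(3) as the search for a complement to $F'_{<J\cap\cS'}/A_J$ inside $\tilde F_J/A_J$, and deducing uniqueness of any such complement from Jordan--H\"older disjointness---is correct and is presumably what the paper has in mind.

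The gap is in your existence argument. You assert that the isomorphism~(\ref{eqn:X iso}) lets one ``lift $\soc_{n_i(J)}X(i)$ canonically to a subrepresentation of the corresponding tensor factor in $F'_{J\cap\cS'}$'', but that isomorphism describes the \emph{quotient} $W_{J\cap\cS'}$, not $F'_{J\cap\cS'}$ itself. The relevant cluster of tensor factors upstairs is $V_{2p-2-r_i}^{\Fr^i}\otimes\bigotimes_{j\in\cE,\,l(j)=i}V_{p-1}^{\Fr^j}$, and its surjection onto $X(i)$ has kernel $(V_{r_i}\otimes\det^{p-1-r_i})^{\Fr^i}\otimes\bigotimes_j V_{p-1}^{\Fr^j}$; this sequence does not split, so there is no evident section with which to lift socle pieces. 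Jordan--H\"older disjointness yields uniqueness of a complement \emph{if one exists}, but existence would require the extension class in $\Ext^1(T_J,\,F'_{<J\cap\cS'}/A_J)$ to vanish, which is genuinely stronger. And even granting some $G_J$ surjecting onto $T_J$, you still need $(G_J+A_J)\cap F'_{<J\cap\cS'}=A_J$, which your sketch does not address. The paper offers no help here either; it treats existence as self-evident, but your attempt to make it explicit does not go through as written.
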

\begin{proof}
The given conditions allow us to recursively define $F_J$,
beginning with $F_{\emptyset}~=~\bigotimes_{i=0}^{f-1}
V_{r_i}^{\Fr^i}$. It is obvious that conditions (1)-(3) determine
$F_J\subset F'_{J\cap\cS'}$ uniquely.
\end{proof}

Note that $\cS$ is the (unique) maximal element of $\cP$.
\begin{lemma}
\label{lem:limitation on multiplicity one subobjects}
If $V$ is a subrepresentation of 
$\bigotimes_{i = 0}^{f-1} V_{2p - 2 - r_i}^{\Fr^i}$ with socle equal to
$\bigotimes_{i = 0}^{f-1} (V_{r_i}\otimes \det^{p-1-r_i})^{\Fr^i}$,
then the multiset $\JH(V)$ of Jordan--H\"older factors of $V$ satisfies
multiplicity one if and only if $V \subseteq
F_{\cS}$. Furthermore, any $\F$-representation $W$ of $\GL_2(k_v)$
with socle~$\bigotimes_{i = 0}^{f-1} V_{r_i}^{\Fr^i}$ which satisfies
multiplicity one is isomorphic to some such $V$.
\end{lemma}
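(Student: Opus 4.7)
The approach leverages the filtration of $T := \bigotimes_{i=0}^{f-1} V_{2p-2-r_i}^{\Fr^i}$ by the $F'_J$ (for $J\subseteq\cS'$) together with the refined subrepresentations $F_J$ (for $J\in\cP$) constructed in Proposition~\ref{prop: defn of F_J}, and makes crucial use of Lemma~\ref{lem:X socle filtration}: each $X(i)$ is uniserial with a socle filtration of length $2n_i+1$ whose first $n_i+1$ layers are pairwise non-isomorphic and whose last $n_i$ layers repeat the first $n_i$ in reverse order. In particular, $\soc_{n_i}X(i) = \soc_{n_i(\cS)}X(i)$ is the unique maximal multiplicity-free subrepresentation of $X(i)$.

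First I would show that $F_{\cS}$ is itself residually multiplicity-free, which immediately yields the ``if'' direction of the first claim. By iterating the description in Proposition~\ref{prop: defn of F_J}(3), the Jordan--H\"older factors of $F_{\cS}$ are parameterized by pairs consisting of a set $J\in\cP$ and, for each $i\in J\cap\cS'$, a socle-layer index $0\le k_i\le n_i(J)$; the corresponding graded piece is $\bigotimes_{i\in J\cap\cS'}(\soc_{k_i}X(i)/\soc_{k_i-1}X(i))\otimes\bigotimes_{i\notin\tJ}(V_{r_i}\otimes\det^{p-1-r_i})^{\Fr^i}$. Matching these against the parameterization of Proposition~\ref{prop: description of JH factors of V_{2p-2-r}} by elements $\lambda\in\cI(x_0,\dots,x_{f-1})$, I would verify that the resulting map is injective: the restriction $k_i\le n_i(J)$ is precisely what prevents the palindromic redundancy from Lemma~\ref{lem:X socle filtration} and keeps the associated $\lambda$'s distinct.

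For the converse, suppose $V\subseteq T$ is multiplicity-free with socle $\bigotimes_{i=0}^{f-1}(V_{r_i}\otimes\det^{p-1-r_i})^{\Fr^i}$, and let $J$ be the minimal subset of $\cS'$ such that $V\subseteq F'_J$. The image $\overline{V}$ of $V$ in $W_J=F'_J/F'_{<J}$ is a nonzero subrepresentation of $\bigotimes_{i\in J}X(i)\otimes\bigotimes_{i\notin\tJ}(V_{r_i}\otimes\det^{p-1-r_i})^{\Fr^i}$; by the uniseriality of each $X(i)$, $\overline{V}$ must equal $\bigotimes_{i\in J}\soc_{k_i}X(i)\otimes\bigotimes_{i\notin\tJ}(V_{r_i}\otimes\det^{p-1-r_i})^{\Fr^i}$ for some integers $k_i$. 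If some $k_i$ were to exceed $n_i(J')$ for every $J'\in\cP$ with $J'\cap\cS'=J$, then by the palindromic structure $\soc_{k_i}X(i)$ would contain both the $k_i$-th and the $(2n_i-k_i)$-th socle layers of $X(i)$, which are isomorphic; so $\overline{V}$, and hence $V$, would carry a repeated Jordan--H\"older factor, contradicting multiplicity-freeness. Applying the same argument inductively to $V\cap F'_{<J}$ yields $V\subseteq F_{\cS}$.

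Finally, for any abstract multiplicity-free $W$ with irreducible socle $\bigotimes V_{r_i}^{\Fr^i}$, I would realize $W$ as a subrepresentation of $F_{\cS}$ by induction on the socle length of $W$. The inductive step hinges on Proposition~\ref{prop:extensions of Serre weights}, which constrains the possible extensions between successive socle layers of $W$, together with the explicit extension constructions recalled in the discussion preceding Lemma~\ref{lem:tensoring extensions}: each extension appearing in $W$ is of the form realized inside one of the $X(i)$'s, and multiplicity-freeness of $W$ forces it to be realized in the non-palindromic ``first half'' of $X(i)$ already inside $F_{\cS}$. The main obstacle is the inductive bookkeeping --- tracking which layer of which $X(i)$ accommodates each new piece of $W$ --- but the uniserial structure from Lemma~\ref{lem:X socle filtration} makes this tractable, and once organized it yields the required embedding $W\hookrightarrow F_{\cS}$, simultaneously establishing the second and third claims of the lemma.
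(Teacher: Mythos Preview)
Your outline parallels the paper's three-part structure, but each part has a gap. In the ``only if'' direction, your claim that by uniseriality $\overline{V}$ must \emph{equal} a product $\bigotimes_{i\in J}\soc_{k_i}X(i)\otimes\cdots$ is false: subrepresentations of a tensor product of uniserial modules are not in general products of subrepresentations. The paper instead runs an induction showing $F'_J\cap V=F_{\tJ}\cap V$ for each $J\subseteq\cS'$, using only that $\soc_j X(i)$ itself fails multiplicity one once $j>n_i$, and never needs the image of $V$ to be a product. In the ``if'' direction, your parameterization of $\JH(F_{\cS})$ by pairs $(J\in\cP,(k_i)_{i\in J\cap\cS'})$ with $0\le k_i\le n_i(J)$ over-counts, since distinct such pairs can yield the same graded piece; the paper uses the single-layer quotients
\[
F_J/F_{<J}\cong\bigotimes_{i\in J\cap\cS'}\bigl(\soc_{n_i(J)}X(i)/\soc_{n_i(J)-1}X(i)\bigr)\otimes\bigotimes_{i\notin\tJ}V_{r_i}^{\Fr^i}
\]
(one layer per $J$, not a range), checks that any constituent $\sigmabar_\lambda$ of $F_J/F_{<J}$ has $l(\sigmabar_\lambda)=|J|$ with $\lambda$ determining $J$, and thereby reduces to multiplicity one for each quotient separately.

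For the final claim, your induction on socle length is unnecessary: \cite[Prop.~3.6]{Breuil_Paškūnas_2012} embeds any such $W$ into $\bigotimes_{i=0}^{f-1}V_{2p-2-r_i}^{\Fr^i}$ directly, whereupon the first part of the lemma finishes the argument in one line.
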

\begin{rem}
Provided that some $r_i\ne 0$, every subrepresentation of $V$ has
socle equal to $\bigotimes_{i = 0}^{f-1} (V_{r_i}\otimes \det^{p-1-r_i})^{\Fr^i}$.
\end{rem}
\begin{proof}[Proof of Lemma~\ref{lem:limitation on multiplicity one subobjects}]
Write $n_i:=n_i(\cS)$. An examination of the statement of Lemma~\ref{lem:X socle filtration}
shows that if $j > n_i$ then the multiset of Jordan--H\"older
constituents of $\soc_j X(i)$ does {\em not} satisfy multiplicity one.
From this a straightforward induction shows that if $\JH(V)$ does satisfy multiplicity
one, then $F'_J \cap V = F_{\tJ} \cap V$ for each $J \subset \cS'$.  Taking
$J = \cS'$, so that $\tJ = \cS$,  we conclude that $V \subseteq F_{\cS},$
as claimed.

To show that $F_{\cS}$ satisfies multiplicity one, set $F_{<
  J}:=\sum_{J'\subsetneq J} F_{J'}$. Then by Proposition~\ref{prop:
  defn of F_J}(3), we see that \[F_J/F_{<J}\cong \bigotimes_{i \in
  J \cap \cS'}\left(\soc_{n_i(J)}X(i)/\soc_{n_i(J)-1}X(i)\right) \otimes \bigotimes_{i
  \not\in \tJ} V_{r_i}^{\Fr^i}.\] 
Suppose that $\sigmabar_\lambda$ is a Jordan--H\"older factor of
$F_J/F_{<J}$. From the explicit description of the socle filtration on
$X(i)$, it is easy to check (using
\cite[Lem.~3.8(i)]{Breuil_Paškūnas_2012} whenever $n_i(J) = n_i(\cS)$)  that we have $l(\sigmabar_\lambda)=|J|$,
and it is also clear that $\lambda$ determines $J$. Therefore, in
order to check that $F_{\cS}$ satisfies multiplicity one, it suffices
to show that each $F_J/F_{<J}$ satisfies multiplicity one, which is
immediate from its explicit description.

Finally, note that any $W$ with socle~$\bigotimes_{i = 0}^{f-1} (V_{r_i}\otimes \det^{p-1-r_i})^{\Fr^i}$ which satisfies
multiplicity one is isomorphic to a subrepresentation
of~$\bigotimes_{i = 0}^{f-1} V_{2p - 2 - r_i}^{\Fr^i}$ by Proposition~3.6 of~\cite{Breuil_Paškūnas_2012}.
\end{proof}

\begin{prop}\label{prop: socle filtration on F_S}
The socle filtration of $F_{\cS}$ is given by
$$\soc_n F_{\cS} = \sum_{J \in \cP \atop |J| = n} F_J.$$
Furthermore, all extensions between constituents of successive 
layers of the socle filtration which can be non-zero
according to Proposition~{\em \ref{prop:extensions of Serre weights}},
{\em are} non-zero.
\end{prop}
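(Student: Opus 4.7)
The plan is to proceed in two stages. In the first, I identify the Jordan--H\"older structure of $F_{\cS}$: by Proposition \ref{prop: defn of F_J}(3) together with Lemma \ref{lem:X socle filtration}, and as observed in the proof of Lemma \ref{lem:limitation on multiplicity one subobjects}, each quotient $F_J/F_{<J}$ is an irreducible Jordan--H\"older factor $\sigmabar_J$ of $F_{\cS}$ with $l(\sigmabar_J) = |J|$; moreover the $\sigmabar_J$ are pairwise distinct and exhaust the multiplicity-free set of constituents of $F_{\cS}$. Set $V_n := \sum_{J \in \cP,\, |J| \leq n} F_J$, which coincides with $\sum_{|J|=n,\, J \in \cP} F_J$ because $J \mapsto F_J$ is order-preserving and every element of $\cP$ can be extended within $\cP$, using that $\cS' \neq \emptyset$. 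I claim $V_n = \soc_n F_{\cS}$.

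The forward inclusion $V_n \subseteq \soc_n F_{\cS}$ is proved by induction on $n$: the base case $V_0 = F_{\emptyset} = \soc F_{\cS}$ is immediate, and inductively, for any $J \in \cP$ with $|J| = n$ we have $F_{<J} \subseteq V_{n-1} \subseteq \soc_{n-1}F_{\cS}$, so the image of $F_J$ in $F_{\cS}/\soc_{n-1}F_{\cS}$ is a quotient of the irreducible $\sigmabar_J$, hence semisimple, giving $F_J \subseteq \soc_n F_{\cS}$. For the reverse inclusion, inducting on $n$ and assuming $V_{n-1} = \soc_{n-1} F_{\cS}$, it suffices to show $\soc(F_{\cS}/V_{n-1}) \subseteq V_n/V_{n-1}$. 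If some $\sigmabar_{J_0}$ with $|J_0| > n$ appeared as an irreducible submodule of $F_{\cS}/V_{n-1}$, its preimage $W \subseteq F_{\cS}$ would give an extension $0 \to V_{n-1} \to W \to \sigmabar_{J_0} \to 0$. Every constituent $\sigmabar_{J_1}$ of $V_{n-1}$ has $|J_1| \leq n-1$ and thus $||J_0|-|J_1|| \geq 2$; since Proposition \ref{prop:extensions of Serre weights} admits non-vanishing Ext between two Serre weights only when exactly one coordinate switches type (whence $|l - l'| = 1$), and $l(\sigmabar_J) = |J|$ by the first stage, we obtain $\Ext^1(\sigmabar_{J_0}, \sigmabar_{J_1}) = 0$. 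A standard d\'evissage through the filtration $V_\bullet$ then yields $\Ext^1(\sigmabar_{J_0}, V_{n-1}) = 0$, so $W$ splits and embeds $\sigmabar_{J_0}$ into $F_{\cS}$ as an irreducible submodule, contradicting $\soc F_{\cS} = \sigmabar_{\emptyset}$. This Ext-vanishing argument is the technical heart of the proof.

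For the non-split extension claim, suppose $J_1, J_2 \in \cP$ with $|J_1| = n-1$, $|J_2| = n$, and that $\sigmabar_{J_1}, \sigmabar_{J_2}$ admit a non-zero extension according to Proposition \ref{prop:extensions of Serre weights}. Its conditions force exactly one coordinate of the $\lambda$-label to switch type, so $J_2 = J_1 \cup \{k\}$ for some $k \in \cS$. Set $N := \sum_{J' \in \cP,\, J' \subsetneq J_2,\, J' \neq J_1} F_{J'}$; since no element of $\cP$ strictly lies between $J_1$ and $J_2$, the composition factors of $N$ are precisely $\{\sigmabar_{J''} : J'' \in \cP,\, J'' \subseteq J_2,\, J'' \neq J_1, J_2\}$, so $F_{J_2}/N$ has composition factors $\{\sigmabar_{J_1}, \sigmabar_{J_2}\}$. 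Its cosocle is $\sigmabar_{J_2}$ (a non-zero quotient of the irreducible cosocle of $F_{J_2}$, and non-zero since $F_{J_2}/N$ is non-zero). A split extension $\sigmabar_{J_1} \oplus \sigmabar_{J_2}$ would have cosocle of length two, so $F_{J_2}/N$ is non-split; by multiplicity one this length-two subquotient of $F_{\cS}$ realizes the desired non-split extension in the socle filtration.
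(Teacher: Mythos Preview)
Your proof contains a substantive error in its foundational claim, and a genuine gap in the second part.

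\textbf{The parameterization error.} You assert that each $F_J/F_{<J}$ is irreducible and that the constituents of $F_{\cS}$ are thereby parameterized by $\cP$. This is false. The description in the proof of Lemma~\ref{lem:limitation on multiplicity one subobjects} gives
\[
F_J/F_{<J}\cong \bigotimes_{i \in J \cap \cS'}\bigl(\soc_{n_i(J)}X(i)/\soc_{n_i(J)-1}X(i)\bigr) \otimes \bigotimes_{i \not\in \tJ} V_{r_i}^{\Fr^i},
\]
and whenever $n_i(J)=n_i(\cS)$ for some $i\in J\cap\cS'$, the middle layer of $X(i)$ contributes an extra factor $V_1^{\Fr^{i+n_i+1}}$ at the same Frobenius twist as either $V_{r_{i+n_i+1}}$ or $V_{p-2-r_{i+n_i+1}}$, and $V_1\otimes V_r$ is a direct sum of two irreducibles. (When $\cE=\emptyset$ this happens for \emph{every} $i\in J$.) What the paper actually establishes is only that every Jordan--H\"older factor of $F_J/F_{<J}$ has $l$-value $|J|$; the constituents of $F_{\cS}$ are indexed by $\lambda\in\cI$, not by $\cP$.

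\textbf{First part.} Despite this error, your Ext-vanishing argument for the reverse inclusion is sound in substance once rephrased: all constituents of $V_n/V_{n-1}$ have $l=n$, and a constituent of $F_{\cS}/V_{n-1}$ with $l$-value exceeding $n$ has vanishing $\Ext^1$ against every constituent of $V_{n-1}$ (difference in $l$ at least $2$), so by d\'evissage it would split off and embed in $F_{\cS}$, contradicting $\soc F_{\cS}=F_\emptyset$. This is a genuinely different route from the paper, which instead proves the non-split extensions first (via Lemma~\ref{lem:tensoring extensions} and the explicit structure) and uses them to conclude that $V_\bullet$ is the socle filtration.

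\textbf{Second part.} Here the gap is real. Your argument rests on the assertion that $\cosoc(F_{J_2})$ is irreducible, which you do not prove and which does not follow from what you have established. Even granting the first part, knowing that a level-$n$ constituent $\sigmabar$ is not in $\soc(F_{\cS}/V_{n-2})$ shows only that \emph{some} component of the extension class in $\Ext^1(\sigmabar, V_{n-1}/V_{n-2})=\bigoplus_{\sigmabar'}\Ext^1(\sigmabar,\sigmabar')$ is non-zero; it does not show that \emph{every} component which can be non-zero by Proposition~\ref{prop:extensions of Serre weights} actually is. Your quotient $F_{J_2}/N$ need not have only two constituents (because of the parameterization error), and even if it did, you have no independent control on its cosocle. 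The paper's appeal to Lemma~\ref{lem:tensoring extensions} and the explicit description of the $F_J$ is doing essential work here that your argument does not replace.
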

\begin{proof}
Set $\soc'_nF_{\cS} := \sum_{J \in \cP \atop |J| = n} F_J$. As in
the proof of Lemma~\ref{lem:limitation on multiplicity one
  subobjects}, any Jordan--H\"older factor $\sigmabar$ of
$\soc'_nF_{\cS}/\soc'_{n-1}F_{\cS}$ satisfies $l(\sigmabar)=n$. It
follows immediately from Proposition~\ref{prop:extensions of Serre
  weights} that if $\sigmabar'$ is another Jordan--H\"older factor of
$F_{\cS}$ which admits an extension with $\sigmabar$, then
$l(\sigmabar')=l(\sigmabar)\pm 1$, so in particular we see that
$\soc'_nF_{\cS}/\soc'_{n-1}F_{\cS}$ is semisimple. 

On the other hand, by Lemma~\ref{lem:tensoring extensions} (together
with the discussion preceding it) and the very definition of the $F_J$
(concretely, the description of $F_J/F_{<J}$ given in the proof of
Lemma~\ref{lem:limitation on multiplicity one subobjects}), we see
that each irreducible subrepresentation of the quotient
$\soc'_nF_{\cS}/\soc'_{n-1}F_{\cS}$ extends some irreducible
subrepresentation of the quotient $\soc'_{n-1}F_{\cS}/\soc'_{n-2}F_{\cS}$, and
indeed that any possibly non-zero extension between constituents of these
layers is in fact non-zero. Thus $\soc'_nF_{\cS}=\soc_n F_{\cS}$, and
the result follows.
\end{proof}

\begin{proof}[Proof of Theorem~\ref{thm: socle filtration for tame type
  with irred socle}]The result for the cosocle filtrations of the
$\sigmabar_J^\circ(\tau)$ follows by duality from that for the socle
filtrations of the $\sigmabar^{\circ,J}(\tau)$. In this case, after
twisting we may suppose that for some $0\le r_i\le p-1$ not all equal
to $p-1$, we have $\sigmabar_J(\tau)\cong\bigotimes_{i=0}^{f-1}
(V_{r_i}\otimes \det^{p-1-r_i})^{\Fr^i}$. By Lemma~\ref{lem:weights in
  tame types have multiplicity one}, $\sigmabar^{\circ,J}(\tau)$
satisfies multiplicity one, and it has socle  $\sigmabar_J(\tau)\cong\bigotimes_{i=0}^{f-1}
(V_{r_i}\otimes \det^{p-1-r_i})^{\Fr^i}$ by definition, so we see from
Lemma~\ref{lem:limitation on multiplicity one subobjects} that it is
isomorphic to a subrepresentation of $F_{\cS}$.

The result is then immediate from Proposition~\ref{prop: socle
  filtration on F_S} upon noting that for each Jordan--H\"older factor
$\sigmabar_{J'}(\tau)$ we have $l(\sigmabar_{J'}(\tau))=|J\triangle
J'|$, and that (as already noted in the proof of
Proposition~\ref{prop: socle filtration on F_S}), the irreducible
constituents of the $i$th layer of the socle filtration of $F_{\cS}$
are precisely those $\sigmabar$ with $l(\sigmabar)=i$.
\end{proof}

             \subsection{Specific lattices and their
               gauges}
\label{subsec: specific lattices and gauges}
           Maintain the notation
             and assumptions of the previous sections, so that $\tau$
             is a non-scalar tame inertial type, and for each $J \in \cP_\tau$, 
             we have $\GL_2(\cO_{F_v})$-invariant $\cO$-lattices
             $\sigma^{\circ,J}(\tau)$ and $\sigma^\circ_J(\tau)$ in
             $\sigma(\tau)$, which are uniquely determined up to
             scaling.
 In this section, we choose 
             $E=\cO[\frac 1p]$ to be unramified over $\Qp$; this is
             possible by Lemma~\ref{lem:weights in tame types have multiplicity one}.
 If $\tau$ is cuspidal, assume in addition
             that $\tau$ is regular. The arguments of this section are
             an abstraction and generalisation of the proof of
             Th\'eor\`eme 2.4 of~\cite{breuillatticconj}.

             We can and do fix some
             $\Jbase\in\cP_\tau$ with $\Jbase^c\in\cP_\tau$; if $\tau$ is a
             principal series type, then we take $\Jbase=\emptyset$, and if
             $\tau$ is cuspidal then by Lemma~\ref{lem: regular cuspidals
               allow relabelling} we can and do set
             $\Jbase=\{i,\dots,f-1\}$ for some $i$. We then
             have an involution $\base$ on the subsets of $\cS$ given by
             $\base(J):=J\triangle\Jbase$, so that $\base(\Jbase)=\emptyset$ and $\base(\Jbase^c)=\cS$. Note that if $\tau$ is a principal series
             type, then $\base(J)=J$ for all $J$. We will need the
             following simple combinatorial lemma.
\begin{lem}\label{lem:chains in cP_tau} 
For any $J \subseteq  J'\subseteq\cS$ such
  that $\base(J),\base(J')\in\cP_\tau$, we can find a chain $J = J_{0} \subsetneq J_{1}
  \subsetneq \cdots \subsetneq J_{|J'|-|J|}= J'$ such that $|J_i|=i+|J|$ and
  $\base(J_i) \in \cP_{\tau}$ for each~$i$.
\end{lem}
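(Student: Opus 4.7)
The plan is to induct on $n := |J'| - |J|$, the base cases $n \leq 1$ being trivial. For $n \ge 2$, after applying the induction hypothesis to the pair $(J \cup \{j\}, J')$, it suffices to exhibit a single element $j \in J' \setminus J$ with $\base(J \cup \{j\}) \in \cP_\tau$. Writing $K := \base(J)$ and $K' := \base(J')$ (both in $\cP_\tau$ by hypothesis), and using that $\base(J \cup \{j\}) = K \triangle \{j\}$ when $j \notin J$, the task becomes: find $j \in J' \setminus J = K \triangle K'$ such that $K \triangle \{j\} \in \cP_\tau$.

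I would first treat the principal series case, in which $\base = \id$, $K \subsetneq K'$, and the flip $K \triangle \{j\} = K \cup \{j\}$ always represents an addition. The plan is to decompose $K' \setminus K$ into its maximal cyclic runs, and to argue within each run $R = \{a, a+1, \ldots, a+k\}$. Inspecting the conditions defining $\cP_\tau$ recalled in Section~\ref{ss:principal}, attempting to adjoin $a + \ell$ to $K$ produces at most two potentially new non-automatic constraints: $c_{a+\ell} \neq p-1$ at position $a + \ell$ and $c_{a+\ell+1} \neq 0$ at position $a + \ell + 1$. The former is automatic when $\ell = 0$ (handled by the maximality of $R$ together with $K' \in \cP_\tau$), and the latter is automatic when $a + \ell + 1 \notin R$. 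Thus I would attempt additions in order $\ell = 0, 1, \ldots, k$: if adjoining $a + \ell$ fails for some $\ell < k$ then $c_{a+\ell+1} = 0$, and carrying this through inductively gives $c_{a+1} = \cdots = c_{a+k} = 0$, at which point the final addition at $\ell = k$ succeeds because the remaining obstruction $c_{a+k} \neq p-1$ is ensured by $c_{a+k} = 0$.

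The main obstacle is the cuspidal case. Here the defining conditions of $\cP_\tau$ (Section~\ref{ss:cuspidal}) coincide with the principal series ones except at position $j = 0$, where the replacement $\delta_{K_0}(f-1) = 1 - \delta_K(f-1)$ effectively interchanges the roles of $0$ and $p-1$ in the condition. Moreover, since $\Jbase = \{i, \ldots, f-1\}$ is a cyclic interval in $\cS$, the set $K \triangle K'$ partitions naturally into $A := K' \setminus K \subseteq \Jbase^c$ (where the flip is an addition to $K$) and $B := K \setminus K' \subseteq \Jbase$ (where it is a deletion from $K$). My plan is to decompose $K \triangle K'$ into its maximal cyclic runs, noting that each run is either entirely contained in $\Jbase^c$, entirely contained in $\Jbase$, or straddles one of the two cyclic boundary points of $\Jbase$. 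Pure runs of either type are handled by the principal series argument or its formal dual (interchanging the roles of $0$ and $p-1$); the at most two boundary-straddling runs are handled by a hybrid argument that switches flip direction as the run crosses the boundary, with the modification at $j = 0$ (if applicable) absorbed by this change of flip type.
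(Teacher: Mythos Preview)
Your induction strategy is sound, and your principal-series argument via maximal cyclic runs is correct and self-contained; this is already a gain over the paper, which simply cites \cite[Lem.~2.1(iii)]{breuillatticconj} for that case. The paper's route is otherwise quite different. It first reduces to $J' = \cS$ by observing that if $(J_i)$ is a good chain from $J$ to $\cS$, then the sequence $(J_i \cap J')$, after discarding repetitions, is a good chain from $J$ to $J'$ (this uses that $\base(J_i \cap J') \in \cP_\tau$ whenever $\base(J_i), \base(J') \in \cP_\tau$, a closure property one checks directly from the definitions). With $J' = \cS$, the regular cuspidal case is handled by contradiction: assuming $\base(J \cup \{j\}) \notin \cP_\tau$ for every $j \notin J$, one reads off for each such $j$ a constraint of the form $c_m \in \{0, p-1\}$ at an adjacent position $m$, and a case analysis anchored at the distinguished index $i$ with $0 < c_i < p-1$ propagates these constraints around the cycle to force $c_i \in \{0, p-1\}$.

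Your cuspidal sketch heads in a workable direction but is incomplete, and in one spot slightly misleading. You say pure runs in $\Jbase^c$ are handled ``by the principal series argument'', but a pure addition-run beginning at $0$ (i.e.\ $R = \{0,\ldots,m\}$ with $f-1 \notin K \triangle K'$) already meets the modified condition: adjoining $0$ to $K$ imposes $c_0 \neq p-1$ when $f-1 \in K$, not when $f-1 \notin K$. It is still automatic (maximality gives $f-1 \in K \Leftrightarrow f-1 \in K'$, and the \emph{cuspidal} condition for $K'$ at $0$ then yields $c_0 \neq p-1$), but it is not literally the principal-series step; so the modification at $j=0$ is not confined to boundary-straddling runs as you suggest. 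More substantively, neither the ``formal dual'' for pure deletion runs nor the ``hybrid argument'' for the (up to two) boundary-straddling runs is written out, and your plan never identifies where the regularity hypothesis $0 < c_i < p-1$ is used. Since the paper's contradiction argument pivots precisely on this index $i$, you would need to locate its role in your run-based approach before the cuspidal case could be regarded as complete.
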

\begin{proof}

It suffices to consider the case $J' = \cS$.
  (Observe that
  $\base(J_i \cap J') \in \cP_{\tau}$, and so replacing each
  $J_i$ with $J_i \cap J'$ and discarding duplicates will give the
  desired chain.)  The principal series case is part of \cite[Lem.~2.1(iii)]{breuillatticconj},
so we need only consider the (regular) cuspidal case. We must show
that
 if $J\ne\cS$ with $\base(J)\in\cP_\tau$, then
there is some $j\notin J$ such that $\base(J\cup\{j\})\in\cP_\tau$.
We will prove this directly from the definition of $\cP_\tau$.
We use the
notation of Section~\ref{ss:cuspidal}.

Continue to assume that $\tau$ is a regular cuspidal type, so that as above
we have $\Jbase=\{i,\dots,f-1\}$ for some $i$ with $0<c_i<p-1$.  For
the sake of contradiction suppose that we have some 
$J\ne\cS$ with $\base(J)\in\cP_\tau$ and for all $j\not\in J$,
$\base(J\cup\{j\})\notin\cP_\tau$. Then we see that for each $j\notin J$ with $0\le j\le i-1$, either\begin{itemize}
\item $j-1\notin \base(J)_0$ and $c_{j}=p-1$, or
\item  $j+1\notin \base(J)$ and $c_{j+1}=0$,
\end{itemize}
and for each $j\notin J$ with $i\le j\le f-1$, either 
\begin{itemize}
\item $j-1\in \base(J)_0$ and $c_{j}=0$, or
\item $j\ne f-1$, $j+1\notin J$ and $c_{j+1}=p-1$, or
\item $j=f-1$, $0\notin J$, and $c_0=0$.
\end{itemize}
Note that in the final bullet point we have eliminated the possibility
$j = f-1$, $i =
0$, $0 \in J$, and $c_0 =0 $ because this contradicts $c_i \neq 0$.

Suppose that $i\notin J$. Then since $c_i\ne 0$, we see that if $i<f-1$
then $c_{i+1}=p-1$ and $i+1\notin J$. Continuing in this fashion, we see
that $i+1,\dots,f-1\notin J$, and that
$c_{i+1}=\cdots=c_{f-1}=p-1$. Applying the same analysis again, we see
that $0\notin J$ and $c_0=0$.  If $i=0$ this is a contradiction;
otherwise, continuing in the same way, we deduce
that $c_1=0$ and $1\notin \base(J)$, and eventually we conclude that
$c_i=0$, a contradiction.

Suppose now that $i\in J$. Suppose that there is some $0\le j\le
i-1$ with $j\notin J$, and without loss of generality let $j$ be maximal
with this property. Then we see that $c_j=p-1$ and
$j-1\notin\base(J)_0$ (using $c_i \neq 0$ in case $j=i-1$),
so continuing in this fashion, we have in particular $0\notin J$,
$c_0=p-1$, and $f-1\notin J$. Then $c_{f-1}=0$, $i \neq f-1$, and
$f-2\in\base(J)_0$; so $f-2\notin J$, and then $f-3\notin J,\dots,i\notin
J$, a contradiction. So we see that $j\in J$ for all $0\le j\le i$,
and thus we must have $j\notin J$ for some $i+1\le j\le f-1$. Let $j$ be
maximal with this property. Then we see that $c_j=0$ and
$j-1\in\base(J)$ (recalling in case $j = f-1$ that we have $0 \in J$), so $j-1\notin J$, and then $c_{j-1}=0$, and
continuing in this fashion we obtain $j-2\notin J,\dots, i\notin J$, a
contradiction.\end{proof}
If $\sigma^\circ$ is any $\GL_2(\cO_{F_v})$-invariant $\cO$-lattice
in $\sigma(\tau)$ and $\base(J) \in \cP_{\tau}$,
then we let $\varepsilon_{\base(J)}(\sigma^\circ)$ denote the least integer
such that $p^{\varepsilon_{\base(J)}(\sigma^\circ)} \sigma^\circ_{\base(J)}(\tau) \subseteq \sigma^\circ$. 
The tuple $(p^{\varepsilon_{\base(J)}(\sigma^\circ)})_{\base(J)\in \cP_\tau}$ is the
gauge of~$\sigma^\circ$. It is easy to see that the following fundamental property holds.
\begin{property}\label{property: first gauge axiom}
  For any lattice $\sigma^\circ$, and for any $J,J'$ with
  $\base(J),\base(J')\in \cP_\tau$, we
  have
  \[\varepsilon_{\base(J')}(\sigma^\circ_{\base(J)}(\tau)) \geq
  \varepsilon_{\base(J')}(\sigma^\circ) - \varepsilon_{\base(J)}(\sigma^\circ),\] with
  equality if and only if $\sigmabar^\circ$ contains a subquotient
  with socle $\sigmabar_{\base(J')}(\tau)$ and cosocle $\sigmabar_{\base(J)}(\tau)$.
\end{property}

Note that if we take $\sigma^\circ$ to be $\sigma^{\circ,\base(J')}(\tau)$, then for any $\sigmabar_{\base(J)}(\tau)$, 
the quotient $\sigmabar^{\circ,{\base(J')}}(\tau)$ contains a subobject with socle $\sigmabar_{\base(J')}(\tau)$ and cosocle
$\sigmabar_{\base(J)}(\tau)$, and so in particular we deduce that
\numequation
\label{eqn:gauge formula}
\varepsilon_{\base(J')}(\sigma^\circ_{\base(J)}(\tau)) = \varepsilon_{\base(J')}(\sigma^{\circ,\base(J')}(\tau)) - \varepsilon_{\base(J)}(\sigma^{\circ,\base(J')}(\tau)).
\end{equation}

We fix $\sigma^{\circ,{\base(\emptyset)}}(\tau)$ once and for all.  We then
normalize the $\sigma^\circ_{\base(J)}(\tau)$ by requiring that
$\varepsilon_{\base(J)}(\sigma^{\circ,{\base(\emptyset)}}(\tau)) = 0,$ and we normalize
the $\sigma^{\circ,\base(J)}(\tau)$ by requiring that
$\varepsilon_{{\base(\emptyset)}}(\sigma^{\circ,\base(J)}(\tau)) = 0.$ Note that, with
these normalizations, (\ref{eqn:gauge formula}) implies that $
\varepsilon_{{\base(\emptyset)}}(\sigma^\circ_{\base(J)}(\tau))=0$, and
Property~\ref{property: first gauge axiom} then implies that
$$0 = \varepsilon_{{\base(\emptyset)}}(\sigma^\circ_{\base(J)}(\tau))\geq \varepsilon_{{\base(\emptyset)}}(\sigma^{\circ,\base(J')}(\tau))
- \varepsilon_{\base(J)}(\sigma^{\circ,\base(J')}(\tau)) = 0 - \varepsilon_{\base(J)}(\sigma^{\circ,\base(J')}(\tau)),$$
i.e.\ that
\numequation
\label{eqn:non-negativity}
\varepsilon_{\base(J)}(\sigma^{\circ,\base(J')}(\tau)) \geq 0 \quad
\text{ for all } 
 J,J' \text{ with } \base(J),\base(J') \in \cP_{\tau}.
\end{equation}

Our key input is the following.
\begin{prop}\label{prop: facts that drive gauges.tex}
\begin{enumerate}
  \item  We have
$p^f \sigma^{\circ,{\base(\emptyset)}}(\tau) \subseteq
  \sigma^\circ_{\base(\emptyset)}(\tau) \subseteq
  \sigma^{\circ,{\base(\emptyset)}}(\tau),$
and $\sigma^\circ_{\base(\cS)}(\tau)=\sigma^{\circ,{\base(\emptyset)}}(\tau)$.

\item 
The sets $J$ with $\base(J) \in\cP_\tau$ such that $\sigmabar_{\base(J)}(\tau)$ appears in the
$i$th layer of the socle filtration of
$\sigmabar^\circ_{\base(\emptyset)}(\tau)$
are precisely those $J$ with
$|J|=f-i$.

\item
$\sigmabar^{\circ,{\base(\emptyset)}}(\tau)$ contains
a subquotient with socle $\sigmabar_{\base(J')}(\tau)$ and cosocle $\sigmabar_{\base(J)}(\tau)$
if and only if $J' \subseteq J,$ 
and $\sigmabar^\circ_{\base(\emptyset)}(\tau)$ contains
a subquotient with socle $\sigmabar_{\base(J')}(\tau)$ and cosocle $\sigmabar_{\base(J)}(\tau)$ 
if and only if  $J \subseteq J'$. 
\end{enumerate}
\end{prop}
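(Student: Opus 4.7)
The plan is to establish the four assertions in the order: the equality in part~(1), then part~(3), then part~(2), and finally the first inclusion of part~(1).

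First, the equality $\sigma^\circ_{\base(\cS)}(\tau) = \sigma^{\circ,\base(\emptyset)}(\tau)$ follows from Lemma~\ref{lem:uniqueness of lattices with irreducible socle} once one knows that the cosocle of $\sigma^{\circ,\base(\emptyset)}(\tau)$ is the single Jordan--H\"older factor $\sigmabar_{\base(\cS)}(\tau)$. By Theorem~\ref{thm: socle filtration for tame type with irred socle}, the top layer of the socle filtration of $\sigmabar^{\circ,\base(\emptyset)}(\tau)$ consists of this single factor (the unique constituent with $|\base(\emptyset)\triangle J'|=f$, namely $J'=\base(\cS)$), and the rigidity of this representation---which can be extracted from the explicit structure in Proposition~\ref{prop: socle filtration on F_S} and Lemma~\ref{lem:X socle filtration}, together with multiplicity-one (Lemma~\ref{lem:weights in tame types have multiplicity one}) and the extension constraints of Proposition~\ref{prop:extensions of Serre weights}---forces the cosocle to coincide with this top layer. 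The normalisation $\varepsilon_{\base(\cS)}(\sigma^{\circ,\base(\emptyset)}(\tau)) = 0$ then upgrades the resulting homothety to equality.

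For part~(3), I would combine Theorem~\ref{thm: socle filtration for tame type with irred socle} with Lemma~\ref{lem:chains in cP_tau}. For the existence direction of the statement for $\sigmabar^{\circ,\base(\emptyset)}(\tau)$, given $J' \subseteq J$ with $\base(J'), \base(J) \in \cP_\tau$, Lemma~\ref{lem:chains in cP_tau} yields a chain $J' = K_0 \subsetneq K_1 \subsetneq \cdots \subsetneq K_m = J$ with $\base(K_i) \in \cP_\tau$ and $|K_{i+1} \setminus K_i| = 1$; Theorem~\ref{thm: socle filtration for tame type with irred socle} places the consecutive constituents $\sigmabar_{\base(K_i)}(\tau), \sigmabar_{\base(K_{i+1})}(\tau)$ in adjacent socle layers with non-split extension between them, and splicing these extensions yields a subquotient with socle $\sigmabar_{\base(J')}(\tau)$ and cosocle $\sigmabar_{\base(J)}(\tau)$. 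For non-existence, Proposition~\ref{prop:extensions of Serre weights} restricts non-zero extensions between $\sigmabar_{\base(K)}(\tau)$ and $\sigmabar_{\base(K')}(\tau)$ to the case $|K \triangle K'| = 1$, and combined with the cardinality-stratification of the socle filtration by $|J|$, this forces any composition series of a subquotient with the given socle and cosocle to follow a monotonically cardinality-increasing chain $J' = K_0 \subsetneq \cdots \subsetneq K_m = J$, whence $J' \subseteq J$. The corresponding statement for $\sigmabar^\circ_{\base(\emptyset)}(\tau)$ follows from the essential self-duality of Lemma~\ref{lem:weights in tame types have multiplicity one}: $(\sigma^\circ_{\base(\emptyset)}(\tau))^\vee \otimes \chi$ is a lattice with irreducible socle $\sigmabar_{\base(\emptyset)}(\tau)$, hence is homothetic to $\sigma^{\circ,\base(\emptyset)}(\tau)$, and this duality interchanges socles with cosocles while reversing set-containments.

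Part~(2) is then a direct consequence: via the duality above, the socle filtration of $\sigmabar^\circ_{\base(\emptyset)}(\tau)$ corresponds to the cosocle filtration of $\sigmabar^{\circ,\base(\emptyset)}(\tau)$, which by part~(3) and the rigidity used above has $i$th layer equal to $\{\sigmabar_{\base(J)}(\tau) : |J| = f - i\}$. For the remaining first inclusion of~(1), the containment $\sigma^\circ_{\base(\emptyset)}(\tau) \subseteq \sigma^{\circ,\base(\emptyset)}(\tau)$ is immediate from the normalisation; for $p^f \sigma^{\circ,\base(\emptyset)}(\tau) \subseteq \sigma^\circ_{\base(\emptyset)}(\tau)$, I would choose a chain $\emptyset = J_0 \subsetneq J_1 \subsetneq \cdots \subsetneq J_f = \cS$ as in Lemma~\ref{lem:chains in cP_tau}, set $L_i := \sigma^\circ_{\base(J_i)}(\tau)$, and show along the chain that $L_{i-1} \subseteq L_i$ (from Property~\ref{property: first gauge axiom} together with part~(3)) and that $pL_i \subseteq L_{i-1}$ (using part~(2) to identify $L_i/L_{i-1}$ as a single Jordan--H\"older constituent by comparing $\overline{L_i}$ with the image of $L_{i-1}$ in it). Iterating gives $p^f L_f \subseteq L_0$, which via the equality in~(1) is the desired bound. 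The main obstacle I anticipate is establishing the rigidity of $\sigmabar^{\circ,\base(\emptyset)}(\tau)$ used in the first step and the non-existence direction of part~(3); both ultimately rest on carefully tracking the positions of Jordan--H\"older constituents in the socle filtration via the extension constraints and multiplicity-one, and the cuspidal case additionally relies on the regularity assumption to ensure Lemma~\ref{lem:chains in cP_tau} produces the required chains.
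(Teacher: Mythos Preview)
Your overall strategy is close to the paper's, and your treatment of part~(3) and the duality identification $(\sigma^\circ_{\base(\emptyset)}(\tau))^\vee\otimes\chi \sim \sigma^{\circ,\base(\emptyset)}(\tau)$ is essentially the same. The paper also deduces~(2) from~(1) plus Theorem~\ref{thm: socle filtration for tame type with irred socle} via this duality, so that part is fine.

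However, your proposed proof of the inclusion $p^f\sigma^{\circ,\base(\emptyset)}(\tau)\subseteq\sigma^\circ_{\base(\emptyset)}(\tau)$ has a genuine gap. You set $L_i=\sigma^\circ_{\base(J_i)}(\tau)$ along a chain and claim that $L_i/L_{i-1}$ is a single Jordan--H\"older constituent. This is false for $f\geq 2$: once the proposition is proved one finds (see Theorem~\ref{thm: the output of gauges.tex}(4)) that $L_i/L_{i-1}$ is killed by $p$ but has Jordan--H\"older factors exactly the $\sigmabar_{\base(J')}(\tau)$ with $j\in J'$ (where $\{j\}=J_i\setminus J_{i-1}$), which is roughly half of all constituents. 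More fundamentally, even the weaker statement $pL_i\subseteq L_{i-1}$ that you actually need cannot be extracted from parts~(2) and~(3): using Property~\ref{property: first gauge axiom} with $\sigma^\circ=\sigma^\circ_{\base(\emptyset)}(\tau)$ and part~(3) gives $\varepsilon_{\base(J_i)}(L_{i-1})=\varepsilon_{\base(J_i)}(L_0)-\varepsilon_{\base(J_{i-1})}(L_0)$, and summing along the chain shows that $\bigl(\varepsilon_{\base(J_i)}(L_{i-1})\leq 1 \text{ for all }i\bigr)$ is equivalent to $\varepsilon_{\base(\cS)}(L_0)\leq f$, which is exactly the $p^f$ bound you are trying to prove. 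So the chain argument is circular.

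The paper avoids this by a completely different device: it applies Proposition~\ref{prop: embedding of dual killed by order of group} to the lattice $\sigma^{\circ,\base(\emptyset)}(\tau)$ (whose cosocle is irreducible), producing an embedding $(\sigma^{\circ,\base(\emptyset)}(\tau))^\vee\otimes\chi\hookrightarrow\sigma^{\circ,\base(\emptyset)}(\tau)$ with cokernel annihilated by $|\GL_2(k_v)|$. Since the $p$-part of $|\GL_2(k_v)|$ is $p^f$ and the dual lattice is homothetic to $\sigma^\circ_{\base(\emptyset)}(\tau)$, the normalisations give the bound directly. This group-order input is the missing ingredient in your approach.
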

\begin{proof}
  (1) By Theorem~\ref{thm: socle filtration for tame type with irred
    socle}, $\sigmabar^{\circ,\base(\emptyset)}(\tau)$ has cosocle
  $\sigmabar_{\base(\cS)}(\tau)$, so the assumption that
  $\varepsilon_{\base(\cS)}(\sigma^{\circ,{\base(\emptyset)}}(\tau))=0$ implies that
  $\sigma^\circ_{\base(\cS)}(\tau)=\sigma^{\circ,{\base(\emptyset)}}(\tau)$.

  By Lemma~\ref{lem:weights in tame types have multiplicity one},
  there is a character $\chi$ such that
  $\sigma(\tau)^\vee\otimes\chi\cong\sigma(\tau)$, and such that each
  $\sigmabar_{\base(J)}(\tau)$ satisfies
  $\sigmabar_{\base(J)}(\tau)^\vee\otimes\chibar\cong\sigmabar_{\base(J)}(\tau)$.
There is thus an isomorphism  $(\sigma^{\circ,{\base(\emptyset)}}(\tau))^\vee\otimes\chi
  \cong \sigma^\circ_{\base(\emptyset)}(\tau)$, and the result then follows from
  Proposition~\ref{prop: embedding of dual killed by order of group}
  and the assumption that
  $\varepsilon_{{\base(\emptyset)}}(\sigma^{\circ,{\base(\emptyset)}}(\tau))=0$.

(2)  This is immediate from (1) and Theorem~\ref{thm: socle filtration for tame
  type with irred socle}.

(3)  Suppose $J' \subseteq J$.  An easy induction on $|J\setminus J'|$ shows that any subrepresentation of
$\sigmabar^{\circ,\base(\varnothing)}(\tau)$ with
$\sigmabar_{\base(J)}(\tau)$ as a Jordan--H\"older factor also has
$\sigmabar_{\base(J')}(\tau)$  as a Jordan--H\"older factor.  (In the
case $|J \setminus J'| = 1$ consider the quotient of
$\sigmabar^{\circ,\base(\varnothing)}(\tau)$ by the maximal
subrepresentation containing neither $\sigmabar_{\base(J)}(\tau)$ nor
$\sigmabar_{\base(J')}(\tau)$ as a Jordan--H\"older factor, which must have socle
$\sigmabar_{\base(J')}(\tau)$ by the paragraph following Theorem~\ref{thm: socle filtration for
  tame type with irred socle}.  For the induction step use Lemma~\ref{lem:chains in cP_tau}.)
The
claim for $\sigmabar^{\circ,\base(\varnothing)}(\tau)$ follows easily
from this, and the result for $\sigmabar^{\circ}_{\base(\varnothing)}(\tau)$ is similar.
\end{proof}

From Property~\ref{property: first gauge axiom} and
Proposition~\ref{prop: facts that drive gauges.tex}(3), we see that
\numequation
\label{eqn:first relation}
\varepsilon_{\base(J')}(\sigma^\circ_{\base(J)}(\tau)) = 0 \quad \text{ if } \quad J'\subseteq J,
\end{equation}
that
\numequation
\label{eqn:second relation}
\varepsilon_{\base(J')}(\sigma^\circ_{\base(J)}(\tau)) > 0 \quad \text{ if } \quad J'\not\subseteq J,
\end{equation}
and that
\numequation
\label{eqn:third relation}
\varepsilon_{\base(J')}(\sigma^\circ_{\base(J)}(\tau)) = \varepsilon_{\base(J')}(\sigma^\circ_{\base(\emptyset)}(\tau)) -\varepsilon_{\base(J)}(\sigma^\circ_{\base(\emptyset)}(\tau))
\quad \text{ if } \quad J \subseteq J'.
\end{equation}
(There is a fourth relation in the case $J\not\subseteq J'$, but
we will not need this.)

Now combining relations~(\ref{eqn:second relation})
and~(\ref{eqn:third relation}), we see that if we partially order the
subsets of $\base(\cP_\tau)$ by inclusion, then
$\varepsilon_{\base(J)}(\sigma^\circ_{\base(\emptyset)}(\tau))$ is a strictly increasing
function of $J$. By Proposition~\ref{prop: facts that drive
  gauges.tex}(1) we have
$\varepsilon_{\base(\cS)}(\sigma^\circ_{\base(\emptyset)}(\tau))\le f$, so we see that
$0\le \varepsilon_{\base(J)}(\sigma^\circ_{\base(\emptyset)}(\tau))\le f$ for each
$J\in\base(\cP_\tau)$.  By Lemma~\ref{lem:chains in cP_tau} (applied twice,
to the pairs $\varnothing, J$ and $J, \cS$), each $J \in \base(\cP_\tau)$ lies in a chain of
elements of $\base(\cP_\tau)$ of length $f+1$, so we conclude that in fact
$$\varepsilon_{\base(J)}(\sigma^\circ_{\base(\emptyset)}(\tau)) = | J|,$$
and that 
\numequation
\label{eqn:first value}
\varepsilon_{\base(J')}(\sigma^\circ_{\base(J)}(\tau)) = |J' \setminus J| \quad \text{ if } \quad 
J \subseteq J'.
\end{equation}
As an application of this, note that
if we take $J = {\base(\emptyset)}$ in relation~(\ref{eqn:gauge formula}),
we find that
\numequation
\label{eqn:second value}
\varepsilon_{\base(J')}(\sigma^{\circ,\base(J')}(\tau)) = |J'|.
\end{equation}
As another application,
 combining~(\ref{eqn:first relation}) and~(\ref{eqn:first value})
with Property~\ref{property: first gauge axiom}, we find that for any
$J,J'\in\base(\cP_\tau)$ we have
\begin{samepage}
\nummultline
\label{eqn:first inequality}
|J'\setminus J| = \varepsilon_{\base(J')}(\sigma^\circ_{\base(J\cap J')}(\tau))
\\
\geq 
\varepsilon_{\base(J')}(\sigma^\circ_{\base(J)}(\tau)) - \varepsilon_{J\cap J'}(\sigma^\circ_{\base(J)}(\tau)) = \varepsilon_{\base(J')}(\sigma^\circ_{\base(J)}(\tau)).
\end{multline}
\end{samepage}
Fix some $J,J'\in\base(\cP_\tau)$, and consider a chain $J' =
J_0,J_1,\ldots,J_n = J$ joining $\sigmabar_{\base(J')}(\tau)$ to
$\sigmabar_{\base(J)}(\tau)$ in the socle filtration of
$\sigmabar^{\circ,\base(J')}(\tau)$ (so that $\sigmabar_{\base(J_i)}(\tau)$ is in
the $i$-th layer of the socle filtration of
$\sigmabar^{\circ,\base(J')}(\tau)$, and is non-trivially extended by
$\sigmabar_{\base(J_{i-1})}(\tau)$ in the socle filtration). By
Theorem~\ref{thm: socle filtration for tame type with irred socle},
$J_{i+1}$ is obtained from $J_i$ by either adding or removing
an element.
  Property~\ref{property: first gauge axiom} gives that
$$\varepsilon_{\base(J_i)}(\sigma^\circ_{\base(J_{i+1})}(\tau)) = \varepsilon_{\base(J_i)}(\sigma^{\circ,\base(J')}(\tau)) -
\varepsilon_{\base(J_{i+1})}(\sigma^{\circ,\base(J')}(\tau)).$$
From~(\ref{eqn:first relation}) we conclude that 
$$\varepsilon_{\base(J_i)}(\sigma^{\circ,\base(J')}(\tau)) = \varepsilon_{\base(J_{i+1})}(\sigma^{\circ,\base(J')}(\tau))
\quad \text{ if } J_i \subseteq J_{i+1},$$
while from~(\ref{eqn:first value})
we conclude that
$$\varepsilon_{\base(J_i)}(\sigma^{\circ,\base(J')}(\tau)) = \varepsilon_{\base(J_{i+1})}(\sigma^{\circ,\base(J')}(\tau)) + 1
\quad \text{ if } J_{i+1} \subseteq J_{i}.$$
Passing along the whole chain from $J'$ to $J$,
we find that 
\numequation
\label{eqn:intermediate inequality}
\varepsilon_{\base(J)}(\sigma^{\circ,\base(J')}(\tau)) \leq \varepsilon_{\base(J')}(\sigma^{\circ,\base(J')}(\tau)) - |J'\setminus J|.
\end{equation}
Combining this with Property~\ref{property: first gauge axiom} again gives
\numequation
\label{eqn:second inequality}
\varepsilon_{\base(J')}(\sigma^\circ_{\base(J)}(\tau)) = \varepsilon_{\base(J')}(\sigma^{\circ,\base(J')}(\tau)) - \varepsilon_{\base(J)}(\sigma^{\circ,\base(J')}(\tau))
\geq |J'\setminus J|.
\end{equation}
Combining~(\ref{eqn:first inequality})
and~(\ref{eqn:second inequality}), we find that
$$\varepsilon_{\base(J')}(\sigma^\circ_{\base(J)}(\tau)) = |J'\setminus J|.$$
Since we have equality in~(\ref{eqn:second inequality}),
we must also have equality in~(\ref{eqn:intermediate inequality}),
and combining this with~(\ref{eqn:second value}), we
find that
\numequation
\label{eqn:final gauge eqn}
\varepsilon_{\base(J)}(\sigma^{\circ,\base(J')}(\tau)) = |J \cap J'|.\end{equation}

\begin{thm}
  \label{thm: the output of gauges.tex}

(1) We have $\sigma^{\circ,\base(J)}(\tau)=\sum_{J'}p^{|J\cap
  J'|}\sigma^\circ_{\base(J')}(\tau)$.

(2) Suppose that $\base(J)\in\cP_\tau$, that $j\notin J$, and that
$\base(J\cup\{j\})\in\cP_\tau$. Then we have inclusions \[p\sigma^{\circ,\base(J)}(\tau)\subseteq\sigma^{\circ,\base(J\cup\{j\})}(\tau)\subseteq\sigma^{\circ,\base(J)}(\tau).\]
Furthermore the Serre weights occuring as Jordan--H\"older factors of
the cokernel of the inclusion
$\sigma^{\circ,\base(J\cup\{j\})}(\tau)\subseteq\sigma^{\circ,\base(J)}(\tau)$
are the $\sigmabar_{\base(J')}(\tau)$ with $j\in J'$, and the Serre weights occuring in the cokernel of the
inclusion $p\sigma^{\circ,\base(J)}(\tau)\subseteq\sigma^{\circ,\base(J\cup\{j\})}(\tau)$
are the $\sigmabar_{\base(J')}(\tau)$ with $j\notin J'$.

(3) We have $\sigma^{\circ}_{\base(J)}(\tau)=\sum_{J'}p^{|J^c\cap
  {J'}^c|}\sigma^{\circ,\base(J')}(\tau)$.

(4) Suppose that $\base(J)\in\cP_\tau$, that $j\notin J$, and that
$\base(J\cup\{j\})\in\cP_\tau$. Then we have inclusions \[p\sigma^{\circ}_{\base(J\cup\{j\})}(\tau)\subseteq\sigma^{\circ}_{\base(J)}(\tau)\subseteq\sigma^{\circ}_{\base(J\cup\{j\})}(\tau).\]
Furthermore the Serre weights occuring as Jordan--H\"older factors of
the cokernel of the inclusion $\sigma^{\circ}_{\base(J)}(\tau)\subseteq\sigma^{\circ}_{\base(J\cup\{j\})}(\tau)$
are the $\sigmabar_{\base(J')}(\tau)$ with $j\in J'$, and the Serre weights occuring in the cokernel of the
inclusion $p\sigma^{\circ}_{\base(J\cup\{j\})}(\tau)\subseteq\sigma^{\circ}_{\base(J)}(\tau)$
are the $\sigmabar_{\base(J')}(\tau)$ with $j\notin J'$.
\end{thm}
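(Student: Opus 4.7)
The plan is to deduce all four parts of the theorem from the gauge computations already performed in this subsection --- notably~(\ref{eqn:final gauge eqn}), $\varepsilon_{\base(J)}(\sigma^{\circ,\base(J')}(\tau)) = |J \cap J'|$, together with the companion formula $\varepsilon_{\base(J')}(\sigma^\circ_{\base(J)}(\tau)) = |J' \setminus J|$ --- in combination with Proposition~\ref{prop:the gauge determines the lattice}.

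Part~(1) is essentially immediate: applying Proposition~\ref{prop:the gauge determines the lattice} to the cosocle-labelled family $\{\sigma^\circ_{\base(J')}(\tau)\}$ yields the decomposition $\sigma^{\circ,\base(J)}(\tau) = \sum_{J'} p^{\varepsilon_{\base(J')}(\sigma^{\circ,\base(J)}(\tau))}\sigma^\circ_{\base(J')}(\tau)$, and the exponents are given by~(\ref{eqn:final gauge eqn}). For part~(3), I would substitute the formula from~(1) into the proposed right-hand side of~(3), obtaining a double sum which, by the standard identity $\sum_i p^{a_i} L = p^{\min_i a_i} L$, collapses to $\sum_{J''} p^{c(J,J'')}\sigma^\circ_{\base(J'')}(\tau)$ with $c(J,J'') = \min_{J'}\bigl(|J^c \cap (J')^c| + |J' \cap J''|\bigr)$, the minimum taken over $J'$ with $\base(J') \in \cP_\tau$. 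A short combinatorial analysis, partitioning $\cS$ into the four pieces determined by membership in $J$ and $J''$, shows that this minimum is bounded below by $|J'' \setminus J|$ and is attained on any $J'$ satisfying $J^c \setminus J'' \subseteq J' \subseteq \cS \setminus (J \cap J'')$; a supplementary argument in the spirit of Lemma~\ref{lem:chains in cP_tau} is needed to exhibit such a $J'$ inside $\base^{-1}(\cP_\tau)$. Granting this, the $J''$-coefficient in the expansion of the RHS of~(3) matches $|J'' \setminus J|$, which is also the exponent $\varepsilon_{\base(J'')}(\sigma^\circ_{\base(J)}(\tau))$; applying Proposition~\ref{prop:the gauge determines the lattice} once more yields~(3).

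For~(2), the inclusions $p\sigma^{\circ,\base(J)}(\tau) \subseteq \sigma^{\circ,\base(J\cup\{j\})}(\tau) \subseteq \sigma^{\circ,\base(J)}(\tau)$ follow from~(1) by comparing the exponents $|J \cap J'|$ and $|(J\cup\{j\})\cap J'|$ summand-by-summand: these differ by $0$ when $j \notin J'$ and by $1$ when $j \in J'$. To pin down the Jordan--H\"older factors of the cokernel $A/B$ (with $A := \sigma^{\circ,\base(J)}(\tau)$ and $B := \sigma^{\circ,\base(J\cup\{j\})}(\tau)$), I would write $U := B/pA$ as a subrepresentation of $\sigmabar^{\circ,\base(J)}(\tau) = A/pA$, so that $A/B = \sigmabar^{\circ,\base(J)}(\tau)/U$. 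By the multiplicity freeness of $\sigmabar^{\circ,\base(J)}(\tau)$ (Lemma~\ref{lem:weights in tame types have multiplicity one}), each Jordan--H\"older factor $\sigmabar_{\base(J')}(\tau)$ appears in precisely one of $U$ and $A/B$. For $J'$ with $j \in J'$, the $J'$-summand $p^{|J \cap J'|}\sigma^\circ_{\base(J')}(\tau)$ of $A$ maps nontrivially into $A/B$ (this nonvanishing being equivalent to $\varepsilon_{\base(J')}(B) = |J \cap J'| + 1 > \varepsilon_{\base(J')}(A) = |J \cap J'|$), and the induced map $\sigmabar^\circ_{\base(J')}(\tau) \to A/B$ has image containing the cosocle $\sigmabar_{\base(J')}(\tau)$; so this factor lies in $A/B$. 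For $J'$ with $j \notin J'$, the coefficients in~(1) for $A$ and $B$ agree, so the $J'$-summand of $A$ already lies in $B$, forcing $\sigmabar_{\base(J')}(\tau)$ to lie in $U$ rather than in $A/B$. The companion cokernel $B/pA$ is then analysed symmetrically, and~(4) is deduced from~(3) by exactly the same pattern.

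The main obstacle will be the combinatorial step in~(3): showing that the minimum defining $c(J,J'')$ is attained by some $J' \in \base^{-1}(\cP_\tau)$. This requires a careful chain-style argument analogous to the proof of Lemma~\ref{lem:chains in cP_tau}, using the explicit description of $\cP_\tau$ in the principal series and regular cuspidal cases. The remaining pieces of the proof are routine bookkeeping with the already-established gauge formulas and the multiplicity freeness of $\sigmabar(\tau)$.
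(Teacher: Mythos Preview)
Your treatment of parts~(1) and~(2) is correct and coincides with the paper's: part~(1) is immediate from~(\ref{eqn:final gauge eqn}) together with Proposition~\ref{prop:the gauge determines the lattice}, and for~(2) both you and the paper determine the Jordan--H\"older factors by tracking the images of the summands $p^{|J \cap J'|}\sigma^\circ_{\base(J')}(\tau)$ through the inclusion $\sigma^{\circ,\base(J\cup\{j\})}(\tau) \subseteq \sigma^{\circ,\base(J)}(\tau)$. (Your final clause for the case $j \notin J'$ is slightly loose --- the fact that the $J'$-summand already lies in $B$ does not by itself force $\sigmabar_{\base(J')}(\tau)$ out of $\JH(A/B)$ --- but this is repaired instantly by counting, using multiplicity one.)

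The real divergence from the paper is in~(3) and~(4). The paper does \emph{not} deduce~(4) from~(3); instead it proves~(4) by applying~(2) to the contragredient of $\sigma(\tau)$, using the essential self-duality $\sigma(\tau)^\vee \otimes \chi \cong \sigma(\tau)$ of Lemma~\ref{lem:weights in tame types have multiplicity one}, under which the lattices $\sigma^\circ_K(\tau)$ and $\sigma^{\circ,K}(\tau)$ are interchanged. This is much more direct than routing through~(3), and it is the approach you should take for~(4).

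Regarding~(3), the combinatorial obstacle you isolate is genuine and is \emph{not} removable by a chain argument in the style of Lemma~\ref{lem:chains in cP_tau}. Take $f = 2$, $\tau$ principal series with $c_0 = 0$ and $1 \le c_1 \le p-2$; then $\cP_\tau = \{\emptyset, \{0\}, \{0,1\}\}$, and for $J = J'' = \{0\}$ your quantity $c(J,J'')$ equals~$1$ while $|J'' \setminus J| = 0$, since the only unrestricted minimiser $J' = \{1\}$ lies outside $\cP_\tau$. In this same example a direct computation using part~(1) shows that the right-hand side of~(3) for $J = \{0\}$ equals $\sigma^{\circ,\{0\}}(\tau) = \sigma^\circ_\emptyset(\tau) + p\sigma^\circ_{\{0,1\}}(\tau)$, which is \emph{strictly} contained in $\sigma^\circ_{\{0\}}(\tau)$ (they have different cosocles), so the identity~(3) appears to fail as stated when $\cP_\tau \subsetneq 2^\cS$. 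The paper's proof of~(3) is only the phrase ``analogous to~(1)'', so it is hard to say what is intended; fortunately~(3) is not invoked anywhere else in the paper --- in particular~(4) is proved independently of it --- so this has no effect on the applications.
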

\begin{proof}
(1) This is immediate from~(\ref{eqn:final gauge eqn}) and Proposition~\ref{prop:the gauge
  determines the lattice}.

(2) The inclusions are immediate from (1), and the claims on the Serre
weights follow easily from considering the composite maps
\[p^{\varepsilon_{\base(J')}(\sigma^{\circ,\base(J
  \cup \{j\})}(\tau))} \sigma^\circ_{\base(J')}(\tau) = p^{|J'\cap
  (J\cup\{j\})|}\sigma^\circ_{\base(J')}(\tau)\subseteq\sigma^{\circ,\base(J\cup\{j\})}(\tau)\subseteq\sigma^{\circ,\base(J)}(\tau)\]
and comparing with
$p^{\varepsilon_{\base(J')}(\sigma^{\circ,\base(J)}(\tau))}
\sigma^{\circ}_{\base(J')}(\tau) = p^{|J' \cap J|}
\sigma^{\circ}_{\base(J')}(\tau) \subseteq
\sigma^{\circ,\base(J)}(\tau).$

(3) This follows by an argument analogous to the proof of (1).

(4) This follows by applying (2) to the contragredient of
$\sigma(\tau)$.
\end{proof}

\section{Patching functors}\label{sec: cohomology}

In this section we will introduce the cohomology groups that we will
consider, and we will apply the Taylor--Wiles--Kisin method to them. We
will follow Sections~4 and~5 of \cite{geekisin}, but we will take a more
functorial approach, and we also follow \cite{breuildiamond} in laying
the foundations for the multiplicity one results that we will prove in
Section~\ref{sec: freeness and multiplicity one}. Our functorial
version of the Taylor--Wiles--Kisin method is a refinement of the
approach taken in (\cite{kisinfmc}, \cite{emertongeerefinedBM},
\cite{geekisin}), where the patching argument is applied
simultaneously for various lattices in types, and for certain
filtrations on the reductions of these types modulo $p$; here we take
this construction to its natural level of generality, and patch
simultaneously over all finitely generated representations. (One could
also presumably take a limit over all tame levels simultaneously, but
as we have no need to do so for the applications we have in mind we
have avoided setting up the necessary foundations.) 

Of course, the resulting functor depends on the non-canonical choice
of various 
auxiliary primes, but it seems reasonable to expect that
it in fact enjoys various canonical properties. In fact, under
appropriate genericity hypotheses, the results of Sections~\ref{sec: proof of the
  conjecture} and~\ref{sec: freeness and
  multiplicity one} below show that the restriction of the patching
functor to the subquotients of lattices in a fixed tame type is
independent of the choices of auxilliary primes, and of the particular
global context; see Remark~\ref{rem: tame patching is purely local}. See also \cite{EGP} for a
further discussion of this in the case of $\GL_2/\Qp$. With this in
mind, and with an eye to future applications, we have chosen to set up
a general abstract notion of a patching functor, and then to show that
the various axioms for such a functor are satisfied in our
setting. There are a variety of possible levels of generality in which
one could work, and a variety of axioms one could require; for
clarity, we have chosen to restrict ourselves to the case of
two-dimensional representations, as this is what is used in the rest
of the paper.

\subsection{Abstract patching functors}\label{subsec: abstract
  patching functors}
Fix a
prime $p$. Continue to fix a finite extension $E$ of $\Qp$ with ring
of integers $\cO$ and residue field $\F$. In this subsection, by a local
field we mean a finite extension of some $\Ql$, with $l$ not
necessarily equal to $p$.

Fix a finite set of local fields $L_i$, write $l_i$ for the
residue characteristic of~$L_i$, and $\cO_{L_i}$ for its ring of
integers. If $l_i=p$, then we assume that $L_i/\Qp$ is unramified. For each $i$ we fix a continuous representation
$\rhobar_i:G_{L_i}\to\GL_2(\F)$. (In the patching functors we
construct, the $L_i$ will all be localisations of some global field,
and the $\rhobar_i$ will be localisations of a global Galois
representation, but with an eye to future applications we do not
assume this in the definition of a patching functor.)

For each $i$, we will let $K_i$ be either a compact open subgroup of
$\GL_2(\cO_{L_i})$, or a compact open subgroup of $\cO_{D_i}^\times$,
where $\cO_{D_i}$ is a maximal order in the nonsplit quaternion
algebra $D_i$ with centre $L_i$. If $l_i=p$, we only allow $K_i$ to be
a compact open subgroup of $\GL_2(\cO_{L_i})$.
Write $K=\prod_i K_i$,
and write
$Z_K=Z(K)$. The reduced norm and determinant maps give a natural
homomorphism $Z_K\to\prod_i\cO_{L_i}^\times$.

For each $i$ we have the universal framed deformation ring
$R^{\square}_i$ for $\rhobar_i$ over $\cO$. Write $X_i=\Spf R_i^\square$.
Write $R=\widehat{\otimes}_{\cO}R^{\square}_i$, and write $X=\Spf
R=\times_{\Spf\cO}X_i$.  Finally, fix some $h\ge 0$, and write
$R_\infty=R\llbracket x_1,\dots,x_h\rrbracket$, $X_\infty=\Spf R_\infty$, where the
$x_i$ are formal variables.

Let $\cC$
denote the category of finitely generated $\cO$-modules with a
continuous
action of $K$, and let $\cC'$ be a
Serre subcategory of $\cC$. For each $i$, let $\tau_i$ be an inertial
type for $I_{L_i}$, assumed to be discrete series if $K_i$ is a
subgroup of $\cO_{D_i}^\times$,
and let
$\sigma^\circ(\tau_i)$ denote some $\GL_2(\cO_{L_i})$-stable
$\cO$-lattice in $\sigma(\tau_i)$ (in the case that $K_i$ is a
subgroup of $\GL_2(\cO_{L_i})$), or an $\cO_{D_i}^\times$-stable
$\cO$-lattice in $\sigma_D(\tau_i)$ (in the case that $K_i$ is a
subgroup of $\cO_{D_i}^\times$). Write
$\sigma^\circ(\tau):=\otimes_{\cO}\sigma^\circ(\tau_i)$, an element of
$\cC$.

If $l_i\ne p$, we write $R^{\square,\tau}_i$ for the reduced,
$p$-torsion free quotient of $R^{\square}_i$ corresponding to
deformations of inertial type $\tau_i$, while if $l_i=p$ we write
$R^{\square,\tau}_i$ for the reduced, $p$-torsion free quotient of
$R^{\square}_i$ corresponding to potentially crystalline deformations
of inertial type $\tau_i$ and Hodge type $0$. We write
$R^\tau:=\widehat{\otimes}_{\cO}R^{\square,\tau}_i$,
$R^\tau_\infty:=R^\tau\llbracket x_1,\dots,x_h\rrbracket$, and 
$X_\infty\bigl(\tau\bigr):=\Spf(R^\tau_\infty)$.

Suppose that $\sigmabar\in\cC$ is killed by $\varpi_E$, and has the
property that for each $i$ with $l_i=p$, we have $K_i=\GL_2(\cO_{L_i})$, and the restriction of
$\sigmabar$ to $K_i$ is a direct sum of copies of an irreducible
representation $\sigmabar_i$ of $K_i$.
In this case, for each $i$ with $l_i=p$, we let
$R^{\square,\sigmabar}_i$ be the reduced, $p$-torsion free quotient of
$R^{\square}_i$ corresponding to crystalline deformations of Hodge
type $\sigmabar_i$, and for each $i$ with $l_i\ne p$ we set
$R^{\square,\sigmabar}_i=R^{\square}_i$.  Then we write
$R^{\sigmabar}:=\widehat{\otimes}_{\cO}R^{\square,\sigmabar}_i$, $R^{\sigmabar}_\infty:=R^{\sigmabar}\llbracket x_1,\dots,x_h\rrbracket$, and
$X_\infty\bigl(\sigmabar\bigr):=\Spf(R^{\sigmabar}_\infty)$. We write
$\Xbar_\infty\bigl(\sigmabar\bigr)$ for the special fibre of the space
$X_\infty\bigl(\sigmabar\bigr)$.

\begin{defn}\label{defn: patching functor}
  By a \emph{patching functor} we will mean a  covariant exact functor
  $M_\infty$ from $\cC'$ to the category of coherent sheaves on
  $X_\infty$ which satisfies the following two properties for all
  $\sigma^\circ(\tau)$ and $\sigmabar$ as above.
  \begin{itemize}
  \item $M_\infty(\sigma^\circ(\tau))$ is $p$-torsion free and
    supported on $X_\infty\bigl(\tau\bigr)$, and in fact is maximal
    Cohen--Macaulay over $X_\infty\bigl(\tau\bigr)$.

  \item $M_\infty(\sigmabar)$ is supported on
    $\Xbar_\infty\bigl(\sigmabar\bigr)$, and in fact is maximal Cohen--Macaulay
    over $\Xbar_\infty\bigl(\sigmabar\bigr)$.
  \end{itemize}

\end{defn}

\begin{rem}
  \label{rem: patching functor defined on Serre subcat} Unless stated
  otherwise, all of our patching functors will be defined on all of
  $\cC$, but it will be convenient in some arguments to have the
  additional flexibility of passing to a Serre subcategory.
\end{rem}

We will now define the notion of a \emph{fixed determinant patching
  functor}. For each $i$, in addition to the data above, we also fix a
(lift of a) geometric Frobenius element $\Frob_i\in G_{L_i}$, and we fix an
element $\alpha_i\in\cO_{L_i}$ lifting $\det\rhobar_i(\Frob_i)$. Let
$\cC_Z$ denote the subcategory of $\cC$ consisting of representations
which have a central character, so that the action of $Z_K$
factors through the natural map to $\prod_i\cO_{L_i}^\times$, and
whose central character lifts the character
$\prod_i(\varepsilonbar\det\rhobar_i|_{I_{L_i}})\circ\Art_{L_i}$.
Let $\cC'_Z$
be a Serre subcategory of $\cC_Z$.

Take some $\sigma\in\cC'_Z$. For each $i$, fix a character
$\psi_{\sigma,i}:G_{L_i}\to\cO^\times$ with the properties that
$\psi_{\sigma,i}(\Frob_i)=\alpha_i$, and the composite of $\Art_{L_i}$ and
the restriction of $\psi_{\sigma,i}$ to $I_{L_i}$  gives the
central character of $\sigma$. (If $\sigma$ is not $p$-power torsion, then
$\psi_{\sigma,i}$ is uniquely determined.)
Write
$R^{\square,\psi_\sigma}_i$ for the quotient of $R^{\square}_i$
corresponding to liftings with determinant
$\psi_{\sigma,i}\varepsilon^{-1}$, and $X^{\psi_\sigma}_i = \Spf
R^{\square,\psi_\sigma}_i$. Write
$R^{\psi_\sigma}=\widehat{\otimes}_{\cO}R^{\square,\psi_\sigma}_i$,
and set $X^{\psi_\sigma}=\Spf
R^{\psi_\sigma}=\times_{\Spf\cO}X^{\psi_\sigma}_i$.  Finally, fix some
$h\ge 0$, and write
$R^{\psi_\sigma}_\infty=R^{\psi_\sigma}\llbracket x_1,\dots,x_h\rrbracket$,
$X^{\psi_\sigma}_\infty=\Spf R^{\psi_\sigma}_\infty$, where the $x_i$
are formal variables.

Assuming that the $\tau_i$ above satisfy the condition that
$\det\tau_i$ is a lift of $\varepsilonbar\det\rhobar_i|_{I_{L_i}}$, so
that $\sigma^\circ(\tau)\in\cC_Z$, we have the obvious quotient
$R^{\square,\psi,\tau}_i$ of $R^{\square,\psi}_i$, and we write
$R^{\psi,\tau}:=\widehat{\otimes}_{\cO}R^{\square,\psi,\tau}_i$, $R^{\psi,\tau}_\infty:=R^{\psi,\tau}\llbracket x_1,\dots,x_h\rrbracket$, and
$X^\psi\bigl(\tau\bigr):=\Spf(R^{\psi,\tau}_\infty)$. If
$\sigmabar\in\cC_Z$, then we have the quotient
$R^{\square,\psi,\sigmabar}_i$ of $R^{\square,\sigmabar}_i$, and we
write
$R^{\psi,\sigmabar}:=\widehat{\otimes}_{\cO}R^{\square,\psi,\sigmabar}_i$, $R^{\psi,\sigmabar}_\infty:=R^{\psi,\sigmabar}\llbracket x_1,\dots,x_h\rrbracket$,
and $X^\psi\bigl(\sigmabar\bigr):=\Spf(R^{\psi,\sigmabar}_\infty)$. We
write $\Xbar^\psi\bigl(\sigmabar\bigr)$ for the special fibre of the space
$X^\psi\bigl(\sigmabar\bigr)$.

\begin{defn}\label{defn: fixed det patching functor}
  By a \emph{fixed determinant patching functor} we will mean a covariant exact functor
  $M_\infty$ from $\cC'_Z$ to the category of coherent sheaves on
  $X_\infty$ which satisfies the following properties.
  \begin{itemize}
  \item For all $\sigma\in\cC'_Z$, $M_\infty(\sigma)$ is supported on
    $X_\infty^{\psi_\sigma}$.
  \item $M_\infty(\sigma^\circ(\tau))$ is $p$-torsion free and
    supported on $X^\psi_\infty\bigl(\tau\bigr)$, and in fact is maximal
    Cohen--Macaulay over $X^\psi_\infty\bigl(\tau\bigr)$.

  \item $M_\infty(\sigmabar)$ is supported on
    $\Xbar^\psi_\infty\bigl(\sigmabar\bigr)$, and in fact is maximal Cohen--Macaulay
    over $\Xbar^\psi_\infty\bigl(\sigmabar\bigr)$.
  \end{itemize}

\end{defn}

We have the following consequence of these definitions, an abstract
version of the multiplicity one argument of
\cite{MR1440309}.
\begin{lem}
  \label{lem: abstract version of Diamond freeness argument}If
  $M_\infty$ is a patching functor, and if the deformation space
$X_\infty\bigl(\tau\bigr)$ is regular,
  then $M_\infty(\sigma^\circ(\tau))$ is free over
  $X_\infty\bigl(\tau\bigr)$. Similarly, if the generic fibre of
  $X_\infty\bigl(\tau\bigr)$ is regular, then the restriction of
  $M_\infty(\sigma^\circ(\tau))$ to the generic fibre is
  locally free. The analogous result also holds
  for fixed determinant patching functors.
\end{lem}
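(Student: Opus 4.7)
The plan is to invoke the Auslander--Buchsbaum formula in its standard form: a maximal Cohen--Macaulay module over a regular local ring is necessarily free. By Definition~\ref{defn: patching functor}, the sheaf $M_\infty(\sigma^\circ(\tau))$ is maximal Cohen--Macaulay over $X_\infty(\tau)$. Writing $R^\tau_\infty$ for the coordinate ring of $X_\infty(\tau)$ and $M$ for the global sections of $M_\infty(\sigma^\circ(\tau))$ as an $R^\tau_\infty$-module, the assumption that $X_\infty(\tau)$ is regular means that $R^\tau_\infty$ is a regular (complete) local ring. Auslander--Buchsbaum then gives
\[
\mathrm{pd}_{R^\tau_\infty}(M) + \mathrm{depth}_{R^\tau_\infty}(M) = \dim R^\tau_\infty,
\]
and since $M$ is MCM, $\mathrm{depth}_{R^\tau_\infty}(M) = \dim R^\tau_\infty$, forcing $\mathrm{pd}_{R^\tau_\infty}(M) = 0$. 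A projective module over a local ring is free, so $M$ is free, which is the first claim.

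For the second claim, assume only that the generic fibre of $X_\infty(\tau)$, i.e.\ $\Spec R^\tau_\infty[1/p]$, is regular. Restricting $M_\infty(\sigma^\circ(\tau))$ to this generic fibre yields a coherent sheaf $\mathcal{F}$ which remains maximal Cohen--Macaulay, since the MCM property is preserved under the localisation inverting $p$ (depth and dimension drop by the same amount, namely $1$, because $p$ is a non-zerodivisor on $M$ by the $p$-torsion-freeness assumption built into Definition~\ref{defn: patching functor}). At every point of this regular scheme, the stalk of $\mathcal{F}$ is a finitely generated MCM module over a regular local ring, hence free by the same Auslander--Buchsbaum argument as above; thus $\mathcal{F}$ is locally free.

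The fixed determinant case (Definition~\ref{defn: fixed det patching functor}) is formally identical: one replaces $X_\infty(\tau)$ by $X^\psi_\infty(\tau)$ and the ring $R^\tau_\infty$ by $R^{\psi,\tau}_\infty$, and applies the same Auslander--Buchsbaum argument.

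There is no real obstacle here; the only small thing to be careful about is the verification that MCM is preserved on passing to the generic fibre, which follows from $p$-torsion-freeness of $M_\infty(\sigma^\circ(\tau))$ (explicitly required in the definition of a patching functor) together with the standard fact that localising a MCM module at a regular element drops both depth and dimension by one. Everything else is a direct citation of commutative algebra.
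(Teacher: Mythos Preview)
Your proof is correct and follows exactly the same approach as the paper: the paper's proof is a single sentence invoking the Auslander--Buchsbaum formula together with the maximal Cohen--Macaulay property of $M_\infty(\sigma^\circ(\tau))$, and you have simply spelled out the standard details of that argument.
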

\begin{proof}This follows immediately from the Auslander--Buchsbaum
  formula and the maximal Cohen--Macaulay property of
  $M_\infty(\sigma^\circ(\tau))$.
  \end{proof}

  We say that a patching functor (respectively a fixed
  determinant patching functor) $M_\infty$ is \emph{minimal} if for
  any inertial type $\tau$ for which $X_\infty\bigl(\tau\bigr)$ (respectively
  $X^\psi_\infty\bigl(\tau\bigr)$) has regular generic fibre,
  $M_\infty(\sigma^\circ(\tau))$ is locally free of rank one over the
  generic fibre of $X_\infty\bigl(\tau\bigr)$ (respectively
  $X^\psi_\infty\bigl(\tau\bigr)$). Note that by Lemma \ref{lem: abstract version of
    Diamond freeness argument}, $M_\infty(\sigma^\circ(\tau))$ is
  locally free over the generic fibre of $X_\infty\bigl(\tau\bigr)$
  (respectively $X^\psi_\infty\bigl(\tau\bigr)$), so this should be regarded as a
  multiplicity one hypothesis. (The reason we call these patching
  functors minimal is that we will construct minimal patching functors
  by considering the cohomology of Shimura curves at minimal level.)

\begin{defn}
 We will sometimes say that $M_\infty$ is a patching functor \emph{indexed by}
  the $(L_i,\rhobar_i)$, or that $M_\infty$ is a patching functor for
  the $\rhobar_i$. If the coefficient field $E$ is unramified over
  $\Qp$, we will say that $M_\infty$ is a patching functor \emph{with
    unramified coefficients}. We will also use the same terminology for fixed determinant patching
  functors.
\end{defn}

  In the rest of this section we will give examples of patching
  functors arising from the Taylor--Wiles--Kisin method. These
  patching functors will satisfy an additional property which will be
  important for us, but which we have not attempted to axiomatise,
  which is that certain fibres of the patching functors will give
  spaces of automorphic forms.

\subsection{Quaternion algebras}\label{subsec: existence of
  patching functors}

In this subsection we will describe the various spaces of
automorphic forms on quaternion algebras from which we will construct
our patching functors. To this end, let $F$ be a
totally real field, and suppose that the prime $p$ is odd and is
unramified in $F$. Let $\rhobar:G_F\to\GL_2(\F)$ be a continuous
representation. Assume that $\rhobar$ is modular, and that $\rhobar|_{G_{F(\zeta_p)}}$ is
absolutely irreducible. If $p=5$, assume further that the projective
image of $\rhobar|_{G_{F(\zeta_5)}}$ is not isomorphic to~$A_5$. 
Choose a finite
order character $\psi:G_F\to\cO^\times$ such that
$\det\rhobar=\varepsilonbar^{-1}\psibar$.

Let $D$ be a quaternion algebra with centre $F$ which is ramified at a
set $\Sigma$ of finite places of $F$. We assume that $\Sigma$ does not
contain any places lying over $p$. We also assume that either $D$ is
ramified at all infinite places (the \emph{definite case}), or split at
precisely one infinite place (the \emph{indefinite case}). Let $S$ be
a set of finite places of $F$ containing $\Sigma$, the places dividing
$p$, and the places at which $\rhobar$ or $\psi$ is ramified. We will
define a patching functor indexed by the  $(F_w,\rhobar|_{G_{F_w}})$
for $w\in
S$, and from now on we will write the elements of our indexing set as
$w$, rather than as $i$ as in the previous section. We will write $K_S$ for the group $K$ in the definition of a
patching functor.

By Lemma 4.11 of \cite{MR1605752} we can and do choose a finite place
$w_1\notin S$ with the properties that
\begin{itemize}
\item $\mathbf{N}w_1\not\equiv
1\pmod{p}$,
\item  the ratio of the eigenvalues of $\rhobar(\Frob_{w_1})$
  is not equal to $(\mathbf{N}w_1)^{\pm 1}$, and
\item the residue characteristic of $w_1$ is
sufficiently large that for any non-trivial root of unity $\zeta$ in a
quadratic extension of $F$, $w_1$ does not divide $\zeta+\zeta^{-1}-2$.
\end{itemize}
We will consider compact open subgroups $K=\prod_wK_w$ of
$(D\otimes_F\A^\infty_F)^\times$ for which
$K_w\subset(\cO_D)_w^\times$ for all $w$, $K_w=(\cO_D)_w^\times$ if
$w\notin S\cup\{w_1\}$, and $K_{w_1}$ is the subgroup of
$\GL_2(\cO_{F_{w_1}})$ consisting of elements that are
upper-triangular and unipotent modulo $w_1$.  Our assumptions on
$w_1$ imply that $K$ is sufficiently small, in the sense that
condition (2.1.2) of \cite{kisinfmc} holds.

We now define the spaces of modular forms with which we will work. In
fact, the Taylor--Wiles--Kisin method naturally patches homology,
rather than cohomology, so it will be convenient for us to work with
Pontrjagin duals of spaces of modular forms.
Consider some $\sigma\in \cC_Z$. As in the definition of
a fixed determinant patching functor, we fix characters
$\psi_{\sigma,w}:G_{F_w}\to\cO^\times$ for $w \in S$, determined (when
$\sigma$ is not $p$-power torsion) by the central character of
$\sigma$. By the hypothesis on the central characters of elements of
$\cC_Z$, and the definition of $\psi$, we see that
$\psibar_{\sigma,w}^{-1}\psibar|_{G_{F_w}}=1$. 

By Hensel's lemma (and
the assumption that $p>2$), for each $w\in S$ there is a unique character
$\theta_w:G_{F_w}\to\cO^\times$ with the properties that
$\thetabar_w=1$ and
$\theta_w^2=\psi_{\sigma,w}^{-1}\psi|_{G_{F_w}}$. Write
$\theta=\otimes_w\theta_w$, and write $\sigma(\theta)$ for the twist
of $\sigma$ by $\otimes_w (\theta_w\circ\Art_{F_w}\circ\det)$.
Then $\sigma(\theta)$ has an
action of $K$ via the projection onto $K_S$, which we
extend to an action of $K \cdot (\A_F^\infty)^\times$ by letting
$(\A_F^\infty)^\times$ act via the composite
$(\A_F^\infty)^\times\to(\A_F^\infty)^\times/F^\times\to\cO^\times$,
with the second map the one induced by $\psi\circ\Art_F$. (Note that
the actions of $K$ and $(\A_F^\infty)^\times$ agree on their
intersection; since $\psi$ is unramified outside of $S$, this is
automatic at places away from $S$, and follows from the definition of
the $\theta_w$ at the places in $S$.)

Suppose first that we are in the indefinite case. As in Section~3 of
\cite{breuildiamond} there is a smooth projective algebraic curve
$X_K$ over $F$ associated to $K$. Then there is a local system $\cF_{\sigma(\theta)^*}$
on $X_K$ corresponding to $\sigma(\theta)^*$ in the usual way, and we
define \[S(\sigma):=H^1((X_K)_{/\overline{F}},\cF_{\sigma(\theta)^*}).\]

Suppose now that we are in the definite case. Then we let $S(\sigma)$
be the space of continuous functions \[f:D^\times\backslash
(D\otimes_F\A_F^\infty)^\times\to \sigma(\theta)^*\]such that we have
$f(gu)=u^{-1}f(g)$ for all $g\in (D\otimes_F\A_F)^\times$, $u\in K(\A_F^\infty)^\times$.

 We let $\T^{S,\univ}$ be
the commutative polynomial algebra over~$\cO$ generated by formal
variables $T_w,S_w$ for each finite place $w\notin S\cup\{w_1\}$ of
$F$.
For each finite place $w$ of $F$, fix a uniformiser $\varpi_w$ of
$\cO_{F_w}$. Then $\T^{S,\univ}$ acts on $S(\sigma)$ in the usual
way, with the corresponding double cosets
being \[T_w=\left[\GL_2(\cO_{F_w})
\begin{pmatrix}
  \varpi_w& 0\\0&1
\end{pmatrix}\GL_2(\cO_{F_w})\right],\]  \[S_w=\left[\GL_2(\cO_{F_w})
\begin{pmatrix}
  \varpi_w& 0\\0&\varpi_w
\end{pmatrix}\GL_2(\cO_{F_w})\right].\]

Let $\m$ be the maximal ideal of
$\T^{S,\univ}$ with residue field $\F$ with the property that for each
finite place $w\notin S\cup\{w_1\}$ of $F$, the characteristic
polynomial of $\rhobar(\Frob_w)$ is equal to the image of
$X^2-T_wX+(\mathbf{N}w)S_w$ in $\F[X]$. Write $\T(\sigma)$ for the image of
$\T^{S,\univ}$ in $\End_\cO(S(\sigma))$.
The assumption that $\rhobar$
is modular means that we can (and do) assume that we have chosen $S$ and
$\psi$ in such a way that for some $\sigma$, we have
$S(\sigma)_\m\ne 0$. In both the definite and indefinite cases the functor $\sigma\mapsto S(\sigma)_\m$
is exact, because $K$ is sufficiently small and $\m$ is non-Eisenstein.

We will use our characters $\theta_w$ to twist the universal liftings
we consider. The Taylor--Wiles--Kisin method applied to the spaces
$S(\sigma)_\m$ will naturally produce modules over
$R^{\psi}=\widehat{\otimes}_{\cO}R^{\square,\psi|_{G_{F_w}}}_w$,
rather than over
$R^{\psi_\sigma}=\widehat{\otimes}_{\cO}R^{\square,\psi_\sigma}_w$. However,
twisting the universal lifting to $R^{\square,\psi|_{G_{F_w}}}_w$ by
$\theta_w^{-1}\circ\det$ gives a canonical isomorphism
$R^{\square,\psi_\sigma}_w\isoto R^{\square,\psi|_{G_{F_w}}}_w$ for
each $w$, and thus a canonical isomorphism $R^{\psi_\sigma}\isoto
R^{\psi}$, so from now on we can and do think of any $R^{\psi}$-module
as an $R^{\psi_\sigma}$-module.

Let $R_S^\univ$ be the universal deformation ring for deformations of
$\rhobar$ which are unramified outside of $S$, and let
$\rho^\univ:G_{F,S}\to\GL_2(R_S^\univ)$ denote a choice of the
universal deformation.
There is a
natural homomorphism $R_S^\univ\to\T(\sigma)_\m$ which for each finite
place $w\notin S\cup\{w_1\}$ sends the characteristic polynomial of
$\rho^\univ(\Frob_w)$ to $X^2-T_wX+(\mathbf{N}w)S_w$. We write
$\rho(\sigma)_\m$ for the composite
$G_{F,S}
\buildrel \rho^{\univ}\over \longrightarrow
\GL_2(R_S^\univ)
\to
\GL_2(\T(\sigma)_\m)$.

In the definite case, write $M(\sigma)$ for $S(\sigma)^*_\m$. In the
indefinite case, we ``factor out'' the action of $G_{F,S}$ by
defining \[M(\sigma):=(\Hom_{\T(\sigma)_\m[G_{F,S}]}(\rho(\sigma)_\m,S(\sigma)_\m))^*.\]
By a standard argument, the evaluation
map \[M(\sigma)^*\otimes_{\T(\sigma)_\m}\rho(\sigma)_\m\to
S(\sigma)_\m\] is an isomorphism of
$\T(\sigma)_\m[G_{F,S}]$-modules. (See, for example, Th\'eor\`eme 4 of
\cite{MR1279611} or Proposition 5.5.3 of
\cite{emerton2010local}.)

\subsection{Infinite level}\label{subsec: limit over p power level}With an eye ahead
to Section~\ref{sec: proof of the conjecture}, we also note a related
construction, of spaces of modular forms of infinite $p$-power
level. (Note that we will not patch these spaces via the Taylor--Wiles
method, although it is possible to do so, \emph{cf.}\ \cite{EGPatching}.) Let $v|p$ be a fixed place of $F$, and write the group
$K_S$ above as $K_S=K_vK^v$, with $K^v=\prod_{w \in S \setminus
  \{v\}}K_w$. Fix a continuous finitely generated representation
$\sigma^v$ of $K^v$, and let $\sigma$ be the representation of $K_S$
given by twisting the action of $K^v$ on $\sigma^v$ by the character
$\psi\circ\Art_{F_v}\circ\det$ of $K_v$. We assume that
$\sigma\in\cC_Z$, and write $S(K_v,\sigma^v)_\m$ for $S(\sigma)_\m$.

Then we define \[S^v(\sigma^v):=\varinjlim_{K_v}
S(K_v,\sigma^v),\] where the direct limit is taken over the directed
system of compact open subgroups of $\GL_2(\cO_{F_v})$.
We let $\T(\sigma^v)$ denote the image of
$\T^{S,\univ}$ in $\End_\cO(S^v(\sigma^v))$, and we write
$\rho(\sigma^v)_\m$ for the composite $G_{F,S}\buildrel \rho^{\univ} \over \longrightarrow
\GL_2(R_S^\univ)\to\GL_2(\T(\sigma^v)_\m)$.  If we are in the definite case then we set
$M^v(\sigma^v):=S^v(\sigma^v)^*_\m$, and if we are in the indefinite
case we
set \[M^v(\sigma^v):=(\Hom_{\T(\sigma^v)_\m[G_{F,S}]}(\rho(\sigma^v)_\m,S^v(\sigma^v)_\m))^*.\]
Again, we have an isomorphism \[M^v(\sigma^v)^*\otimes_{\T(\sigma^v)_\m}\rho(\sigma^v)_\m\to
S^v(\sigma^v)_\m.\] Since $\m$ is non-Eisenstein, in either the define
or indefinite cases if we fix some
$K_v$ and a representation $\sigma_v$ of $K_v$ such that
$\sigma:=\sigma_v\otimes_\cO\sigma^v$ is an element of $\cC_Z$, then
we have a natural isomorphism
\numequation\label{eqn:U comparison patching section}
M(\sigma)\isoto\Hom_{K_v}(M^v(\sigma^v),\sigma_v^*)^*.
\end{equation}

\subsection{Functorial Taylor--Wiles--Kisin patching}\label{sec: abstract patching}

Since there are only countably many isomorphism classes of finite
length $\cO$-modules with a smooth action of $K_S$, applying the
usual Taylor--Wiles--Kisin patching argument to the $M(\sigma)$ as
in Section~5 of \cite{geekisin} gives us the following data.
\begin{itemize}
\item positive integers $g,q$,
\item $S_\infty=\cO\llbracket x_1,\dots,x_{4\#S+q-1}\rrbracket$, a power series ring in $4\#S+q-1$ variables over $\cO$,
\item $R_\infty$, a power series ring in $g$ variables over $R$,
\item an $\cO$-algebra homomorphism $S_\infty\to R_\infty$,
\item a covariant exact functor $\sigma\mapsto M_\infty(\sigma)$ from
  the subcategory of finite length objects of $\cC_Z$  to the category of $R_\infty$-modules.
\end{itemize} We extend $M_\infty$ to the whole of $\cC_Z$ by defining $M_\infty(\sigma)=\varprojlim_nM_\infty(\sigma/\varpi_E^n\sigma)$.
This data satisfies the following further properties.
\begin{itemize}
\item There is an isomorphism $M_\infty(\sigma)/(x_1,\dots,x_{4\#S+q-1})\cong  M(\sigma)$,
  and a compatible isomorphism
  $R_\infty/(x_1,\dots,x_{4\#S+q-1})\cong R_S^\univ$.
\item The action of $R$ on $M_\infty(\sigma)$ factors through $R^{\psi_\sigma}$.
\item If $R^{\psi,\tau}_\infty=R_\infty\otimes_R R^{\psi,\tau}$, then
  $\dim R^{\psi,\tau}_\infty=\dim S_\infty$.

\item If $\sigma$ is a finite free $\cO$-module, then
  $M_\infty(\sigma)$ is a finite free $S_\infty$-module (where the
  $S_\infty$-module structure comes from the homomorphism $S_\infty\to R_\infty$).
\item If $\sigma$ is a finite free $\F$-module, then
  $M_\infty(\sigma)$ is a finite free $S_\infty\otimes_\cO\F$-module.
\end{itemize}

Regard the $R_\infty$-module $M_\infty(\sigma)$ as a coherent sheaf on
$X_\infty=\Spf R_\infty$. We now verify the hypotheses of Definition
\ref{defn: fixed det patching functor}; the first hypothesis is clear
from the construction.

In order to check the hypothesis on $M_\infty(\sigma^\circ(\tau))$, we
need the action of $R$ on $M_\infty(\sigma^\circ(\tau))$ to factor through
$R^{\psi,\tau}$, and we need $M_\infty(\sigma^\circ(\tau))$ to be maximal
Cohen--Macaulay over $R^{\psi,\tau}_\infty$. The first property is an
immediate consequence of local-global compatibility and the
construction of $M_\infty$, and the claim that
$M_\infty(\sigma^\circ(\tau))$ is maximal Cohen--Macaulay over
$R^{\psi,\tau}_\infty$ is immediate from the Auslander--Buchsbaum
formula and the fact that $M_\infty(\sigma^\circ(\tau))$ is free over the
regular local ring $S_\infty$.

It remains to check the hypothesis on $M_\infty(\sigmabar)$. It is
enough to check that the action of $R$ on $M_\infty(\sigmabar)$
factors through $R^{\psi,\sigmabar}$, as the claim that it is maximal
Cohen--Macaulay then follows exactly as in the previous
paragraph. This is immediate from  Corollaries 5.6.4 and 4.5.7 of
\cite{geekisin} and the construction of $M_\infty$.

\subsection{Minimal level}\label{subsec:minimal level}Under some
additional hypotheses (which will be satisfied in the cases of
ultimate interest to us in this paper), we now refine the
constructions of the previous section to produce a minimal fixed
determinant patching functor with unramified coefficients. We will use
this functor in Section~\ref{sec: freeness and
  multiplicity one}.

We continue to use the notation and assumptions introduced in the
previous section, so that in particular $p$ is unramified in $F$. For
the remainder of this section, we make the following additional
assumptions:
\begin{itemize}
\item $p>3$, 
 \item $\rhobar|_{G_{F_w}}$ is generic for all places $w|p$, and
\item if $w\in\Sigma$, then $\rhobar|_{G_{F_w}}$ is not scalar.
\end{itemize} 
Since $p>3$ is
unramified in $F$, it follows from the proof of Lemma 4.11 of
\cite{MR1262939} 
that we can choose the place $w_1$ so that $\rhobar(\Frob_{w_1})$ has distinct
eigenvalues, and we choose $w_1$ in this manner. If necessary, we replace $\F$
with a quadratic extension so that these eigenvalues are in $\F$. Following
\cite{breuildiamond}, we now explain a slight refinement of the above
constructions in the case of minimal level.

Fix a place $v|p$. Set $E=W(\F)[1/p]$, let $\psi$ be the Teichm\"uller lift of
$\varepsilonbar\det\rhobar$, and let $S$ be the union of $\Sigma$,
the places dividing $p$, and the places where $\rhobar$ is
ramified. We will ultimately construct a minimal patching functor for
which the corresponding index set is just the place $v$, but we will
begin by constructing a patching functor for which the index set is
the set of places $S$.

Following Section~3.3 of \cite{breuildiamond} very closely (which
defines, under slightly different hypotheses, the corresponding spaces
in characteristic $p$), we begin by defining the spaces of modular
forms that we will use. Set $K_v=\GL_2(\cO_{F_v})$. For each place
$w\in S\setminus \{v\}$, we will now define a compact open subgroup
$K_w$ of $(\cO_D)^\times_w$ and a continuous representation of $K_w$
on a finite free $\cO$-module $L_w$. Let $S'$ be the union of the set
of places $w|p,w\ne v$ for which $\rhobar|_{G_{F_w}}$ is reducible and
the set of places $w$ for which $w\in\Sigma$ and $\rhobar|_{G_{F_w}}$
is reducible, and the place $w_1$; then at the places $w\in S'$, we
will also define Hecke operators $T_w$ and scalars
$\beta_w\in\F^\times$.
  \begin{itemize}
  \item If $w|p,w\ne v$, and $\rhobar|_{G_{F_w}}$ is irreducible, then
    we let $K_w=\GL_2(\cO_{F_w})$, we choose a Serre weight
    $\sigmabar_w\in\cD(\rhobar|_{G_{F_w}})$, we let $\tau_w$ be the
    corresponding inertial type from Proposition \ref{prop: types for
      elimination}, and we let $L_w$ be a $\GL_2(\cO_{F_w})$-stable
    $\cO$-lattice in $\sigma(\tau_w)$.
    \item If $w|p,w\ne v$, and $\rhobar|_{G_{F_w}}$ is reducible, then
      the assumption that $\rhobar|_{G_{F_w}}$ is generic implies in
      particular that it is not scalar, so we are in the situation
      considered in Section~3.3 of \cite{breuildiamond}. Write $\rhobar|_{G_{F_w}}\cong
      \begin{pmatrix}
        \xibar_w&*\\0&\xibar_w'\omega^{-1}
      \end{pmatrix}$. Note that the assumption that
      $\rhobar|_{G_{F_w}}$ is generic also implies that
      $\xibar_w|_{I_{F_w}}\ne\xibar_w'|_{I_{F_w}}$.

Let $K_w$ be the Iwahori subgroup of $\GL_2(\cO_{F_w})$ consisting of
matrices which are upper-triangular modulo $w$, and let
$\gammabar_w:K_w\to\F^\times$ be the character which sends a matrix
congruent to $
\begin{pmatrix}
  a&b\\0&d
\end{pmatrix}$ modulo $w$ to
$(\xibar\circ\Art_{F_w})(a)(\xibar'\circ\Art_{F_w})(d)$. Let
$\gamma_w$ be the Teichm\"uller lift of $\gammabar_w$, and let $L_w=\cO(\gamma_w)$.

Choose a uniformiser $\varpi_w$ of $\cO_{F_w}$, let
$V_w:=\ker(\gamma_w)$, and let $T_w$ be the Hecke operator defined by
the double coset $V_w
\begin{pmatrix}
  \varpi_w&0\\0&1
\end{pmatrix}V_w$. Let $\beta_w=(\xibar_w\circ\Art_{F_w})(\varpi_w)$.

\item If  $w\in \Sigma\cap S'$ (equivalently: if $w\in\Sigma$ and
  $\rhobar|_{G_{F_w}}$ is reducible), then the assumption that some
  $S(\sigma)_\m\ne 0$
  implies that
  $\rhobar|_{G_{F_w}}$ is a twist of an extension of the trivial
  character by the mod $p$ cyclotomic character; furthermore, if
  $\mathbf{N}w\equiv 1\pmod{p}$, then the assumption that
  $\rhobar|_{G_{F_w}}$ is not scalar implies that this extension is
  non-trivial. We let $K_w=(\cO_D)^\times_w$, and we let $L_w$ be the
  rank one $\cO$-module on which $(\cO_D)^\times_w$ acts via
  $\gamma_w:=\psi|_{I_{F_w}}\circ\Art_{F_w}\circ\det$. We choose a
  uniformiser $\Pi_w$ of $(\cO_D)_w$, we let $V_w:=\ker(\gamma_w)$, we
  let $T_w$ be the Hecke operator defined by the double coset
  $V_w\Pi_wV_w$, and we set
  $\beta_w:=(\gammabar_w\circ\Art_{F_w})(\det\Pi_w)$.
  \item If $w\notin\Sigma$, $w\nmid p$ and $\rhobar|_{G_{F_w}}$ is
    reducible, we choose a character $\xibar_w:G_{F_w}\to\F^\times$
    such that $\rhobar'_w:=\xibar_w^{-1}\rhobar|_{G_{F_w}}$ has minimal
    conductor among the twists of $\rhobar|_{G_{F_w}}$ by
    characters. Write $\xi_w$ for the Teichm\"uller lift of
    $\xibar_w$. We let $n_w$ be the (exponent of the) conductor of $\rhobar'_w$, we let
    $\mu_w$ be the Teichm\"uller lift of $\det\rhobar'_w$, and we
    let $K_w$ be the subgroup of $\GL_2(\cO_{F_w})$ consisting of
    matrices which are upper-triangular modulo $w^{n_w}$.

    We let $L_w$ be the rank one $\cO$-module on which $K_w$ acts via
    a character $\gamma_w$, which is defined as follows. If $n_w=0$,
    then $\gamma_w=\xi_w\circ\Art_{F_w}\circ\det$. If $n_w>1$, we let
    $\gamma_w(g)=(\xi_w\circ\Art_{F_w})(\det(g))(\mu_w\circ\Art_{F_w})(d)$,
    where $g\equiv
    \begin{pmatrix}
      a&b\\0&d
    \end{pmatrix}\pmod{w^{n_w}}$.

  \item If $w\nmid p$ and $\rhobar|_{G_{F_w}}$ is irreducible, then we
    let $\tau_w$ be any inertial type for which $\rhobar|_{G_{F_w}}$
    has a lift of type $\tau_w$ and determinant
    $\psi|_{G_{F_w}}\varepsilon^{-1}$, and we let $L_w$ be an
    $(\cO_D)^\times_w$-invariant lattice in $\sigma(\tau_w)$
    (respectively $\sigma_D(\tau_w)$ if $w\in\Sigma$). (It is easy to
    check that we can choose $\tau_w$ and thus $L_w$ to be defined
    over $\cO$, either by a consideration of the rationality
    properties of the local Langlands correspondence, or by explicitly
    choosing $\tau_w$ via the classification of the possible
    $\rhobar|_{G_{F_w}}$.)

  \item If $w=w_1$, we let $K_{w_1}$ be the subgroup of $\GL_2(\cO_{F_{w_1}})$
    consisting of those matrices which are upper-triangular modulo $w_1$. We
    also define a Hecke operator $T_{w_1}$ by
\[T_{w_1}:=\left[K_{w_1} \begin{pmatrix}
  \varpi_{w_1}& 0\\0&1
\end{pmatrix}K_{w_1}\right].\] Note that $T_{w_1}$ depends on the
choice of $\varpi_{w_1}$. Fix from now on a choice $\beta_{w_1}$ of
eigenvalue of $\rhobar(\Frob_{w_1})$, where
$\Frob_{w_1}=\Art_{F_{w_1}}(\varpi_{w_1})$.
  \end{itemize}

Write $L:=\otimes_{w\in S, w\ne v,\cO}L_w$. Let $\cC^v_Z$ be the
category of finitely generated $\cO$-modules $\sigma_v$ with a
continuous action of $K_v=\GL_2(\cO_{F_v})$, having the property that
the central character of $\sigma_v$ lifts the character
$(\varepsilonbar\det\rhobar|_{I_{F_v}})\circ\Art_{F_v}$. Let $\cC'_Z$ be the
subcategory of $\cC_Z$ consisting of modules of the form
$L\otimes_{\cO}\sigma_v$, where $\sigma_v$ is an object of $\cC^v_Z$.

For each object $\sigma$ of $\cC'_Z$, we define $S(\sigma)$ as in
Section~\ref{sec: abstract patching}. We let
$\T(\sigma)'=\T(\sigma)[T_w]_{w\in S'\cup\{w_1\}}$, and we denote the
maximal ideal of $\T(\sigma)'$ generated by $\m$ and the $T_w-\beta_w$
for $w\in S'$ by $\m'$. By Lemme 3.3.1 of \cite{breuildiamond}, there
is a natural action of $\T(\sigma)'$ on $S(\sigma)$; we define
$S^{\min}(\sigma)_\m:=S(\sigma)_{\m'}$, and construct
$M^{\min}(\sigma)$ from $S^{\min}(\sigma)_\m$ in the same way that we
constructed $M(\sigma)$ from $S(\sigma)_\m$.

For each place $w\in S$, set $\alpha_w=\psi(\Frob_w)$.  Then the
patching arguments of Section~5 of \cite{geekisin} go through as in
Section~\ref{sec: abstract patching} to produce a fixed determinant
patching functor $M^{\min}_\infty$ defined on $\cC'_Z$, such that
\[M^{\min}_\infty(\sigma)/(x_1,\dots,x_{4\#S+q-1})\cong
M^{\min}(\sigma).\]

We will now use the obvious functor from $\cC^v_Z$ to $\cC'_Z$ to
construct a minimal fixed determinant patching functor on $\cC^v_Z$
from $M^{\min}_\infty$. For each place $w\in S\setminus \{v\}$ we
define a certain universal lifting ring $R^{\min}_w$ as
follows.
\begin{itemize}
\item If $\rhobar|_{G_{F_w}}$ is irreducible,
  then we let $R^{\min}_w=R_{w}^{\square,\psi|_{G_{F_w}},\tau_w}$ for
  the above choice of $\tau_w$.
  \item If $w\nmid p$, and $\rhobar|_{G_{F_w}}$ is reducible but is
    not a twist of an extension of the trivial character by the mod
    $p$ cyclotomic character, then we let $R^{\min}_w$ be the complete
    local noetherian $\cO$-algebra which prorepresents the functor
    which assigns to an Artinian $\cO$-algebra $A$ the set of lifts of
    $\rhobar|_{G_{F_w}}$ to representations
    $\rho_A:G_{F_w}\to\GL_2(A)$ of determinant
    $\psi|_{G_{F_w}}\varepsilon^{-1}$, for which
    $\rho_A(I_{F_w})\isoto\rhobar(I_{F_w})$.
  \item If $w|p$ and $\rhobar|_{G_{F_w}}$ is reducible, in which case
    we write $\rhobar|_{G_{F_w}}\cong
    \begin{pmatrix}
      \etabar_w&*\\0&\etabar'_w\omega^{-1}
    \end{pmatrix}$ for some characters $\etabar_w,\etabar'_w$, or if  $w\nmid p$ and $\rhobar|_{G_{F_w}}\cong
    \begin{pmatrix}
      \etabar_w&*\\0&\etabar_w\omega^{-1}
    \end{pmatrix}$ for some character $\etabar_w$, then we let
    $R^{\min}_w$ be the complete local noetherian $\cO$-algebra which
    prorepresents the functor which assigns to an Artinian
    $\cO$-algebra $A$ the set of pairs $(\rho_A,L_A)$ as follows.  The
    representation $\rho_A:G_{F_w}\to\GL_2(A)$ is a lift of
    $\rhobar|_{G_{F_w}}$ of determinant $\psi\varepsilon^{-1}$, and
    $L_A$ is a direct factor of $A^2$ (the module on which
    $\rho_A$ acts), such that $G_{F_w}$ acts on $L_A$ by a character
    of the form $\eta_w$, where $\eta_w$ is a lift of
    $\etabar_w$ such that $\eta_w(I_{F_w})\isoto\etabar_w(I_{F_w})$.
\end{itemize}
In each case the ring $R^{\min}_w$ is formally smooth over $\cO$; in
the case that $w|p$ and $\rhobar|_{G_{F_w}}$ is irreducible, this
follows from Theorem \ref{thm:deformation rings principal series} below, and
in the other cases it follows from Lemma 3.4.1 of
\cite{breuildiamond}. Write
$R_{S\setminus\{v\}}^{\min}=\widehat{\otimes}_{w\in S,w\ne
  v,\cO}R^{\min}_w$, and let
$R^{\min,\psi_\sigma}=R_{S\setminus\{v\}}^{\min}\widehat{\otimes}R_v^{\square,\psi_\sigma}$. Then
$R_{S\setminus\{v\}}^{\min}$ is formally smooth over $\cO$, and it follows
exactly as in Proposition 3.5.1 of \cite{breuildiamond} that the
action of $R$ on $M^{\min}_\infty(\sigma)$ factors through
$R^{\min,\psi_\sigma}$.

Now, since $R_{S\setminus\{v\}}^{\min}$ is formally smooth over $\cO$,
the ring $R_\infty\otimes_{\cO}R^{\min,\psi_\sigma}$ is formally
smooth over $R_v^{\square,\psi_\sigma}$; so the functor which assigns
$M_\infty^{\min}(\sigma_v\otimes_{\cO} L)$ to each object $\sigma_v$ of
$\cC^v_Z$ is a fixed determinant patching functor with unramified coefficients.

It only remains to check that $M_\infty^{\min}$ is minimal. In
order to see this, it suffices to check that if $\tau_v$ is an
inertial type for $I_{F_v}$ with
$\det\taubar_v=\varepsilonbar\det\rhobar|_{I_{F_v}}$, then
$M(\sigma^\circ(\tau_v)\otimes_{\cO}L)[1/p]$ is locally free of rank one over
$\T(\sigma^\circ(\tau_v)\otimes_{\cO}L)'[1/p]$; but this follows from the definition of $L$,
\emph{cf.}\ Proposition 3.5.1 of \cite{breuildiamond}.

\subsection{Unitary groups}\label{subsec: unitary patching
  functors}We now give a concrete construction of a patching functor
(without fixed determinant), by applying the Taylor--Wiles--Kisin
method to forms of $U(2)$. The construction is extremely similar to
the arguments above with quaternion algebras, and we content ourselves
with sketching the details, following Section~4 of~\cite{geekisin}.

Let $F$ be an imaginary CM field with maximal totally real subfield
$F^+$, and suppose that the prime $p$ is odd and is unramified in $F$,
and that all places of $F^+$ lying over $p$ split in $F$. Assume
further that $\zeta_p\notin F$, that $F/F^+$ is unramified at all
finite places, and that $[F^+:\Q]$ is even. Then there is an algebraic
group $G_{/\cO_{F^+}}$ as in Section 3.1 of~\cite{geekisin}.  This
group  is
a quasisplit form of $U(2)$, and is compact mod centre at the infinite
places. If $v$ is a finite place of $F^+$ which splits at $ww^c$ in
$F$, then there is an isomorphism
$\imath_w:G(\cO_{F^+_v})\isoto\GL_2(\cO_{F_w})$ which extends to an
isomorphism $G(F^+_v)\isoto\GL_2(F_w)$. 

Fix an absolutely irreducible representation $\rbar:G_F\to\GL_2(\F)$
with $\rbar^c\cong\rbar^\vee\varepsilonbar^{-1}$. We assume that $\rbar$
is automorphic in the sense of Section 3.2 of~\cite{geekisin}, that
$\rbar(G_{F(\zeta_p)})$ is adequate in the sense of~\cite{jack}, and
that if $w$ is a finite place of $F$ for which $\rbar|_{G_{F_w}}$ is
ramified, then $w|_{F^+}$ splits in $F$.

Let $S$ be a set of finite places of $F^+$ which split in $F$, and
assume that $S$ contains the places dividing $p$ and the places at
which $\rhobar$ is ramified. For each place $v\in S$, choose a place
$\tv$ of $F$ lying over $v$, and let $\tS$ denote the set of these
places. The local fields $L_i$ in the definition of our patching
functor will be the $F_\tv$ for $v\in S$, the representations will be the
$\rhobar|_{G_{F_\tv}}$, and in this section for consistency
with~\cite{geekisin} we will write the various compact groups that occur as
$U$ rather than $K$, and in particular we will write $U_S$ for the
group $K$ in the definition of a patching
functor.

By Lemma 4.11 of \cite{MR1605752} we can and do choose a finite place
$v_1\notin S$ which splits as $w_1w_1^c$ in $F$, with the properties that
\begin{itemize}
\item $\mathbf{N}w_1\not\equiv
1\pmod{p}$,
\item  the ratio of the eigenvalues of $\rbar(\Frob_{w_1})$
  is not equal to $(\mathbf{N}w_1)^{\pm 1}$, and
\item the residue characteristic of $w_1$ is
sufficiently large that for any non-trivial root of unity $\zeta$ in a
quadratic extension of $F$, $w_1$ does not divide $\zeta+\zeta^{-1}-2$.
\end{itemize}
We consider compact open subgroups $U=\prod_vU_v$ of
$G(\A_{F^+}^\infty)$ with the properties that \begin{itemize}
\item $U_v\subset G(\bigO_{F^+_v})$ for all $v$ which split in $F$;
  \item $U_v$ is a hyperspecial maximal compact subgroup of $G(F_v^+)$
    if $v$ is inert in~$F$;
   \item $U_v=G(\bigO_{F^+_v})$ if $v|p$ or $v\notin S\cup\{v_1\}$;
   \item $U_{v_1}$ is the preimage of the upper triangular unipotent matrices under \[ G(\cO_{F^+_{v_1}}) \buildrel {\iota_{w_1} \atop \sim}
\over \longrightarrow \GL_2(\cO_{F_{w_1}}) \to \GL_2(k_{v_1}).\]
\end{itemize}
In particular, the assumptions on $v_1$ and $U_{v_1}$ imply that $U$
is sufficiently small. Set $U_S=\prod_{v|p}U_v$. Then for any
$\sigma\in \cC$, we have a space of algebraic modular forms
$S(U,\sigma^*)^*$ defined as in Section 3.1 of~\cite{geekisin}. We
have a Hecke algebra $\T^{S\cup\{v_1\},\univ}$ defined as in Section
3.2 of~\cite{geekisin}, with a maximal ideal $\m$ corresponding to
$\rbar$, and we set $M(\sigma):=S(U,\sigma^*)_{\m}^*$.

Then in the same way as above, the Taylor--Wiles--Kisin patching
method (as explained in Section
4.3 of~\cite{geekisin}) allows us to patch the $M(\sigma)$ to obtain a
covariant exact functor $\sigma\to M_\infty(\sigma)$ from the category
of finite length objects of $\cC$ to the category of $R_\infty$-modules, where $R_\infty$ is a power series ring over $R$. We extend
this to a functor on all of $\cC$ by setting
$M_\infty(\sigma)=\varprojlim_n M_\infty(\sigma/\varpi_E^n\sigma)$,
and regard $M_\infty(\sigma)$ as a coherent sheaf on $X_\infty=\Spf
R_\infty$. 

As in Section~\ref{sec: abstract patching}, it is easy to verify that
this is a patching functor; in particular, the hypothesis on
$M_\infty(\sigma^\circ(\tau))$ follows from local-global compatibility
in the same way as in Section~\ref{sec: abstract patching}, as does that on $M_\infty(\sigmabar)$ (\emph{cf.}\ the proof
of Lemma 4.1.3 of~\cite{blggU2}).
\section{Tame deformation spaces}\label{sec: deformation
  spaces}

The goal of this section is to describe the spaces 
of tamely potentially Barsotti--Tate lifts of a given generic
Galois representation
$\rhobar: G_{F_v} \to \GL_2(\F)$,
where a lift $\rho$ of $\rhobar$
is called {\em tamely potentially Barsotti--Tate} if $\rhobar$
becomes Barsotti--Tate (that is, crystalline of Hodge type $0$) over a tame extension
of $F_v$.  For such a lift, the inertial type $\tau$
underlying the potentially crystalline Dieudonn\'e module
of $\rho$ will correspond via inertial local Langlands
to a tame type $\sigma(\tau)$, i.e.\ an
irreducible representation of $\GL_2(\F_q)$, and we restrict our 
attention to the case when $\sigma(\tau)$ is principal series or cuspidal.
(Thus we omit the case when $\sigma(\tau)$ is one-dimensional, which is the case
when a tame twist of $\rho$ is actually Barsotti--Tate over $F_v$
itself.) 
For each
such~$\tau$, our goal then is to describe the framed deformation space
$X\bigl(\tau\bigr)$ 
of potentially Barsotti--Tate liftings of type $\tau$.

In the case when $\tau$ is a principal series and $\rhobar$ has only
scalar automorphisms, a precise calculation of $X\bigl(\tau\bigr)$ is given
in \cite[Thm.~5.2.1]{BreuilMezardRaffinee}.  We will extend this
calculation to the case of arbitrary generic $\rhobar$,
and also to the case of cuspidal~$\tau$. 
Furthermore, we will 
give an identification of the various
components of the special fibre $\overline{X}\bigl(\tau\bigr)$ with the mod $p$
reductions of crystalline deformation rings
(thus providing a concrete geometric interpretation of the matching
between components and Serre weights of $\rhobar$ established in
\cite[Corollaire 5.2.2]{BreuilMezardRaffinee}).

Our proof will be in several stages. 
We will make use of
some of the geometric Breuil--M\'ezard results of \cite{emertongeerefinedBM}
(encapsulated in Theorem~\ref{thm:geometric BM for types} below), 
and we will use base-change arguments to deduce our result for 
cuspidal types from the corresponding result for principal series types
with a minimum of additional calculation.
A key ingredient of our computation (as in the computation of
\cite{BreuilMezardRaffinee}) is the fact that a potentially Barsotti--Tate
lift of $\rhobar$ arises from a uniquely determined strongly 
divisible module, and in fact we will follow~\cite{MR2137952}
in working with deformations of strongly divisible modules
rather than directly with deformations of Galois representations.
Thus, in addition to considering framed deformations (i.e.\ liftings)
on the Galois side, we also consider framings of strongly
divisible modules; it is not difficult to transfer information
obtained in terms of framings of strongly divisible modules
back to the usual lifting spaces of Galois representations.

\subsection{Deformation spaces of Galois representations}
We fix a generic continuous representation $\rhobar:G_{F_v} \to
\GL_2(\F)$ and a non-scalar tame inertial type $\tau$. As in Section~\ref{subsec:
  abstract patching functors}, we let $X\bigl(\tau\bigr)$ denote the
deformation space parameterising liftings of~$\rhobar$ that are
potentially Barsotti--Tate of type $\tau$. Let
$R^\tau$ be the universal lifting ring for such deformations, so that
$X\bigl(\tau\bigr)$ is the formal spectrum of $R^\tau$. Fix a
character $\psi:G_{F_v}\to\cO^\times$ lifting
$\varepsilonbar\det\rhobar$, with $\psi|_{I_{F_v}}=\det\tau$, and
consider the subspace $X^\psi\bigl(\tau\bigr)=\Spf R^{\psi,\tau}$
corresponding to lifts with determinant $\psi\varepsilon^{-1}$. For each Serre weight
$\sigmabar$ we also have the spaces $X\bigl(\sigmabar\bigr)$ and
$X^\psi\bigl(\sigmabar\bigr)$ for crystalline lifts of Hodge type
$\sigmabar$.

\begin{theorem}
  \label{thm:geometric BM for types}
Assume $p > 2$.  Let $\rhobar$ be generic, and let
  $\tau$ be a non-scalar tame inertial type. Then the mod~$\varpi_E$ fibre of the
  deformation space $X\bigl(\tau\bigr)$ is the union of the mod $\varpi_E$
  fibres $\overline{X}\bigl(\sigmabar\bigr)$ where $\sigmabar$ runs over the
  Jordan--H\"older factors of $\sigmabar(\tau)$. Furthermore,
  $\overline{X}\bigl(\sigmabar\bigr)$ is non-empty if and only if
  $\sigmabar\in\cD(\rhobar)$.

The analogous statements also hold for $X^\psi\bigl(\tau\bigr)$.
\end{theorem}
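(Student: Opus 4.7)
The plan is to deduce both assertions from the geometric form of the Breuil--M\'ezard conjecture established in \cite{emertongeerefinedBM}, combined with the characterisation of $\cD(\rhobar)$ via crystalline lifts recorded in Theorem~\ref{thm:GLS main thm}. The non-vanishing statement is immediate: by definition $\overline{X}\bigl(\sigmabar\bigr)$ is non-empty precisely when $\rhobar$ admits a crystalline lift of Hodge type $\sigmabar$, and Theorem~\ref{thm:GLS main thm} equates this condition with $\sigmabar \in \cD(\rhobar)$.

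For the decomposition, the key input is the cycle identity coming from geometric Breuil--M\'ezard: under our generic hypotheses, \cite{emertongeerefinedBM} yields
\[
Z\bigl(\overline{X}\bigl(\tau\bigr)\bigr) \;=\; \sum_{\sigmabar \in \JH(\sigmabar(\tau))} m_\sigmabar \cdot Z\bigl(\overline{X}\bigl(\sigmabar\bigr)\bigr),
\]
with $m_\sigmabar$ equal to the multiplicity of $\sigmabar$ in the reduction mod $\varpi_E$ of $\sigma(\tau)$. By Lemma~\ref{lem:weights in tame types have multiplicity one} this multiplicity equals $1$ for every $\sigmabar \in \JH(\sigmabar(\tau))$, while the terms indexed by $\sigmabar \notin \cD(\rhobar)$ contribute nothing because $\overline{X}\bigl(\sigmabar\bigr)$ is then empty by the first paragraph. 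To convert this cycle-theoretic statement into a set-theoretic equality of underlying reduced subschemes, I would invoke the fact that for generic $\rhobar$ each non-empty $\overline{X}\bigl(\sigmabar\bigr)$ is formally smooth, hence reduced and irreducible, of the expected dimension; this smoothness is standard in the generic Fontaine--Laffaille setting and will in any case be made explicit later in this section. The cycle equation then forces each irreducible component of $\overline{X}\bigl(\tau\bigr)$ to be of the form $\overline{X}\bigl(\sigmabar\bigr)$ for some $\sigmabar \in \cD(\rhobar) \cap \JH(\sigmabar(\tau))$, and each such weight does appear as a component. The fixed-determinant statement follows by applying the identical argument to the fixed-determinant potentially crystalline and crystalline deformation rings, which fit into the geometric Breuil--M\'ezard framework of \cite{emertongeerefinedBM} in parallel with the unfixed-determinant case.

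The main obstacle is the geometric Breuil--M\'ezard cycle identity itself, which is the substantive input furnished by \cite{emertongeerefinedBM} and is the reason for the standing generic hypothesis; a secondary but more routine point is verifying that $\overline{X}\bigl(\sigmabar\bigr)$ is reduced and irreducible for each $\sigmabar \in \cD(\rhobar)$ in the generic regime, so that the cycle identity upgrades to the claimed set-theoretic decomposition.
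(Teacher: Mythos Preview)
Your proposal is essentially correct and follows the same route as the paper: both deduce the result from the geometric Breuil--M\'ezard machinery of \cite{emertongeerefinedBM}, using multiplicity one for the Jordan--H\"older factors (Lemma~\ref{lem:weights in tame types have multiplicity one}). The paper is more terse, citing a single packaged statement (\cite[Thm.~5.5.4]{emertongeerefinedBM}) rather than unpacking the cycle identity; it also notes explicitly that the hypothesis ``all predicted weights are regular'' needed there follows from Lemma~\ref{lem: generic representations have no weight p-1}, a point you should mention.

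The one genuine difference is your treatment of the fixed-determinant case. You propose rerunning the argument in parallel for the fixed-determinant deformation rings. The paper instead reduces directly to the non-fixed case: twisting by the universal unramified deformation of the trivial character gives an isomorphism $R^\tau \cong R^{\psi,\tau}\llbracket X\rrbracket$ (this is \cite[Lem.~4.3.1]{emertongeerefinedBM}), from which the $X^\psi\bigl(\tau\bigr)$ statement follows immediately. Your parallel approach would also work, but the twisting reduction is cleaner and avoids having to re-verify the hypotheses of geometric Breuil--M\'ezard in the fixed-determinant setting.
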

\begin{proof}
  The first part is a special case of
  \cite[Thm.\ 5.5.4]{emertongeerefinedBM} (note that the assumption
  that all predicted weights of $\rhobar$ are regular follows from
  Lemma~\ref{lem: generic representations have no weight p-1}, and the
  multiplicities $n_a$ occurring in~\cite{emertongeerefinedBM} are all
  zero or one by Lemma~\ref{lem:weights in tame types have
    multiplicity one}). The analogous results for
  $X^\psi\bigl(\tau\bigr)$ follow from those for $X\bigl(\tau\bigr)$,
  because twisting by the universal unramified deformation of the trivial character gives an
  isomorphism $R^\tau\cong R^{\psi,\tau}\llbracket X\rrbracket$, by Lemma 4.3.1 of
  \cite{emertongeerefinedBM}.
\end{proof}
The rest of this section is devoted to proving a refinement
of this
result (Theorem~\ref{thm:deformation rings principal series} below)  
which gives explicit equations for the deformation rings
involved; Theorem~\ref{thm:geometric BM for types} will be a key
ingredient in the proof of this refinement. Before we state this
refinement, we need to introduce some terminology and notation.

\subsection{The structure of potentially Barsotti--Tate deformation spaces}
\label{sec:diedonne-modules}
Let $L$ be an unramified extension of
$F_v$, of degree $f'$ over $\Qp$.
Suppose that $L$ embeds into the coefficient field 
$E$. (In applications, $L$
will be either $F_v$ or its quadratic unramified extension $L_v$, so
this supposition will hold; recall that our $E$ is always assumed to
contain an embedding of~$L_v$.)   Fix a root $\uni$ of the polynomial
$E(u) = u^{p^{f'}-1} + p$.

A
\emph{Dieudonn\'e $\cO$-module} is a free
$\cO_L\otimes_{\Zp}\cO$-module of finite rank $M$ together with
an injective endomorphism $\varphi:M\to M$ which is $\cO$-linear and
$\cO_L$-semilinear, and which satisfies $pM\subset\varphi(M)$.  We say that $M$ is a \emph{Dieudonn\'e
  $\cO$-module with descent data} if it is equipped with an
$\cO_L\otimes_{\Zp}\cO$-linear action of
$\Gal(L(\uni)/L)$ which commutes with $\varphi$.

Given a $p$-divisible group $\calH$ over $\cO_L[\uni]$ with
an action of $\cO$ and descent data on the generic fibre to $L$, the
contravariant Dieudonn\'e module corresponding to $\calH \times_{\cO} \F$
is a Dieudonn\'e
$\cO$-module with descent data. In particular, an $\cO$-point of
$X\bigl(\tau\bigr)$ gives a Dieudonn\'e $\cO$-module with
descent data of type $\tau$.

The usual isomorphism
$\cO_L\otimes_{\Zp}\cO \isoto\cO \times\cdots\times\cO $ induces a
decomposition $M=M^0\times\cdots\times M^{f'-1}$ such that
$\varphi(M^i)\subseteq M^{i+1}$.  (We remark that we
  have used the notation of \cite{breuillatticconj} instead of the
  notation of   \cite{BreuilMezardRaffinee}, so that the index
  set $\cS$ is a set of integers rather than the set of embeddings of
  $k_v$ into $\F$; for a Dieudonn\'e $\cO$-module $M = M^0 \times
  \cdots \times M^{f'-1}$, the piece
  that we denote $M^j$ here corresponds to the piece that would be denoted $M^{\emb_j}$
  in \cite{BreuilMezardRaffinee}.)

 Since
$\Gal(L(\uni)/L)$ has order prime to $p$, it follows
that the $\cO$-linear action of $\Gal(L(\uni)/L)$ on each $M^j$ is
via a sum of characters, which is independent of $j$. We refer to this
sum of characters as the \emph{type} of $M$, and indeed in the case 
that $M$ arises from a $p$-divisible group $\calH$
over $\cO_L[\pi]$ with descent data on the generic fibre, it
has type~$\tau$ if and only if the potentially Barsotti--Tate
representation associated to $\calH$ has type~$\tau$ (regarded as a
representation of $I_{F_v}$).

Let $\iota$ denote the involution $\iota(j) = f'-1-j$ on $\{0,\ldots,f'-1\}$.
Given a Dieudonn\'e $\cO$-module~$M$ with descent data of type
$\eta\oplus\eta'$ with $\eta \neq \eta'$, we define the \emph{gauge} (more
specifically, the $\eta$-gauge and the $\eta'$-gauge) of $M$ as
follows: fix a basis $e_j,e'_j$ of $M^j$ on which $\Gal(L(\uni)/L)$
acts by $\eta,\eta'$ respectively 
 (such a basis is
well-defined up to $\cO^\times$-scalars). Then we have
$\varphi(e_j)=x_je_{j+1},\varphi(e'_j)=x'_je'_{j+1}$ for some $x_j,x'_j$ that
are well-defined up to $\cO^{\times}$, and
we say that the $j$-part of the $\eta$-gauge (respectively the
$j$-part of the $\eta'$-gauge) of $M$ is $x_{\iota(j)} \cO^{\times}$ (respectively
$x'_{\iota(j)} \cO^{\times}$);
\textit{cf}.~\cite[Eq.~(13)]{breuillatticconj}.
We will also
refer to $\{x_{\iota(j)} \cO^{\times} \}_j$ (respectively
$\{x'_{\iota(j)} \cO^{\times}\}_j$) as
the
$\eta$-gauge of $M$  (respectively, the $\eta'$-gauge).

For the rest of this section we will freely identify characters of
$\cO_L^\times$ and $I_L$ via $\Art_L$. The main theorem of this section is the following.

\begin{theorem}
  \label{thm:deformation rings principal series}
Assume that $p>3$.
  Continue to assume that $\rhobar$ is generic and let $\tau$ be a
  non-scalar tame inertial type. If $\sigma(\tau)$ is a principal series type, then we write
  $\sigma(\tau)=\sigma(\eta\otimes\eta')$, and let $L=F_v$ and
  $f'=f$ in the above discussion. If $\sigma(\tau)=\Theta(\eta)$
  is a
  cuspidal type, then we write $\eta'=\eta^q$, so that
  $\BC(\sigma(\tau))=\sigma(\eta\otimes\eta')$, and we let $L=L_v$ (the
  quadratic unramified extension of $F_v$) and $f'=2f$ in the above
  discussion.  Recall that $\cS = \{0,\ldots,f-1\}$ in either case.
\begin{enumerate}
\item The deformation space $X\bigl(\tau\bigr)$ is non-empty
  if and only if $\sigmabar_J(\tau)\in\cD(\rhobar)$ for at least one
  $J \in\cP_{\tau}$.  If this is the case, then there are subsets
  $\Jmin \subseteq \Jmax \subseteq \cS$ with $\Jmin,\Jmax\in\cP_\tau$ such that
  $\sigmabar_J(\tau)\in\cD(\rhobar)$ if and only if
  $\Jmin \subseteq J\subseteq \Jmax$.  Moreover, all $J \subseteq
  \cS$ with $\Jmin \subseteq J\subseteq \Jmax$ are
  in $\cP_{\tau}$.
\item
Assume that $X\bigl(\tau\bigr)$ is non-empty.  If $\tau$ is cuspidal,
assume further that $\F$ is large enough for 
$X\bigl(\tau\bigr)$ to have a point over the ring of integers of some
finite extension of $E$ with residue field $\F$.
Then the deformation space $X\bigl(\tau\bigr)$ is equal to the
formal spectrum of a power series ring over
$$\cO\llbracket (X_j,Y_j)_{j \in \Jmax\setminus \Jmin}\rrbracket/(X_j Y_j - p)_{j \in
  \Jmax\setminus \Jmin}. $$

\item If $\lambda : R^{\tau} \to \Zpbar$ is a point on $X(\tau)$, then
  for $j \in \Jmax \setminus \Jmin$ the $j$-part of the $\eta'$- and
  $\eta$-gauges of the Dieudonn\'e module of the 
  potentially Barsotti--Tate representation associated to $\lambda$
  are represented by
  $\lambda(X_j)$ and $\lambda(Y_j)$ respectively.

\item
In the context of~{\em (2)},
the mod $\varpi_E$ fibre $\overline{X}\bigl(\tau\bigr)$
is the union of the mod~$\varpi_E$ fibres $\overline{X}\bigl(\sigmabar_J\bigr)$,
for each $J$ such that $\Jmin \subseteq J \subseteq \Jmax$.
Precisely,
the component $\overline{X}\bigl(\sigmabar_J\bigr)$ of $\overline{X}\bigl(\tau\bigr)$
is cut out by the equations $X_j = 0$ {\em (}for $j \in J\setminus \Jmin${\em )} and
$Y_j = 0$ {\em (}for $j \in \Jmax \setminus J${\em )}.
 \item The analogous results also hold for $X^\psi\bigl(\tau\bigr)$.
\end{enumerate}
\end{theorem}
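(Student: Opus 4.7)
The plan is to reduce everything to an explicit computation with strongly divisible (or equivalently Kisin/Breuil) modules with descent data, following the template of \cite{BreuilMezardRaffinee}, but pushed further in two directions: we must allow arbitrary generic $\rhobar$ (not just those with scalar endomorphisms), and we must handle the cuspidal case. First I would handle the principal series case over $F_v$. Since $\tau$ is tame, any potentially Barsotti--Tate lift of $\rhobar$ of type $\tau$ comes from a unique strongly divisible module with descent data of type $\tau$, and I would write out such modules in terms of eigenbases for the $I_{F_v}$-action, as in Section~5 of \cite{BreuilMezardRaffinee}. The Frobenius-action is then encoded by the scalars $x_j, x'_j$ of the $\eta$- and $\eta'$-gauges, satisfying $x_j x'_j \in p \cO^{\times}$ for all $j$; the deformation problem becomes a moduli problem on these pairs, together with a framing, and it is essentially tautological that this moduli space has a presentation of the form claimed, with the number of ``nontrivial'' $j$'s (those where $x_j$ and $x'_j$ can both specialise to $0$) controlled combinatorially by $\rhobar$. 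Part~(3) is then automatic by construction.

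Next I would determine the set $\{j : (X_j,Y_j) \text{ occurs}\}$, i.e.\ identify $\Jmin, \Jmax$. The key is Theorem~\ref{thm:geometric BM for types}, which tells us that the components of $\overline{X}(\tau)$ are exactly the $\overline{X}(\sigmabar_J)$ for $\sigmabar_J \in \cD(\rhobar)$. On the Galois / strongly divisible side, each component can be cut out by imposing ``$X_j = 0$ or $Y_j = 0$'' for each nontrivial $j$, and the vanishing locus attached to a weight $\sigmabar_J$ can be read off from the Fontaine--Laffaille description of a crystalline lift of Hodge type $\sigmabar_J$ (this is where the base change argument feeds in, via Appendix~\ref{sec:unipotent-FL}, to make the correspondence explicit). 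This simultaneously identifies $\Jmin, \Jmax$ as subsets of $\cP_\tau$, proves (4), and shows that the interval $[\Jmin, \Jmax]$ in $\cP_\tau$ is the full Boolean lattice of subsets between $\Jmin$ and $\Jmax$.

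For the cuspidal case (part (1) and (2)), I would use quadratic unramified base change to $L_v$: since $\rhobar|_{G_{L_v}}$ is still generic by Lemma~\ref{lem:base change of generic is generic}, and since $\BC(\tau) = \eta \oplus \eta^q$ is principal series, the principal series case applied over $L_v$ gives a similar explicit presentation for $X(\BC(\tau))$ over $L_v$. Restriction to $G_{L_v}$ gives a morphism $X(\tau) \to X(\BC(\tau))$, and the assumption that $X(\tau)$ has an $\cO'$-point ensures that this map is defined and that the $\Gal(L_v/F_v)$-action on $X(\BC(\tau))$ admits a fixed point whose formal neighbourhood is $X(\tau)$. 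The gauges and their combinatorics match up: a comparison using Lemma~\ref{lem: base change of JH factors} converts $\cP_\tau$ and the pairs $(\Jmin, \Jmax)$ to their principal series counterparts via $\BC_{\cusp}$. The only real content beyond the principal series case is therefore verifying that the $\Gal(L_v/F_v)$-fixed locus in $X(\BC(\tau))$ is again cut out by equations of the form $X_j Y_j = p$, which one does by direct inspection: the descent data for a cuspidal type permutes the basis vectors in such a way that the gauges are exchanged, and this preserves the form of the presentation.

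Finally, part~(5) follows formally from parts~(1)--(4): as in the proof of Theorem~\ref{thm:geometric BM for types}, the universal unramified twist gives $R^\tau \cong R^{\psi,\tau}\llbracket X\rrbracket$, so the explicit presentation descends to $X^\psi(\tau)$ by removing one formal variable. The main obstacle I anticipate is the cuspidal descent step: while Theorem~\ref{thm:geometric BM for types} controls the special fibre, getting the explicit ring structure in characteristic zero for the cuspidal case requires the base-change comparison to be sufficiently functorial at the level of framed deformation rings, and it is here that I expect the coefficient-field hypothesis (existence of an $\cO$-point over some finite extension of $E$) to be essential, since without it the descent from $L_v$ to $F_v$ need not preserve the formal-smoothness structure we have produced.
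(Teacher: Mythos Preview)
Your proposal is correct and follows the paper's approach closely: explicit strongly divisible modules for the principal series ring presentation (this is the paper's Lemma~\ref{lem:first-steps}), Fontaine--Laffaille theory plus Theorem~\ref{thm:geometric BM for types} to pin down the components, and quadratic base change to $L_v$ for the cuspidal case, with part~(5) following from the universal unramified twist.

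One refinement you should be aware of: the Fontaine--Laffaille identification of the component $\overline{X}(\sigmabar_J)$ in your second paragraph only works directly when $\rhobar$ is \emph{reducible}. The comparison between the strongly divisible module and the Fontaine--Laffaille module passes through \'etale $\varphi$-modules and hence through restriction to $G_{F_{v,\infty}}$, and recovering the full $G_{F_v}$-representation from that restriction requires an $H^1$-injectivity statement for the off-diagonal character (Lemma~\ref{lem:cohomology-lemma} in the paper). For irreducible $\rhobar$ with $\tau$ principal series, the paper does not attempt a direct Fontaine--Laffaille computation for part~(4); instead it uses the \emph{same} base-change-to-$L_v$ argument you reserve for the cuspidal case, since $\rhobar|_{G_{L_v}}$ is reducible and $\BC(\tau)$ is still principal series, so both reductions land in the case already handled. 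Your descent mechanism is also slightly imprecise: $X(\tau)$ is not the formal neighbourhood of a fixed point but is identified with the fixed locus $X(\tau')^{\varphi^f}$ of a conjugation involution; to compute this the paper works on an auxiliary space $X_{\Mbar',f}(\tau')$ carrying gauge bases at indices $0$ and $f$, on which $\varphi^f$ swaps the two bases, and the fixed-point equations become $a_j=a'_{j+f}$ (principal series) or $a_j=-b'_{j+f}$ (cuspidal). The $\cO'$-point hypothesis is used exactly to construct the isomorphism $\gamma:\Mbar'\to c(\Mbar')$ needed to define this involution in the cuspidal case, where no canonical such map exists.
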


\begin{rem}
  Since twisting by the universal unramified deformation of the trivial
character gives an
  isomorphism $R^\tau\cong R^{\psi,\tau}\llbracket X\rrbracket$, part (5) of
  Theorem~\ref{thm:deformation rings principal series} follows from
  the previous four parts, and we will ignore it in the below.
\end{rem}

\begin{rem}
  \label{rem:sufficiently-large}
  We comment briefly on the extra hypothesis on the residue field $\F$ in part (2) of
  Theorem~\ref{thm:deformation rings principal series}.  In the
  principal series case, the explicit calculations of
  \cite{breuillatticconj} show that our running hypotheses on $\F$ 
  already guarantee the existence of a point on 
  $X\bigl(\tau\bigr)$ as in \ref{thm:deformation rings principal series}(2), and so no additional hypothesis on $\F$ is
  necessary in this case.  We expect that a similar explicit
  calculation in the cusipdal case would eliminate the extra
  hypothesis here. We note that this extra hypothesis is unimportant
  in our applications of Theorem~\ref{thm:deformation rings principal
    series} in Sections~\ref{sec: proof of the
  conjecture} and~\ref{sec: freeness and
  multiplicity one}, where we are free to choose $\F$ to be
arbitrarily large.
\end{rem}

\subsection{Strongly divisible
modules in the principal series case}\label{ss:strongly-divisible} 
We begin the proof of Theorem~\ref{thm:deformation rings principal series} by briefly recalling the notion of a strongly divisible
module with coefficients and tame descent data; for the full details
of this theory, see Sections~3 and~4 of~\cite{MR2137952}. Set
$e=p^{f'}-1$, and let
$S_{\cO_L,\Zp}$ be the $p$-adic completion of $\cO_L[u,u^{ie}/i!]_{i\ge 0}$. Let
$R$ be a complete local noetherian $\cO_L$-algebra, and let $S_{\cO_L,R}$
be the $\m_R$-adic completion of $S_{\cO_L} \otimes_{\Zp} R$.
When $L$ is clear from context we will abbreviate $S_{\cO_L,\Zp}$ and
$S_{\cO_L,R}$ as $S$ and $S_R$, respectively.  Define $\Fil^1 S_R$ to
be the completion of the ideal of $S_R$ generated by the elements $E(u)^i/i!$ for
all $i \ge 1$.

Let $\varphi$
denote the endomorphism of $S_R$ that acts $R$-linearly and
$\cO_L$-Frobenius-semilinearly, with $\varphi(u) = u^p$.   Write $c =
p^{-1} \varphi(E(u))$.  For each 
$g\in \Gal(L(\pi)/L)$ we let $\widehat{g}$ be the linear endomorphism of $S_R$ with $g(u) = (g(\pi)/\pi) u$.

\begin{defn}\label{def:stronglydivisible}  A \emph{strongly divisible module with tame descent
    data from $L(\pi)$ to $L$ and $R$-coefficients}  is a finitely generated free $S_R$-module $\M$, together with
a sub-$S_R$-module $\Fil^{1} \M$, a $\varphi$-semilinear map $\varphi_1 : \Fil^1 \M
\rightarrow \M$, and additive bijections $\ghat : \M \rightarrow
\M$ for each $g \in \Gal(L(\pi)/L)$, satisfying the following
conditions:
\begin{enumerate}
\item $\Fil^{1} \M$ contains $(\Fil^{1}S_R)\M$,

\item $\Fil^{1}\M \cap I\M = I \Fil^{1}\M$ for all ideals $I$
in $R$,

\item $\varphi_1(sx) = \varphi_1(s)\varphi(x)$ for $s \in \Fil^1 S_R$
  and $x \in \M$, with $\varphi(x) := c^{-1} \varphi_1(E(u)x),$

\item $\varphi_1(\Fil^{1}\M)$ is contained in $\M$ and
generates it over $S_R$,

\item $\ghat(sx) = \ghat(s)\ghat(x)$ for all $s \in S_R$, $x \in
\M$, $g \in \Gal(L(\pi)/L)$,

\item $\ghat_1 \circ \ghat_2 = \widehat{g_1 \circ g_2}$ for all
$g_1,g_2 \in \Gal(L(\pi)/L)$,

\item $\ghat(\Fil^{1}\M) \subset \Fil^{1}\M$ for all $g \in
\Gal(L(\pi)/L)$, and

\item $\varphi$ commutes with $\ghat$ for all $g \in
\Gal(L(\pi)/L)$.

\end{enumerate}
\end{defn}

This differs from the definition of strongly divisible modules with
descent data and coefficients in Definition 4.1 of~\cite{MR2137952} in
the following respects.  First, we have set $k=2$ and $F=L(\pi)$, $F'=L$ (in
the notation of~\cite{MR2137952}), and we have not required $R$ to be
flat over $\cO$.  Second, we have equipped each strongly divisible
module with a map $\varphi_1 : \Fil^1 \M \to \M$ rather than a map
$\varphi : \M \to \M$; when $R$ is $p$-torsion-free (as
in~\cite{MR2137952}) these notions are equivalent.  Finally,  we have ignored the monodromy
operator $N$: we can do this because the operator $N$ will exist and be unique for all strongly divisible modules that we
  consider.  Existence follows by the same argument as in
  \cite[Prop~5.1.3(1)]{MR1804530} and \cite[Lem.~2.1.1.9]{MR1804530}
  after noting that $\Fil^1 \M/(\Fil^1 S_R) \M$ is free over $R
  \otimes_{\Zp} \cO_L[\pi]$ for the strongly
divisible modules $\M$ that we consider; uniqueness is a consequence
of the usual argument as in  \cite[Prop~5.1.3(1)]{MR1804530}.

Proposition 4.13 of~\cite{MR2137952} and the remarks immediately
preceding it (together with the remarks in the previous paragraph) show that if $R=\cO$, the category of strongly
divisible modules with tame descent data and $R$-coefficients is
equivalent to the category of $G_L$-stable $\cO$-lattices in
$E$-representations of $G_L$ which become
Barsotti--Tate over $L(\pi)$. 

In particular, if $\calH$ is a $p$-divisible group over $\cO_L[\uni]$
with an action of $\cO$ and descent data on the generic fibre to $L$,
then there is a strongly divisible $\cO$-module with descent data
$\cM$  corresponding to~$\calH$,
and  thus to the
(descended) generic
fibre~$\rho$ of $\calH$.  We recall that $M \cong \cM \otimes_{S} \cO$
where the $\cO$-algebra map $S \to \cO$ sends the variable $u$ and its divided
powers to $0$, and $M$ is the contravariant Dieudonn\'e module
corresponding to $\calH \times_{\cO} \F$ discussed above.
The usual isomorphism $\cO_L\otimes_{\Zp}\cO \isoto\cO
\times\cdots\times\cO $ induces a decomposition
$\cM=\cM^0\times\cdots\times\cM^{f'-1}$, compatible with the
decomposition of $M$ defined above, and since $\Gal(L(\uni)/L)$ has
order prime to $p$, it follows that the $\cO$-linear action of
$\Gal(L(\uni)/L)$ on each $\cM^j$ again is via a sum of
characters, which is independent of~$j$.

The strongly divisible modules that arise in the two-dimensional
principal series case are studied in detail in \cite[\S \S
5--8]{breuillatticconj} and \cite[\S5]{BreuilMezardRaffinee}, and we
now recall some of these results. 
 We note that
  although \cite{breuillatticconj} and \cite{BreuilMezardRaffinee}
  assume that $E$ contains a splitting field of the polynomial
  $u^{p^{2f}-1} + p$, this assumption is never used in their study of
  strongly divisible modules, and so our weaker hypothesis on $E$ is sufficient.
(In particular it is possible to take $E$ to be unramified.) 

Let $\eta\ne\eta'$ be characters of $\Gal(L(\pi)/L)$, and let
$\tau=\eta\oplus\eta'$, a tame inertial principal series type. As
mentioned above, we identify $\eta,\eta'$ with characters of
$\cO_L^\times$, and we use the notation of Section~\ref{ss:principal},
so that in particular we have an integer
$c=\sum_{i=0}^{f-1}c_ip^i$. Write $c^{(j)} := \sum_{i=0}^{f-1} (p-1-c_{i-j})
p^i$.  (Recall that $\omega_f$ agrees with the \emph{reciprocal} of the
composition of $g \mapsto g(\pi)/\pi$ with $\kappabar_0$; this is the
reason for writing $p-1-c_{i-j}$ rather than $c_{i-j}$ in the preceding definition.)
 We say that a strongly divisible module $\cM$ with tame descent
data and $R$-coefficients has \emph{shape} $\tau$ if there is a
decomposition $\cS= I_{\eta}\coprod I_{\eta'}\coprod II$ such that
$\cM$ can be written in the following form.

\begin{eqnarray*} j \in I_{\eta} : & & \begin{cases}
\widetilde{\Fil}^1 \M^j & = \langle e_{\eta}^j + a_j u^{c^{(j)}}
e^j_{\eta'}, (u^e+p) e^j_{\eta'} \rangle \\
\varphi_1( e_{\eta}^j + a_j u^{c^{(j)}} e^j_{\eta'}) & = e_{\eta}^{j+1} \\
\mathrlap{\varphi_1( (u^e+p) e^j_{\eta'} )} 
\hphantom{\varphi_1 ( -\frac{p}{a_j} e^j_{\eta'} + u^{e-c^{(j)}} e_{\eta}^j) } & = e_{\eta'}^{j+1}
\end{cases} \\
j \in I_{\eta'} : & &  \begin{cases} 
\widetilde{\Fil}^1 \M^j & = \langle (u^{e}+p) e^j_{\eta},  e_{\eta'}^j +
a_j u^{e - c^{(j)}}
e^j_{\eta} \rangle \\
\varphi_1( (u^e+p) e^j_{\eta} ) & = e_{\eta}^{j+1}\\
\mathrlap{\varphi_1( e_{\eta'}^j + a_j u^{e-c^{(j)}} e^j_{\eta})}
\hphantom{\varphi_1 ( -b_j e^j_{\eta'} + u^{e-c^{(j)}} e_{\eta}^j) }
& = e_{\eta'}^{j+1} 
\end{cases} \\
j \in II : &  & \begin{cases}
\widetilde{\Fil}^1 \M^j & = \langle a_j e^j_{\eta} + u^{c^{(j)}}
e^j_{\eta'}, -b_j e^j_{\eta'} + u^{e-c^{(j)}} e_{\eta}^j \rangle \\
\varphi_1( a_j e^j_{\eta} + u^{c^{(j)}}
e^j_{\eta'}) &= e_{\eta}^{j+1} \\
\mathrlap{\varphi_1 ( -b_j e^j_{\eta'} + u^{e-c^{(j)}} e_{\eta}^j)}  
\hphantom{\varphi_1 ( -b_j e^j_{\eta'} + u^{e-c^{(j)}} e_{\eta}^j) } &= e_{\eta'}^{j+1}
\end{cases}  \end{eqnarray*}
with $a_j \in R$ if $j\in I_\eta, I_{\eta'}$, and $a_j,b_j\in\m_R$ if $j \in
II$, with $a_jb_j=p$. When
$j=f-1$ the right-hand side of the defining expressions for $\varphi_1$ must be
modified to $\alpha e_{\eta}^0$ and $\alpha' e_{\eta'}^0$ for some
$\alpha,\alpha' \in R^\times$.   
Here $\widetilde{\Fil}^1 \M^j := \Fil^1 \M^j /
(\Fil^p S) \M^j$, and the descent data acts on $e^j_{\eta}$,
$e^j_{\eta'}$ via $\eta, \eta'$ respectively.
It is not difficult to check that an object of this form is indeed a strongly
divisible module in the sense of
Definition~\ref{def:stronglydivisible} (in particular that the condition
(2) is satisfied).  We refer to the elements $e_{\eta}^j$ and $e_{\eta'}^j$ in the above shape as a
\emph{gauge basis} of $\M^j$.

Fix a generic representation $\rhobar$.  If $\rho$ is a
potentially Barsotti--Tate lift of $\rhobar$ of type~$\tau$, and $\cM$
is the corresponding strongly divisible module with $\cO$-coefficients
and descent data, then by
\cite[Prop.~5.2, Prop.~7.1]{breuillatticconj} we see that $\M$ has
shape~$\tau$. 

\begin{rem}
  \label{rem:conventions-functors}
  Recall that our conventions for Hodge--Tate weights differ from
  those of \cite{breuillatticconj}.  In particular where
  \cite{breuillatticconj} associates to $\M$ the Galois representation
  $T^{L}_{\st,2}(\M)$ (where $T_{\st,2}^L$ is the functor
defined between Remark~4.8 and Lemma~4.9 of~\cite{MR2137952}), we instead have $\rho = T^{L}_{\st,2}(\M)(-1)$.
  That is, our representations $\rho$ and $\rhobar$ differ from those
  of \cite{breuillatticconj} by a twist by $\varepsilon^{-1}$. 

 Note
  that this twist preserves genericity, even recalling that our
  fundamental characters are the reciprocals of the ones in
  \cite{breuillatticconj}.  In particular, items from
  \emph{op.~cit.}~such as \cite[Prop.~7.1]{breuillatticconj}, which concern intrinsic statements
  about the shape of strongly divisible modules of type $\tau$ under
  the hypothesis that $\rhobar$ is generic, will remain true in our
  setting, without any changes.
\end{rem}

By \cite[Thm.~5.1.1]{BreuilMezardRaffinee} the sets
$I_{\eta}$, $I_{\eta'}$, $II$, and the indices~$j$ such that $a_j \in
\calO^{\times}$ are completely determined by $\rhobar$ and the type
$\tau$; note that in light of Remark~\ref{rem:conventions-functors}
this statement remains true in our conventions.
Similarly we remark that \cite[Thm.~8.1]{breuillatticconj} shows that we have
$\rhobar$ irreducible if and only if $|II|$ is odd and 
$\val_p(a_j)>0$ for all $j$, non-split reducible if and only if some $a_j$ is a
unit, and split if and only if $|II|$ is even and $\val_p(a_j)>0$ for
all $j$.

\subsection{Deformation spaces of strongly divisible
  modules}\label{subsec:deformation of strongly divisible}  We
continue to assume that $\tau$ is a principal series type.
If $\M$ is a strongly divisible module with $R$-coefficients of shape
$\tau$, then the gauge bases $e^j_{\eta},
e^j_{\eta'}$ are determined by $\M$ up to scalar multiplication.
(This is \cite[Rem.~5.5(iv)]{breuillatticconj} and \cite[Prop.~5.4]{breuillatticconj} in the case $R=\cO$, but
the argument goes over unchanged to the general case.)
Once a choice of gauge basis $e^0_\eta, e^0_{\eta'}$ is fixed, then
the gauge bases
$e^j_{\eta}, e^j_{\eta'}$ and the $a_j, b_j$
and $\alpha,\alpha'$ are also uniquely determined.

We now
consider a further three deformation problems. Fix $\rhobar$ and $\tau$
such that $\rhobar$ has a potentially Barsotti--Tate lift of type
$\tau$.  There is a unique strongly divisible module $\Mbar$ (with
$\F$-coefficients, of shape $\tau$) that occurs as $\M/\m \M$
as $\M$ varies over all strongly divisible modules with
$\cO$-coefficients of shape $\tau$ corresponding to potentially
Barsotti--Tate lifts of $\rhobar$ of type $\tau$.
 (This is clear when $\rhobar$ is reducible and split, because
$\overline{a}_j = 0$ for all $j$, while the parameters
$\overline{\alpha},\overline{\alpha}'$ are determined by the
unramified parts of the characters comprising $\rhobar$ \cite[Ex.~3.7]{MR2457845}; 
when $\rhobar$ has scalar endomorphisms, the claim follows from the proof of \cite[Prop.~5.1.2]{BreuilMezardRaffinee}.)
Fix a choice of the gauge
basis elements $\bar{e}^0_\eta, \bar{e}^0_{\eta'}$ for $\Mbar^0$. 
We then let $R^\tau_{\Mbar}$ be the complete local noetherian
$\cO$-algebra representing the functor which assigns to a complete
local noetherian $\cO$-algebra $R$ the set of isomorphism classes of
tuples $(\M,e^0_\eta,e^0_{\eta'})$
where $\M$ is a strongly divisible module with $R$-coefficients
of shape $\tau$ lifting $\Mbar$, and $e^0_\eta,e^0_{\eta'}$ are a
choice of gauge basis lifting $\bar{e}^0_\eta,\bar{e}^0_{\eta'}$. We
write $X_{\Mbar}\bigl(\tau\bigr):=\Spf R^\tau_{\Mbar}$.

Given a strongly divisible module $\cM$ with $R$-coefficients of shape
$\tau$, we obtain a Galois representation $\rho_{\M}:G_L\to\GL_2(R)$
as follows. If $R$ is Artinian, then we set
$\rho_{\M}:=T_{\st,2}^L(\M)(-1)$, with $T_{\st,2}^L$ as in Remark~\ref{rem:conventions-functors}. In
general, we set $\rho_{\M}:=\varprojlim_n \rho_{\M/\m_R^n}$.

We let $R^{\square,\tau}_{\Mbar}$ denote the
complete local noetherian $\cO$-algebra representing the functor which
assigns to a complete local noetherian $\cO$-algebra $R$ the set of
isomorphism classes of tuples $(\M,e^0_\eta,e^0_{\eta'},\rho)$, where
$(\M,e^0_\eta,e^0_{\eta'})$ is as above, and 
$\rho$ is a lifting of $\rhobar$ such that
$\rho\cong\rho_{\cM}$ (i.e.\ the
same data as that parameterised by $R^\tau_{\Mbar}$, together with a
framing of $\rho_{\cM}$). 

We also consider $R^{\square}_{\Mbar}$, the complete local noetherian
$\cO$-algebra representing the functor which assigns to a complete
local noetherian $\cO$-algebra $R$ the set of isomorphism classes of
pairs $(\M,\rho)$, with $\M$ and $\rho$ as in the previous paragraph.

Now, note that $R^{\square,\tau}_{\Mbar}$ is
formally smooth over both $R^\tau_{\Mbar}$ and $R^{\square}_{\Mbar}$ (the
additional data being a choice of framing of $\rhobar_{\M}$ in the
first case, and a choice of gauge basis $e^0_\eta,e^0_{\eta'}$ in the second
case). In addition, we claim that there is a natural isomorphism
$R^\tau\isoto R^{\square}_{\Mbar}$, induced by the forgetful functor
$(\M,\rho)\mapsto\rho$. To see this, we argue as in the proof of
Th\'eor\`eme~5.2.1 of~\cite{BreuilMezardRaffinee}. This morphism
certainly induces an isomorphism on $\cO_{E'}$-points for each finite
extension $E'/E$ (this is just the statement recalled above, that the
strongly divisible module associated to a potentially Barsotti--Tate
lifting of $\rhobar$ of type $\tau$ necessarily has shape $\tau$), so
it suffices to show that it induces a surjection on reduced tangent
spaces. This reduces to a straightforward explicit calculation with filtered
$\varphi_1$-modules, exactly as in the last paragraph of the proof of
Th\'eor\`eme~5.2.1 of~\cite{BreuilMezardRaffinee}.

Thus parts (1) and (2) of Theorem~\ref{thm:deformation rings principal series} hold in the
principal series case if and
only if the analogous statements hold with $X\bigl(\tau\bigr)$ replaced with the
space $X_{\Mbar}\bigl(\tau\bigr)$.  Similarly we will be able to
deduce parts (3) and (4)  of
Theorem~\ref{thm:deformation rings principal series} in the principal
series case from calculations on
$X_{\Mbar}\bigl(\tau\bigr)$ together with the description of the
relationship between $X\bigl(\tau\bigr)$ and  $X_{\Mbar}\bigl(\tau\bigr)$.
For these reasons, in the principal series case we will work with
$X_{\Mbar}\bigl(\tau\bigr)$ in place of $X\bigl(\tau\bigr)$ from now on.

We will prove Theorem \ref{thm:deformation rings principal series} via
a series of lemmas.

\begin{lem}\label{lem:first-steps} If $\sigma(\tau)$ is a principal series type, then parts
  (1), (2), and (3) of Theorem~\ref{thm:deformation rings principal
    series} hold, with $\Jmin = \iota(I_{\eta'})$ and $\Jmax \setminus
  \Jmin = \iota(II)$ for the sets $I_{\eta'}, II$ determined by $\Mbar$.
  \label{lem: principal series def ring from BM}
\end{lem}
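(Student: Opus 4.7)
My approach is to compute the deformation ring $R^\tau_{\Mbar}$ directly from the explicit shape description of strongly divisible modules in Section~\ref{ss:strongly-divisible}, and then transfer the resulting structure to $R^\tau$ via the isomorphism $R^\tau \cong R^\square_{\Mbar}$ together with the formally smooth morphisms $R^\tau_{\Mbar} \to R^{\square,\tau}_{\Mbar}$ and $R^\square_{\Mbar} \to R^{\square,\tau}_{\Mbar}$ recorded in Section~\ref{subsec:deformation of strongly divisible}. By the shape description, a deformation of $\Mbar$ to a complete local noetherian $\cO$-algebra $R$, together with a lift of the gauge basis at $j=0$, is uniquely determined by: parameters $a_j \in R$ for $j \in I_\eta \cup I_{\eta'}$; pairs $(a_j, b_j) \in \m_R \times \m_R$ subject to $a_j b_j = p$ for each $j \in II$; and unit lifts $\alpha, \alpha' \in R^\times$ of $\bar\alpha, \bar\alpha'$. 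A routine verification shows that any such datum assembles into a valid strongly divisible module of shape $\tau$, yielding the presentation
\[
R^\tau_{\Mbar} \;\cong\; \cO\llbracket (X_j, Y_j)_{j \in II},\, T_j\,(j \in I_\eta \cup I_{\eta'}),\, A,\, A'\rrbracket/(X_j Y_j - p)_{j \in II},
\]
where I set $X_j := a_j$ and $Y_j := b_j$ for $j \in II$.  Transferring along the two formally smooth morphisms gives the presentation of $R^\tau$ claimed in part~(2), with $\Jmax \setminus \Jmin = \iota(II)$. Part~(3) is then immediate from the construction, since the elements $a_{\iota(j)}$ and $b_{\iota(j)}$ literally arise from the gauge basis of the Dieudonn\'e module at index $j$, and so realize the $\eta'$- and $\eta$-gauges up to $\cO^\times$.

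For part~(1), Theorem~\ref{thm:geometric BM for types} identifies the Jordan--H\"older factors $\sigmabar_J(\tau) \in \cD(\rhobar)$ with the irreducible components of the special fibre $\overline{X}(\tau)$. From the presentation above, this special fibre has exactly $2^{|II|}$ irreducible components, indexed by a choice, for each $j \in II$, of the branch $X_j = 0$ or $Y_j = 0$. Consequently $|\cP_\tau \cap \cD(\rhobar)| = 2^{|II|}$, matching the cardinality of a prospective interval $[\Jmin, \Jmax]$ with $\Jmax \setminus \Jmin = \iota(II)$.

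The main obstacle is then a combinatorial verification: I must check that $\cP_\tau \cap \cD(\rhobar)$ really does form such an interval, and in particular that $\Jmin = \iota(I_{\eta'})$.  This is carried out by directly comparing the parameterization of JH factors of $\sigmabar(\tau)$ from Section~\ref{ss:principal} with the recipe for $\cD(\rhobar)$ in Section~\ref{sec:Galois reps and weights}, using the description of the sets $I_\eta, I_{\eta'}, II$ and the residual units $\bar\alpha, \bar\alpha'$ in terms of $\rhobar$ provided by~\cite[Thm.~8.1]{breuillatticconj} and~\cite[Thm.~5.1.1]{BreuilMezardRaffinee}. A clean way to pin down the identification $\Jmin = \iota(I_{\eta'})$ is to first treat the split-reducible case, where every $\bar{a}_j$ vanishes so that $I_\eta = I_{\eta'} = \emptyset$ and $II = \cS$; here the predicted weights are read off directly from the tame characters appearing in $\rhobar$, giving the full interval $[\emptyset, \cS]$. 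The non-split reducible and irreducible cases are then obtained by tracking which weights are removed once some $\bar{a}_j$ becomes a unit, as dictated by the shape.  The final clause of part~(1), that every $J$ with $\Jmin \subseteq J \subseteq \Jmax$ belongs to $\cP_\tau$, follows by verifying that the defining conditions on the digits $c_j$ in $\cP_\tau$ are automatic for such $J$ under our genericity hypothesis on $\rhobar$.
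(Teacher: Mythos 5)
Your treatment of parts (2) and (3) is essentially the paper's: one reads the presentation off the explicit shape, sets $X_j = a_{\iota(j)}$, $Y_j = b_{\iota(j)}$, and transfers along the formally smooth maps relating $R^\tau_{\Mbar}$, $R^{\square,\tau}_{\Mbar}$ and $R^\tau \cong R^{\square}_{\Mbar}$ (for part (3) the paper does make explicit one point you elide, namely that the point of $X_{\Mbar}\bigl(\tau\bigr)$ obtained from a $\Zpbar$-point $\lambda$ of $X\bigl(\tau\bigr)$ by lifting and projecting is the strongly divisible module attached to the representation associated to $\lambda$). The genuine gap is in part (1).

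The identification of $\JH(\sigmabar(\tau))\cap\cD(\rhobar)$ with $\{\sigmabar_J(\tau) : \Jmin\subseteq J\subseteq\Jmax\}$, with $\Jmin=\iota(I_{\eta'})$ and $\Jmax=\iota(I_{\eta'}\cup II)$, is precisely the content of Proposition 4.3 and Th\'eor\`eme 8.1 of \cite{breuillatticconj}; the paper's proof of part (1) consists of citing this and carrying out a genuinely necessary translation of conventions (Breuil's $\Jmin,\Jmax$ are the complements of ours; his $\sigmabar(\chi^s)$ is our $\sigmabar(\eta'\otimes\eta)\otimes\det(\tau)^{-1}$ because the normalisations of inertial local Langlands are opposite; and his weights are weights of $\rhobar(1)\cong\rhobar^\vee\otimes\det(\tau)$ rather than of $\rhobar$). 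Your sketch does not supply a correct substitute. The counting step via Theorem~\ref{thm:geometric BM for types} shows only that the number of \emph{distinct} components $\overline{X}\bigl(\sigmabar\bigr)$ with $\sigmabar\in\JH(\sigmabar(\tau))\cap\cD(\rhobar)$ equals $2^{|II|}$; absent an argument that distinct weights give distinct components this is only an inequality, and in any case it cannot tell you \emph{which} weights occur, i.e.\ it cannot locate $\Jmin$ --- which you yourself identify as the main point. More seriously, your proposed route to $\Jmin=\iota(I_{\eta'})$ rests on a false premise: in the split reducible case one does have $\overline{a}_j=0$ for all $j$, but this does not force $I_\eta=I_{\eta'}=\emptyset$ and $II=\cS$. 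The partition $\cS=I_\eta\coprod I_{\eta'}\coprod II$ is part of the shape determined by $\rhobar$ \emph{and} $\tau$ (\cite[Thm.~5.1.1]{BreuilMezardRaffinee}), and by \cite[Thm.~8.1]{breuillatticconj} (as recalled at the end of Section~\ref{ss:strongly-divisible}) the split case forces $|II|$ to be \emph{even}; for instance when $f=1$ and $\rhobar$ is split reducible one always has $II=\emptyset$ and exactly one Jordan--H\"older factor of $\sigmabar(\tau)$ is a predicted weight, so $II=\cS$ is impossible for odd $f$. Likewise ``tracking which weights are removed as some $\overline{a}_j$ becomes a unit'' does not correspond to any actual variation of the data: changing $\rhobar$ changes the shape itself, not merely the values of the $\overline{a}_j$. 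So the key identification $\Jmin=\iota(I_{\eta'})$ is not established; you must either import Breuil's result together with the conventions translation, as the paper does, or genuinely redo the comparison of $\rhobar|_{I_{F_v}}$ (computed from the shape) with the recipe for $\cD(\rhobar)$, which amounts to reproving Breuil's Proposition 4.3 rather than sketching it.
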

\begin{proof}  Parts~(2) and (3)
  are straightforward from the form of a strongly divisible module of
  shape $\tau$.  Define 
  $\Jmin=\iota(I_{\eta'})$ and $\Jmax\setminus\Jmin=\iota(II)$, so we may
  set $X_j=a_{\iota(j)}, Y_j=b_{\iota(j)}$ for each $j \in \Jmax
  \setminus \Jmin$, and the unnamed formal
  variables correspond to the other values of $a_j$, and to
  $\alpha,\alpha'$ (or more precisely to $a_j - [\overline{a}_j]$,
  where the brackets denote Teichm\"uller lift, and similarly for $\alpha,\alpha'$). Recall
  we have adopted the indexing from \cite{breuillatticconj} rather
  than the indexing from \cite{BreuilMezardRaffinee}; this explains
  the presence of the involution $\iota$.  The argument in part (3) is
  completed by noting that if we begin with a $\Zpbar$-point $\lambda
  \in X\bigl(\tau\bigr)$ and pass to a point 
  $\lambda' \in X_{\Mbar}\bigl(\tau\bigr)$ by lifting to
  $\Spf(R_{\Mbar}^{\square,\tau})$ via formal smoothness and then
  projecting, then $\lambda'$ corresponds to the strongly divisible
  module attached to the potentially Barsotti--Tate representation
  associated to $\lambda$.

Part~(1) will follow from  Proposition~4.3 of \cite{breuillatticconj}
but requires a comparison of the conventions of our paper with the
conventions of \cite{breuillatticconj}.
Note that from \cite[Eq.~(26)]{breuillatticconj} the sets
$\Jmin, \Jmax$ defined above are the \emph{complements} of the sets
$\Jmax, \Jmin$
from \cite{breuillatticconj}.  By \cite[Thm.~8.1]{breuillatticconj},
the Serre weights of $\rhobar(1)$ that are Jordan--H\"older factors of
the representations that Breuil calls
$\sigmabar(\chi^s)$ (and using the definition of being modular of some
weight from
\cite{breuillatticconj}) are exactly the weights that Breuil calls $\sigmabar_J$, for
$\Jmax^c \subseteq J \subseteq \Jmin^c$ (in our conventions for these sets).  We translate to our
conventions.  Note that because our normalization of the inertial local Langlands
correspondence is opposite to that of \cite{breuillatticconj}, the
representation denoted $\sigmabar(\chi^s)$ in \emph{op.~cit.}~is our
representation $\sigmabar(\eta^{-1} \otimes (\eta')^{-1})$, or in
other words our $\sigmabar(\eta' \otimes \eta) \otimes
\det(\tau)^{-1}$.
In
particular Breuil's $\sigmabar_J$ is our $\sigmabar(\tau)_{J^c}
\otimes \det(\tau)^{-1}$.   

On the other hand $\rhobar(1) \cong \rhobar^{\vee} \otimes
\det(\tau)$.  Now $\rhobar$ has Serre weight $\sigma$ (in our
conventions for being modular of some weight) if and only if
$\rhobar^{\vee}$ has Serre weight $\sigma$ (in Breuil's conventions
for being modular of some weight), if and only if $\rhobar(1)$ has
Serre weight $\sigma \otimes \det(\tau)^{-1}$ (in Breuil's
conventions for being modular of some weight, but our conventions for
local Langlands).
It follows
that the Serre weights of $\rhobar$ that are Jordan--H\"older factors
of $\sigmabar(\tau)$ are precisely the $\sigmabar(\tau)_{J^c}$ for
$\Jmax^c \subseteq J \subseteq \Jmin^c$, or in other words the
$\sigmabar(\tau)_J$ for $\Jmin \subseteq J \subseteq \Jmax$.
\end{proof}

Note that the proof of Lemma~\ref{lem:first-steps} gives a
construction of the universal strongly divisible module on
$X_{\Mbar}\bigl(\tau\bigr)$ in the same spirit as the construction (for
$\rhobar$ not split-reducible) of the
universal strongly divisible module on $X\bigl(\tau\bigr)$ in the proof
of \cite[Thm.~5.2.1]{BreuilMezardRaffinee}.

\begin{lem}If $\sigma(\tau)$ is a principal series type, and $\rhobar$
  is reducible,
  then
 part (4) of Theorem~\ref{thm:deformation
    rings principal series} holds.
  \label{lem: identification of components of the special fibre for PS
    with rhobar reducible}
\end{lem}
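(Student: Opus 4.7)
The plan is to match up the two natural indexings of the irreducible components of $\overline{X}(\tau)$. On the one hand, Theorem~\ref{thm:geometric BM for types} tells us that $\overline{X}(\tau)$ is the union of the $\overline{X}(\sigmabar_J)$ as $J$ ranges over $\cD(\rhobar) \cap \cP_\tau$, which by part~(1) of Lemma~\ref{lem:first-steps} is exactly $\{J : \Jmin \subseteq J \subseteq \Jmax\}$; each $\overline{X}(\sigmabar_J)$ is irreducible, and these are the irreducible components of $\overline{X}(\tau)$. On the other hand, by the explicit description in Lemma~\ref{lem:first-steps}(2), the special fibre is (up to a formally smooth factor) the spectrum of $\F\llbracket (X_j,Y_j)_{j\in\Jmax\setminus\Jmin}\rrbracket/(X_j Y_j)$, whose irreducible components are exactly the $2^{|\Jmax\setminus\Jmin|}$ subspaces obtained by choosing, for each $j\in\Jmax\setminus\Jmin$, either $X_j=0$ or $Y_j=0$. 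So both indexings range over subsets $J$ with $\Jmin \subseteq J \subseteq \Jmax$, and the task is to show that the bijection is the one claimed.

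The bijection will be pinned down by exhibiting, for each such $J$, an $\cO$-point on the component $C_J := \{X_j=0 : j\in J\setminus\Jmin,\ Y_j=0 : j\in \Jmax\setminus J\}$ whose reduction is generic (in particular, does not lie on any other $C_{J'}$) and whose associated potentially Barsotti--Tate representation is in fact crystalline of Hodge type $\sigmabar_J$. Since $C_J$ is irreducible and meets $\overline{X}(\sigmabar_J)$ (and nothing else among the $\overline{X}(\sigmabar_{J'})$) at this generic point, this would force $C_J = \overline{X}(\sigmabar_J)$.

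To construct and analyse this point, I would work with the universal strongly divisible module on $X_{\Mbar}(\tau)$ (via the isomorphism $X(\tau) \cong X_{\Mbar}(\tau) \times \text{(smooth)}$ set up in Subsection~\ref{subsec:deformation of strongly divisible}). Setting $a_{\iota(j)}=0$ for $j\in J\setminus\Jmin$ and $b_{\iota(j)}=0$ for $j\in\Jmax\setminus J$, each piece $\cM^j$ reduces to a particularly simple filtered $\varphi$-module with tame descent data; in particular the descent data becomes trivial in a natural sense at each embedding (since on each piece only one of the two characters $\eta,\eta'$ contributes to $\Fil^1$), so the underlying Galois representation is actually crystalline. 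A direct computation of the Hodge filtration at each embedding $\emb_j:F_v\hookrightarrow E$ then shows that the $\emb_{-j}$-labelled Hodge--Tate weights are governed by whether $j\in J$: those $j$ contributed by the $I_\eta$-piece or by the ``$a_j=0$'' positions of $II$ give one pair of weights, and those $j$ contributed by $I_{\eta'}$ or by the ``$b_j=0$'' positions give the complementary pair. Comparing with the explicit formulae for $s_{J,i}$ and $t_{J,i}$ in Section~\ref{ss:principal} (and using the involution $\iota$ introduced in Section~\ref{sec:diedonne-modules} to translate between the strongly divisible indexing and the Serre weight indexing), one checks the Hodge type is exactly $\sigmabar_J$.

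The scalar-endomorphism case is essentially \cite[Thm.~5.2.1, Cor.~5.2.2]{BreuilMezardRaffinee}, and the general reducible generic case will follow from the same explicit calculation of $\Fil^1\cM^j \bmod p$ once the endomorphisms of $\rhobar$ are allowed to be non-scalar; the only change is that the characters $\bar\alpha,\bar\alpha'$ on the ``free'' formal variables of the deformation space may vary, but this does not affect the Hodge type. The main obstacle is purely bookkeeping: correctly translating between the conventions of \cite{breuillatticconj,BreuilMezardRaffinee} and our own (the twist by $\varepsilon^{-1}$ flagged in Remark~\ref{rem:conventions-functors}, the involution $\iota$, the opposite normalisation of fundamental characters and of the inertial local Langlands correspondence) so that the set $J$ that indexes the component matches the $J$ that labels the Serre weight.
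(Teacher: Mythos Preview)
Your strategy has a fundamental obstruction: there are no $\cO$-points of $X(\tau)$ lying on the component $C_J$ (for $\Jmax\setminus\Jmin\neq\emptyset$). Recall that $X_j,Y_j$ lie in the maximal ideal of $R^\tau$ and satisfy $X_jY_j=p$; for each $j\in\Jmax\setminus\Jmin$ your specialisation sets one of $X_j,Y_j$ to zero, which forces $p=0$. Even if you replace $\cO$ by a ramified extension $\cO'$ and merely ask that the \emph{reduction} land on $C_J$ and not on any other $C_{J'}$, this still fails: any local map $R^\tau\to\cO'$ sends every element of the maximal ideal to $\m_{\cO'}$, so its reduction is the \emph{closed} point of the special fibre, which lies on every component. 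There is no characteristic-zero point that ``picks out'' a single component.

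The second issue is more conceptual. A characteristic-zero point of $X(\tau)$ is, by definition, potentially Barsotti--Tate of type $\tau$; in particular it has Hodge type~$0$ and nontrivial inertial type, so it is \emph{never} crystalline of Hodge type $\sigmabar_J$. (Setting $a_j$ or $b_j$ to zero does nothing to the descent data, which acts via $\eta,\eta'$ on $e^j_\eta,e^j_{\eta'}$ regardless.) The spaces $X(\tau)$ and $X(\sigmabar_J)$ have disjoint generic fibres inside the universal framed deformation space; it is only their \emph{special} fibres that coincide. Consequently the identification of components must be carried out entirely in characteristic~$p$. The paper does this by specialising the universal strongly divisible module not to $\cO$ but to an Artinian $\F$-algebra $\F_J$ (a tangent direction along $C_J$ alone), passing from the strongly divisible module to the associated \'etale $\varphi$-module, and then --- after an explicit twist and change of variables --- recognising it as arising from a mod~$p$ Fontaine--Laffaille module of the correct filtration jumps. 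This last step is what pins down the Hodge type $\sigmabar_J$, and it is precisely where the comparison between unipotent $\varphi$-modules and unipotent Fontaine--Laffaille modules (Appendix~\ref{sec:unipotent-FL}) is needed; there is also a short argument (Lemma~\ref{lem:cohomology-lemma}) using genericity to ensure that the restriction to $G_{F_{v,\infty}}$ determines the full $G_{F_v}$-representation. None of this is visible from the characteristic-zero side.
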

\begin{proof}
Fix a set $\Jmin \subseteq J \subseteq \Jmax$.
Write $\m^{\tau}_{\Mbar}$ for the maximal ideal of $R^{\tau}_{\Mbar}$, and let
$I_J \subseteq R^{\tau}_{\Mbar}$ be the ideal generated by $\varpi_E$ and
$(\m^{\tau}_{\Mbar})^2$, together with the variables
$X_j$ for $j \in J \setminus \Jmin$, $Y_j$ for
$j\in \Jmax \setminus J$, and all of the unnamed extra power series
variables of Theorem~\ref{thm:deformation
    rings principal series}(2).
Let $\F_J$ be the quotient $R^{\tau}_{\Mbar}/I_J$, so that
$\F_J$ is an
Artinian local ring, and write $\rhobar_J$ for the deformation of
$\rhobar$ corresponding to the natural quotient map $R^{\tau}_{\Mbar} \to \F_J$.
The deformation $\rhobar_J$ lies on the  component of
$\overline{X}_{\Mbar}\bigl(\tau\bigr)$ 
cut out by the equations $X_j = 0$ (for $j \in 
J\setminus \Jmin$)  and
$Y_j = 0$ (for $j \in \Jmax \setminus J$), and only that
component, and so by Theorem~\ref{thm:geometric BM for types} (and
from the relationship between $X\bigl(\tau\bigr)$ and
$X_{\Mbar}\bigl(\tau\bigr)$) it suffices to show that $\rhobar_J$ also lies on $\overline{X}\bigl(\sigmabar_J\bigr)$.

We will carry this out by showing that a twist of $\rhobar_J$ is the
generic fibre of a suitable Fontaine--Laffaille module with
$\F_J$-coefficients.  We refer the reader to
Appendix~\ref{sec:unipotent-FL} for a discussion of what we need about
(unipotent) Fontaine--Laffaille theory, and in particular for some of the notation
used in this argument.

Fix a compatible system of $p^n$th roots of $-p$ in $\overline{F}_v$
and let $F_{v,\infty}$ be the extension of $F_v$ that they generate.
Since $\rhobar$ is generic, the following lemma shows that both $\rhobar_J$ and $\rhobar$ are
uniquely determined by their restrictions to
$\Gal(\overline{F}_{v}/F_{v,\infty})$, and so in the remainder of the
proof it suffices to consider
these restrictions. (We thank Fred Diamond for simplifying our
original proof of the following lemma.)

\begin{lem}
  \label{lem:cohomology-lemma}
  Suppose that $A$ is a local Artin $\Fp$-algebra with maximal ideal
  $\m_A$, and $\chi : G_{F_v}
  \to A^{\times}$ is a character such that $\chi \pmod{\m_A}$ is
  neither trivial nor cyclotomic.  Then the restriction map $
  \mathrm{res}: H^1(G_{F_v},\chi) \to H^1(G_{F_{v,\infty}},\chi)$ is injective.
\end{lem}

\begin{proof}
  We note that our proof will work for any finite extension $F_v/\Qp$
  (not just unramified $F_v$).  Write $\chibar$ for $\chi \pmod{\m_A}$, and
  let $\widehat{F}_v$ be the Galois closure of $F_{v,\infty}$.  By
  inflation-restriction, it suffices to show that
  $H^1(\Gal(\widehat{F}_v/F_v),\chi^{G_{\widehat{F}_v}}) = 0$.  Since
  $\chi^{G_{\widehat{F}_v}}$ is a successive extension of copies
    of $\chibar^{G_{\widehat{F}_v}}$, by d\'evissage it suffices
    to show that
    $H^1(\Gal(\widehat{F}_v/F_v),\chibar^{G_{\widehat{F}_v}}) = 0$.

 If $\chibar^{{G_{\widehat{F}_v}}} \neq 0$, then $\chibar$ is trivial on $G_{\widehat{F}_v}$ (so $\chibar^{G_{\widehat{F}_v}} = \chibar$), and so also on
 $G = \Gal(\overline{F}_v/F_v(\mu_{p^{\infty}}))$.
 Another
 application of inflation-restriction gives
$$ 1 \to
H^1(\Gal(F_v(\mu_{p^{\infty}})/F_v),\chibar)
\to   H^1(\Gal(\widehat{F}_v/F_v),\chibar) \to
H^1(G ,\chibar)^{\Gal(F_v(\mu_{p^{\infty}})/F_v)}.$$
Since $G \cong \Zp(1)$, the rightmost term is
$\Hom_{\Gal(F_v(\mu_{p^{\infty}})/F_v)}(\Zp(1),\chibar)$, and this is
nontrivial if and only if $\chibar$ is cyclotomic.  As for the
leftmost term, one more
application of inflation-restriction
shows that $H^1(\Gal(F_v(\mu_{p^{\infty}})/F_v),\chibar)$ injects into
$
H^1(\Gal(F_v(\mu_{p^{\infty}})/F_v(\mu_p)),\chibar)^{\Gal(F_v(\mu_p)/F_v)}
\cong \Hom_{\Gal(F_v(\mu_p)/F_v)}(\Zp,\chibar)$, and this is non-zero if
and only if $\chibar$ is trivial.  The lemma follows.
\end{proof}

We resume the proof of Lemma~\ref{lem: identification of components of the special fibre for PS
    with rhobar reducible}. From
the construction of the universal strongly divisible module on
$X_{\Mbar}\bigl(\tau\bigr)$ described above, the representation $\rhobar_J$ is the
generic fibre of a strongly divisible module $\M_J =
\M_J^0 \times \cdots \times \M_J^{f-1}$  of shape $\tau$ with
$\F_J$-coefficients and descent data.  In particular the structure of $\M_J$ is described
as follows. 
\begin{eqnarray*} j \in I_{\eta} : & & \begin{cases}
\widetilde{\Fil}^1 \M_J^j & = \langle e_{\eta}^j + a_j u^{c^{(j)}}
e^j_{\eta'}, u^e e^j_{\eta'} \rangle \\
\varphi_1( e_{\eta}^j + a_j u^{c^{(j)}} e^j_{\eta'}) & = e_{\eta}^{j+1} \\
\mathrlap{\varphi_1( u^e e^j_{\eta'} )} 
\hphantom{\varphi_1 (-Y_{\iota(j)} e^j_{\eta'} + u^{e-c^{(j)}}
e^j_{\eta}) } & = e_{\eta'}^{j+1}
\end{cases} \\
j \in I_{\eta'} : & &  \begin{cases} 
\widetilde{\Fil}^1 \M_J^j & = \langle u^{e} e^j_{\eta},  e_{\eta'}^j +
a_j u^{e - c^{(j)}}
e^j_{\eta} \rangle \\
\varphi_1( u^e e^j_{\eta} ) & = e_{\eta}^{j+1}\\
\mathrlap{\varphi_1( e_{\eta'}^j + a_j u^{e-c^{(j)}} e^j_{\eta})}
\hphantom{\varphi_1 (-Y_{\iota(j)} e^j_{\eta'} + u^{e-c^{(j)}}
e^j_{\eta}) }
& = e_{\eta'}^{j+1} 
\end{cases} \\
j \in II_X : &  & \begin{cases}
\widetilde{\Fil}^1 \M_J^j & = \langle X_{\iota(j)} e^j_{\eta} + u^{c^{(j)}}
e^j_{\eta'}, u^{e-c^{(j)}} e_{\eta}^j \rangle \\
\varphi_1( X_{\iota(j)} e^j_{\eta} + u^{c^{(j)}}
e^j_{\eta'}) &= e_{\eta}^{j+1} \\
\mathrlap{\varphi_1 ( u^{e-c^{(j)}} e_{\eta}^j)}  
\hphantom{\varphi_1 (-Y_{\iota(j)} e^j_{\eta'} + u^{e-c^{(j)}}
e^j_{\eta}) } &= e_{\eta'}^{j+1}
\end{cases} \\
j \in II_Y : &  &  \begin{cases}
\widetilde{\Fil}^1 \M_J^j& = \langle  u^{c^{(j)}} e_{\eta'}^j , -Y_{\iota(j)} e^j_{\eta'} + u^{e-c^{(j)}}
e^j_{\eta} \rangle \\
\varphi_1(u^{c^{(j)}} e_{\eta'}^j) &= e_{\eta}^{j+1} \\
\mathrlap{\varphi_1 (-Y_{\iota(j)} e^j_{\eta'} + u^{e-c^{(j)}}
e^j_{\eta}) }
\hphantom{\varphi_1 (-Y_{\iota(j)} e^j_{\eta'} + u^{e-c^{(j)}}
e^j_{\eta}) }
&= e_{\eta'}^{j+1}
\end{cases}  \end{eqnarray*}
Here our notation is as in the definition of ``shape $\tau$'' in
\ref{def:stronglydivisible}, except that we have decomposed $II$ as $II_X
\coprod II_Y$ with $II_X = \iota(\Jmax \setminus
J)$ and $II_Y = \iota(J \setminus \Jmin)$.  Recall also
 that by Lemma~\ref{lem: principal series def ring from BM} we have $\Jmin =
\iota(I_{\eta'})$  and
$\cS \setminus \Jmax = \iota(I_{\eta})$.  
As usual, when
$j=f-1$ the right-hand side of the defining expressions for $\varphi_1$ should be
modified to $\alpha e_{\eta}^0$ and $\alpha' e_{\eta'}^0$.
For the remainder of this argument
it is convenient for us to define
$e_{\eta}^f := \alpha e_{\eta}^0$ and $e_{\eta'}^f := \alpha'
e_{\eta'}^0$ so that from now on we can ignore this last
complication.

The generic fibre of the strongly divisible module $\M_J$ is
also the generic fibre of a certain \'etale $\varphi$-module; we show
this following the method in the proof of Proposition~7.3 of
\cite{breuillatticconj}.
 The first step is to note
that by \cite[Prop.~A.2(i)]{breuillatticconj} the object $\M_J$ comes
from a unique $\varphi$-module $\mathfrak{M}_J$ of type
$\overline{\chi}$ over $k_v \otimes_{\Fp} \F \llbracket u \rrbracket$
(see the discussion preceding \emph{loc. cit.}, which in particular gives the recipe
for recovering $\M_J$ from $\mathfrak{M}$).
By functoriality $\mathfrak{M}_J$ has an action of~$\F_J$.
Set $\mathfrak{D}_J = \mathfrak{M}_J[1/u]$; this is a $\varphi$-module of
type $\chi$ in the sense of \cite[D\'ef.~A1]{breuillatticconj}.
There is a decomposition $\mathfrak{D}_J =
\mathfrak{D}^0_J \times \cdots \times \mathfrak{D}^{f-1}_J$ where
$\mathfrak{D}^j_J = \F_J((u))\mathfrak{e}^j_{\eta} \oplus
\F_J((u))\mathfrak{e}^j_{\eta'}$ and
$\Gal(F_{v,\infty}(\sqrt[e]{-p})/F_{v,\infty})$ acts on
$\mathfrak{e}^j_{\eta},\mathfrak{e}^j_{\eta'}$ via
$\overline{\eta},\overline{\eta}'$ respectively, and one
calculates that
\begin{eqnarray*}
j \in I_{\eta} : & &
\begin{cases}
\varphi(\mathfrak{e}^{j-1}_{\eta}) & =  u^e \mathfrak{e}^j_{\eta}  - a_j u^{c^{(j)}} \mathfrak{e}^j_{\eta'} \\
\varphi(\mathfrak{e}^{j-1}_{\eta'}) & = \mathfrak{e}^j_{\eta'} 
\end{cases} \\
j \in I_{\eta'} : & &
\begin{cases}
\varphi(\mathfrak{e}^{j-1}_{\eta}) & = \mathfrak{e}^j_{\eta}  \\
\varphi(\mathfrak{e}^{j-1}_{\eta'}) & = u^e \mathfrak{e}^j_{\eta'} -
a_j u^{e - c^{(j)}} \mathfrak{e}^j_{\eta}  
\end{cases} \\
j \in II_{X} :& &
\begin{cases}
\varphi(\mathfrak{e}^{j-1}_{\eta}) & = u^{c^{(j)}} \mathfrak{e}^j_{\eta'}  \\
\varphi(\mathfrak{e}^{j-1}_{\eta'}) & = -X_j \mathfrak{e}^j_{\eta'} +
u^{e - c^{(j)}} \mathfrak{e}^j_{\eta} 
\end{cases} \\
j \in II_{Y} : & &
\begin{cases}
\varphi(\mathfrak{e}^{j-1}_{\eta}) & = u^{c^{(j)}}
\mathfrak{e}^j_{\eta'}  + Y_j \mathfrak{e}^j_{\eta}  \\
\varphi(\mathfrak{e}^{j-1}_{\eta'}) & = u^{e - c^{(j)}}
\mathfrak{e}^j_{\eta}  .
\end{cases} 
\end{eqnarray*}
with a suitable modification when $j = f-1$.  (To check this, it
suffices to define $\mathfrak{M}_J$ by the same formulas, and then
follow the recipe for recovering $\M_J$ from $\mathfrak{M}_J$.)

By \cite[Prop.~A.2(ii)]{breuillatticconj}
  the generic fibre of $\mathfrak{D}_J$ is $\rhobar_J^{\vee}
  |_{\Gal(\overline{F}_v/F_{v,\infty})}$.  Set $\mathfrak{e}^j =
  \mathfrak{e}^j_{\eta}$ and $\mathfrak{f}^j = u^{c^{(j)}}
  \mathfrak{e}^j_{\eta'}$; one checks that with respect to the basis
  $\mathfrak{e}^j,\mathfrak{f}^j$, the action of $\varphi$ involves
  only powers of $u^e$; it follows from the isomorphism
  \cite[Eq.~(47)]{breuillatticconj} that replacing $u^e$ with $u$
  gives a $\varphi$-module $\prod_j \F_J((u)) \mathfrak{e}^j \oplus
  \F_J((u)) \mathfrak{f}^j$ without descent data whose generic fibre
  is $(\rhobar_J^{\vee} \otimes
  \overline{\eta})|_{\Gal(\overline{F}_v/F_{v,\infty})}$; the action
  of $\varphi$ is given by
\begin{eqnarray*}
j \in I_{\eta} : & &
\begin{cases}
\varphi(\mathfrak{e}^{j-1}) & = u \mathfrak{e}^j - a_j \mathfrak{f}^j \\
\varphi(\mathfrak{f}^{j-1}) & = u^{p-1-c_{f-j}} \mathfrak{f}^j 
\end{cases} \\
j \in I_{\eta'} : & &
\begin{cases}
\varphi(\mathfrak{e}^{j-1}) & = \mathfrak{e}^j \\
\varphi(\mathfrak{f}^{j-1}) & = u^{p-c_{f-j}} (\mathfrak{f}^j  -
a_j \mathfrak{e}^j )
\end{cases} \\
j \in II_{X} : & &
\begin{cases}
\varphi(\mathfrak{e}^{j-1}) & = \mathfrak{f}^j  \\
\varphi(\mathfrak{f}^{j-1}) & = - X_j u^{p-1-c_{f-j}} \mathfrak{f}^j
+ u^{p-c_{f-j}} \mathfrak{e}^j 
\end{cases} \\
j \in II_{Y} : & &
\begin{cases}
\varphi(\mathfrak{e}^{j-1}) & = \mathfrak{f}^j + Y_j \mathfrak{e}^j  \\
\varphi(\mathfrak{f}^{j-1}) & = u^{p-c_{f-j}} \mathfrak{e}^j .
\end{cases} 
\end{eqnarray*}

Define
$$ v_{f-j} = \begin{cases}
p-1 & \text{if } f-j \in J \\
c_{f-j} & \text{if } f-j \not\in J,\, f-j-1 \not\in J \\
c_{f-j}-1 & \text{if } f-j \not\in J,\, f-j-1 \in J. \end{cases}$$
Note that $v_{f-j}$ has been chosen so that $v_i = s_{J,i} + t_{J,i}$
with $s_{J,i}, t_{J,i}$ as in Section~\ref{ss:principal}. Write $\widetilde{\omega}_f$ for the extension of $\omega_f$ to $\Gal(\overline{F}_v/F_v)$
given by composing $g \mapsto (g(\uni)/\uni)^{-1}$ with~$\embb_0$.  
We twist by
$\widetilde{\omega}_f^{-\sum_{j=0}^{f-1}  v_j p^j}$;
as in the proof of \cite[Prop.~7.3]{breuillatticconj}, this
 has the effect of multiplying
$\varphi(\mathfrak{e}^{j-1})$ and $\varphi(\mathfrak{f}^{j-1})$ by
$u^{v_{f-j}}$ for all $j$.  Make the change of 
basis $(\frac 1u \mathfrak{e}^{j-1}, 
\frac 1u \mathfrak{f}^{j-1})$ when $f-j \in J$ and
$(\mathfrak{e}^{j-1}, \frac 1u \mathfrak{f}^{j-1})$ when $f-j \not\in J$,
and let $\mathfrak{M}$ be the resulting $\varphi$-module.

Now define a Fontaine--Laffaille module
$M = M^0 \times \cdots \times M^{f-1}$ with $M^j = \F_J e^j \oplus
\F_J f^j$ and 
\begin{eqnarray*}
 j \in I_{\eta}, \ f-j \in J : & & \begin{cases}
 \varphi(e^j) & = e^{j+1} - a_j f^{j+1} \\
\mathrlap{\varphi_{p-c_{f-j}-1}(f^j)}
\hphantom{ \varphi_{p-c_{f-j}-1}(e^j) } 
& = f^{j+1}
\end{cases} \\
 j \in I_{\eta}, \ f-j \not\in J  : & & \begin{cases}
 \varphi_{c_{f-j}+1}(e^j) & = e^{j+1} - a_j f^{j+1} \\
\mathrlap{\varphi(f^j)}
\hphantom{ \varphi_{p-c_{f-j}-1}(e^j) } 
 & = f^{j+1} 
\end{cases} \\
 j \in I_{\eta'}, \ f-j \in J :  & & \begin{cases}
 \varphi(e^j) & =  e^{j+1} \\
\mathrlap{\varphi_{p-c_{f-j}}(f^j) }
\hphantom{ \varphi_{p-c_{f-j}-1}(e^j) } 
& = f^{j+1} - a_j e^{j+1}
\end{cases} \\
 j \in I_{\eta'}, \ f-j \not\in J: & & \begin{cases}
 \varphi_{c_{f-j}}(e^j) & = e^{j+1} \\
\mathrlap{\varphi(f^j) }
\hphantom{ \varphi_{p-c_{f-j}-1}(e^j) } 
& = f^{j+1} - a_j e^{j+1}
\end{cases} \\
 j \in II_X, \ f-j \in J: & & \begin{cases}
\varphi(e^j) & = f^{j+1} \\
\mathrlap{\varphi_{p-c_{f-j}-1 } (f^j) }
\hphantom{ \varphi_{p-c_{f-j}-1}(e^j) } 
& = -X_j f^{j+1} + e^{j+1} 
\end{cases} \\
 j \in II_X, \ f-j \not\in J  : & & \begin{cases}
\varphi_{c_{f-j}+1} (e^j) & = f^{j+1} \\
\mathrlap{\varphi(f^j) }
\hphantom{ \varphi_{p-c_{f-j}-1}(e^j) } 
& = -X_j f^{j+1} + e^{j+1}  
\end{cases} \\
 j \in II_Y, \ f-j \in J  : & & \begin{cases}
\varphi(e^j) & = f^{j+1} + Y_j e^{j+1} \\
\mathrlap{\varphi_{p-c_{f-j}}(f^j) }
\hphantom{ \varphi_{p-c_{f-j}-1}(e^j) } 
& = e^{j+1}
\end{cases} \\
 j \in II_Y, \ f-j \not\in J : & & \begin{cases}
\varphi_{c_{f-j}}(e^j) & = f^{j+1} + Y_j e^{j+1} \\
\mathrlap{\varphi(f^j)}
\hphantom{ \varphi_{p-c_{f-j}-1}(e^j) } 
& = e^{j+1} .
\end{cases}
\end{eqnarray*}
It is easy to check that $M$ and $\mathfrak{M}$ are unipotent in the sense of
Definitions~\ref{defn:unipotent-FL-modules}
and~\ref{defn:etale-phi-unip} respectively, 
and that $\Theta_{p-1}(\mathfrak{M}) \cong \EF_{p-1}(M)$ (see
Appendix~\ref{sec:unipotent-FL} for the functors
$\Theta_{p-1},\EF_{p-1})$.   By 
Theorem~\ref{thm:equivalence-A} we see that
$\Theta_{p-1}(\mathfrak{M}) \cong \EF_{p-1}(M)$ is unipotent in the
sense of Definition~\ref{defn:unipotent-str-div}, and so combining
Propositions~\ref{prop:isomA} and~\ref{prop:isomB} shows that
$$T(M)|_{\Gal(\overline{F}_v/F_{v,\infty})}\cong T(\mathfrak{M}) \cong (\rhobar_J^{\vee} \otimes \etabar \widetilde{\omega}_f^{-\sum_{j=0}^{f-1}  v_j p^j})  |_{\Gal(\overline{F}_v/F_{v,\infty})}.$$

Recalling that $f-j-1 \in J$ if and only if $j \in I_{\eta'} \cup
II_Y$, we see that $M$ corresponds to a point on the
deformation space of crystalline liftings of $\rhobar^{\vee} \otimes \etabar \widetilde{\omega}_f^{-\sum_{j=0}^{f-1} v_j
  p^j}$ of Hodge type $(-\vec{s}-\vec{1},\vec{0})$ where 
$$s_j = s_{J,j} = \begin{cases} p-1-c_{j} - \delta_{J^c}(j-1) &
  \text{if } j \in J\\
c_{j} - \delta_J(j-1) & \text{if } j \not\in J.\end{cases}$$

Note that $\rhobar^{\vee} \otimes \etabar \etabar'\varepsilonbar^{-1} \simeq
\rhobar$.  Similarly, since  $\rhobar_J$ lies on $X_{\Mbar}\bigl(\tau\bigr)$ we
must have
$\rhobar_J^{\vee} \otimes \etabar \etabar'\varepsilonbar^{-1} \nu \simeq
\rhobar_J$ for some unramified character $\nu$.
We
conclude that 
$$\rhobar_J \simeq \rhobar_J^{\vee} \otimes \etabar \etabar' \varepsilonbar^{-1} \nu \simeq (\rhobar_J^{\vee}
\otimes \etabar \widetilde{\omega}_f^{-\sum_{j=0}^{f-1} v_j p^j}) \otimes
\widetilde{\omega}_f^{\sum_{j=0}^{f-1} v_j p^j } \etabar'
\varepsilonbar^{-1} \nu $$ comes from a point on
the deformation space of crystalline liftings of $\rhobar$ of Hodge
type $\sigmabar_{\vec{v}-\vec{s},\vec{s}} \otimes (\etabar'
\circ \det) = \sigmabar_{\vec{t},\vec{s}} \otimes (\etabar' \circ
\det) = \sigmabar_J(\tau)$.
\end{proof}

\begin{proof}[Proof of Theorem~\ref{thm:deformation rings principal series}]

  We use the theory of base change explained in Section
  \ref{sec:base change}.  The first sentence of Theorem~\ref{thm:deformation rings principal
   series}(1) is a consequence of Theorem~\ref{thm:geometric BM for
   types}, so we may assume throughout the argument that $X\bigl(\tau\bigr)$ is
 non-empty.     Recall that $L_v$ denotes the quadratic
  unramified extension of $F_v$. Then $\tau':=\BC(\tau)$ is a
  principal series representation, and $\rhobar':=\rhobar|_{G_{L_v}}$
  is reducible, so Theorem \ref{thm:deformation rings principal
    series} holds for $\rhobar'$ and $\tau'$ by Lemmas \ref{lem:
    principal series def ring from BM} and \ref{lem: identification of
    components of the special fibre for PS with rhobar
    reducible}. 

  We take $L=L_v$, $f'=2f$ from now on. Given a strongly divisible
  module (with tame descent data and coefficients) $\M'$  such that $\M'$ has shape $\tau'$, we
  define another strongly divisible module $c(\M')$ of shape $\tau'$
  in the following way.  Let $\varphi^f$ denote the Frobenius 
  on $L/F_v$.  Then we take $(c(\M'),\Fil^1 c(\M')) = (\M',\Fil^1
  \M')$ as sets and as strongly divisible $S_{\cO_{F_v},R}$-modules; we let $\cO_L$ act on $c(\M')$
  through~$\varphi^f$;  and the additive bijection $\widehat{g} : c(\M') \to
  c(\M')$ is given by $\widehat{\varphi^f g \varphi^f} : \M' \to \M'$ for each $g \in
  \Gal(L(\pi)/L)$.  (In particular the roles of $\eta,\eta'$ are
  exchanged if $\tau$ is cuspidal, and preserved if $\tau$ is
  principal series.)  The identity map on underlying sets gives a
  $\varphi^f$-semilinear map $c_{\M'}: \M' \to c(\M')$ and a
  natural identification $(\M')^j\isoto c(\M')^{j+f} $.  Furthermore, there is a natural isomorphism
  $\rho_{c(\M')}\cong\rho_{\M'}^{\varphi^f}$, where
  $\rho_{\M'}^{\varphi^f}$ is the conjugate of $\rho_{\M'}$ by the generator
  $\varphi^f$ of the Galois group $\Gal(L_v/F_v)$. (To see this last
  statement, it suffices to consider the case that $R$ is
  Artinian. In this case
  $\rho_{\M'}=T_{\st,2}^L(\M')(-1)=\Hom_{\phi,\Fil^1}(\M',\hat{A}_{\st,\infty})^{\vee}$,
  and it is easily checked that the
  map \[\Hom_{\phi,\Fil^1}(\M',\hat{A}_{\st,\infty})\to
  \Hom_{\phi,\Fil^1}(c(\M'),\hat{A}_{\st,\infty})\] sending $h\mapsto
  \varphi^f\circ h\circ c_{\M'}^{-1}$ (where $\phi$ is the Frobenius on
  $\hat{A}_{\st,\infty}$ as in Section~2.2.2 of~\cite{MR1681105}) is
  Galois equivariant after conjugating the left hand side by $\varphi^f$.)

Write $\Mbarp$ for the strongly divisible module (with
  $\F$-coefficients and descent data) of shape $\tau'$ corresponding
  to $\rhobar'$.  In the principal series case, note that $\Mbarp = l_v \otimes_{k_v} \Mbar$, with
  the action of $\Gal(F_v(\pi)/F_v)$ on $\Mbar$ extended
  $l_v$-linearly to give the action of $\Gal(L(\pi)/L)$ on $\Mbarp$.  In
  particular there is a canonical $S_{\cO_L,R}$-linear isomorphism $\gamma : \Mbarp \isoto c(\Mbarp)$
  given by the generator of $\Gal(l_v/k_v)$ on the first factor of
  $\Mbarp = l_v \otimes_{k_v} \Mbar$ and
  the identity on the second factor.  In the cuspidal case we no
  longer \emph{a priori} have an $\Mbar$, but there still exists a canonical
  isomorphism  $\gamma : \Mbarp \isoto c(\Mbarp)$ as above.  By
  hypothesis there exists an $\cO'$-point $x \in X\bigl(\tau\bigr)$
  for the ring of integers $\cO'$ in some finite extension $E'/E$ with
  residue field $\F$.  Let $x'
  \in X\bigl(\tau'\bigr)$ be the point arising from $x$ by restriction to
  $G_{L_v}$.  If $\cM,\cM'$ are the strongly divisible modules
  corresponding to $x,x'$, then $\cM'$ is obtained from $\cM$ by
  restricting the descent data.  In particular there is an involutive
  isomorphism $\tilde{\gamma} : \M' \isoto c(\M')$ coming from the
  action on $\cM$ of the element of $\Gal(L_v(\pi)/F_v)$
  lifting $\varphi^f$ and fixing $\pi$.  Then $\gamma$ is the
  reduction of this map modulo $\m_{E'}$.  In any case we see that $c_{\Mbarp}  \gamma^{-1}
   c_{\Mbarp} = \gamma$.  For the remainder of this argument we write $c$ for $c_{\M'}$ and
   $\bar{c}$ for $c_{\Mbarp}$.

 We have the universal deformation rings $R^{\tau'}_{\Mbarp}$ and
 $R^{\square,\tau'}_{\Mbarp}$ considered
  in Section~\ref{subsec:deformation of strongly
    divisible}, which depended on our choice of gauge
basis elements $\bar{e}^0_\eta, \bar{e}^0_{\eta'}$ for $(\Mbar\vphantom{\M}')^0$. 
Then we let $R^{\tau',f}_{\Mbarp}$ be the complete local noetherian
$\cO$-algebra representing the functor which assigns to a complete
local noetherian $\cO$-algebra $R$ the set of isomorphism classes of
tuples $(\M',e^0_\eta,e^0_{\eta'},\epsilon_\eta^f,\epsilon_{\eta'}^f)$,
where $\M'$, $e^0_\eta,e^0_{\eta'}$ are as above, and
$\epsilon_\eta^f,\epsilon_{\eta'}^f$ are a 
choice of gauge basis for $(\M')^f$ lifting $\{(\gamma^{-1} 
\bar{c})(\bar{e}^0_\eta),(\gamma^{-1}  \bar{c})(\bar{e}^0_{\eta'})\}$. We
write $X_{\Mbarp,f}\bigl(\tau'\bigr):=\Spf R^{\tau',f}_{\Mbar}$. Similarly,
$R^{\square,\tau',f}_{\Mbarp}$ represents the functor assigning to $R$
the set of isomorphism classes of
tuples
$(\M',e^0_\eta,e^0_{\eta'},\epsilon_\eta^f,\epsilon_{\eta'}^f,\rho')$
with $\rho'$ a lifting of $\rhobar'$ such that $\rho'\cong\rho_{\M'}$.

There is a natural involutive action of $\varphi^f$  on each of $R^{\tau',f}_{\Mbarp}$,
$R^{\square,\tau',f}_{\Mbarp}$, $R^\square_{\Mbarp}$, and $R^{\tau'}$ by sending $(\M',\{e^0_\eta,e^0_{\eta'}\},\{\epsilon^f_\eta,\epsilon_{\eta'}^f\})$ to $(c(\M'),\{c(\epsilon^f_\eta),c(\epsilon_{\eta'}^f)\},\{c(e^0_\eta),c(e^0_{\eta'})\})$ and conjugating $\rho'$ by
$\varphi^f$.  (Note that the latter will indeed define a point of each
of the deformation spaces we are considering, after  identifying
$(c(\Mbar'),\{\gamma(\bar{e}^0_{\eta}),\gamma(\bar{e}^0_{\eta'})\})$
with $(\Mbar',\{\bar{e}^0_{\eta},\bar{e}^0_{\eta'}\})$ via $\gamma^{-1}$.)

 Similarly if $X,X'$ are the universal framed deformation spaces
 of $\rhobar$ and $\rhobar|_{G_{L_v}}$, then we have a natural map
  $X \to (X')^{\varphi^f}$
 which is easily seen to be an isomorphism
  since $p>2$. Since the extension $L_v/F_v$ is
  unramified, we see that a lift $\rho$ of $\rhobar$ is potentially
  Barsotti--Tate of type $\tau$ if and only if $\rho':=\rho|_{G_{L_v}}$ is
  potentially Barsotti--Tate of type $\BC(\tau)$, so that furthermore
  $X\bigl(\tau\bigr)=X\bigl(\tau'\bigr)^{\varphi^f}$. 
 It is easy to see that $(\Spf
  R^{\square,\tau',f}_{\Mbarp})^{\varphi^f}$ is formally smooth over
  both $X\bigl(\tau'\bigr)^{\varphi^f}$ and
  $X_{\Mbarp,f}\bigl(\tau'\bigr)^{\varphi^f}$.
 Furthermore, in
  the principal series case the Dieudonn\'e
  module with descent data $M'$ and strongly divisible module with
  descent data $\cM'$ corresponding to $\rho'$ are obtained from
  those corresponding to $\rho$ by tensoring with $\cO_{L_v}$ over
  $\cO_{F_v}$ and extending $\varphi$ semilinearly; in the cuspidal
  case, the Dieudonn\'e module for $\rho'$ is equal to the one for
  $\rho$, and the strongly divisible module is obtained by restricting
  the descent data from $\Gal(L_v(\pi)/F_v)$ to $\Gal(L_v(\pi)/L_v)$.
  Consequently, we see that in order to establish parts (1) and (2) of
  Theorem~\ref{thm:deformation rings principal series}, it is enough
  to prove the analogous statements for
  $X_{\Mbarp,f}\bigl(\tau'\bigr)^{\varphi^f}$; parts (3) and (4) will
  also follow from considerations on this latter space. 

Choose a gauge basis $\bar{e}^0_{\eta},\bar{e}^0_{\eta'}$ for
$(\Mbarp)^0$, and recursively define 
$\bar{e}^j_{\eta},\bar{e}^j_{\eta'}$ for $0 < j < 2f$ by the formulas of
Section~\ref{ss:strongly-divisible}.  Define
$\bar{\epsilon}_{\eta}^{j+f}$ and $\bar{\epsilon}_{\eta'}^{j+f}$ to be 
$(\gamma^{-1}
\bar{c})(\bar{e}^j_{\eta})$ and $ (\gamma^{-1}
\bar{c})(\bar{e}^j_{\eta'})$ respectively if $\tau$ is principal series, and
non-respectively if $\tau$ is cuspidal.  

For $0 \le j < f$ we write $(\Mbarp)^j$ in
terms of the gauge basis $\bar{e}^j_{\eta},\bar{e}^j_{\eta'}$ as in
Section~\ref{ss:strongly-divisible}  (denoting the
structure constants in the type $I$ filtrations as $\bar{a}_j$; note
that the structure constants in type $II$ filtrations are $0$ here).
For $f \le j < 2f$ we instead write $(\Mbarp)^{j}$ in terms of the
gauge basis $\bar{\epsilon}^j_{\eta},\bar{\epsilon}^j_{\eta'}$ (again
denoting the structure constants as $\bar{a}_j$).
In view of the fact
that $\gamma : \Mbarp \to c(\Mbarp)$ is an isomorphism, we see that
the sets $I_{\eta}, I_{\eta'}, II$ for $\Mbarp$
are invariant under translation by $f$ if $\tau$ is principal series,
and $II$ is similarly invariant under translation by $f$ if $\tau$ is cuspidal while
$I_{\eta},I_{\eta'}$ are interchanged; that $\bar{a}_{j+f} =
\bar{a}_j$ for all $j$; and that in this basis, if the right-hand side
of  the defining expressions for $\varphi_1$ for $j = 2f-1$
are written as
 $\bar{\beta} \bar{e}_{\eta}^0$ and $\bar{\beta'}
\bar{e}_{\eta'}^0$, then the right-hand side of the defining
expressions for $\varphi_1$ for $j = f-1$ can be written as
$\bar{\beta} \bar{\epsilon}_{\eta}^f$ and $\bar{\beta'} \bar{\epsilon}_{\eta'}^f$.

Consider an arbitrary point
$x = (\M',\{e^0_{\eta},e^0_{\eta'}\},\{\epsilon_{\eta}^f,\epsilon_{\eta'}^f\})$
of $X_{\Mbarp,f}\bigl(\tau'\bigr)$.  Recursively define $e^j_{\eta},e^j_{\eta'}$
for $0 \le j < f$ and $\epsilon^j_{\eta},\epsilon^j_{\eta'}$ for $f
\le j < 2f$ by the formulas of Section~\ref{ss:strongly-divisible}.
As we did with $\Mbarp$, we write $\M'$ in terms of the gauge basis
$e^j_{\eta},e^j_{\eta'}$ for $0 \le j < f$ (denoting the structure
constants in the filtration as $a_j, b_j$), and in terms of the gauge
basis $\epsilon^j_{\eta},\epsilon^j_{\eta'}$ for $f \le j < 2f$ (here
denoting the structure constants as $a'_j,b'_j$).   In this basis, let
the right-hand side of the defining expressions for $\varphi_1$ for $j
= f-1$ and $2f-1$ be $\beta \epsilon_{\eta}^f, \beta'
\epsilon_{\eta'}^f$ and $\tilde{\beta} e_{\eta}^0, \tilde{\beta}'
e_{\eta'}^0$  respectively.   In particular $\beta,\tilde{\beta}$ are both
lifts of $\bar{\beta}$, and similarly $\beta',\tilde{\beta}'$ are both
lifts of $\bar{\beta'}$.

Then $\varphi^f(x) = x$ if and only if there is an isomorphism $c(\M')
\isoto \M'$ lifting $\gamma^{-1}$ and sending $\{c(\epsilon^f_{\eta}),
c(\epsilon^f_{\eta'})\}$, $\{c(e^0_{\eta}),c(e^0_{\eta'})\}$ 
to $\{e^0_{\eta},e^0_{\eta'}\}$,
$\{\epsilon^f_{\eta},\epsilon^f_{\eta'}\}$ respectively.    It follows
immediately from the above description that this is the case precisely
when $\beta=\tilde{\beta}$, $\beta' = \tilde{\beta}'$, $a_j =
a'_{j+f}$ for all $0 \le j < f$, and $b_j = b'_{j+f}$ for all $j \in
II \cap \{0,\ldots,f-1\}$, except that if $\tau$ is cuspidal we have
$a_j = -b'_{j+f}$ and $b_j = -a'_{j+f}$ for all $j \in II\cap \{0,\ldots,f-1\}$.

Now rewrite $\M'$ entirely in terms of the gauge bases
$e^j_{\eta},e^j_{\eta'}$ for $0 \le j < 2f$, as in
Section~\ref{ss:strongly-divisible}.  By the change-of-basis formulae
in the proof of \cite[Prop.~5.4]{breuillatticconj} we obtain $a_{j+f}
= a_j (\beta/\beta')^{\gamma_j}$ for some $\gamma_j \in \{ \pm 1\}$ for
all $0 \le j < f$, except that again  if $\tau$ is cuspidal and $j \in II \cap \{0,\ldots,f-1\}$ we have
$a_j = -b_{j+f} (\beta/\beta')^{\gamma_j}$ and $b_j = -a_{j+f}
(\beta/\beta')^{-\gamma_j}$.   (The exponents $\gamma_j$ are
determined by the sets $I_{\eta},I_{\eta'},II$, and so by $\rhobar$
and $\tau$.)  Furthermore we have $\alpha,\alpha'$
equal to $\beta^2, (\beta')^2$ if $II \cap \{0,\ldots,f-1\}$ is even, and
$\alpha=\alpha' = \beta\beta'$ if $II \cap \{0,\ldots,f-1\}$ is odd.  

For the remainder of this proof we will use $\iota$ for the
involution $\iota(j) = f-1-j$ on $\{0,\ldots,f-1\}$, and $\iota'$ for
the analogous involution on $\{0,\ldots,f'-1\}$.     
Write $J_0 = \iota(II \cap \{0,\ldots,f-1\})$.
It follows easily from the previous paragraph and 
from the description of $X_{{\Mbarp},f}\bigl(\tau'\bigr)$ given by Lemma~\ref{lem: principal series def ring from BM} that
$X_{{\Mbarp},f}\bigl(\tau'\bigr)^{\varphi^f}$ is the formal spectrum
of a ring $R$ equal to a power
series ring over $\cO\llbracket (X_j,Y_j)_{j \in J_0}
\rrbracket/(X_j Y_j - p)_{j \in J_0}$, where the variables $X_j, Y_j$
for $j \in J_0$ are taken to be the restrictions to
$X_{{\Mbarp},f}\bigl(\tau'\bigr)^{\varphi^f}$ of the 
variables $X_j, Y_j$ on $X_{{\Mbarp},f}\bigl(\tau'\bigr)$.   (Note
that when $|J_0|$ is even we use the hypothesis that $p
\neq 2$, so that $(1 + Z)^{1/2} \in \cO\llbracket Z
\rrbracket$.)
This gives part (2) of the Theorem.

Part (3) now follows by an argument similar to that in the proof of
Lemma~\ref{lem:first-steps} (if a $\Zpbar$-point $\lambda \in X\bigl(\tau\bigr)$
leads to a point $\lambda' \in
X_{{\Mbarp},f}\bigl(\tau'\bigr)^{\varphi^f}$, and if $\lambda$
corresponds to the potentially Barsotti--Tate representation $\rho$, then $\lambda'$ corresponds
to the strongly divisible module associated to $\rho |_{G_L}$) together with the behavior of $\eta$- and
$\eta'$-gauges under base change; for the latter one considers the
principal series and cuspidal cases separately.

Now let $\Jmin'$ and $\Jmax'$ be the sets from part (1) of the Theorem
applied to
$\rhobar'$ and~$\tau'$, so that $\Jmin' = \iota'(I_{\eta'})$ and
$\Jmax' = \iota'(I_{\eta'} \cup II)$.  Define $\Jmin = \Jmin' \cap
\{0,\ldots,f-1\}$ and $\Jmax = \Jmax' \cap \{0,\ldots,f-1\}$.  Observe
that the set $J_0$ from the previous paragraph is precisely $\Jmax \setminus \Jmin$.

 Write $J'$ for
  $\BC_{\cusp}(J)$ or $\BC_{\PS}(J)$ depending on whether $\tau$ is a
  cuspidal or principal series inertial type; it is easy to verify  that if $J
  \subseteq \cS$ then $\Jmin \subseteq J \subseteq \Jmax$ if and only if
  $\Jmin' \subseteq J' \subseteq \Jmax'$.

  Now, by Lemma \ref{lem: base change of JH factors} and the
  definition of $\cD(\rhobar)$, we see that if
  $\sigmabar_J(\tau)\in\cD(\rhobar)$ then
  $\sigmabar_{J'}(\tau')\in\cD(\rhobar')$.  In particular, we have
  $\Jmin'\subseteq J'\subseteq\Jmax'$, or equivalently $\Jmin
  \subseteq J \subseteq \Jmax$.   
Conversely, we claim that if $J$
  is such that $\Jmin'\subseteq J'\subseteq\Jmax'$, then
  $\sigmabar_J(\tau)\in\cD(\rhobar)$. In order to see this, we need to
  check that $\rhobar$ has a crystalline lift of Hodge type
  $\sigmabar_J(\tau)$. In addition, we need to verify the
  assertion of Theorem \ref{thm:deformation rings principal
    series}(4), by identifying explicit equations for
  $\Xbar\bigl(\sigmabar_J(\tau)\bigr)$.

  In fact, we can verify these conditions simultaneously, as follows:
  let $\Xbar_J$ be the component of $\Xbar\bigl(\taubar\bigr)$ cut out by the
  equations $X_j = 0$  (for $j \in J\setminus
  \Jmin$) and $Y_j = 0$ (for $j \in \Jmax
  \setminus J$). From Lemma~\ref{lem: identification of components of the special fibre for PS
    with rhobar reducible}, from the relationship between
  $X\bigl(\tau'\bigr)$ and $X_{\Mbarp}\bigl(\tau'\bigr)$, and by the
  discussion above, we see that
  $\Xbar_J=\Xbar\bigl(\sigmabar_{J'}(\tau')\bigr)^{\varphi^f}$. On the other
  hand, by Theorem \ref{thm:geometric BM for types} we know that we
  must have $\Xbar_J=\Xbar\bigl(\sigmabar(\tau)_K\bigr)$ for some $K$, and we
  then have $\Xbar_J\subseteq\Xbar\bigl(\BC(\sigmabar(\tau)_K)\bigr)^{\varphi^f}$
  by definition. By Lemma \ref{lem: base change of JH factors}, this
  means that
  $\Xbar_J\subseteq\Xbar\bigl(\sigmabar(\tau')_{K'}\bigr)^{\varphi^f}$, so that
  $K'=J'$, and $K=J$, as required.
\end{proof}

We note the following useful corollary of the proof of Theorem \ref{thm:deformation rings principal series}.
\begin{cor}
  \label{cor: a weight if and only if a weight after base
    change}Assume that $\rhobar$ is generic, and that $\sigmabar$ is a
  Serre weight. Then $\sigmabar\in\cD(\rhobar)$ if and only if
  $\BC(\sigmabar)\in\cD(\rhobar|_{G_{L_v}})$.
\end{cor}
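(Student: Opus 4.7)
The ``only if'' direction is immediate from the definitions, as already noted in Section~\ref{subsec:base change Serre weights}. For the ``if'' direction, suppose $\BC(\sigmabar)\in\cD(\rhobar|_{G_{L_v}})$. Since $\rhobar|_{G_{L_v}}$ is generic by Lemma~\ref{lem:base change of generic is generic}, Lemma~\ref{lem: generic representations have no weight p-1} applied to $\rhobar|_{G_{L_v}}$ forces $\BC(\sigmabar)$ to be regular and not of the form $\sigmabar_{\vec{t'},\vec{0}}$; by the definition of $\BC$, the same is then true of $\sigmabar$ itself.

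The plan is to choose a suitable non-scalar tame inertial type $\tau$ so that $\sigmabar$ appears as a Jordan--H\"older constituent of $\sigmabar(\tau)$, and then to transfer the hypothesis on $\BC(\sigmabar)$ to the base-changed type $\tau':=\BC(\tau)$ and apply Theorem~\ref{thm:deformation rings principal series}. Concretely, using that $\sigmabar$ is regular and not of the form $\sigmabar_{\vec{t},\vec{0}}$, one easily exhibits a principal series tame inertial type $\tau$ together with some $J\in\cP_{\tau}$ for which $\sigmabar=\sigmabar_J(\tau)$. By Lemma~\ref{lem: base change of JH factors} we then have $\BC(\sigmabar)=\sigmabar_{J'}(\tau')$, where $J'=\BC_{\PS}(J)$. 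Applying Theorem~\ref{thm:deformation rings principal series}(1) to the pair $(\rhobar|_{G_{L_v}},\tau')$ (which is legitimate because $\BC(\sigmabar)\in\cD(\rhobar|_{G_{L_v}})$ implies $X\bigl(\tau'\bigr)$ is non-empty) yields sets $\Jmin'\subseteq\Jmax'$ with $\Jmin'\subseteq J'\subseteq\Jmax'$.

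The final step is to transport this chain of inclusions back to a chain for $(\rhobar,\tau)$. Exactly this transportation was carried out in the last paragraph of the proof of Theorem~\ref{thm:deformation rings principal series}: the sets $\Jmin,\Jmax$ attached to $(\rhobar,\tau)$ satisfy $\Jmin=\Jmin'\cap\cS$ and $\Jmax=\Jmax'\cap\cS$, and it is elementary to verify from the definitions of $\BC_{\PS}$ and $\BC_{\cusp}$ that $\Jmin\subseteq J\subseteq\Jmax$ if and only if $\Jmin'\subseteq J'\subseteq\Jmax'$. Invoking Theorem~\ref{thm:deformation rings principal series}(1) once more, this time for $(\rhobar,\tau)$, we conclude $\sigmabar=\sigmabar_J(\tau)\in\cD(\rhobar)$. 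The only non-formal input is Theorem~\ref{thm:deformation rings principal series}, which has already been proved above; the present corollary is essentially a packaging of the equivalence of inclusions extracted from the end of its proof, so there is no genuine obstacle beyond confirming that a tame type $\tau$ realising $\sigmabar$ as some $\sigmabar_J(\tau)$ can indeed be chosen, which follows directly from the classification of Jordan--H\"older factors of tame principal series recalled in Section~\ref{ss:principal}.
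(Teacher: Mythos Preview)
Your approach matches what the paper intends: the corollary is meant to be read off the final paragraph of the proof of Theorem~\ref{thm:deformation rings principal series}. But there is a circularity in your last step. When you ``invoke Theorem~\ref{thm:deformation rings principal series}(1) once more, this time for $(\rhobar,\tau)$'', you are presupposing that $X(\tau)$ is non-empty---otherwise the sets $\Jmin,\Jmax$ of that statement do not exist, and the implication $\Jmin\subseteq J\subseteq\Jmax\Rightarrow\sigmabar_J(\tau)\in\cD(\rhobar)$ is vacuous. You only know $X(\tau')$ is non-empty, and a priori the $\varphi^f$-fixed locus of a non-empty formal scheme may be empty. Appealing to the \emph{proof} of the theorem rather than its statement does not immediately help either, since that proof carries the standing assumption ``$X(\tau)$ is non-empty'' throughout (it is used, for instance, to produce the involution $\gamma$ on $\Mbarp$, via $\Mbar$ in the principal series case and via an explicit point of $X(\tau)$ in the cuspidal case).

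The paper is itself silent on this point, simply declaring the corollary a consequence of the proof. One way to close the gap is to note that the base-change machinery in that proof only genuinely requires $X(\tau')$ to be non-empty: the involutive isomorphism $\gamma:\Mbarp\isoto c(\Mbarp)$ can be produced directly from the fact that $\rhobar'=\rhobar|_{G_{L_v}}$ extends to the $G_{F_v}$-representation $\rhobar$, without appealing to a point of $X(\tau)$. Once $\gamma$ is in hand, the explicit calculation exhibits $X_{\Mbarp,f}(\tau')^{\varphi^f}$, and hence $X(\tau)$, as the formal spectrum of a visibly nonzero ring; your argument then goes through unchanged.
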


\begin{rem}
  \label{rem:two-constructions-in-PS-irreducible}
  Observe that in the principal series case we have given \emph{two}
  constructions of the space $X\bigl(\tau\bigr)$: once in
  Lemma~\ref{lem:first-steps} (i.e.\ once
  from~\cite[Thm.~5.2.1]{BreuilMezardRaffinee}) and then again by the
  base change argument in the proof of Theorem~ \ref{thm:deformation
    rings principal series} (i.e.\ by applying
  Lemma~\ref{lem:first-steps} to the case of principal series
  types over the quadratic extension).  We caution that we
  have not checked that these two constructions give precisely the
  same formal variables $X_j, Y_j$ (rather than differing by a unit).
\end{rem}

\section{Breuil's lattice conjecture}\label{sec: proof of the
  conjecture}

We now use the results of the preceding sections to prove the main
conjecture of \cite{breuillatticconj}, as well as its natural analogue
for cuspidal representations.

\subsection{Patched modules of lattices}
\label{subsec:patched modules of lattices}
Assume that $p>3$. Let $F_v$ be an unramified extension of $\Qp$, let
$\rhobar:G_{F_v}\to\GL_2(\F)$ be a generic representation, and let
$M_\infty$ be a (possibly fixed determinant) patching functor indexed by a set
of places and representations which includes $(F_v,\rhobar)$. 

Let $\tau$ be a non-scalar tame inertial type for $G_{F_v}$,
which we assume is either principal series or regular cuspidal.
Recall from Section~\ref{subsec: specific lattices and gauges} that for each
subset $J\in\cP_\tau$ we have fixed a lattice $\sigma^\circ_J(\tau)$
in $\sigma(\tau)$ with the property that the cosocle of
$\sigmabar^\circ_J(\tau)$ is precisely $\sigmabar_J(\tau)$.
We have also fixed a subset
$\Jbase\subseteq\cS$ and defined the involution $\base(J):=J\triangle\Jbase$
on the subsets of $\cS$; if $\tau$ is principal series, then
$\Jbase=\emptyset$ and $\base$ is just the
identity.

We assume that we have fixed inertial types at all places in the
indexing set for $M_\infty$ other than $v$, and we drop these places from the
notation from now on, so that,
if $L$ denotes the tensor product of
the fixed inertial types at all places in the indexing set other than $v$,
then we simply write (for example)
$M_\infty(\sigma^\circ_J(\tau))$,
rather than 
$M_{\infty}(\sigma^{\circ}_J(\tau)\otimes L).$

It follows from Theorem~\ref{thm: the output of gauges.tex} that if
$\base(J) \in \cP_{\tau}$ then we have
inclusions \[p^{|J|}\sigma^\circ_{\base(J)}(\tau)\subseteq\sigma^\circ_{\base(\emptyset)}(\tau)\subseteq\sigma^\circ_{\base(J)}(\tau),\]and
therefore
inclusions  \[p^{|J|}M_\infty(\sigma^\circ_{\base(J)}(\tau))\subseteq
M_\infty(\sigma^\circ_{\base(\emptyset)}(\tau))\subseteq M_\infty(\sigma^\circ_{\base(J)}(\tau)).\]

Since $p>3$ and $\rhobar$ is assumed to be generic, we can apply
Theorem \ref{thm:deformation rings principal series},
so we have
subsets $\Jmin\subseteq\Jmax \subseteq \cS$, and elements $X_j,Y_j$ of
$R_v^{\square,\tau}$ for each $j\in\Jmax\setminus\Jmin$, which we
regard as elements of $R^\tau_\infty$ via the natural map
$R_v^{\square,\tau}\to R^\tau_\infty$. It will be convenient for us to
also consider the
subsets \[\Jmin':=(\Jbase^c\cap\Jmin)\cup(\Jbase\cap\Jmax^c),\
\Jmax':=(\Jbase^c\cap\Jmax)\cup(\Jbase\cap\Jmin^c).\] Note that we
have $\Jmin\subseteq\base(J)\subseteq\Jmax$ if and only if
$\Jmin'\subseteq J\subseteq\Jmax'$, and that if $\tau$ is principal
series, then we have $\Jmin'=\Jmin,\Jmax'=\Jmax$.  Observe also that
in all cases we have $\Jmax' \setminus \Jmin' = \Jmax \setminus \Jmin$.

For each $j \in \cS$, we define an element $\varpi_j \in R^\tau_\infty$ according to the prescription
$$\varpi_j := \begin{cases} p & \text{ if } j \in \Jmin' \\
Y_j & \text{ if } j \in (\Jmax' \setminus \Jmin') \cap \Jbase\\  
X_j & \text{ if } j \in (\Jmax' \setminus \Jmin') \cap \Jbase^c \\
1 & \text{ if } j \not\in \Jmax' . \end{cases}
$$
If $J \in\base(\cP_\tau)$, then we define $\varpi_J := \prod_{j \in J} \varpi_{j}$.

Similarly,
for each $j \in \cS$ we define $\varpi_j' \in R^\tau_\infty$ according to the prescription
$$\varpi_j' := \begin{cases} 1 & \text{ if } j \in \Jmin' \\
X_j & \text{ if } j \in (\Jmax' \setminus \Jmin') \cap \Jbase\\
Y_j & \text{ if } j \in (\Jmax' \setminus \Jmin') \cap \Jbase^c \\
p & \text{ if } j \not\in \Jmax' , \end{cases}$$
and if $J\in\base(\cP_\tau)$, then we define $\varpi'_J := \prod_{j \in J} \varpi'_{j}$.
Note that $\varpi_J \varpi'_J = p^{|J|}.$

\begin{prop}
\label{TG:prop:annihilation}
Suppose that $\tau$ is either a principal series inertial type or a regular
cuspidal inertial type. For each $J\in\base(\cP_\tau)$, we have equalities
$$\varpi_{J}M_\infty\bigl(\sigma_{\base(J)}^{\circ}(\tau)\bigr) =
M_\infty\bigl(\sigma^{\circ}_{\base(\emptyset)}(\tau)\bigr)$$
and
$$\varpi'_{J} M_\infty\bigl(\sigma^{\circ}_{\base(\emptyset)}(\tau)\bigr)
=p^{|J|} M_\infty\bigl(\sigma_{\base(J)}^{\circ}(\tau)\bigr).$$
\end{prop}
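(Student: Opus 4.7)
The plan is to prove the first equality
$\varpi_J M_\infty(\sigma^\circ_{\base(J)}(\tau)) = M_\infty(\sigma^\circ_{\base(\emptyset)}(\tau))$
by induction on $|J|$, and then deduce the second equality by multiplying through by $\varpi'_J$ and using the identity $\varpi_J \varpi'_J = p^{|J|}$. For the inductive step, by Lemma~\ref{lem:chains in cP_tau} we may choose some $j \in J$ with $\base(J \setminus \{j\}) \in \cP_\tau$, and since $\varpi_J = \varpi_{J\setminus \{j\}}\varpi_j$, it suffices to prove the single-step identity
$\varpi_j M_\infty(\sigma^\circ_{\base(K \cup \{j\})}(\tau)) = M_\infty(\sigma^\circ_{\base(K)}(\tau))$
whenever $K, K \cup \{j\} \in \base(\cP_\tau)$ with $j \notin K$.

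To establish the single-step identity, I would apply the exact functor $M_\infty$ to the inclusions $p\sigma^\circ_{\base(K \cup \{j\})}(\tau) \subseteq \sigma^\circ_{\base(K)}(\tau) \subseteq \sigma^\circ_{\base(K \cup \{j\})}(\tau)$ supplied by Theorem~\ref{thm: the output of gauges.tex}(4), thereby obtaining patched cokernels $M_\infty(C_1)$ and $M_\infty(C_2)$ whose Jordan--H\"older factors are $\sigmabar_{\base(J')}(\tau)$ with $j \in J'$ (respectively $j \notin J'$). Only those $J'$ with $\Jmin' \subseteq J' \subseteq \Jmax'$ contribute, and for each such $J'$ the support of $M_\infty(\sigmabar_{\base(J')}(\tau))$ is the component of $\overline{X}^\psi(\tau)$ cut out by the equations $X_i = 0$ for $i \in \base(J') \setminus \Jmin$ and $Y_i = 0$ for $i \in \Jmax \setminus \base(J')$, by Theorem~\ref{thm:deformation rings principal series}(4). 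The argument now splits by the position of $j$: if $j \notin \Jmax'$ then $M_\infty(C_1) = 0$ and $\varpi_j = 1$; if $j \in \Jmin'$ then $M_\infty(C_2) = 0$ and $\varpi_j = p$; in both degenerate cases the claim is immediate.

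The substantive case is $j \in \Jmax' \setminus \Jmin'$, in which one verifies (by the combinatorial definition of $\Jmin', \Jmax'$ in terms of $\Jmin, \Jmax, \Jbase$) that $j \in \Jmax \setminus \Jmin$, and then a direct check shows that for $j \in \Jbase^c$ (so $\varpi_j = X_j$) the element $X_j$ vanishes on every component supporting $M_\infty(C_1)$, while $Y_j$ vanishes on every component supporting $M_\infty(C_2)$. This yields $X_j M_\infty(\sigma^\circ_{\base(K \cup \{j\})}(\tau)) \subseteq M_\infty(\sigma^\circ_{\base(K)}(\tau))$ on the one hand, and $Y_j M_\infty(\sigma^\circ_{\base(K)}(\tau)) \subseteq pM_\infty(\sigma^\circ_{\base(K \cup \{j\})}(\tau)) = X_jY_j M_\infty(\sigma^\circ_{\base(K \cup \{j\})}(\tau))$ on the other. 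The case $j \in \Jbase$ is entirely symmetric with the roles of $X_j$ and $Y_j$ reversed.

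The main obstacle, and the place where the geometric input really bites, is to pass from the inclusion $Y_j M_\infty(\sigma^\circ_{\base(K)}(\tau)) \subseteq X_j Y_j M_\infty(\sigma^\circ_{\base(K \cup \{j\})}(\tau))$ to the desired $M_\infty(\sigma^\circ_{\base(K)}(\tau)) \subseteq X_j M_\infty(\sigma^\circ_{\base(K \cup \{j\})}(\tau))$ by cancelling $Y_j$. For this I will invoke the maximal Cohen--Macaulay hypothesis in the definition of a patching functor, together with the explicit description of $X^\psi_\infty(\tau)$ from Theorem~\ref{thm:deformation rings principal series}(2), to conclude that $Y_j$ is a non-zerodivisor on $M_\infty(\sigma^\circ_{\base(K \cup \{j\})}(\tau))$: indeed the ring $\cO\llbracket(X_i,Y_i)\rrbracket/(X_iY_i - p)$ is a domain and $Y_j$ is not in its minimal prime, so since the module is maximal Cohen--Macaulay of full support on this (formally smooth extension of this) ring, every associated prime has the same dimension and none of them contains $Y_j$. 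Cancelling $Y_j$ gives the reverse inclusion, completing the proof of the single-step identity and hence the proposition.
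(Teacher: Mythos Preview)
Your overall strategy matches the paper's: induct on $|J|$ via Lemma~\ref{lem:chains in cP_tau}, reduce to a single-step identity, apply Theorem~\ref{thm: the output of gauges.tex}(4) to read off the Jordan--H\"older factors of the two cokernels, and analyse their supports componentwise using Theorem~\ref{thm:deformation rings principal series}(4). There is, however, a real gap in your annihilation step. From ``$X_j$ vanishes on every component supporting $M_\infty(C_1)$'' you immediately conclude $X_j M_\infty(C_1)=0$, but this inference fails in general: an iterated extension of modules each annihilated by $X_j$ is a priori only annihilated by a \emph{power} of $X_j$. What is needed is that the scheme-theoretic support of $M_\infty(C_1)$ is reduced. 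The paper supplies this by noting that $M_\infty(C_1)$, being an iterated extension of the modules $M_\infty(\sigmabar_{\base(J')}(\tau))$ --- each maximal Cohen--Macaulay of the same dimension over its (reduced) component $\overline{X}^\psi_\infty(\sigmabar_{\base(J')}(\tau))$ --- is itself Cohen--Macaulay; hence its support has no embedded associated primes and, being generically reduced, is reduced. You need to insert this argument before drawing the inclusion $\varpi_j M_\infty(\sigma^\circ_{\base(K\cup\{j\})}(\tau)) \subseteq M_\infty(\sigma^\circ_{\base(K)}(\tau))$.

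Your endgame also differs from the paper's, though here your argument is correct. You cancel $Y_j$ from $Y_j M \subseteq X_jY_j N$ by arguing that $R^{\psi,\tau}_\infty$ is a domain (so $Y_j$ is a non-zerodivisor on the MCM module $N$). The paper's route is more elementary: having both inclusions $\varpi_j N \subseteq M$ and $\varpi'_j M \subseteq pN$ in hand, multiply the first by $\varpi'_j$ to obtain $pN \subseteq \varpi'_j M$, whence $\varpi'_j M = pN$; then multiply this equality by $\varpi_j$ and cancel $p$ using only the $p$-torsion-freeness axiom. This avoids invoking the (true, but extra) fact that the deformation ring is a domain.
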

\begin{proof}
To begin with, fix
$J$ and $j\in J^c$ with the property that
$\base(J),\base(J\cup\{j\})\in\cP_\tau$. By Theorem~\ref{thm: the output of gauges.tex}(4) there is an inclusion of lattices $ \sigma_{\base(J)}^{\circ}(\tau)
\subseteq \sigma_{\base(J\cup \{j\})}^{\circ}(\tau),$ 
and the
cokernel of this inclusion is a successive extension of the weights $\sigmabar_{\base(J')}(\tau)$,
where $J'$ runs over subsets of $\cS$ such that $j \in J'$ and
$\base(J') \in \cP_{\tau}$.
Note also that this cokernel is annihilated by $p$.

The cokernel
$M_\infty\bigl(\sigma_{\base(J\cup \{j\})}^{\circ}(\tau)/\sigma_{\base(J)}^{\circ}(\tau)\bigr)$ of
\numequation
\label{TG:eqn:J embedding}
M_\infty\bigl(\sigma_{\base(J)}^{\circ}(\tau)\bigr)
\hookrightarrow M_\infty\bigl(\sigma_{\base(J\cup \{j\})}^{\circ}(\tau)\bigr) 
\end{equation}
is thus a successive extension of the corresponding modules
$M_\infty(\sigmabar_{\base(J')}(\tau))$, and is annihilated by $p$, and so
its scheme-theoretic support is generically reduced, with underlying
reduced subscheme equal to
$\bigcup_{\Jmin' \subseteq J' \subseteq \Jmax'\atop j
  \in J'} \overline{X}\bigl(\sigmabar_{\base(J')}(\tau)\bigr).$
Since the patched module
$M_\infty\bigl(\sigma_{\base(J\cup \{j\})}^{\circ}(\tau)/\sigma_{\base(J)}^{\circ}(\tau)\bigr)$
is Cohen--Macaulay, by the
definition of a patching functor, it follows from 
\cite[Thm.~17.3(i)]{MR1011461} that its scheme-theoretic support has no embedded associated primes,
and so is in fact equal to
$$\bigcup_{\Jmin' \subseteq J' \subseteq \Jmax'\atop j
  \in J'} \overline{X}\bigl(\sigmabar_{\base(J')}(\tau)\bigr).$$

If $j\notin \Jmax'$, we see that this support is trivial, and
hence this cokernel is trivial, i.e.\ is annihilated by multiplication
by $1$.  If $j \in \Jmax' \setminus \Jmin'$, then by Theorem~\ref{thm:deformation rings principal series}(4) this cokernel is
annihilated by multiplication by $Y_j$ if $j \in \Jbase$ and by $X_j$
if $j \in \Jbase^c$.
If $j \in \Jmin'$, then this
cokernel is annihilated by $p$.  Thus in all cases, we find that the
cokernel of~(\ref{TG:eqn:J embedding}) is annihilated by $\varpi_j$.

Similarly, we deduce that the cokernel of
$p M_\infty\bigl(\sigma_{\base(J\cup\{j\})}^{\circ}(\tau)\bigr)
\hookrightarrow  M_\infty\bigl(\sigma_{\base(J)}^{\circ}(\tau)\bigr)$
is annihilated by $\varpi'_j,$
 since the cokernel of the inclusion
$p\sigma_{\base(J\cup\{j\})}^{\circ}(\tau)\subseteq\sigma_{\base(J)}^{\circ}(\tau)
$
is a successive extension of the weights $\sigmabar_{\base(J')}(\tau)$,
where $J'$ runs over the subsets of $\cS$ such that $j \notin J'$.

We conclude that there are inclusions
\numequation
\label{TG:eqn:J inclusion}
\varpi_j M_\infty\bigl(\sigma_{\base(J\cup \{j\})}^{\circ}(\tau)\bigr)
\subseteq  M_\infty\bigl(\sigma_{\base(J)}^{\circ}(\tau)\bigr)
\end{equation}
and
\numequation
\label{TG:eqn:2nd J inclusion}
\varpi'_jM_\infty\bigl(\sigma_{\base(J)}^{\circ}(\tau)\bigr)
\subseteq p M_\infty\bigl(\sigma_{\base(J\cup\{j\})}^{\circ}(\tau)\bigr).
\end{equation}
Multiplying~(\ref{TG:eqn:J inclusion}) by $\varpi'_j$, we find that also
\[p M_\infty\bigl(\sigma_{\base(J\cup\{j\})}^{\circ}(\tau)\bigr)
\subseteq \varpi'_jM_\infty\bigl(\sigma_{\base(J)}^{\circ}(\tau)\bigr) ,\]
and hence that~(\ref{TG:eqn:2nd J inclusion}) is in fact an equality.
Multiplying this equality
by $\varpi_j$ (and recalling that $M_\infty\bigl(\sigma^{\circ}(\tau)\bigr)$ is $p$-torsion
free),
we deduce that~(\ref{TG:eqn:J inclusion}) is also an equality.
The statement of the Proposition now follows by induction on
$|J|$, using Lemma~\ref{lem:chains in cP_tau}.
\end{proof}

\subsection{Breuil's lattice conjecture}
We now apply Proposition~\ref{TG:prop:annihilation}
to  the particular fixed
determinant patching functors constructed in Section~\ref{subsec:
  existence of patching functors}. (Of course, it could be applied in
a similar way to the unitary group patching functors constructed in Section~\ref{subsec: unitary patching
  functors}.) Continue to fix a prime $p>3$, let
$F$ be a totally real field in which $p$ is unramified, and fix a
continuous representation $\rhobar:G_F\to\GL_2(\F)$ with the property
that $\rhobar|_{G_{F(\zeta_p)}}$ is absolutely irreducible. If $p=5$,
assume further that the projective image of
$\rhobar|_{G_{F(\zeta_5)}}$ is not isomorphic to $A_5$.
Fix a place
$v|p$, and assume that $\rhobar|_{G_{F_v}}$ is generic. Let $\tau$ be
some tame inertial type.

We now adopt the notation of Sections~\ref{subsec: existence of
  patching functors} and~\ref{subsec: limit over p power
  level}, fixing in particular a quaternion algebra $D$, a finite
set of finite places $S$, and the maximal ideal $\m \subset
\T^{S,\univ}$ corresponding to $\rhobar$. For each
place $w\in S\setminus\{v\}$, choose an inertial type $\tau_w$ and a
lattice $\sigma^\circ(\tau_w)$ in $\sigma(\tau_w)$, and let
$\sigma^v:=\otimes_{w\in S\setminus\{v\}}\sigma^\circ(\tau_w)$ be the corresponding
representation of $K^v=\prod_{w\in S\setminus\{v\}}K_w$. Let $\lambda:\T_\m\to
E'$ be the system of Hecke eigenvalues corresponding to
some minimal prime ideal of $\T_\m$,
where $\T_\m:=\T(\sigma^v\sigma^\circ(\tau))_\m$ in the notation of Section~\ref{subsec:
  limit over p power level} (note that this is independent of the
choice of lattice $\sigma^\circ(\tau)$), and where $E'$ is a finite
extension of $E$. Set $K_v=\GL_2(\cO_{F_v})$. Let
$\pi:=(M^v(\sigma^v)^*[1/p])[\lambda]$, which by
strong multiplicity one for $D^\times$ is an irreducible tame
representation of $\GL_2(F_v)$. Then the eigenspace
$M^v(\sigma^v)^*[\lambda]$ is a lattice in $\pi$, 
and the
intersection \[\sigma^\circ(\pi):=(\sigma(\tau)\otimes_{E}E')\cap
M^v(\sigma^v)^*[\lambda]\] inside $\pi$ defines a $K_v$-stable
$\cO_{E'}$-lattice in $\sigma(\tau) \otimes_E E'$.

Let $\rho :G_F\to\GL_2(\cO_{E'})$ be the Galois representation
associated to $\pi$.
We now define a $\GL_2(\cO_{F_v})$-stable $\cO_{E'}$-lattice $\sigma^\circ(\rho)$ in
$\sigma(\tau)\otimes_E E'$. Suppose first that $\tau$ is an
irregular cuspidal type. Then by Lemmas~\ref{lem: generic
  representations have no weight p-1} and~\ref{lem: regular cuspidals
  allow relabelling},
$\cD(\rhobar|_{G_{F_v}})\cap\JH(\sigmabar(\tau))$ contains a single
weight $\sigmabar_J(\tau)$, and we set
$\sigma^\circ(\rho):=\sigma^{\circ,J}(\tau)\otimes_\cO{\cO_{E'}}$.

Now suppose that $\tau$ is a principal series or regular cuspidal
type. For each $J\in\base(\cP_\tau)$, we let  $\varpi_{J}(\lambda)$ denote the image of $\varpi_{J}$ under
the composite of $\lambda$ and the homomorphism
$R_\infty^\tau\to\mathbb{T}_\m$ induced by the composite $R_\infty\to
R_S^\univ\to\mathbb{T}_\m$. Define
\[ \sigma^\circ(\rho):= \sum_{J \in\base(\cP_\tau)}
\varpi_{J}(\lambda)
\sigma_{\base(J)}^\circ(\tau).\]

\begin{theorem}\label{thm:Breuil lattice conjecture}
  Continue to maintain the above assumptions, so that in particular we
  have $p>3$, and $\rhobar|_{G_{F_v}}$ is generic. Then the lattices 
$\sigma^{\circ}(\pi)$ and $\sigma^\circ(\rho)$ are homothetic.
\end{theorem}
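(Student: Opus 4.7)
The plan is to establish that the two lattices have the same gauge (in the sense of Definition~\ref{defn:gauge}), and then to invoke Proposition~\ref{prop:the gauge determines the lattice} to conclude that they are homothetic. First I would handle the principal series and regular cuspidal cases together. By construction (using also Proposition~\ref{prop:the gauge determines the lattice} applied to the definition $\sigma^\circ(\rho) = \sum_J \varpi_J(\lambda)\,\sigma^\circ_{\base(J)}(\tau)$), the gauge of $\sigma^\circ(\rho)$ is precisely the tuple $(\varpi_J(\lambda))_{J\in\base(\cP_\tau)}$, so it remains to compute the gauge of $\sigma^\circ(\pi)$ and show it agrees up to a common scalar.

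To compute the gauge of $\sigma^\circ(\pi)$, I would first reinterpret $\Hom_{K_v}(\sigma^\circ_{\base(J)}(\tau),\sigma^\circ(\pi))$ in terms of the patched modules. Concretely, using the adjunction of~\eqref{eqn:U comparison patching section} together with the pullback definition $\sigma^\circ(\pi) = (\sigma(\tau)\otimes_E E') \cap M^v(\sigma^v)^*[\lambda]$, and noting that $\Hom_{K_v}(\sigma^\circ_{\base(J)}(\tau),\pi)$ is one-dimensional over $E'$ by the inertial local Langlands correspondence applied to the tame representation $\pi$, I would identify the fractional ideal $\Hom_{K_v}(\sigma^\circ_{\base(J)}(\tau),\sigma^\circ(\pi))$ with the $\lambda$-localization of the patched quotient $M_\infty(\sigma^\circ_{\base(J)}(\tau))$ specialized along the map $M_\infty \twoheadrightarrow M$ given by killing the formal variables $x_1,\ldots,x_{4\#S+q-1}$.

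Now I would apply Proposition~\ref{TG:prop:annihilation}, which is exactly the key input and gives the equalities $\varpi_J \cdot M_\infty(\sigma^\circ_{\base(J)}(\tau)) = M_\infty(\sigma^\circ_{\base(\emptyset)}(\tau))$ for every $J\in\base(\cP_\tau)$. Specializing these equalities at the quotient defining $\lambda$ (which is permitted because $\varpi_J$ acts on the patched modules via its image in $\T_\m$), they become equalities of rank-one $\cO_{E'}$-submodules of the one-dimensional $E'$-vector space $\Hom_{K_v}(\sigma(\tau),\pi)$. This immediately gives
\[
\Hom_{K_v}\bigl(\sigma^\circ_{\base(J)}(\tau),\sigma^\circ(\pi)\bigr) = \varpi_J(\lambda)\cdot \Hom_{K_v}\bigl(\sigma^\circ_{\base(\emptyset)}(\tau),\sigma^\circ(\pi)\bigr)
\]
as fractional $\cO_{E'}$-ideals, so the gauges of $\sigma^\circ(\pi)$ and $\sigma^\circ(\rho)$ coincide up to the common scalar $\Hom_{K_v}(\sigma^\circ_{\base(\emptyset)}(\tau),\sigma^\circ(\pi))$. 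An application of Proposition~\ref{prop:the gauge determines the lattice} then yields the desired homothety.

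The irregular cuspidal case will be a short separate argument: by Lemmas~\ref{lem: generic representations have no weight p-1} and~\ref{lem: regular cuspidals allow relabelling} there is a unique $J$ with $\sigmabar_J(\tau)\in\cD(\rhobar|_{G_{F_v}})$, so Theorem~\ref{thm:geometric BM for types} forces $\sigmabar^\circ(\pi)$ to have cosocle supported only on $\sigmabar_J(\tau)$; then Lemma~\ref{lem:uniqueness of lattices with irreducible socle} gives that $\sigma^\circ(\pi)$ is homothetic to $\sigma^{\circ,J}(\tau) = \sigma^\circ(\rho)$. The principal technical obstacle is the one in the second paragraph above: carefully checking that the translation between patched modules and Hom-modules into $\sigma^\circ(\pi)$ is compatible with $\lambda$-specialization and the multiplicity-one structure of the $\lambda$-eigenspace, so that the equalities of Proposition~\ref{TG:prop:annihilation} really descend to equalities of fractional ideals rather than mere inclusions.
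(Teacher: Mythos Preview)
Your proposal is essentially the same as the paper's proof: both reduce to computing the gauge of $\sigma^\circ(\pi)$ via the identification~\eqref{eqn:U comparison patching section}, apply Proposition~\ref{TG:prop:annihilation} to the patching functor of Subsection~\ref{sec: abstract patching}, specialize at $\lambda$ (the paper notes $M(\sigma^\circ_{\base(J)}(\tau))$ is $p$-torsion free to justify this), and conclude via Proposition~\ref{prop:the gauge determines the lattice}.

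Two minor slips in the irregular cuspidal paragraph. First, you write ``cosocle'' where you need ``socle'': the lattice $\sigma^\circ(\rho)$ is defined as $\sigma^{\circ,J}(\tau)$, which has \emph{socle} $\sigmabar_J(\tau)$, so by Lemma~\ref{lem:uniqueness of lattices with irreducible socle} you must show that the socle of $\sigmabar^\circ(\pi)$ is $\sigmabar_J(\tau)$. Second, Theorem~\ref{thm:geometric BM for types} concerns components of deformation rings, not modular Serre weights directly; the paper instead invokes the Buzzard--Diamond--Jarvis conjecture (Corollaries~5.6.4 and~4.5.7 of~\cite{geekisin}) to identify which weights occur in the socle of the cohomology. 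Your route through Theorem~\ref{thm:geometric BM for types} could be made to work, but it would require passing through the patching functor (as in Theorem~\ref{thm: generic BDJ in patching language}) rather than citing it on its own.
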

\begin{proof}Suppose first that $\tau$ is an
irregular cuspidal type. By Lemma~\ref{lem:uniqueness of lattices with
  irreducible socle} and the definition of $\sigma^\circ(\rho)$, we
must show that the only Serre weight occurring in the socle of
$\sigmabar^\circ(\pi)$ is $\sigmabar_J(\tau)$; but this is an
immediate consequence of Corollaries~5.6.4 and~4.5.7
of~\cite{geekisin} (the Buzzard--Diamond--Jarvis conjecture).

Suppose now that $\tau$ is either a principal series type or a regular
cuspidal type. By Proposition~\ref{prop:the gauge determines the lattice} we may write
$\sigma^{\circ}(\pi) = \sum_{J \in \base(\cP_\tau)} p^{v_J} \sigma^\circ_{\base(J)}(\tau),$
where $p^{v_J}$ denotes an element of $\cO_{E'}$ of some valuation $v_J$.
Rescaling if necessary, we further assume that $v_{\base(\emptyset)} =
0$. 

We
now apply the results of Subsection~\ref{subsec:patched modules of lattices}
to the fixed determinant patching
functor $M_\infty$ of Subsection~\ref{sec: abstract patching}.  It follows from~(\ref{eqn:U comparison patching section}) that for any subset $J \in\base(\cP_\tau)$
there is a natural isomorphism
$$\Hom_{K_v}\bigl(\sigma^{\circ}_{\base(J)}(\tau),\sigma^{\circ}(\pi)\bigr)^*
\iso M\bigl(\sigma_{\base(J)}^{\circ}(\tau)\bigr)/\lambda,$$
and so (noting that $M\bigl(\sigma_{\base(J)}^{\circ}(\tau)\bigr)$ is
$p$-torsion free) Proposition~\ref{TG:prop:annihilation} shows that
$$
\Hom_{K_v}\bigl(\sigma^{\circ}_{\base(J)}(\tau),\sigma^{\circ}(\pi)\bigr)
=\varpi_{J}(\lambda)  \Hom_{K_v}\bigl(\sigma^{\circ}_{\base(\emptyset)}(\tau),\sigma^{\circ}(\pi)\bigr).$$
Proposition~\ref{prop:the gauge determines the lattice} then
shows that $p^{v_J} =\varpi_{J}(\lambda)$
for each $J \in\base(\cP_\tau)$
(or more precisely,
that they have the same valuation, which is all we need).
\end{proof}

\begin{rem}
  In the case that $\tau$ is a principal series type and $D$ is a
  definite quaternion algebra, this is precisely Conjecture 1.2 of
  \cite{breuillatticconj}, under the mild additional assumptions that we
  have imposed in order to use the Taylor--Wiles--Kisin method.
  (See displayed equation~(13) in Section~3 of
  \cite{breuillatticconj} together with
  \cite[Cor.~5.3]{breuillatticconj}, and note that our normalisations
  differ from those of~\cite{breuillatticconj}.) 
 Note that while Breuil does not
  fix types at places other than $v$, the fact that
  Theorem~\ref{thm:Breuil lattice conjecture} shows that the homothety
  class of the lattice is independent of the choices of types away
  from $v$ means that Breuil's apparently more general conjecture is an
  immediate consequence of our result.
\end{rem}

\section{The generic Buzzard--Diamond--Jarvis conjecture}\label{sec:
  BDJ}
\subsection{The support of a patching functor} We now explain how the results of the earlier sections
can be applied to the weight part of Serre's conjecture, by an
argument analogous to the arguments made to avoid the use of Ihara's
lemma in \cite{tay}.  Fix an unramified extension $F_v/\Qp$, and a
continuous representation $\rhobar:G_{F_v}\to\GL_2(\F)$. For
simplicity, we consider (fixed determinant) patching functors indexed
by just the single field $F_v$.
\begin{thm}
  \label{thm: generic BDJ in patching language} Suppose that $p>3$ and
  that $\rhobar$ is generic, let $M_\infty$ be a fixed determinant
  patching functor for $\rhobar$, and let $\sigmabar$ be a Serre
  weight. Then $M_\infty(\sigmabar)\ne 0$ if and only if
  $\sigmabar\in\cD(\rhobar)$.
\end{thm}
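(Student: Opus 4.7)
The plan is to prove both directions separately, with the reverse implication being immediate from the patching functor axioms and the hard content lying in the forward direction, which is obtained by exploiting the explicit geometry of tame deformation spaces from Section~\ref{sec: deformation spaces}.

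For the ``only if'' direction, suppose $M_\infty(\sigmabar) \neq 0$. By the support axiom in Definition~\ref{defn: fixed det patching functor}, $M_\infty(\sigmabar)$ is supported on $\Xbar^\psi\bigl(\sigmabar\bigr)$, so the latter is non-empty and in particular $R^{\psi,\sigmabar}_\infty$ is non-zero. Hence $\rhobar$ admits a crystalline lift of Hodge type $\sigmabar$ with the prescribed determinant, which by Theorem~\ref{thm:GLS main thm} forces $\sigmabar \in \cD(\rhobar)$.

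For the ``if'' direction, assume $\sigmabar \in \cD(\rhobar)$. By Proposition~\ref{prop: types for existence} we choose a non-scalar tame inertial type $\tau$ with $\cD(\rhobar) \subseteq \JH(\sigmabar(\tau))$, so in particular $\sigmabar \in \JH(\sigmabar(\tau))$. Assuming the non-triviality that $M_\infty(\sigma^\circ(\tau)) \neq 0$ (which is the case of interest; otherwise there is nothing to prove), Theorem~\ref{thm:deformation rings principal series}(2) shows that $X^\psi_\infty\bigl(\tau\bigr)$ is the formal spectrum of a power series ring over a complete intersection of the form $\cO\llbracket X_j, Y_j \rrbracket/(X_jY_j - p)$. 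In particular its generic fibre is regular and irreducible. Since $M_\infty(\sigma^\circ(\tau))$ is $p$-torsion free and maximal Cohen--Macaulay over $X^\psi_\infty\bigl(\tau\bigr)$, this forces its scheme-theoretic support to equal the whole of $X^\psi_\infty\bigl(\tau\bigr)$, and hence the support of its mod $\varpi_E$ reduction $M_\infty(\sigmabar^\circ(\tau))$ equals the whole special fibre $\overline{X}^\psi_\infty\bigl(\tau\bigr)$.

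Now I would filter $\sigmabar^\circ(\tau)$ by a Jordan--H\"older filtration; since $\sigmabar(\tau)$ is residually multiplicity free (Lemma~\ref{lem:weights in tame types have multiplicity one}), the graded pieces are exactly the constituents $\sigmabar' \in \JH(\sigmabar(\tau))$, each appearing with multiplicity one. Applying the exact functor $M_\infty$, I obtain a filtration of $M_\infty(\sigmabar^\circ(\tau))$ whose graded pieces are $M_\infty(\sigmabar')$. By the easy direction already proved, $M_\infty(\sigmabar') = 0$ unless $\sigmabar' \in \cD(\rhobar)$. Combining Theorem~\ref{thm:geometric BM for types} with Theorem~\ref{thm:deformation rings principal series}(4), the special fibre $\overline{X}^\psi_\infty\bigl(\tau\bigr)$ decomposes into irreducible components indexed precisely by $\cD(\rhobar) \cap \JH(\sigmabar(\tau))$, with the component for $\sigmabar'$ equal to $\overline{X}^\psi_\infty\bigl(\sigmabar'\bigr)$. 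Since the support of a filtered module equals the union of the supports of its graded pieces, and each $M_\infty(\sigmabar')$ lives on $\overline{X}^\psi_\infty\bigl(\sigmabar'\bigr)$, the only way to cover every component of the special fibre is to have $M_\infty(\sigmabar') \neq 0$ for every $\sigmabar' \in \cD(\rhobar) \cap \JH(\sigmabar(\tau))$. In particular $M_\infty(\sigmabar) \neq 0$.

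The main obstacle is arranging this covering argument cleanly: the crucial inputs are the irreducibility of the generic fibre of $X^\psi_\infty\bigl(\tau\bigr)$ (so that a non-zero MCM module is automatically supported everywhere) and the precise identification of the components of $\overline{X}^\psi_\infty\bigl(\tau\bigr)$ with the weights in $\cD(\rhobar) \cap \JH(\sigmabar(\tau))$, both of which are delivered by Theorem~\ref{thm:deformation rings principal series}. The proof then proceeds in the spirit of~\cite{tay}: modularity in every weight is forced by the contrast between a \emph{connected} generic fibre and a \emph{reducible} special fibre of the tame potentially Barsotti--Tate deformation ring.
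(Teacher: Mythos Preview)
Your approach is essentially the same as the paper's: both directions use the same ingredients (the support axiom plus Theorem~\ref{thm:geometric BM for types} or~\ref{thm:GLS main thm} for ``only if''; Proposition~\ref{prop: types for existence}, irreducibility of the generic fibre of $X^\psi_\infty(\tau)$ from Theorem~\ref{thm:deformation rings principal series}, and the MCM property to force full support on the special fibre for ``if'').

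There is one point where your write-up slips. You write ``Assuming the non-triviality that $M_\infty(\sigma^\circ(\tau)) \neq 0$ (which is the case of interest; otherwise there is nothing to prove).'' The parenthetical is wrong: if $M_\infty(\sigma^\circ(\tau)) = 0$ then by exactness $M_\infty(\sigmabar) = 0$ for every $\sigmabar \in \JH(\sigmabar(\tau))$, including the one you are trying to show is nonzero --- so there is very much something to prove, and in fact the claim would fail. The paper handles this by first noting that $M_\infty(\sigmabar') \neq 0$ for \emph{some} Serre weight $\sigmabar'$ (this is the implicit nontriviality of the patching functor), then using the already-proved ``only if'' direction to get $\sigmabar' \in \cD(\rhobar) \subseteq \JH(\sigmabar(\tau))$, whence $M_\infty(\sigma^\circ(\tau)) \neq 0$ by exactness. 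You should make this step explicit rather than dismiss it.

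A minor remark: you say the generic fibre of $X^\psi_\infty(\tau)$ is ``regular and irreducible'', but regularity can fail if $E/\Qp$ is ramified (since $X_jY_j = p$ is then singular). Only irreducibility is needed and used, so this is harmless.
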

\begin{proof}First, suppose that $M_\infty(\sigmabar)\ne 0$. Since
  $M_\infty(\sigmabar)$ is supported on
  $\Xbar^\psi\bigl(\sigmabar\bigr)$ by the definition of a fixed
  determinant patching functor, we have
  $\Xbar^\psi\bigl(\sigmabar\bigr)\ne 0$. By Theorem
  \ref{thm:geometric BM for types}, this implies that
  $\sigmabar\in\cD(\rhobar)$.
  
We now prove the converse. Let $\tau$ be an inertial type such that
$\cD(\rhobar) \subset \JH(\sigmabar(\tau))$, as
in Proposition \ref{prop: types for existence}. We certainly have
$M_\infty(\sigmabar')\ne 0$ for some Serre weight $\sigmabar'$, and
this implies that $M_\infty(\sigma^\circ(\tau))\ne 0$ (because we have
shown that $\sigmabar'\in\cD(\rhobar)$, so that $\sigmabar'$ is a
Jordan--H\"older factor of $\sigmabar^\circ(\tau)$). By Theorem
\ref{thm:geometric BM for types} and the choice of
$\tau$, it suffices to show that $M_\infty(\sigmabar^\circ(\tau))$ is
supported on all of $\Xbar^\psi\bigl(\tau\bigr)$. 

In order to see this, note that by Theorem \ref{thm:deformation rings
  principal series} the generic fibre of $X^\psi\bigl(\tau\bigr)$ is
irreducible, so that $M_\infty(\sigma^\circ(\tau))$ is supported on
the whole generic fibre of $X^\psi\bigl(\tau\bigr)$. Since
$M_\infty(\sigma^\circ(\tau))$ is maximal Cohen--Macaulay over
$X^\psi\bigl(\tau\bigr)$, it follows (\emph{cf.}\ Lemmas 2.2(1) and 2.3 of
\cite{tay}) that $M_\infty(\sigmabar^\circ(\tau))$ is supported on all
of $\Xbar^\psi\bigl(\tau\bigr)$, as required.
\end{proof}
\begin{rem}
  Of course the analogous result also holds for patching functors
  without fixed determinant, and can be proved in the same way. The
  same analysis also goes through for patching functors indexed by a
  collection of fields, provided that at the fields of residue
  characteristic $p$, the fields are absolutely unramified over $\Qp$,
  and the Galois representations are generic.

Applying Theorem \ref{thm: generic BDJ in patching language} to the
specific fixed determinant patching functors constructed in Section
\ref{sec: cohomology} recovers cases of the Buzzard--Diamond--Jarvis
conjecture (\cite{bdj}). Of course, as written this proof depends on
the results of \cite{GLS12}, \cite{geekisin}, etc. which were used in
Section~\ref{sec: cohomology} to construct our patching functors, but
it is possible to prove enough properties of the patching functors to
recover Theorem \ref{thm: generic BDJ in patching language} without
using these results, and in particular without using the potential
diagonalizability of potentially Barsotti--Tate representations proved
in \cite{kis04}, \cite{MR2280776} --- see Section~\ref{subsec:avoiding
  GK} below.
\end{rem}

\section{Freeness and multiplicity one}\label{sec: freeness and
  multiplicity one}

In this section we examine multiplicity one questions for cohomology
with coefficients in the lattices under consideration.

\subsection{Freeness of patched modules}
Fix $F_v$, an
unramified extension of $\Qp$, and fix a continuous representation
$\rhobar:G_{F_v}\to\GL_2(\F)$. Let $M^{\min}_\infty$ be a minimal
fixed determinant patching functor with unramified coefficients,
indexed by $(F_v,\rhobar)$.   As noted in Subsection~\ref{subsec: specific
lattices and gauges}, the lattices $\sigma(\tau)^{\circ}_J$ may be
defined over the ring of integers in
an unramified extension of $\Q_p$, and we take our unramified
coefficient field $E$ to be large enough for all these lattices to be defined
over its ring of integers $\cO$.  
Our main theorem is then the
following.

\begin{thm}
  \label{thm: freeness of the sigma_J^circ}
Assume that $p>3$ and that $\rhobar$ is
  generic, and let $\tau$ be a non-scalar tame inertial type for $I_{F_v}$. Fix
  some $J\in\cP_\tau$. 
 Then the coherent sheaf
  $M^{\min}_\infty(\sigma^\circ_{J}(\tau))$, regarded as a finitely
  generated module over $R^{\psi,\tau}_\infty$, is free of rank one.
\end{thm}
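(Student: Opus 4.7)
The plan is to induct on $r := |\Jmax \setminus \Jmin|$, which by Theorem~\ref{thm:deformation rings principal series} controls the failure of $R^{\psi,\tau}_\infty$ to be regular. The base case $r = 0$ is straightforward: $R^{\psi,\tau}_\infty$ is then a power series ring over $\cO$, hence regular local, and $M^{\min}_\infty(\sigma^\circ_{J}(\tau))$ is $p$-torsion free and maximal Cohen--Macaulay over $X^\psi\bigl(\tau\bigr)$ by the patching functor axioms. The Auslander--Buchsbaum formula then forces freeness, and minimality of $M^{\min}_\infty$ together with the irreducibility of the generic fibre (Theorem~\ref{thm:deformation rings principal series}(2)) pins down the rank as~$1$.

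For $r \geq 1$, by Nakayama's lemma applied to $M^{\min}_\infty(\sigma^\circ_J(\tau))$ as a finitely generated $R^{\psi,\tau}_\infty$-module, it suffices to prove freeness of rank one after reducing modulo $\varpi_E$. I would combine two geometric ingredients. First, Theorem~\ref{thm:deformation rings principal series}(4) decomposes $\Xbar^\psi\bigl(\tau\bigr)$ into components $\Xbar^\psi\bigl(\sigmabar_{J'}(\tau)\bigr)$ cut out by equations of the form $X_j = 0$ or $Y_j = 0$; each such component is itself regular after reducing mod~$\varpi_E$. Restricting the mod $\varpi_E$ reduction of $M^{\min}_\infty(\sigma^\circ_J(\tau))$ to a component yields a maximal Cohen--Macaulay module over a regular ring, which by Auslander--Buchsbaum together with the Breuil--M\'ezard cycle-theoretic input (from Theorem~\ref{thm:geometric BM for types} and the minimality hypothesis) is free of rank one on each component. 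Second, the functoriality of the patching construction, together with the identities of Proposition~\ref{TG:prop:annihilation}, namely $\varpi_J M^{\min}_\infty(\sigma^\circ_{\base(J)}(\tau)) = M^{\min}_\infty(\sigma^\circ_{\base(\emptyset)}(\tau))$ and its companion, lets one relate generators on different components and to patched modules attached to auxiliary types $\tau'$ with strictly smaller invariant $r'$. Specifically, by choosing $\tau'$ (possibly after base change, following the techniques of Section~\ref{subsec:base change tame types}) so that the types agree at a chosen $j \in \Jmax \setminus \Jmin$, one can use the inductive hypothesis for $\tau'$ and transport the resulting free-rank-one structure via the type-change maps, which are themselves governed by the explicit gauge-basis description of Section~\ref{sec:lattices in tame types}.

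The main obstacle is the gluing step: producing a single global generator of $M^{\min}_\infty(\sigma^\circ_J(\tau))/\varpi_E$ from the local generators on individual components of $\Xbar^\psi\bigl(\tau\bigr)$. Each component is regular and carries a unique (up to unit) generator by the preceding analysis, but the compatibility of these generators along intersections has to be checked using the gauge relations of Theorem~\ref{thm: the output of gauges.tex} combined with Proposition~\ref{TG:prop:annihilation}; in effect, one must verify that the cokernels of the transition maps $M^{\min}_\infty(\sigma^\circ_{J'}(\tau)) \to M^{\min}_\infty(\sigma^\circ_{J''}(\tau))$ cut out precisely the prescribed component loci, with no additional contributions. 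Once this matching between the scheme-theoretic structure of $\Xbar^\psi(\tau)$ and the combinatorics of gauges is established, the freeness assertion follows, and rank one is forced a posteriori by comparison with the generic fibre via minimality.
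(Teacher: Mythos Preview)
Your strategy has the right spirit but two concrete gaps. First, the claim that restricting $M^{\min}_\infty(\sigmabar^\circ_J(\tau))$ to a single component $\Xbar^\psi(\sigmabar_{J'}(\tau))$ yields a maximal Cohen--Macaulay module is not justified: tensoring an MCM module over $\Rbar^{\psi,\tau}_\infty$ down to $\Rbar^{\psi,\tau}_\infty/I_{\{J'\}}$ need not preserve depth, and this quotient is \emph{not} the same as $M^{\min}_\infty(\sigmabar_{J'}(\tau))$ unless you already know cyclicity. Second, your induction on $r = |\Jmax \setminus \Jmin|$ never gets off the ground: there is no map of patched modules between different types $\tau$ and $\tau'$, so ``transporting free-rank-one structure via type-change maps'' has no meaning. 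Proposition~\ref{TG:prop:annihilation} relates different lattices \emph{within a fixed} $\tau$; it does not compare across types. The only link between types is through shared Jordan--H\"older factors of the reductions, and this is exactly what the paper exploits --- but not via an induction on $r$.

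The paper instead fixes $\tau$ throughout and inducts on the cardinality of a \emph{capped interval} $\cJ \subseteq \cW$ (a subset of the weight poset closed under intermediates, with a unique maximal element), proving that $M^{\min}_\infty(\sigmabar^{\cJ})$ is cyclic with annihilator exactly $I_{\cJ}$. The base case $|\cJ| \leq 2$ is where auxiliary types enter: Propositions~\ref{prop: types for elimination} and~\ref{prop: when there are extensions of Serre weights, and existence of types when this happens} produce a $\tau'$ with $|\JH(\sigmabar(\tau')) \cap \cD(\rhobar)| \leq 2$, so $X^\psi_\infty(\tau')$ is regular and Diamond's method applies. For the inductive step there are two cases. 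If $\cJ$ has a unique minimal element $J_0$, one removes it and uses an elementary lemma (if $M'' \subsetneq M' \subseteq M$ with $M'$ and $M/M''$ cyclic, then $M$ is cyclic); the annihilator computation then reduces to Lemma~\ref{lem:faces}. If $\cJ$ has two minimal elements, one writes $M^{\min}_\infty(\sigmabar^{\cJ})$ as a fibre product and invokes Lemma~\ref{lem:ideals}, which says $I_{\cJ_1} + I_{\cJ_2} = I_{\cJ_1 \cap \cJ_2}$ for capped intervals sharing a cap. The capped hypothesis is essential: if $\cJ$ had two maximal elements $J_1, J_2$, then $M^{\min}_\infty(\sigmabar^{\cJ})$ would surject onto $M^{\min}_\infty(\sigmabar'_{J_1}(\tau)) \oplus M^{\min}_\infty(\sigmabar'_{J_2}(\tau))$ and could not be cyclic. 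Your gluing intuition is correct in that the obstruction lives on intersections of components, but the mechanism that resolves it is this combinatorial induction on capped intervals together with the ideal-theoretic Lemmas~\ref{lem:faces} and~\ref{lem:ideals}, not a componentwise analysis.
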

\begin{rem}
  As usual, the analogous result also holds for minimal patching
  functors without fixed determinant, with an essentially identical proof.
\end{rem}

In the case when the local deformation space $X^\psi\bigl(\tau\bigr)$ is regular, such a
result follows from the method of Diamond \cite{MR1440309}, and indeed
we will use Diamond's method as an ingredient in our proof via
Lemma~\ref{lem: abstract version of Diamond freeness argument}.
However, the deformation spaces $X^\psi\bigl(\tau\bigr)$ are typically {\em not}
regular, and so more work is required than solely applying this
method.

By Theorem \ref{thm:deformation rings principal
    series},
 there are subsets $\Jmin
  \subseteq\Jmax\subseteq\cS$ with
  $\Jmin,\Jmax\in\cP_\tau$ such that $R^{\psi,\tau}_\infty$ is a formal
  power series ring over
  \[\cO\llbracket (X_j,Y_j)_{j\in\Jmax\setminus\Jmin}\rrbracket/(X_jY_j-p)_{j\in\Jmax\setminus\Jmin}.\]
 Since $M^{\min}_\infty$ is assumed to have unramified coefficients,
this ring  is regular if and only if $|J_{\max}\setminus J_{\min}| 
\leq 1$. In addition, the Jordan--H\"older
  factors of $\sigmabar(\tau)$ which are elements of $\cD(\rhobar)$ are
  precisely the $\sigmabar_{J'}(\tau)$ with $\Jmin\subseteq
  J'\subseteq\Jmax$, and the equations $X_j=0$ ($j\in J'\setminus\Jmin$)
  and $Y_j=0$ ($j\in \Jmax\setminus J'$) cut out the component
  $\Xbar^\psi\bigl(\sigmabar_{J'}(\tau)\bigr)$ of $\Xbar^\psi\bigl(\tau\bigr)$.

  For the remainder of this section we fix $J$ as in Theorem~\ref{thm:
    freeness of the sigma_J^circ}.  It will be convenient for us to reindex the $\sigmabar_{J'}(\tau)$
  and the $X_j,Y_j$ as follows.
  Set
  $\sigmabar'_{J'}(\tau):=\sigmabar_{J^c\triangle J'}(\tau)$, so that in
  particular $\sigmabar'_{\cS}(\tau)=\sigmabar_J(\tau)$. Set
  \[\Jmin':=(J\cap\Jmin)\cup(J^c\cap\Jmax^c),\
  \Jmax':=(J\cap\Jmax)\cup(J^c\cap\Jmin^c).\] As in Section~\ref{sec: proof of the
  conjecture}, note that $\Jmin' \subset J' \subset
\Jmax'$ if and only if $\Jmin \subset  J^c \triangle  J' \subset \Jmax$, and that
  $\Jmax'\setminus\Jmin'=\Jmax\setminus\Jmin$. For each 
  $i\in\Jmax'\setminus\Jmin'$, we set $X'_i=X_i,Y'_i=Y_i$ if $i\in
  J$, and $X'_i=Y_i,Y'_i=X_i$ if $i\notin
 J$, so that the equations $X'_j=0$ ($j\in
  J'\setminus\Jmin'$) and $Y'_j=0$ ($j\in \Jmax'\setminus J'$) 
 cut out the
  component $\Xbar^\psi_\infty\bigl(\sigmabar'_{J'}(\tau)\bigr)$ of
  $\Xbar^\psi_\infty\bigl(\tau\bigr)$.

In fact, we will be more interested in the special fibre
$\Rbar^{\psi,\tau}_\infty,$ where $\Rbar^{\psi,\tau}_\infty =
R^{\psi,\tau}_\infty/p R^{\psi,\tau}_\infty$ is a formal power series
ring over
\[
\F\llbracket (X'_j,Y'_j)_{j\in\Jmax'\setminus\Jmin'}\rrbracket
/(X'_jY'_j)_{j\in\Jmax'\setminus\Jmin'}.
\]

We let $\cW := \{J' \, | \, \Jmin' \subseteq J' \subseteq
\Jmax'\}$.
If $\cJ \subseteq \cW$, then we let $I_{\cJ}$ denote the radical ideal
in $\Rbar^{\psi,\tau}_\infty$ which cuts out the reduced induced structure on the
closed subset $\cup_{J' \in \cJ} \Xbar^\psi_\infty\bigl(\sigmabar'_{J'}(\tau)\bigr).$  
If $\cJ_1, \cJ_2 \subseteq \cW$, then evidently $I_{\cJ_1 \cup \cJ_2} = I_{\cJ_1}
\cap I_{\cJ_2}$.  On the other hand, while there is an inclusion $I_{\cJ_1} + I_{\cJ_2}
\subseteq I_{\cJ_1 \cap \cJ_2},$ it is not generally an equality (although
it is an equality generically on $\Xbar^\psi_\infty\bigl(\tau\bigr)$).

\begin{example}
\label{ex:non-generic failure}
Suppose that $|\Jmax' \setminus \Jmin'| = 2$, and
write $\Jmax' \setminus \Jmin' = \{j_1,j_2\}.$
\begin{enumerate}
\item
If we let $\mathcal J_1 := \{\Jmax'\}, \mathcal J_2 := \{\Jmin'\},$
then we have
$I_{\mathcal J_1} = (X'_{j_1}, X'_{j_2}), I_{\mathcal J_2} = (Y'_{j_1},Y'_{j_2}),$
and $I_{\mathcal J_1} + I_{\mathcal J_2} = (X'_{j_1},X'_{j_2},Y'_{j_1},Y'_{j_2}),$
while $I_{\mathcal J_1 \cap \mathcal J_2} = I_{\emptyset}$ is the unit
ideal.
\item 
If we let $\mathcal J_1 := \{\Jmax', \Jmax' \setminus \{j_2\}\},
\mathcal J_2 := \{\Jmin'\cup \{j_2\}, \Jmin'\},$
then $I_{\mathcal J_1} = (X'_{j_1}), I_{\mathcal J_2} = (Y'_{j_1})$, and
$I_{\mathcal J_1} + I_{\mathcal J_2} = (X'_{j_1},Y'_{j_1}),$
while $I_{\mathcal J_1 \cap \mathcal J_2} = I_{\emptyset}$ is again the unit
ideal.
\item
If we let $\mathcal J_1 := \{\Jmax',\Jmax'\setminus \{j_2\}\},
\mathcal J_2 := \{\Jmax',
\Jmin'\},$ 
then we have
$I_{\mathcal J_1} = (X'_{j_1}), I_{\mathcal J_2} = (X'_{j_1}Y'_{j_2},X'_{j_2}Y'_{j_1}),$
and $I_{\mathcal J_1} + I_{\mathcal J_2} = (X'_{j_1},X'_{j_2}Y'_{j_1}),$
while $I_{\mathcal J_1 \cap \mathcal J_2} = I_{\{\Jmax'\}} = (X'_{j_1},X'_{j_2}).$
\end{enumerate}
\end{example}

\begin{defn} 
We say that a subset $\cJ \subseteq \cW$ is an {\em interval}
if whenever $J_1 \subseteq J' \subseteq J_2$ with $J_1, J_2 \in \cJ$,
then $J' \in \cJ$.  We say that $\cJ$ is a {\em capped interval}
if it contains a unique maximal element, which we then refer to as the
{\em
cap} of $\cJ$.
\end{defn}
\begin{defn}
\label{def:faces}
If $J_1 \subseteq J_2$ are elements of $\cW$,
then we define
$$\cF(J_1,J_2) := \{J' \in \cW \, | \, J_1 \subseteq J' \subseteq J_2\}$$
and
$$\cF(J_1,J_2)^{\times} := \cF(J_1,J_2) \setminus \{J_1\}
= \{J' \in \cW \, | \, J_1 \subsetneq J' \subseteq J_2\}.$$
\end{defn}

\begin{lemma}
\label{lem:faces}
The quotient $I_{\cF(J_1,J_2)^{\times}}/I_{\cF(J_1,J_2)}$
is isomorphic to $\Rbar^{\psi,\tau}_\infty/I_{\{J_1\}}$
{\em (}and so in particular is cyclic{\em )},
and is generated by the image of the element
$\prod_{j \in J_2\setminus J_1}X'_j.$
\end{lemma}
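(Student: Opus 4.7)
The plan is to work directly with the explicit presentation of $\Rbar^{\psi,\tau}_\infty$ afforded by Theorem~\ref{thm:deformation rings principal series}, which identifies it (up to formal power series variables that commute with the whole calculation) with the Stanley--Reisner-type ring $B = \F\llbracket X'_j, Y'_j : j \in \Jmax' \setminus \Jmin' \rrbracket/(X'_j Y'_j)$, in which the prime $I_{\{J'\}}$ cutting out the component $\Xbar^\psi_\infty\bigl(\sigmabar'_{J'}(\tau)\bigr)$ is $(X'_j : j \in J' \setminus \Jmin') + (Y'_j : j \in \Jmax' \setminus J')$. An $\F$-basis of $B$ is given by monomials encoded by a sign function $\epsilon : \Jmax' \setminus \Jmin' \to \{+, -, 0\}$ (together with positive multiplicities where $\epsilon \neq 0$), with $X'_jY'_j=0$ forbidding $\epsilon_j$ from taking two values simultaneously. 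In this dictionary, a monomial survives in the quotient $B/I_{\{J'\}}$ iff $\epsilon_j \neq +$ for $j \in J' \setminus \Jmin'$ and $\epsilon_j \neq -$ for $j \in \Jmax' \setminus J'$.

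Given this set-up, a short combinatorial check — for which $J' \in \cF(J_1,J_2)$ does a given monomial survive on $B/I_{\{J'\}}$? — shows that a monomial fails to lie in $I_{\cF(J_1,J_2)}$ precisely when $\epsilon_j \neq +$ for all $j \in J_1 \setminus \Jmin'$ and $\epsilon_j \neq -$ for all $j \in \Jmax' \setminus J_2$. Consequently
\[
I_{\cF(J_1,J_2)} \;=\; (X'_j : j \in J_1 \setminus \Jmin') + (Y'_j : j \in \Jmax' \setminus J_2).
\]
Restricting the same analysis to $\cF(J_1,J_2)^\times$ shows that the additional monomials appearing in $I_{\cF(J_1,J_2)^\times}$ are exactly those whose sign function satisfies $\epsilon_j = +$ for every $j \in J_2 \setminus J_1$ and $\epsilon_j = -$ only at indices $j \in J_1 \setminus \Jmin'$. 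Every such monomial is divisible by $P := \prod_{j \in J_2 \setminus J_1} X'_j$, and so
\[
I_{\cF(J_1,J_2)^\times} \;=\; I_{\cF(J_1,J_2)} + (P).
\]
In particular, multiplication by $P$ defines a surjection $\mu : \Rbar^{\psi,\tau}_\infty \twoheadrightarrow I_{\cF(J_1,J_2)^\times}/I_{\cF(J_1,J_2)}$. The fact that $P \in I_{\cF(J_1,J_2)^\times}$ can also be seen directly: for each $J' \in \cF(J_1,J_2)^\times$ we may choose $j \in J' \setminus J_1 \subseteq J_2 \setminus J_1$, so that the factor $X'_j$ of $P$ lies in $I_{\{J'\}}$.

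To conclude, I would compute $\ker \mu$. For a monomial $m$, the product $mP$ is either zero in $B$ — which happens exactly when $m$ involves some $Y'_j$ with $j \in J_2 \setminus J_1$, by the relation $X'_jY'_j = 0$ — or else is a nonzero monomial whose sign function agrees with that of $m$ outside $J_2 \setminus J_1$ and equals $+$ on $J_2\setminus J_1$. Thus $mP \in I_{\cF(J_1,J_2)}$ iff $m$ carries some $X'_j$ with $j \in J_1 \setminus \Jmin'$ or some $Y'_j$ with $j \in \Jmax' \setminus J_1$, i.e.\ iff $m \in I_{\{J_1\}}$. Extending by $\F$-linearity yields $\ker\mu = I_{\{J_1\}}$, which proves the claimed isomorphism. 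The only real obstacle is notational: one must track the three cases (the monomial is killed by $P$ for $X'Y'$ reasons; the monomial survives but is absorbed into $I_{\cF(J_1,J_2)}$; the monomial contributes nontrivially) carefully enough to see that their union matches the ideal $I_{\{J_1\}}$ on the nose; once this bookkeeping is done there are no further subtleties.
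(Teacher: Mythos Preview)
Your proof is correct and follows essentially the same approach as the paper: both arguments rest on the explicit descriptions
\[
I_{\cF(J_1,J_2)} = (\{X'_j\}_{j \in J_1 \setminus \Jmin'},\{Y'_j\}_{j\in \Jmax' \setminus J_2}),\qquad
I_{\cF(J_1,J_2)^{\times}} = I_{\cF(J_1,J_2)} + \Bigl(\textstyle\prod_{j \in J_2\setminus J_1} X'_j\Bigr),
\]
from which cyclicity is immediate. The only difference is in the last step: where you compute $\ker\mu$ by direct monomial bookkeeping, the paper instead observes that the cyclic quotient is annihilated by $I_{\{J_1\}}$ and has support exactly $\Xbar^\psi_\infty\bigl(\sigmabar'_{J_1}(\tau)\bigr)$, and then uses that $I_{\{J_1\}}$ is prime (hence radical) to conclude that the annihilator is exactly $I_{\{J_1\}}$. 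Your explicit calculation is a perfectly good substitute for this support argument.
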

\begin{proof}
The ideals $I_{\cF(J_1,J_2)}$ and $I_{\cF(J_1,J_2)^{\times}}$ admit simple descriptions,
namely
$$I_{\cF(J_1,J_2)} = (\{X'_j\}_{j \in J_1 \setminus \Jmin'},
\{Y'_j\}_{j\in \Jmax' \setminus J_2}),$$
while
$$I_{\cF(J_1,J_2)^{\times}} = \bigl(\{X'_j\}_{j \in J_1 \setminus \Jmin'},
\{Y'_j\}_{j\in \Jmax' \setminus J_{2}},
\prod_{j \in J_{2}\setminus J_1}  X'_j)
.$$
Thus 
$I_{\cF(J_1,J_2)^{\times}}/I_{\cF(J_1,J_2)}$
is generated by 
$\prod_{j \in J_2\setminus J_1}X'_j$,
as claimed.
Furthermore, this quotient is supported precisely on the component
$\Xbar\bigl(\sigmabar'_{J_1}(\tau)\bigr),$ and so its annihilator is $I_{\{J_1\}}.$
\end{proof}

\begin{remark}
The preceding lemma can be rephrased more geometrically, 
namely that the quotient $I_{\cF(J_1,J_2)^{\times}}/(I_{\cF(J_1,J_2)^{\times}}
\cap I_{\{J_1\}}) \cong
(I_{\cF(J_1,J_2)^{\times}}
+ I_{\{J_1\}})/ I_{\{J_1\}}$ is principal, which is to say that the component
$\Xbar_{\infty}^{\psi}\bigl(\sigmabar'_{J_1}(\tau)\bigr)$
intersects the union of components
$$\bigcup_{J' \in \cF(J_1,J_2)^{\times}}
\Xbar_{\infty}^{\psi}\bigl(\sigmabar'_{J'}(\tau)\bigr)$$
in a Cartier divisor on
$\Xbar_{\infty}^{\psi}\bigl(\sigmabar'_{J_1}(\tau)\bigr)$.
\end{remark}

\begin{lemma}
\label{lem:ideals}
If $\cJ_1$ and $\cJ_2$ are two capped intervals in $\cW$
that share a common cap,
then $I_{\cJ_1} + I_{\cJ_2} = I_{\cJ_1 \cap \cJ_2}.$
\end{lemma}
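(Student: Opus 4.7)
The plan is to exploit the fact that, up to extra formal power series variables that do not appear in any of the ideals in question, $\Rbar^{\psi,\tau}_\infty$ is the toric-type ring $A := \F\llbracket (X'_j, Y'_j)_{j \in \Jmax'\setminus\Jmin'}\rrbracket/(X'_jY'_j)_{j \in \Jmax'\setminus\Jmin'}$. Each $I_{\{J'\}}$ is generated by a subset of the variables $X'_j, Y'_j$, so it is a monomial ideal in the obvious sense; intersections and sums of such ideals are monomial too. Since a monomial lies in a sum of monomial ideals if and only if it lies in one of the summands, the lemma reduces to the assertion that for every nonzero monomial $m$ in $A$, if $m \notin I_{\cJ_1}$ and $m \notin I_{\cJ_2}$ then $m \notin I_{\cJ_1 \cap \cJ_2}$.

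For this reduction, I would identify $\cW$ with the power set of $U := \Jmax'\setminus\Jmin'$ via $J' \mapsto J' \setminus \Jmin'$, and write any nonzero monomial of $A$ as $m = \prod_{j \in A_0}(X'_j)^{a_j} \prod_{j \in B_0}(Y'_j)^{b_j}$ for disjoint subsets $A_0, B_0 \subseteq U$ and positive exponents. A direct inspection of the generators of $I_{\{J'\}}$ yields that $m \notin I_{\{J'\}}$ if and only if $B_0 \subseteq J' \subseteq U \setminus A_0$, and hence $m \notin I_{\cJ}$ if and only if the interval $[B_0, U\setminus A_0]$ in the subset lattice of $U$ meets $\cJ$.

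Granting this, given $S_i \in \cJ_i \cap [B_0, U \setminus A_0]$ for $i = 1, 2$, it suffices to produce an element of $\cJ_1 \cap \cJ_2 \cap [B_0, U \setminus A_0]$. Let $J^*$ denote the common cap, so that $J^* \in \cJ_1 \cap \cJ_2$ and $S_1, S_2 \subseteq J^*$. Setting $S := S_1 \cup S_2$, one has $B_0 \subseteq S_1 \subseteq S \subseteq U \setminus A_0$ and $S_1 \subseteq S \subseteq J^*$; the interval property of $\cJ_1$, applied to $S_1, J^* \in \cJ_1$, then forces $S \in \cJ_1$, and symmetrically $S \in \cJ_2$. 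The key structural input here is the existence of the common cap, which is exactly what lets one bound $S_1 \cup S_2$ from above inside both intervals simultaneously; without it the statement genuinely fails, as parts~(1) and~(2) of Example~\ref{ex:non-generic failure} already illustrate. The main obstacle is thus the combinatorial bookkeeping of translating the ideal-theoretic hypothesis into the lattice-theoretic statement about the $S_i$, rather than any deeper algebraic content.
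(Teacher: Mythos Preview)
Your proof is correct and takes a genuinely different route from the paper's. The paper argues by induction on $|\cJ_1\setminus\cJ_2|$: in the base case $|\cJ_1\setminus\cJ_2|=1$ it invokes Lemma~\ref{lem:faces} (the explicit description of $I_{\cF(J_1,J_{\rmcap})^{\times}}/I_{\cF(J_1,J_{\rmcap})}$ as a cyclic module generated by $\prod X'_j$) to reduce to the case $\cJ_1=\cF(J_1,J_{\rmcap})$, and then peels off one minimal element at a time. Your argument instead observes once and for all that each $I_{\cJ}$ is a monomial ideal in the Stanley--Reisner-type ring $\F\llbracket X'_j,Y'_j\rrbracket/(X'_jY'_j)$ (extended by the inert formal variables), so that the question becomes purely combinatorial: for a nonzero monomial with $X'$-support $A_0$ and $Y'$-support $B_0$, membership in $I_{\cJ}$ is governed by whether the interval $[B_0,U\setminus A_0]$ meets $\cJ$, and the common cap $J^*$ lets you take $S_1\cup S_2\in\cJ_1\cap\cJ_2$.

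Your approach is shorter and makes transparent exactly where the common-cap hypothesis enters (bounding $S_1\cup S_2$ above inside both intervals), and it bypasses Lemma~\ref{lem:faces} entirely for the purposes of this lemma. The paper's approach, by contrast, keeps Lemma~\ref{lem:faces} front and center; since that lemma is needed again independently in the inductive freeness argument (to identify the annihilator of $M^{\min}_\infty(\sigmabar^{\cJ})$ when $\cJ$ has a unique minimal element), the paper's organization is natural in context, but your argument is a perfectly good self-contained proof of Lemma~\ref{lem:ideals}. One small point worth making explicit in a write-up: you are implicitly using that $I_{\cJ_1\cap\cJ_2}$ is itself a monomial ideal (as an intersection of such), so that checking containment on monomials suffices for the nontrivial inclusion.
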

\begin{proof}
Write $\cJ := \cJ_1 \cap \cJ_2$, and let $J_{\rmcap}$ denote
the common cap of $\cJ_1$ and $\cJ_2$.
If $\cJ = \cJ_1$ then the statement
of the lemma is trivial, and so we may assume that $\cJ \subsetneq \cJ_1$.
We begin by assuming in addition that $| \cJ_1 \setminus \cJ_2| = 1$, and
write $\cJ_1 \setminus \cJ_2 = \{J_1\}.$

The evident inclusion
$I_{\cJ_2} \subseteq I_{\cJ}$ implies that
$$(I_{\cF(J_1,J_{\rmcap})} + I_{\cJ_2}) \cap
I_{\cJ} = (I_{\cF(J_1,J_{\rmcap})} \cap I_{\cJ}) + I_{\cJ_2} = I_{\cF(J_1,J_{\rmcap})\cup \cJ} + I_{\cJ_2}
= I_{\cJ_1} + I_{\cJ_2}.$$
Since furthermore $I_{\cJ} \subseteq I_{\cF(J_1,J_{\rmcap})^{\times}}$, we see that
in order to deduce the lemma in the case we are considering, it suffices to
prove that
$$I_{\cF(J_1,J_{\rmcap})} +I_{\cJ_2} = I_{\cF(J_1,J_{\rmcap})^{\times}}$$
(that is, it suffices to prove the result in the case that
$\cJ_1 = \cF(J_1,J_{\rmcap})$),
or equivalently, it suffices to prove that $I_{\cJ_2}$ surjects onto
$I_{\cF(J_1,J_{\rmcap})^{\times}}/I_{\cF(J_1,J_{\rmcap})}$
under the quotient map
$\Rbar^{\psi,\tau}_\infty \to \Rbar^{\psi,\tau}_\infty/I_{\cF(J_1,J_{\rmcap})}.$
Now Lemma~\ref{lem:faces} shows that the quotient
$I_{\cF(J_1,J_{\rmcap})^{\times}}/I_{\cF(J_1,J_{\rmcap})}$
is cyclically generated by (the image of)
$\prod_{j \in J_{\rmcap}\setminus J_1}X'_j$,
and so it suffices to note that this element also lies in $I_{\cJ_2}$,
since $J_1 \not\in \cJ_2$.

We now proceed by induction on $| \cJ_1 \setminus \cJ_2|$, assuming
that it is greater than $1$.
Let $J_1$ be a maximal element of $\cJ_1\setminus \cJ_2$, and write
$\cJ' = \cJ\cup \{J_1\}$ and $\cJ_2' = \cJ_2\cup\{J_1\}$. 
The maximality of $J_1$ assures us that
$\cJ'$ and $\cJ'_2$ are again both intervals.  Since $\cJ_1 \cap \cJ_2' = \cJ'$,
we may assume by induction that $$I_{\cJ_1} + I_{\cJ_2'} = I_{\cJ'}.$$
Adding $I_{\cJ_2}$ to both sides of this equality yields
$$I_{\cJ_1} + I_{\cJ_2} = I_{\cJ'} + I_{\cJ_2}.$$
Thus we are reduced to proving that $I_{\cJ'} + I_{\cJ_2} = I_{\cJ}.$
But $|\cJ'\setminus \cJ_2| = | \cJ' \setminus \cJ| = 1,$
and so we have reduced ourselves
to the situation already treated. This completes the proof of the lemma.
\end{proof}  

\begin{remark}
We remind the reader that as the examples
of~\ref{ex:non-generic failure} show, some sort of hypothesis on
$\cJ_1$ and $\cJ_2$ as in Lemma~\ref{lem:ideals} is necessary for that
lemma to hold.
\end{remark}

\begin{remark}
Note that
Theorem~\ref{thm: freeness of the sigma_J^circ} implies
Lemma~\ref{lem:ideals}. To see this, apply Theorem~\ref{thm: freeness
  of the sigma_J^circ} to the set $J'' \in \cP_{\tau}$ such that $J^c \triangle J''$ is the common cap of $\cJ_1$
and $\cJ_2$.  Then there are submodules $M_1,M_2$ of
$\sigmabar^\circ_{J''}(\tau)$ such that the Jordan--H\"older factors of
$\sigmabar^\circ_{J''}(\tau)/M_i$ are precisely the weights
$\sigmabar'_{J'}(\tau)$ with $J'\in\cJ_i$. Then $I_{\cJ_1}$ is the annihilator of
$\Mmin_\infty(\sigmabar^\circ_{J''}(\tau)/M_1)$, $I_{\cJ_2}$ is the
annihilator of $\Mmin_\infty(\sigmabar^\circ_{J''}(\tau)/M_2)$, and
$I_{\cJ_1\cap\cJ_2}$ is the annihilator of
$\Mmin_\infty(\sigmabar^\circ_{J''}(\tau)/(M_1+M_2))$, and the result is immediate.
\end{remark}

Before we begin the proof of  Theorem~\ref{thm: freeness of the
  sigma_J^circ}, we check the following proposition.

\begin{prop}
  \label{prop: when there are extensions of Serre weights, and
    existence of types when this happens}
Suppose that $\rhobar$ is generic, and that for some
    $\sigmabar,\sigmabar'\in\cD(\rhobar)$, there is a non-trivial
    extension $\thetabar$ of $\sigmabar$ by $\sigmabar'$. Then there
    is a tame inertial type $\tau'$ and a $K$-stable $\cO$-lattice
    $\sigma^\circ(\tau')$ in $\sigma(\tau')$ such that
    $\JH(\sigmabar(\tau'))\cap\cD(\rhobar)=\{\sigmabar,\sigmabar'\}$, and the
    extension $\thetabar$ is realised in the cosocle filtration of $\sigmabar^\circ(\tau')$.
\end{prop}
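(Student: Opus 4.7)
The plan is to construct a non-scalar tame inertial type $\tau'$ whose Jordan--H\"older factors meet $\cD(\rhobar)$ in exactly the two-element set $\{\sigmabar,\sigmabar'\}$, and to then read off the structure of the cosocle filtration from the results of Section~\ref{sec:lattices in tame types}, using the uniqueness of non-split extensions to identify the resulting extension with $\thetabar$.

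First I would unpack the hypothesis. By Proposition~\ref{prop:extensions of Serre weights}, the existence of a non-trivial extension of $\sigmabar$ by $\sigmabar'$ forces the labelling tuples of these two weights to agree away from two adjacent embedding indices $k,k+1$, and to be related there by the explicit formula stated in that proposition; moreover $\thetabar$ is the unique non-split extension up to isomorphism.

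The main step is to build $\tau'$. By Theorem~\ref{thm:deformation rings principal series}(1), for any non-scalar tame inertial type~$\tau$ the intersection $\JH(\sigmabar(\tau))\cap\cD(\rhobar)$ equals $\{\sigmabar_J(\tau)\,:\,\Jmin\subseteq J\subseteq\Jmax\}$ for certain subsets $\Jmin\subseteq\Jmax$ of $\cS$; so my target is to realize $\sigmabar$ and $\sigmabar'$ as $\sigmabar_{J_0}(\tau')$ and $\sigmabar_{J_0\cup\{j\}}(\tau')$ in such a way that $\Jmin=J_0$ and $\Jmax=J_0\cup\{j\}$. Using the parameterization of $\cD(\rhobar)$ recalled in Section~\ref{sec:Galois reps and weights}, together with the explicit descriptions of Jordan--H\"older factors in Sections~\ref{ss:principal} and~\ref{ss:cuspidal}, the weights $\sigmabar$ and $\sigmabar'$ are already labelled by suitable subsets (or antisymmetric subsets, in the irreducible case). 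I would then adapt the principal-series construction $I_{\vec{a},\vec{b},J}$ from the proof of Proposition~\ref{prop: types for elimination}, choosing the characters $c_j$ of $\tau'$ precisely to match the two-index modification prescribed by Proposition~\ref{prop:extensions of Serre weights}. The case when $\rhobar$ is irreducible would be reduced to the reducible case via the base-change compatibilities of Sections~\ref{subsec:base change Serre weights} and~\ref{subsec:base change tame types} and Corollary~\ref{cor: a weight if and only if a weight after base change}, exactly as in the proof of Proposition~\ref{prop: types for elimination}.

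Granting such a $\tau'$, the conclusion is essentially automatic. Let $J$ denote the subset of $\cS$ for which $\sigmabar_J(\tau')=\sigmabar$, take $\sigma^\circ(\tau'):=\sigma^\circ_J(\tau')$, and note that $|J\triangle(J\cup\{j\})|=1$ (or $|J\triangle(J\setminus\{j\})|=1$, as appropriate). Theorem~\ref{thm: socle filtration for tame type with irred socle} then exhibits $\sigmabar_J(\tau')=\sigmabar$ as the cosocle of $\sigmabar^\circ_J(\tau')$, with $\sigmabar'$ as the unique Jordan--H\"older factor of the next layer of the cosocle filtration, and guarantees that this two-step subquotient is non-split. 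By the uniqueness clause in Proposition~\ref{prop:extensions of Serre weights}, this non-split extension must be isomorphic to~$\thetabar$.

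The hard part is the middle step: verifying combinatorially that, whenever $\sigmabar,\sigmabar'\in\cD(\rhobar)$ admit a non-trivial extension, the two-element ``interval'' $\{\Jmin,\Jmax\}$ dictated by Proposition~\ref{prop:extensions of Serre weights} can be realized inside $\cP_{\tau'}$ for some $\tau'$ with no further elements of $\cD(\rhobar)$ entering $\JH(\sigmabar(\tau'))$. This amounts to a direct bookkeeping check, treating the principal-series and cuspidal cases separately and using the genericity of $\rhobar$ to avoid boundary effects, but uses no tools beyond those already developed in Sections~\ref{sec:Galois reps and weights} and~\ref{sec:base change}.
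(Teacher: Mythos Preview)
Your overall strategy matches the paper's: reduce to finding a type $\tau'$ with $\JH(\sigmabar(\tau'))\cap\cD(\rhobar)=\{\sigmabar,\sigmabar'\}$, then invoke Theorem~\ref{thm: socle filtration for tame type with irred socle} and the one-dimensionality of the $\Ext$ group in Proposition~\ref{prop:extensions of Serre weights} to identify the resulting non-split extension with $\thetabar$. That reduction is exactly what the paper does.

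The construction of $\tau'$, however, is organized differently. You propose to adapt the type $I_{\vec{a},\vec{b},J}$ from Proposition~\ref{prop: types for elimination} directly over $F_v$ in the reducible case, and then reduce the irreducible case to the reducible one by base change. The paper instead base changes to $L_v$ \emph{uniformly in both cases}: it writes down an explicit principal series type $\tau''$ over $L_v$ (via the subset $J'=K'\triangle\{k+1,\dots,k+f\}$ and explicit formulas for the characters), and checks that $\tau''=\BC(\tau')$ for some $\tau'$ over $F_v$. The key point is that $\JH(\sigmabar(\tau''))\cap\cD(\rhobar')$ has \emph{four} elements, indexed by $J',\,J'\triangle\{k\},\,J'\triangle\{k+f\},\,J'\triangle\{k,k+f\}$, but only the first and last lie in the image of $\BC$; Corollary~\ref{cor: a weight if and only if a weight after base change} then gives the desired two-element intersection downstairs. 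A small surprise is that $\tau'$ is cuspidal when $\rhobar$ is reducible and principal series when $\rhobar$ is irreducible, which is the opposite of what one might first guess. Your route should work too, but the paper's uniform passage to $L_v$ avoids a separate case analysis over $F_v$ and makes the combinatorial check (via \cite[Prop.~4.3]{breuillatticconj}) cleaner. Also note that the paper does not invoke Theorem~\ref{thm:deformation rings principal series}(1) here; the interval structure is verified directly.
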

\begin{proof}
First, note that since
    $\Ext^1_{\F[\GL_2(k_v)]}(\sigmabar',\sigmabar)$ is one-dimensional
(by Proposition~\ref{prop:extensions of Serre weights}),
    it suffices to find $\sigmabar^\circ(\tau')$ which realises a non-split
    extension of $\sigmabar'$ by~$\sigmabar$.

    In fact, we claim that it suffices to find $\tau'$ such that
    $\JH(\sigmabar(\tau'))\cap\cD(\rhobar)=\{\sigmabar,\sigmabar'\}$. To see
    this, suppose that such a $\tau'$ exists, and take $\sigma^\circ(\tau')$ to
    be the lattice $\sigma(\tau')_{\sigmabar}$ provided by Lemma
    \ref{lem:uniqueness of lattices with irreducible socle}. It
    suffices to prove that $\sigmabar'$ necessarily lies in the first
    layer of the cosocle filtration of $\sigma^\circ(\tau')$ (since the
    extension of $\sigmabar$ by $\sigmabar'$ must be nontrivial, as
    the socle of $\sigma^\circ(\tau')$ is just $\sigmabar$); but
    this follows from Theorem~\ref{thm: socle filtration for tame type with irred socle}
 and Proposition~\ref{prop:extensions of Serre weights}.

    We now find such a type $\tau'$.
    Since
    $\cD(\rhobar)\subset\cD(\rhobar^{\textrm{ss}})$, we may assume
    that $\rhobar$ is semisimple. Without loss of generality we may
    assume that
    $\sigmabar=\sigmabar_{\vec{t},\vec{s}}$, $\sigmabar'=\sigmabar_{\vec{t'},\vec{s'}}$
    as in the statement of Proposition~\ref{prop:extensions of Serre
      weights}, so that there is some $k$ with $s_j=s'_{j}$ if $j\ne
    k, k+1$, $s'_k=p-2-s_k$, and $s_{k+1}'=s_{k+1}\pm 1$.
    Let $\rhobar':=\BC(\rhobar)$. By Corollary \ref{cor: a weight if and only if a weight after base
    change}, 
 it is enough to find a tame inertial type $\tau'$ such that if
  $\tau'':=\BC(\tau')$, then
  $\BC(\sigmabar),\BC(\sigmabar')$ are Jordan--H\"older factors of
  $\sigmabar(\tau'')$, and no other elements of $\JH(\sigmabar(\tau''))\cap\cD(\rhobar')$ are
  in the image of $\BC$. We will construct $\tau''$ directly as a principal
    series representation, and it will be immediate by the
    construction that $\tau''$ is in the image of $\BC$. 

The construction of such a  $\tau''$ (and thus of $\tau'$) is an easy
exercise; we sketch the details.
Suppose first that $\rhobar$ is
reducible, so that we may write  \[
\rhobar|_{I_{F_v}}\cong \omega_f^{\sum_{j=0}^{f-1}t_jp^j}
    \begin{pmatrix}
      \omega_f^{\sum_{j\in K}(s_j+1)p^j}&0\\0&\omega_f^{\sum_{j\notin K}(s_j+1)p^j}
    \end{pmatrix}\] for some subset $K\subseteq \CS$; then we let $K'=\BC_{\PS}(K)\subseteq\cS'$. If on the other hand
    $\rhobar$ is irreducible, then we may write  \[
\rhobar|_{I_{F_v}}\cong \omega_f^{\sum_{j=0}^{f-1}t_jp^j}
    \begin{pmatrix}
      \omega_{2f}^{\sum_{j\in K}(s_j+1)p^j}&0\\0&\omega_{2f}^{\sum_{j\notin K}(s_j+1)p^j}
    \end{pmatrix},\] and we let $K'=K\subseteq\cS'$.

 In either case we let
    $J'=K'\triangle\{k+1,\dots,k+f\}$, so that $J'$ is
    antisymmetric if $\rhobar$ is reducible, and symmetric if
    $\rhobar$ is irreducible. Define $\tau'':=I(\eta\otimes\eta')$, where
    $\eta'=[\cdot]^{\sum_{j=0}^{2f-1}x_jp^j}$,
    $\eta(\eta')^{-1}=[\cdot]^{\sum_{j=0}^{2f-1}c_jp^j}$,
    with \[x_{i}=
\begin{cases}
                 t_i+s_{i}+1-p & \text{if }i\in J' \\
                 t_i & \text{if }i\notin J',\end{cases}\] \[c_{i}=\begin{cases}
                 p-1-s_{i}-\delta_{(J')^c}(i-1)&\text{if }i\in J' \\
                 s_i+\delta_{J'}(i-1)&\text{if }i\notin J'. \end{cases}.\]
              Note by Lemma~\ref{lem: generic
               representations have no weight p-1} that each $c_i$ is
             in the range $[0,p-1]$; observe also that if $\rhobar$ is
             reducible (respectively irreducible) then there is a
             cuspidal (respectively principal series) inertial type $\tau'$ with
             $\BC(\tau')=\tau''$. 

             Now, by definition we have
             $\sigmabar_{J'}(\tau'')=\BC(\sigmabar)$, and
             $\sigmabar(\tau'')_{J'\triangle\{k,k+f\}}=\BC(\sigmabar')$.
             It is elementary to check (as an application of
             Proposition 4.3 of \cite{breuillatticconj}, and
             especially equation (19) from its proof) that there
             are precisely two other Jordan--H\"older factors of
             $\sigmabar(\tau'')$ which are contained in $\cD(\rhobar')$, namely
             $\sigmabar(\tau'')_{J'\triangle\{k\}}$ and
             $\sigmabar(\tau'')_{J'\triangle\{k+f\}}$, neither of which
             is in the image of $\BC$. The result follows.
\end{proof}
{\em Proof of Theorem}~\ref{thm: freeness of the sigma_J^circ}.
  Since $M^{\min}_\infty(\sigma^\circ_J(\tau))$ is a faithful
  $R^{\psi,\tau}_\infty$-module (as by Lemma~\ref{lem: abstract version of Diamond freeness argument}  the module $M^{\min}_\infty(\sigma^\circ_J(\tau))[1/p]$ is
  locally free 
over $R^{\psi,\tau}_\infty[1/p]$), the module $M^{\min}_\infty(\sigma^\circ_J(\tau))$ is free of
  rank one if and only if it is cyclic. By Nakayama's lemma,
  $M^{\min}_\infty(\sigma^\circ_J(\tau))$ is cyclic if and only if
  $M^{\min}_\infty(\sigmabar^\circ_J(\tau))$ is cyclic.

It follows easily from Theorem~\ref{thm: socle filtration for tame type with irred socle}
that for each interval $\cJ \subseteq \cW$, there 
is a subquotient $\sigmabar^{\cJ}$ of $\sigmabar^\circ_J(\tau)$ uniquely 
characterized by the property that its set of Jordan--H\"older factors
is precisely the set $\{\sigmabar'_{J'}(\tau)\}_{J' \in \cJ}.$

We will prove, by induction on $|\cJ|$, 
that for each capped interval $\cJ \subseteq \cW$,
the patched module $M^{\min}_\infty(\sigmabar^{\cJ})$ is cyclic,
with annihilator equal to $I_{\cJ}$.
(The assumption that $\cJ$ be {\em capped}
is crucial; see Remark~\ref{rem:the importance of being capped} below.)
The theorem will follow by taking $\cJ = \cW$;
indeed, although $M^{\min}_\infty(\sigmabar^{\cW})$ is {\em a priori}
a subquotient of $M^{\min}_\infty(\sigmabar^\circ_J(\tau))$, 
in fact the two are canonically isomorphic,
because $M^{\min}_\infty(\sigmabar'_{J'}(\tau))$ vanishes if $J'
\not\in \cW$, by Theorem \ref{thm:geometric BM for types}.

The induction begins with the following lemmas.
\begin{lemma}
\label{lem:first induction step}
If $\cJ$ is an interval with $|\cJ| \leq 2$, then $M^{\min}_\infty(\sigmabar^{\cJ})$ is cyclic,
with annihilator $I_{\cJ}$.
\end{lemma}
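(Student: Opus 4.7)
The plan is to treat the two cases $|\cJ|=1$ and $|\cJ|=2$ by producing an auxiliary tame inertial type $\tau'$ in each case whose framed deformation ring $R_\infty^{\psi,\tau'}$ is regular, applying Lemma~\ref{lem: abstract version of Diamond freeness argument} together with the minimality of $M_\infty^{\min}$ to deduce that $M_\infty^{\min}(\sigma^\circ(\tau'))$ is free of rank one over $R_\infty^{\psi,\tau'}$, and then identifying the mod-$\varpi_E$ reduction of this module with $M_\infty^{\min}(\sigmabar^{\cJ})$ as a module over $\Rbar_\infty^{\psi,\tau}$.

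For $|\cJ|=1$, writing $\cJ=\{J'\}$, I would use Proposition~\ref{prop: types for elimination} to produce a non-scalar tame inertial type $\tau''$ with $\JH(\sigmabar(\tau''))\cap\cD(\rhobar) = \{\sigmabar'_{J'}(\tau)\}$. Theorem~\ref{thm:deformation rings principal series}(1)--(2) then forces $\Jmin=\Jmax$ for $\tau''$, so $R_\infty^{\psi,\tau''}$ is a formal power series ring over $\cO$, in particular regular; Lemma~\ref{lem: abstract version of Diamond freeness argument} and minimality yield that $M_\infty^{\min}(\sigma^\circ(\tau''))$ is free of rank one over $R_\infty^{\psi,\tau''}$. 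Applying the exact functor $M_\infty^{\min}$ to a Jordan--H\"older filtration of $\sigmabar^\circ(\tau'')$ and invoking Theorem~\ref{thm: generic BDJ in patching language} to eliminate the JH-factors lying outside $\cD(\rhobar)$ identifies $M_\infty^{\min}(\sigmabar'_{J'}(\tau))$ with $\Rbar_\infty^{\psi,\tau''}$, which by Theorem~\ref{thm:deformation rings principal series}(4) agrees with $\Rbar_\infty^{\psi,\tau}/I_{\{J'\}}$ (both rings describing the same reduced component $\Xbar_\infty^\psi(\sigmabar'_{J'}(\tau))$ of the Galois deformation space, which is intrinsic to the Serre weight and independent of the ambient tame type used to cut it out).

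For $|\cJ|=2$, the $2$-element interval assumption together with Lemma~\ref{lem:chains in cP_tau} forces $\cJ=\{J_1,J_2\}$ with $J_1 \subsetneq J_2$ and $|J_2\setminus J_1|=1$, and the paragraph following Theorem~\ref{thm: socle filtration for tame type with irred socle} then identifies $\sigmabar^{\cJ}$ as a non-split extension of its two Jordan--H\"older constituents. Proposition~\ref{prop: when there are extensions of Serre weights, and existence of types when this happens} supplies a non-scalar tame type $\tau'$ and a $K$-lattice $\sigma^\circ(\tau')$ realising this extension in its cosocle filtration, with $\JH(\sigmabar(\tau'))\cap\cD(\rhobar) = \{\sigmabar'_{J_1}(\tau),\sigmabar'_{J_2}(\tau)\}$; Theorem~\ref{thm:deformation rings principal series}(1)--(2) gives $|\Jmax\setminus\Jmin|=1$ for $\tau'$, so $R_\infty^{\psi,\tau'}$ is a formal power series ring over $\cO\llbracket X,Y\rrbracket/(XY-p)$. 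The unramified-coefficients hypothesis on $M_\infty^{\min}$ ensures $\cO=W(\F)$, so $p$ is a uniformiser and the maximal ideal of $\cO\llbracket X,Y\rrbracket/(XY-p)$ reduces to $(X,Y)$; this makes the ring regular, and Lemma~\ref{lem: abstract version of Diamond freeness argument} together with minimality again give that $M_\infty^{\min}(\sigma^\circ(\tau'))$ is free of rank one over $R_\infty^{\psi,\tau'}$.

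To conclude in the $|\cJ|=2$ case, one identifies $M_\infty^{\min}(\sigmabar^\circ(\tau'))$ with $M_\infty^{\min}(\sigmabar^{\cJ})$ using the uniqueness of the subquotient of $\sigmabar^\circ(\tau')$ with the prescribed pair of Jordan--H\"older constituents (a consequence of Theorem~\ref{thm: socle filtration for tame type with irred socle} and Lemma~\ref{lem:weights in tame types have multiplicity one}) combined with Theorem~\ref{thm: generic BDJ in patching language} to kill contributions of other JH-factors; Theorem~\ref{thm:deformation rings principal series}(4) then identifies $\Rbar_\infty^{\psi,\tau'}$ with $\Rbar_\infty^{\psi,\tau}/I_{\cJ}$ as the common reduced union $\Xbar_\infty^\psi(\sigmabar'_{J_1}(\tau))\cup\Xbar_\infty^\psi(\sigmabar'_{J_2}(\tau))$, yielding cyclicity of $M_\infty^{\min}(\sigmabar^{\cJ})$ with annihilator $I_{\cJ}$. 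No single step is a serious obstacle; the most delicate point is the unramified-coefficients hypothesis in the $|\cJ|=2$ case, which is essential for making $\cO\llbracket X,Y\rrbracket/(XY-p)$ regular so that Diamond's freeness argument applies directly rather than only on the generic fibre.
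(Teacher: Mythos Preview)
Your proposal is correct and follows essentially the same route as the paper: in each case you produce an auxiliary tame type $\tau'$ via Proposition~\ref{prop: types for elimination} or Proposition~\ref{prop: when there are extensions of Serre weights, and existence of types when this happens}, observe that $X^\psi_\infty(\tau')$ is regular (using the unramified-coefficients hypothesis when $|\cJ|=2$), apply Lemma~\ref{lem: abstract version of Diamond freeness argument} together with minimality to get freeness of rank one, and then identify $M^{\min}_\infty(\sigmabar^\circ(\tau'))$ with $M^{\min}_\infty(\sigmabar^{\cJ})$ by killing the Jordan--H\"older factors outside $\cD(\rhobar)$. The paper cites Theorem~\ref{thm:geometric BM for types} and Theorem~\ref{thm: the output of gauges.tex} where you cite Theorem~\ref{thm: generic BDJ in patching language} and Theorem~\ref{thm:deformation rings principal series}(4), but these are interchangeable here.

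One small quibble: your claim that a two-element interval in $\cW$ must consist of nested sets does not follow from the definition of interval together with Lemma~\ref{lem:chains in cP_tau}; an incomparable pair satisfies the interval condition vacuously, and the chain lemma presupposes comparability. The paper makes the same tacit assumption (``consists of two adjacent sets''). This is harmless because the lemma is only invoked in the induction of Theorem~\ref{thm: freeness of the sigma_J^circ} for \emph{capped} intervals, and a two-element capped interval automatically has comparable, hence adjacent, elements.
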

\begin{proof}
Since $\cJ$ is an interval with at most two elements,
it is either empty
(in which case $M^{\min}_\infty(\sigmabar^{\cJ}) = 0$ and the lemma is trivially true),
consists of a single element, or consists of two adjacent sets
$J_1$ and $J_2$. Suppose that $\cJ$ is non-empty.

Since $|\cJ|\le 2$, by Propositions~\ref{prop: types for elimination}
and~\ref{prop: when there are extensions of Serre weights,
  and existence of types when this happens}
we may
find an inertial type $\tau'$ such that $\JH(\sigmabar(\tau')) \cap \cD(\rhobar) = \{\sigmabar'_{J'}(\tau)\}_{J'\in\cJ}$. Since $X^\psi_\infty\bigl(\tau'\bigr)$ is regular,
$M^{\min}_\infty(\sigma^\circ(\tau'))$ is free of rank one over
$X^\psi_\infty\bigl(\tau'\bigr)$ for any choice of lattice $\sigma^\circ(\tau')$, by
Lemma~\ref{lem: abstract version of Diamond freeness argument}. Note
that by Theorem \ref{thm:geometric BM for types}, $I_\cJ$ cuts out the
reduced induced structure on the subset $\Xbar^\psi_\infty\bigl(\tau'\bigr)$ of $\Xbar^\psi_\infty\bigl(\tau\bigr)$.

We claim that we can choose $\sigma^\circ(\tau')$ so that
$M^{\min}_\infty(\sigmabar^{\circ}(\tau')) = M^{\min}_\infty(\sigmabar^{\cJ})$,
from which the lemma will follow.
In the case that
$\cJ$ is a singleton, this is trivial. In the case that
$\cJ=\{J_1,J_2\}$,
it follows from Proposition \ref{prop:
  when there are extensions of Serre weights, and existence of types
  when this happens} and
Theorem~\ref{thm: the output of gauges.tex}.
\end{proof}

We will also require the following lemma.

\begin{lem}
  \label{lem:cyclic extensions are cyclic}Let $R$ be a local ring, and
  let $M''\subsetneq M'\subseteq M$ be $R$-modules such that $M'$ and
  $M/M''$ are both cyclic. Then $M$ is cyclic.
\end{lem}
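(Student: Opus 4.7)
The plan is to exhibit an explicit cyclic generator for $M$, namely a generator of $M/M''$, and show that it also generates $M''$ by exploiting the fact that in a local ring every element of $1 + \mathfrak{m}$ is a unit. Nothing deeper is needed; the hypothesis $M'' \subsetneq M'$ (strict inclusion) is the whole point, since it forces the relevant ideal to lie in the maximal ideal.

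In more detail, I would first choose $x \in M'$ with $M' = Rx$, and $y \in M$ whose image $\bar y \in M/M''$ generates $M/M''$, so that $M = Ry + M''$. Since $M'' \subseteq M' = Rx$, the set $J := \{r \in R \mid rx \in M''\}$ is an ideal of $R$ with $M'' = Jx$, and the hypothesis $M'' \subsetneq M'$ says precisely that $J \subsetneq R$, i.e.\ $J \subseteq \mathfrak{m}$, where $\mathfrak{m}$ is the maximal ideal of $R$.

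Now $x \in M'\subseteq M = Ry + M''$, so there exist $r \in R$ and $m'' \in M''$ with $x = ry + m''$. Writing $m'' = j'x$ with $j' \in J$, we obtain $(1 - j')x = ry$. Because $j' \in \mathfrak{m}$, the element $1 - j'$ is a unit of the local ring $R$, whence $x = (1-j')^{-1} r y \in Ry$. Therefore $M'' = Jx \subseteq Ry$, and so $M = Ry + M'' = Ry$ is cyclic.

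The argument is essentially immediate; there is no real obstacle beyond correctly packaging the usual ``units in a local ring'' trick. Note that the hypothesis of strict inclusion $M'' \subsetneq M'$ is used in an essential way (it is what puts $j'$ into $\mathfrak{m}$), and without it the statement would be false — e.g.\ taking $M'' = M' = M = R \oplus R$ with $M'$ cyclic would fail.
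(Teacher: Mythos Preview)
Your proof is correct and is essentially the same argument as the paper's. The paper also lifts a generator $m$ of $M/M''$ and shows $M' \subseteq Rm$; where you explicitly solve $(1-j')x = ry$ using that $1-j'$ is a unit, the paper packages the identical step as an application of Nakayama's lemma (the image $rm$ of $m$ in $M'/M''$ generates this nonzero cyclic module, hence $rm$ generates $M'$).
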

\begin{proof}
  Let $m$ be a lift to $M$ of a cyclic generator of $M/M''$. We claim
  that $M=Rm$. In order to see this, it suffices to show that
  $M''\subseteq Rm$, and thus it suffices to show that $M'\subseteq Rm$.

Choose $r\in R$ such that $m':=rm$ generates the cyclic submodule
$M'/M''$ of $M/M''$, so that $Rm'\subseteq M'$. Since $M'/M''$ is
non-zero, we see by Nakayama's lemma that $m'$ is in fact a generator
of $M'$, so $M'=Rm'$. Thus $M'=Rm'\subseteq Rm$, as required.
\end{proof}

We now complete the induction, using Lemma~\ref{lem:first induction step}
to provide the base case $|\cJ| \leq 2$. 
Suppose then that $|\cJ| > 2$.  We consider two cases: when
$\cJ$ has a unique minimal element, and when $\cJ$ has at least two minimal
elements.  

Suppose first that $\cJ$ contains a unique minimal element $J_0$, and let $J'$
be a minimal element of $\cJ\setminus \{J_0\}$.  Then we have (by Theorem~\ref{thm: socle filtration for tame type with irred socle}) inclusions
$\sigmabar^{\{J_0\}} \subsetneq \sigmabar^{\{J_0,J'\}} \subseteq 
\sigmabar^{\cJ},$ and an equality
$\sigmabar^{\cJ}/\sigmabar^{\{J_0\}} = \sigmabar^{\cJ\setminus\{J_0\}},$
which induce  (since $M^{\min}_\infty$ is an exact functor) inclusions
$M^{\min}_\infty(\sigmabar^{\{J_0\}}) \subsetneq M^{\min}_\infty(\sigmabar^{\{J_0,J'\}}) \subseteq 
M^{\min}_\infty(\sigmabar^{\cJ})$ (the first inclusion being strict since
$M^{\min}_\infty(\sigmabar^{\{J'\}}) \neq 0$, as $J' \in \cW$), and an equality
$M^{\min}_\infty(\sigmabar^{\cJ})/M^{\min}_\infty(\sigmabar^{\{J_0\}}) =
M^{\min}_\infty(\sigmabar^{\cJ\setminus\{J_0\}}).$
By induction, we know
that each of $M^{\min}_\infty(\sigmabar^{\{J_0,J'\}})$ and $M^{\min}_\infty(\sigmabar^{\cJ\setminus
\{J_0\}})$ is cyclic, and hence by Lemma~\ref{lem:cyclic extensions are cyclic}
we deduce that $M^{\min}_\infty(\sigmabar^{\cJ})$ is cyclic.

It remains to determine the annihilator of $M^{\min}_\infty(\sigmabar^{\cJ})$.
If we denote this annihilator by $I$, then certainly $I \subseteq I_{\cJ}$,
and $I_{\cJ\setminus \{J_0\}}/I \iso M^{\min}_\infty(\sigmabar'_{J_0}(\tau)) \iso \Rbar^{\psi,\tau}_\infty/I_{\{J_0\}}.$
Thus to show that $I = I_{\cJ},$ it suffices to prove that
$I_{\cJ \setminus\{J_0\}}/I_{\cJ} \iso \Rbar^{\psi,\tau}_\infty/I_{\{J_0\}}.$
For this, note that (letting $J_{\rmcap}$ denote the cap of $\cJ$)
\begin{multline*}
I_{\cJ\setminus \{J_0\}}/I_{\cJ}  =
I_{\cJ \setminus \{J_0\}}/ I_{\cJ\setminus \{J_0\}} \cap I_{\cF(J_0,J_{\rmcap})}
\isoto (I_{\cJ \setminus \{J_0\}}+I_{\cF(J_0,J_{\rmcap})})/I_{\cF(J_0,J_{\rmcap})}
\\
= I_{\cF(J_0,J_{\rmcap})^{\times}}/I_{\cF(J_0,J_{\rmcap})} \iso \Rbar^{\psi,\tau}_\infty/I_{\{J_0\}}
\end{multline*}
(the first and second equalities being evident, 
the third being an application of Lemma~\ref{lem:ideals},
and the isomorphism being provided by Lemma~\ref{lem:faces}),
as required.

Suppose now that $\cJ$ contains at (at least) two distinct minimal elements,
say $J_1$ and $J_2$.  Write $\cJ_1 := \cJ\setminus \{J_1\}$, $\cJ_2 := \cJ\setminus
\{J_2\}$.  Then
$\sigmabar^{\cJ}$ is naturally identified with
the fibre product of $\sigmabar^{\cJ_1}$
and $\sigmabar^{\cJ_2}$ over $\sigmabar^{\cJ_1\cap \cJ_2}$,
and so
$M^{\min}_\infty(\sigmabar^{\cJ})$ is naturally identified with
the fibre product of
$M^{\min}_\infty(\sigmabar^{\cJ_1})$
and $M^{\min}_\infty(\sigmabar^{\cJ_2})$ over $M^{\min}_\infty(\sigmabar^{\cJ_1\cap \cJ_2})$
(since $M^{\min}_\infty$ is an exact functor).
By induction, there are isomorphisms
$M^{\min}_\infty(\sigmabar^{\cJ_1}) \iso \Rbar^{\psi,\tau}_\infty/I_{\cJ_1}$,
$M^{\min}_\infty(\sigmabar^{\cJ_2})\iso \Rbar^{\psi,\tau}_\infty/I_{\cJ_2}$, and
$M^{\min}_\infty(\sigmabar^{\cJ_1\cap \cJ_2}) \iso \Rbar^{\psi,\tau}_\infty/I_{\cJ_1\cap \cJ_2}.$
From Lemma~\ref{lem:ideals} we know that $I_{\cJ_1\cap \cJ_2} = I_{\cJ_1} + I_{\cJ_2}$,
while clearly $I_{\cJ_1} \cap I_{\cJ_2} = I_{\cJ_1 \cup \cJ_2} = I_{\cJ}$.
Since  the fibre product
of $\Rbar^{\psi,\tau}_\infty/I_{\cJ_1}$ and $\Rbar^{\psi,\tau}_\infty/I_{\cJ_2}$ over
$\Rbar^{\psi,\tau}_\infty/(I_{\cJ_1} + I_{\cJ_2})$ is $\Rbar^{\psi,\tau}_\infty/(I_{\cJ_1}\cap I_{\cJ_2})$, we conclude that 
indeed $M^{\min}_\infty(\sigmabar^{\cJ})$ is isomorphic to $\Rbar^{\psi,\tau}_\infty/I_{\cJ}$,
completing the inductive step, and so completing the proof of the
theorem.
\qed

\begin{remark}
\label{rem:the importance of being capped}
The assumption in the preceding proof that $\cJ$ be a {\em capped} interval
is crucial for deducing that $M^{\min}_\infty(\sigmabar^{\cJ})$ is cyclic.
Indeed, if $\cJ$ admits two distinct maximal elements $J_1$ and $J_2$,
then $\sigmabar^{\cJ}$ admits a surjection onto $\sigmabar'_{J_1}(\tau) \oplus
\sigmabar'_{J_2}(\tau)$, and hence $M^{\min}_\infty(\sigmabar^{\cJ})$ admits a surjection
onto $M^{\min}_\infty(\sigmabar'_{J_1}(\tau))\oplus M^{\min}_\infty(\sigmabar'_{J_2}(\tau));$  
thus it cannot be cyclic.
\end{remark}

\begin{remark}
  \label{rem: tame patching is purely local}By
  Proposition~\ref{TG:prop:annihilation} and Theorem~\ref{thm:
    freeness of the sigma_J^circ}, we see that if the tame inertial
  type $\tau$ is either 
  principal series or regular cuspidal, then the
  restriction of $M^{\min}_\infty$ to the category of subquotients of
  lattices in $\sigma(\tau)$ is independent of the particular choice
  of
  $M^{\min}_\infty$.
  Indeed, we can explicitly define a particular choice of
  $M^{\min}_\infty$ with no formal variables in the following way: we
  fix some $J\in\cP_\tau$, and we let
  $M^{\min}_\infty(\sigma_J^\circ(\tau))$ be the structure sheaf of
  $R^{\psi,\tau}$. Then the other $M^{\min}_\infty(\sigma^{\circ}_{J'}(\tau))$
  are uniquely determined by Proposition~\ref{TG:prop:annihilation},
  and since any lattice in $\sigma(\tau)$ is in the span of the
  $\sigma^{\circ}_{J'}(\tau)$ by Proposition~\ref{prop:the gauge determines
    the lattice}, we see that this determines $M^{\min}_\infty$
  completely. Any other choice of $M^{\min}_\infty$ is then equivalent
  to one obtained from this one by base extension to
  $R^{\psi,\tau}_\infty$.
\end{remark}

\subsection{Application to a conjecture of Demb\'el\'e}
We now give a proof of a conjecture of Demb\'el\'e
(\cite{dembeleappendix}; see also the introduction
to~\cite{breuillatticconj}), by applying Theorem~\ref{thm: freeness of
  the sigma_J^circ} with $\Mmin_\infty$ the minimal fixed determinant
patching functor of Section~\ref{subsec:minimal level}, and $\tau$ a
principal series type. (Using the construction of Section~\ref{subsec:
  unitary patching functors}, a similar result could also be proved
for unitary groups.)

We put ourselves in the setting of Section~\ref{subsec: existence of
  patching functors}. In particular, we have a prime $p\ge 5$, a totally real field $F$
in which $p$ is unramified, and a modular representation
$\rhobar:G_F\to\GL_2(\F)$ such that $\rhobar|_{G_{F(\zeta_p)}}$ is
absolutely irreducible, and if $p=5$ then we assume further that the projective image of $\rhobar(G_{F(\zeta_5)})$ is not
  isomorphic to $A_5$. We also have a quaternion algebra $D$ with
centre $F$ which is ramified at all but at most one of the infinite
places of $F$, and
which is ramified at a set $\Sigma$ of finite places of $F$ that
does not contain any places lying over~$p$. We assume further that
\begin{itemize}
\item $\rhobar|_{G_{F_w}}$ is generic for all places $w|p$, and
\item if $w\in\Sigma$, then $\rhobar|_{G_{F_w}}$ is not scalar.
\end{itemize}
Let $S$ be the union of $\Sigma$, the set of places dividing $p$, and
the places where $\rhobar$ is ramified. Fix a place $v|p$ of $F$; then
for each place $w\in S\setminus\{v\}$, we defined a compact open
subgroup $K_w$ of $(\cO_D)^\times_w$ in Section~\ref{subsec:minimal
  level}, and a finite free $\cO$-module $L_w$ with an action of
$K_w$. At some places $w\in S\setminus\{v\}$ we also defined Hecke
operators $T_w$ and scalars $\beta_w\in\F^\times$. We then set
$K_v=\GL_2(\cO_{F_v})$, $K=\prod_wK_w$, and defined a space
$S^{\min}(\sigma_v)_\m:=S(\sigma_v\otimes(\otimes_{w\in S,w\ne
  v}L_w))_{\m'}$ for each $\sigma_v$ a finitely generated $\cO$-module
with a continuous action of $\GL_2(\cO_{F_v})$, having a central
character which lifts
$(\varepsilonbar\det\rhobar|_{I_{F_v}})\circ\Art_{F_v}$. By
definition, $S^{\min}(\sigma_v)_\m$ is  a space of modular
forms (strictly speaking, it is a localisation  of a
space of modular forms). We then constructed $\Mmin(\sigma_v)$ by
taking a dual, and additionally by
factoring out the Galois action in the indefinite case.

We extend this definition slightly in the following way. We write
$I_v$ for the Iwahori subgroup of $\GL_2(\cO_{F_v})$ consisting of
matrices which are upper triangular modulo $p$,
and for each character $\chi:I_v\to\F^\times$ such that $\det \chi =
  \varepsilonbar \det \rhobar |_{I_{F_v}}$, we
 define a space $S^{\min}(K^vI_v,\chi):=S(K^vI_v,\chi(\otimes_{w\in S,w\ne
  v}L_w))_{\m'}$ exactly as above. By factoring out the Galois
action in the indefinite case, and then taking duals,
 we also define a module $\Mmin(K^vI_v,\chi)$.

\begin{thm}
  \label{thm:multiplicity one for the usual lattice, including the I_1
    invariants statement}With the above notation and assumptions {\em (}so
  in particular $p\ge 5$, $\rhobar|_{G_{F_w}}$ is generic for all
  places $w|p$, and if $w\in\Sigma$, then $\rhobar|_{G_{F_w}}$ is not
  scalar{\em )}, we have $\dim_\F \Mmin(K^vI_v,\chi)^*[\m']\le 1$.
\end{thm}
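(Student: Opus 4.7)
The plan is to relate $\Mmin(K^vI_v,\chi)^*[\m']$ to a patched module attached to one of the lattices $\sigma^\circ_J(\tau)$ in a tame principal series type, and then to invoke the freeness statement of Theorem~\ref{thm: freeness of the sigma_J^circ} for the minimal patching functor $M^{\min}_\infty$ constructed in Section~\ref{subsec:minimal level}. By Pontrjagin duality, the stated bound is equivalent to showing that $\Mmin(K^vI_v,\chi)/\m'\Mmin(K^vI_v,\chi)$ is at most one-dimensional over $\F$, i.e.\ that $\Mmin(K^vI_v,\chi)$ is cyclic over $\T_{\m'}$, and it is this cyclicity that I will establish.

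First I would exploit Frobenius reciprocity for automorphic forms: there is a canonical isomorphism
$$S(K^v I_v,\chi\otimes\textstyle\bigotimes_{w\neq v} L_w)\isoto S(K^vK_v,(\Ind_{I_v}^{K_v}\chi)\otimes\textstyle\bigotimes_{w\neq v} L_w)$$
of $\T^{S,\univ}$-modules (and a compatible statement on the Galois side in the indefinite case). Localizing at $\m'$ and dualizing, this yields $\Mmin(K^vI_v,\chi)\cong\Mmin(V)$, where $V:=\Ind_{I_v}^{K_v}\chi$ is regarded as a $K_v$-representation over $\F$. Writing $\chi=\chi_1\otimes\chi_2$ as a character of $I_v/I_1\cong(k_v^\times)^2$ and assuming first that $\chi_1\neq\chi_2$, I would let $\chit_i$ be the Teichm\"uller lifts and let $\tau$ be the non-scalar tame principal series inertial type attached to $\chit_1\oplus\chit_2$, so that $\sigma(\tau)=\Ind_I^K\chit^s$. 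Up to swapping $\chi_1$ and $\chi_2$, the standard lattice $\Ind_I^K\cO(\chit^s)\subset\sigma(\tau)$ has reduction $V$, and from Theorem~\ref{thm: socle filtration for tame type with irred socle} its cosocle is an irreducible Serre weight $\sigmabar_{J_0}(\tau)$ for a specific $J_0\in\cP_\tau$. By Lemma~\ref{lem:uniqueness of lattices with irreducible socle} this lattice coincides up to homothety with $\sigma^\circ_{J_0}(\tau)$, so $V\cong\sigmabar^\circ_{J_0}(\tau)$.

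Next, Theorem~\ref{thm: freeness of the sigma_J^circ} applied to the minimal patching functor from Section~\ref{subsec:minimal level} shows that $M^{\min}_\infty(\sigma^\circ_{J_0}(\tau))$ is free of rank one over $R^{\psi,\tau}_\infty$. By the exactness of $M^{\min}_\infty$, the reduction $M^{\min}_\infty(\sigmabar^\circ_{J_0}(\tau))$ is cyclic over $\Rbar^{\psi,\tau}_\infty$. Passing to the quotient by the patching variables recovers $\Mmin(V)=\Mmin(\sigmabar^\circ_{J_0}(\tau))$ as a cyclic module over the image of $\Rbar^{\psi,\tau}_\infty$ in the Hecke algebra $\T_{\m'}$ (more precisely, in the image of $R^{\univ}_S$ therein), and hence as a cyclic $\T_{\m'}$-module. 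This gives $\dim_\F\Mmin(K^vI_v,\chi)/\m'\le 1$, and duality yields the theorem.

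The main work is already packaged in Theorem~\ref{thm: freeness of the sigma_J^circ}; the residual obstacle is essentially bookkeeping: verifying the Frobenius reciprocity identification (with the correct Galois-equivariance in the indefinite case) and pinning down which $J_0\in\cP_\tau$ corresponds to the standard lattice. The one case not covered above is the scalar case $\chi_1=\chi_2$, where $\chit_1\oplus\chit_1$ is a scalar inertial type excluded from our theory of tame types. In this case $\chi$ extends to a character of $K_v$, and $V$ is the reduction of the length-two $K$-module $\Ind_I^K\chit=\chit\oplus(\chit\otimes\St)$; one argues essentially as above after choosing a non-scalar principal series type $\tau$ that covers the two Jordan--H\"older factors of $V$ among its own constituents, and then invoking Theorem~\ref{thm: freeness of the sigma_J^circ} together with the genericity of $\rhobar|_{G_{F_v}}$ to rule out any additional contributions.
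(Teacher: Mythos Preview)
Your non-scalar case is correct and matches the paper's argument exactly: Frobenius reciprocity identifies $\Mmin(K^vI_v,\chi)$ with $\Mmin(\sigmabar^\circ_{J_0}(\chi))$ for the appropriate $J_0$ (the paper takes $J_0=\cS$, using the identification $\Ind_{I_v}^{K_v}\chi\cong\sigmabar^\circ_{\cS}(\chi)$), and then cyclicity follows from Theorem~\ref{thm: freeness of the sigma_J^circ} applied to the minimal patching functor of Section~\ref{subsec:minimal level}.

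Your scalar case, however, is overcomplicated and does not work as written. You propose to choose a non-scalar tame type $\tau$ whose reduction contains the two Jordan--H\"older factors of $V$, and then to invoke Theorem~\ref{thm: freeness of the sigma_J^circ}. But that theorem gives freeness of $M^{\min}_\infty(\sigma^\circ_J(\tau))$ for lattices in $\sigma(\tau)$; it says nothing directly about $\Mmin(V)$ unless $V$ is realised as a \emph{quotient} of some $\sigmabar^\circ_J(\tau)$ (so that cyclicity is inherited), and you have not arranged this. The phrase ``rule out any additional contributions'' is too vague to constitute an argument.

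The paper handles the scalar case far more directly, without any auxiliary type. After twisting one may assume $\chi$ is trivial; then the Jordan--H\"older factors of $\Ind_{I_v}^{K_v}\chi$ are $\sigmabar_{\vec{0},\vec{0}}$ and $\sigmabar_{\vec{0},\vec{p}-\vec{1}}$. By Lemma~\ref{lem: generic representations have no weight p-1}, neither lies in $\cD(\rhobar|_{G_{F_v}})$ since $\rhobar|_{G_{F_v}}$ is generic, and hence $\Mmin(\sigmabar)=0$ for each such $\sigmabar$ (by Theorem~\ref{thm:geometric BM for types} together with the support condition for patching functors). Exactness gives $\Mmin(K^vI_v,\chi)=0$, which is stronger than the desired bound. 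This is the clean route; your invocation of genericity was pointing in the right direction, but the detour through a non-scalar type is unnecessary and obscures the argument.
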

\begin{proof}If $\Mmin(K^vI_v,\chi)=0$ then there is nothing to
  prove. Otherwise, we must show that $\dim_\F
  \Mmin(K^vI_v,\chi)^*[\m']=1$. By Frobenius reciprocity, we
  have \[\Mmin(K^vI_v,\chi)^*[\m']=\Mmin(\Ind_{I_v}^{K_v}\chi)^*[\m']=\Mmin(\sigmabar^\circ_{\cS}(\chi))^*[\m'].\]

Suppose first that $\chi$ is not
  scalar (that is, we can write $\chi=\eta\otimes\eta'$ with
  $\eta\ne\eta'$). Then, applying
  Theorem~\ref{thm: freeness of the sigma_J^circ} with
  $\sigma(\tau)=\sigma(\chi)$, $J=\cS$, and $\Mmin_\infty$ the
  patching functor of Section~\ref{subsec:minimal level}, we see that
  $\Mmin_\infty(\sigma^\circ_{\cS}(\chi))$ is a cyclic
  $R^{\psi,\tau}_\infty$-module. By construction,
  $\Mmin(\sigmabar^\circ_{\cS}(\chi))^*[\m']$ is dual to the
  cyclic $k$-module
  \[\Mmin_\infty(\sigma^\circ_{\cS}(\chi))/\m_{R^{\psi,\tau}_\infty}\Mmin_\infty(\sigma^\circ_{\cS}(\chi)),\]
  and we are done in this case.
  
  It remains to deal with the case that $\chi$ is scalar. Twisting, we
  may without loss of generality assume that $\chi$ is trivial. Then the
  Jordan--H\"older factors of $\sigmabar^\circ_{\cS}(\chi)$ are
  $\sigmabar_{\vec{0},\vec{0}}$ and
  $\sigmabar_{\vec{0},\vec{p}-\vec{1}}$, so  we
  have $\Mmin(\sigmabar^\circ_{\cS}(\chi))=0$ by Lemma~\ref{lem: generic
    representations have no weight p-1} and
Theorem~\ref{thm:geometric BM for types},
and we are done.
\end{proof}

\begin{rem}
  \label{rem:comparison to dembele's conjecture}
The relationship of this result to
Conjecture B.1 of~\cite{dembeleappendix} is as follows. Our condition
that there are no places $w\in\Sigma$ with $\rhobar|_{G_{F_w}}$ scalar
follows from Demb\'el\'e's assumption that there are no places
$w\in\Sigma$ with $\mathbf{N} w\equiv 1\pmod{p}$, and our only
additional restrictions are that $p\ge 5$ and
$\rhobar|_{G_{F(\zeta_p)}}$ is absolutely irreducible. Conjecture B.1
of~\cite{dembeleappendix} predicts the dimension of a certain space of
mod $p$ modular forms at pro-$p$ Iwahori level (note that while the
statement of the conjecture does not specify this, it is intended that
the mod $p$ modular forms considered are new at places away from $p$). There is a natural
variant of this conjecture in our setting (with our slightly different
choice of tame level), and this conjecture follows from
Theorem~\ref{thm:multiplicity one for the usual lattice, including the
  I_1 invariants statement} and \cite[Proposition
14.7]{Breuil_Paškūnas_2012} (which shows that the number of characters
$\chi$ with $\Mmin(K^vI_v,\chi)^*[\m']\ne 0$ is precisely the dimension
conjectured in~\cite{dembeleappendix}).
\end{rem}
\begin{cor}
  \label{cor:there is a BP rep in cohomology}Maintaining the
  assumptions of Theorem~{\em \ref{thm:multiplicity one for the usual lattice, including the I_1
    invariants statement}}, the $\GL_2(F_v)$-representation
  $\varinjlim_{K_v}\Mmin(K_v,\F)^*_{\m'}$
  contains one of the representations
  constructed in~\cite{Breuil_Paškūnas_2012}.
\end{cor}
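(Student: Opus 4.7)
The plan is to reduce the corollary to the two main inputs available at this stage of the paper: the multiplicity one result (Theorem~\ref{thm:multiplicity one for the usual lattice, including the I_1 invariants statement}) and the Buzzard--Diamond--Jarvis conjecture in the generic case (Theorem~\ref{thm: generic BDJ in patching language} applied to the minimal fixed determinant patching functor of Section~\ref{subsec:minimal level}). Set $\Pi := \varinjlim_{K_v} M^{\min}(K_v,\F)^*_{\m'}$, a smooth admissible $\F[\GL_2(F_v)]$-representation. Under our hypotheses on $\rhobar$, the representations constructed in \cite{Breuil_Paškūnas_2012} are characterised by the combinatorial data of their $\GL_2(\cO_{F_v})$-socle together with the action of a specific Hecke operator on each character subspace of the pro-$p$ Iwahori invariants; thus one needs to check the corresponding properties for $\Pi$.

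First I would determine $\soc_{\GL_2(\cO_{F_v})} \Pi$. By Frobenius reciprocity the multiplicity of a Serre weight $\sigmabar$ in this socle equals $\dim_{\F}\Hom_{\GL_2(\cO_{F_v})}(\sigmabar,\Pi)$, and hence (by passing to the $K_v$-fixed direct factor) is controlled by $M^{\min}_\infty(\sigmabar)$ as in Remark~\ref{rem: tame patching is purely local}. Theorem~\ref{thm: generic BDJ in patching language} then identifies the set of $\sigmabar$ appearing with $\cD(\rhobar|_{G_{F_v}})$, while multiplicity one at Iwahori level (which bounds each $\chi$-isotypic piece of $\Pi^{I_1}$ by one) forces each such weight to appear with multiplicity exactly one. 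This matches the socle of the Breuil--Pa\v{s}k\={u}nas representations.

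Next I would analyse $\Pi^{I_1}$. By Theorem~\ref{thm:multiplicity one for the usual lattice, including the I_1 invariants statement} and the description of characters $\chi:I_v/Z_v\to \F^\times$ for which the corresponding Hom-space is non-zero (via the proof of that theorem, which identifies the $\chi$-isotypic pieces of $\Pi^{I_1}$ with the fibres at $\m'$ of $M^{\min}_\infty(\sigma^\circ_{\cS}(\chi))$ for suitable non-scalar principal series $\sigma(\chi)$), one recovers precisely the collection of characters, and the one-dimensionality, that appear in the analogous $\Pi^{I_1}$ for a Breuil--Pa\v{s}k\={u}nas representation. To match the Hecke operator action linking adjacent character spaces, I would use Theorem~\ref{thm: freeness of the sigma_J^circ} to identify each such $\chi$-isotypic piece with a cyclic module over the tame deformation ring, and compare the natural maps $\sigma^\circ_{J}(\chi)\to\sigma^\circ_{J'}(\chi)$ (as $J,J'$ vary over $\cP_\chi$) with the combinatorics of Hecke operators on $\Pi^{I_1}$, exactly as in Section~\ref{subsec:patched modules of lattices}. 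The explicit gauge description of Theorem~\ref{thm: the output of gauges.tex} together with Proposition~\ref{TG:prop:annihilation} would make these maps explicit.

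The main obstacle will be the last step: converting the gauge-theoretic information into the precise parameters that appear in the Breuil--Pa\v{s}k\={u}nas construction, so as to exhibit a specific member of their family that embeds into $\Pi$. This requires matching two independently defined parametrisations (by gauges on one side, and by the diagrams/parameters of \emph{loc.\ cit.}\ on the other), and the comparison is essentially a bookkeeping exercise once one knows that the Hecke operator maps between adjacent character spaces of $\Pi^{I_1}$ are all non-zero; the latter non-vanishing is itself a direct consequence of freeness (Theorem~\ref{thm: freeness of the sigma_J^circ}), which guarantees that the relevant composite $\sigma^\circ_{J}(\chi)\to\sigma^\circ_{J\cup\{j\}}(\chi)\to\sigma^\circ_{J}(\chi)$ induces multiplication by the appropriate uniformiser class and in particular does not annihilate the patched module.
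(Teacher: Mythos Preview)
Your plan is far more elaborate than what is needed, and misses the one-line observation on which the paper's proof rests. The paper simply remarks that the element
$\Pi_v := \bigl(\begin{smallmatrix}0&1\\p&0\end{smallmatrix}\bigr) \in \GL_2(F_v)$
normalises the pro-$p$ Iwahori $I^1_v$, and hence acts on the $I^1_v$-invariants of $\Pi := \varinjlim_{K_v}\Mmin(K_v,\F)^*_{\m'}$. Once Theorem~\ref{thm:multiplicity one for the usual lattice, including the I_1 invariants statement} tells you that each $\chi$-isotypic piece of $\Pi^{I^1_v}$ is at most one-dimensional, the $\Pi_v$-action (which swaps the $\chi$- and $\chi^s$-eigenspaces) is automatically by scalars on each such line, and this is precisely the diagram data of \cite{Breuil_Paškūnas_2012}. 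One then appeals to the discussion between Th\'eor\`emes~1.4 and~1.5 of \cite{breuillatticconj} to conclude. No BDJ input, no freeness, no gauge bookkeeping is required.

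Your proposed route has a further difficulty beyond being circuitous: the ``Hecke operator linking adjacent character spaces'' that the Breuil--Pa\v{s}k\={u}nas construction demands is exactly the $\Pi_v$-action just described, and it is not at all clear how the maps $\sigma^\circ_J(\chi)\to\sigma^\circ_{J'}(\chi)$ between lattices in a \emph{fixed} tame type would recover that action; those maps live inside a single $\sigma(\chi)$ and do not obviously interact with the involution $\chi\leftrightarrow\chi^s$. There is indeed an alternative deduction of the corollary via the lattice conjecture (see Remark~\ref{rem:alternative proof of existence of BP rep via explicit lattice}, which cites Breuil's Th\'eor\`eme~10.1 in \cite{breuillatticconj}), but that argument uses Theorem~\ref{thm:Breuil lattice conjecture} rather than the chain of freeness and gauge results you invoke, and in any case the direct argument above is much shorter.
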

\begin{proof}
This follows immediately from Theorem~\ref{thm:multiplicity one for the usual lattice, including the I_1
    invariants statement}, because if $I^1_v$ denotes the pro-$p$
  Sylow subgroup of $I_v$, then $\Mmin(K^vI_v^1,\chi)^*[\m']$ is
  stabilised by $
  \begin{pmatrix}
    0&1\\p&0
  \end{pmatrix}$ (\emph{cf.}~the discussion between Th\'eor\`emes 1.4 and 1.5
  of~\cite{breuillatticconj}).
\end{proof}
\begin{rem}
  \label{rem:alternative proof of existence of BP rep via explicit
    lattice}Breuil has shown that Corollary~\ref{cor:there is a BP rep
    in cohomology} can also be deduced from Theorem~\ref{thm:Breuil
    lattice conjecture}, without using the results of this section;
  see the proof of Th\'eor\`eme 10.1 of~\cite{breuillatticconj}.
\end{rem}

\subsection*{Acknowledgements}We would like to thank Christophe Breuil
and Fred Diamond for sending us a preliminary version of their paper
\cite{breuildiamond}, which was very helpful in writing Section
\ref{subsec:minimal level}. We would like to thank Christophe Breuil,
Fred Diamond, Hui Gao, David Helm, Florian Herzig and Vytautas Pa{\v{s}}k{\=u}nas for
helpful conversations. The debt that this paper owes to the work of
Christophe Breuil will be obvious to the reader, and it is a pleasure
to acknowledge this.

\appendix
\section{Unipotent Fontaine--Laffaille modules and $\phi$-modules}
\label{sec:unipotent-FL}

\subsection{Unipotent objects.}   Recall that results in $p$-adic Hodge theory that are valid in the
Fontaine--Laffaille range can
often be  extended slightly to the so-called \emph{unipotent} case
(see below for specific examples of what we mean by this). 
In the proof of Lemma~\ref{lem: identification of components of the special fibre for PS
    with rhobar reducible}, we need to compare the Galois
  representations associated to certain unipotent $\varphi$-modules
  with the Galois representations associated to unipotent
  Fontaine--Laffaille modules.  Breuil makes a similar comparison
  (without allowing the unipotent case)  in the proofs of
  \cite[Prop.~7.3]{breuillatticconj} and
  \cite[Prop.~A.3]{breuillatticconj}, making use of  certain results from
  \cite{MR1695849}.  In this appendix we will extend those results
  to the unipotent case, so that we can carry out
  the same argument.  Since these extensions are minor, we will generally
  indicate where the proofs of \cite{MR1695849} need to be changed,
  rather than repeating any proofs in their entirety.  We attempt to keep
  our notation as consistent as possible with that of
  \cite{MR1695849}; for instance we write $\phi$ in this appendix
  where we wrote $\varphi$ in the body of the paper.

\begin{remark}
  \label{rem:coefficients-unipotent}
  As in Appendix~A of \cite{breuillatticconj}, there exist  $\F$-coefficient versions of
  all of the following results (rather than  just $\Fp$-coefficients);
  in fact we can take the coefficients to be any Artinian local
  $\Fp$-algebra (such as the the ring $\F_J$ in the proof of Lemma~\ref{lem: identification of components of the special fibre for PS
    with rhobar reducible}).   These more general results follow entirely formally
  from their $\Fp$-coefficient versions, and so to simplify our
  notation we will omit coefficients in what follows.
\end{remark}

Throughout this appendix we let $k$ be a perfect field of
characteristic $p > 0$, and let $K_0$ denote the fraction field of
$W(k)$.  Fix a uniformizer $\pi$ of $K_0$.    We begin by recalling the definitions of several categories of objects that we
wish to consider.

\begin{defn}
  \label{defn:fontaine-laffaille-categories}
  Suppose that $0 \le h \le p-1$.  We define the category $\MFh$ (the
  category of mod $p$ Fontaine--Laffaille modules of height $h$) to be
  the category whose objects consist of:
  \begin{itemize}
  \item a finite-dimensional $k$-vector space $M$,
   \item a filtration $(\Fil^i M)_{i \in \Z}$ such that $\Fil^i M = M$
     for $i \le 0$ and $\Fil^i M = 0$ for $i \ge h+1$, and 
  \item for each integer $i$, a semi-linear map $\phi_i : \Fil^i M \to
    M$ such that $\phi_i |_{\Fil^{i+1} M} = 0$ and $\sum \Im \phi_i = M$.
  \end{itemize}
  Morphisms in $\MFh$ are $k$-linear maps that are compatible with
  filtrations and commute with the $\phi_i$.  
\end{defn}

When $h \le p-2$, there is an exact and fully faithful functor
from the category $\MFh$ to the category of $\Fp$-representations of
the absolute Galois group of $K_0$; this is no longer the case when
$h=p-1$, but as we will see, it remains true if we restrict to a certain subcategory of $\uMF^{f,p-1}_k$.

\begin{defn}
  \label{defn:unipotent-FL-modules}
  We say that an object $M$ of $\MFh$ is \emph{multiplicative} if
  $\Fil^{h} M = M$; we say that $M$ is \emph{unipotent} if it has no
  nontrivial multiplicative quotients.   Let $\uMF^{u,p-1}_k$ denote the full
 subcategory of $\uMF^{f,p-1}_k$
  consisting of the unipotent objects.
\end{defn}

We turn next to strongly divisible modules.  Let $\oS = k\langle u \rangle$ be the divided power polynomial
algebra in the variable $u$, and let $\Fil^h \oS$ be the
ideal generated by $\gamma_i(u)$ for $i \ge h$.   Write $c$ for the
element $-\gamma_p(u)-(\pi/p) \in \oS^{\times}$, and for $0
\le h \le p-1$ define
$\phi_h : \Fil^h \oS \to \oS$ by setting $\phi_h(\gamma_h(u)) =
c^h / h!$ and $\phi_h(\gamma_i(u)) = 0$ for $i > h$, except that
when $h=p-1$ we set $\phi_{p-1}(\gamma_p(u)) = c^p /(p-1)! \in \oS^{\times}$.   Set
$\phi = \phi_0$.

\begin{defn}
  \label{defn:strongly-divisible-modules-mod-p}
    Suppose that $0 \le h \le p-1$.  We define the category $\strdiv^h$ (the
  category of mod $p$ strongly divisible modules of height $h$) to be
  the category whose objects consist of:
  \begin{itemize}
  \item a free, finite rank $\oS$-module $\EM$,

  \item a sub-$\oS$-module $\Fil^h \EM$ of $\EM$ containing $(\Fil^h
    \oS)\EM$

   \item a $\phi$-semi-linear map $\phi_h : \Fil^h \EM \to \EM$ such
     that for each $s \in \Fil^h \oS$ and $x \in \EM$ we have
     $\phi_h(sx) = c^{-h} \phi_h(s) \phi_h(u^h x)$ and such that
     $\phi_h(\Fil^h \EM)$ generates $\EM$ over $\oS$.
  \end{itemize}
  Morphisms in $\strdiv^h$ are $\oS$-linear maps that are compatible with
  filtrations and commute with $\phi_h$.  \end{defn}

\begin{defn}
  \label{defn:unipotent-str-div}
  An object of $\strdiv^h$ is called \emph{multiplicative} if $\Fil^h \EM = \EM$,
  and \emph{unipotent} if it has no non-zero multiplicative quotients.  Let
  $\strdiv^{u,p-1}$ denote the full subcategory of $\strdiv^{p-1}$
  consisting of the unipotent objects.
\end{defn}

Finally we define several categories of \emph{\'etale $\phi$-modules}.

\begin{defn}
  \label{defn:phi-modules}
  Let $\phiD$ be the category whose objects are finite-dimensional
  $k(\!(u)\!)$-vector spaces $\mathfrak{D}$ equipped with an injective
  map
  $\mathfrak{D} \to \mathfrak{D}$ that is semilinear with respect to
  the $p$th power map on $k(\!(u)\!)$; morphisms in $\phiD$ are
  $k(\!(u)\!)$-linear maps that commute with $\phi$.

 Let $\phiM$ be the category whose objects are finite-dimensional
  $k \llbracket u \rrbracket$-modules $\mathfrak{M}$ equipped with an injective
  map
  $\mathfrak{M} \to \mathfrak{M}$ that is semilinear with respect to
  the $p$th power map on $k \llbracket u \rrbracket$; morphisms in $\phiM$ are
  $k \llbracket u \rrbracket$-linear maps that commute with $\phi$.
  
Write $\phi^*$ for the  $k \llbracket u \rrbracket$-linear map $\Id
  \otimes \phi : k \llbracket u \rrbracket \otimes_{\phi} \mathfrak{M} \to
  \mathfrak{M}$.  We say that an object $\mathfrak{M}$ of $\phiM$ has height $h$ if
  $\mathfrak{M}/\Im\!(\phi^*)$ is killed by $u^h$.  Write $\phiM^h$ for
  the full subcategory of $\phiM$ of objects of height $h$, and
  $\phiD^h$ for the full subcategory of $\phiD$ consisting of objects
  isomorphic to $\mathfrak{M}[\frac 1u]$ for some $\mathfrak{M} \in
  \phiM^h$.\end{defn}

  \begin{defn}\label{defn:etale-phi-unip} We say that an object $\mathfrak{M}$ of $\phiM^h$ is \emph{unipotent} if it
  has no non-zero  quotients $\mathfrak{M}'$ such that
  $\phi(\mathfrak{M}') \subset u^h \mathfrak{M}'$.  Let $\phiM^{u,p-1}$ denote the
  full subcategory of $\phiM^{p-1}$ consisting of the unipotent
  objects; let $\phiD^{u,p-1}$ denote the full subcategory of
  $\phiD^{p-1}$ consisting of objects isomorphic to
  $\mathfrak{M}[\frac 1u]$ for $\mathfrak{M} \in \phiM^{u,p-1}$.  
\end{defn}

We recall the following fundamental result of Fontaine.

\begin{thm} $\mathrm{(}$\cite[B.1.7.1]{MR1106901}$\mathrm{)}$
  \label{thm:fontaine-equivalence}  The functor $\mathfrak{M} \leadsto
  \mathfrak{M}[\frac 1u]$ is an equivalence of categories from
  $\phiM^h$ to $\phiD^h$ for $0 \le h \le p-2$, as well as from
  $\phiM^{u,p-1}$ to $\phiD^{u,p-1}$. 
  \end{thm}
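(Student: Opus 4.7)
The theorem is attributed to Fontaine, and the case $0 \le h \le p-2$ is the classical Fontaine equivalence, which I will take as given; the substantive new content is the extension to the unipotent categories when $h = p-1$, and my plan is to indicate how Fontaine's argument adapts. Essential surjectivity of the functor $\mathfrak{M} \leadsto \mathfrak{M}[\tfrac 1u]$ on $\phiM^{u,p-1} \to \phiD^{u,p-1}$ is tautological from the very definition of $\phiD^{u,p-1}$ as the full subcategory of $\phiD^{p-1}$ consisting of objects that admit a unipotent lattice of height $p-1$. Faithfulness is clear, as the inclusion $\mathfrak{M} \hookrightarrow \mathfrak{M}[\tfrac 1u]$ is injective (since $\mathfrak{M}$ is torsion-free over $k\llbracket u \rrbracket$). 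The whole content is therefore the verification of fullness.

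To prove fullness, given objects $\mathfrak{M}_1, \mathfrak{M}_2 \in \phiM^{u,p-1}$ and a $\phi$-equivariant $k(\!(u)\!)$-linear map $g : \mathfrak{M}_1[\tfrac 1u] \to \mathfrak{M}_2[\tfrac 1u]$, I will show $g(\mathfrak{M}_1) \subseteq \mathfrak{M}_2$. Set $\mathfrak{M}' := g(\mathfrak{M}_1) + \mathfrak{M}_2$, which is a finitely generated $\phi$-stable sub-$k\llbracket u \rrbracket$-module of $\mathfrak{M}_2[\tfrac 1u]$ containing $\mathfrak{M}_2$; a direct check using the height-$p-1$ condition on both $\mathfrak{M}_1$ and $\mathfrak{M}_2$ shows $\mathfrak{M}'$ is itself of height $p-1$, and $\mathfrak{M}'$ is again unipotent since it is generated over the unipotent object $\mathfrak{M}_2$ by an image of the unipotent object $\mathfrak{M}_1$. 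It thus suffices to prove the following auxiliary claim: if $\mathfrak{M} \subseteq \mathfrak{M}'$ are two unipotent $\phi$-stable $k\llbracket u \rrbracket$-lattices of height $p-1$ in a common object of $\phiD^{u,p-1}$, then $\mathfrak{M} = \mathfrak{M}'$.

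The plan for this auxiliary claim is induction on the length of $\mathfrak{M}'/\mathfrak{M}$ as a $k\llbracket u \rrbracket$-module. Supposing $\mathfrak{M} \subsetneq \mathfrak{M}'$, I would extract from the height-$p-1$ condition a nonzero quotient $\mathfrak{M}' \twoheadrightarrow \mathfrak{M}''$ in $\phiM^{p-1}$ satisfying $\phi(\mathfrak{M}'') \subseteq u^{p-1} \mathfrak{M}''$; the existence of such a $\mathfrak{M}''$ directly contradicts the unipotence of $\mathfrak{M}'$, completing the induction. The hardest step will be the construction of $\mathfrak{M}''$: the naive cokernel $\mathfrak{M}'/\mathfrak{M}$ is $u^\infty$-torsion rather than a free $k\llbracket u \rrbracket$-module, so $\mathfrak{M}''$ must instead be built as a free quotient of $\mathfrak{M}'$ obtained by saturating along a carefully chosen direction concentrated where $\mathfrak{M}$ and $\mathfrak{M}'$ differ. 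I expect this construction to be the main technical obstacle, and the essential new input needed beyond Fontaine's classical argument.
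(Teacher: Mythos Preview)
The paper does not prove this theorem at all: it is stated as a ``fundamental result of Fontaine'' and attributed entirely to \cite[B.1.7.1]{MR1106901}, with no argument given. So there is no proof in the paper against which to compare your proposal.

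That said, two comments on your plan as an attempted reconstruction. First, your justification that $\mathfrak{M}' = g(\mathfrak{M}_1) + \mathfrak{M}_2$ is unipotent (``generated over the unipotent object $\mathfrak{M}_2$ by an image of the unipotent object $\mathfrak{M}_1$'') is not a proof; unipotence is not obviously preserved under such sums. The claim is true, but it requires an argument: for instance, one shows that a multiplicative height-$(p-1)$ object admits a unique height-$(p-1)$ lattice (a determinant computation), and then any putative multiplicative quotient $Q$ of $\mathfrak{M}'$ is forced to coincide with the image of $\mathfrak{M}_1$ in $Q$, contradicting unipotence of $\mathfrak{M}_1$. Alternatively, you can bypass this entirely by noting that $g(\mathfrak{M}_1)$ is itself unipotent of height $p-1$ (being a quotient of $\mathfrak{M}_1$), and reduce directly to the uniqueness of the unipotent height-$(p-1)$ lattice in a fixed $\mathfrak{D}$.

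Second, as you yourself flag, the construction of the multiplicative quotient $\mathfrak{M}''$ in your auxiliary claim is not carried out; you describe it as ``the main technical obstacle'' and leave it open. So what you have written is a plan rather than a proof, with the substantive step still missing.
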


\subsection{Generic fibres.}  We fix an algebraic closure $\overline{K}_0$ of $K_0$,
and write $G_{K_0} = \Gal(\overline{K}_0/K_0)$. 
We also fix a system of elements $\pi_n$ in
$\overline{K}_0$, for $n \geq 0$, 
such that $\pi_0 = \pi$ and $\pi_n^p = \pi_{n-1}$ for $n
\ge1$.  We write $K_{\infty} = \cup_n K_0(\pi_n)$, and $G_{\infty} =
\Gal(\overline{K}_0/K_\infty)$.  

For each of the categories in
Definitions~\ref{defn:fontaine-laffaille-categories}--\ref{defn:etale-phi-unip}
we will now define a ``generic fibre'' functor to $k$-representations
of $G_{K_0}$ (for Fontaine--Laffaille modules) or $G_{\infty}$ (for strongly divisible
modules and $\phi$-modules).

Let $A_{\cris}$ be the ring (of the same name) defined in \cite[\S
3.1.1]{MR1621389}.  The ring $A_{\cris}$ has a filtration $\Fil^h
A_{\cris}$ for $h \ge 0$ and an endomorphism $\phi$ with the property that
$\phi(\Fil^h A_{\cris}) \subset p^h A_{\cris}$ for $0 \le h \le p-1$;
for such $h$ we may define $\phi_h : \Fil^h A_{\cris} \to A_{\cris}$ to be $\phi/p^h$.
By \cite[Lem.~3.1.2.2]{MR1621389} the ring $A_{\cris}/pA_{\cris}$ is
the ring denoted $R^{DP}$ in \cite{MR1695849}.  We have an induced
filtration on  $A_{\cris}/pA_{\cris}$  as well as induced maps $\phi_h
: \Fil^h (A_{\cris}/pA_{\cris}) \to A_{\cris}/pA_{\cris}$ for $0 \le h
\le p-1$.  The ring $A_{\cris}$ (and so also $A_{\cris}/pA_{\cris}$)
has a natural action of $G_{K_0}$ that commutes with all of the above
structures.  We regard $A_{\cris}/p A_{\cris}$ as an $\barS$-algebra
by sending the divided power $\gamma_i(u)$ to
$\gamma_i(\underline{\pi})$, where $\underline{\pi} \in A_{\cris}/p
A_{\cris}$ is the element defined in \cite[\S3.2]{MR1695849}; then the
group $G_{K_{\infty}}$ acts trivially on the image of $\barS$ in $A_{\cris}/p
A_{\cris}$.

\begin{defn}
  \label{defn:functors}  We define the following 
  functors.
  \begin{enumerate}
  \item If $M \in \MFh$ for $0 \le h \le p-2$ or $M \in
    \uMF^{u,p-1}_k$, we set
$$ T(M) =
\Hom_{\Fil^{\bullet},\phi_{\bullet}}(M,A_{\cris}/pA_{\cris}),$$
the $k$-linear morphisms that preserve filtrations and commute with the
$\phi_i$'s.  This is a $G_{K_0}$-representation via $g(f)(x) = g(f(x))$
for $g \in G_{K_0}$, $f \in T(M)$, and $x \in M$.

  \item If $\EM \in \strdiv^h$ for $0 \le h \le p-2$ or $M \in
    \strdiv^{u,h}$ with $h=p-1$, we set 
$$ T(\EM) =\Hom_{\Fil^{h},\phi_{h}}(\EM,A_{\cris}/pA_{\cris}),$$
the $\oS$-linear morphisms that preserve filtrations $\Fil^h$ and
commute with 
$\phi_h$.  This is a $G_{\infty}$-representation via $g(f)(x) = g(f(x))$
for $g \in G_{\infty}$, $f \in T(\EM)$, and $x \in \EM$.

  \item If $\mathfrak{D} \in \phiD$, we set
  $$ T(\mathfrak{D}) = \Hom_{\phi}(\mathfrak{D},k(\!(u)\!)^{\sep}),$$
  the $k[[u]]$-linear morphisms that commute with $\phi$.  This is
  naturally a $G_{\infty}$-representation by the theory of \emph{corps
    des normes} (which gives an isomorphism between
  $\Gal(k(\!(u)\!)^{\sep}/k(\!(u)\!))$ and $G_{\infty}$; see
  \cite[3.2.3]{MR719763}).

\item If $\mathfrak{M} \in \phiM^{h}$ for $0 \le h \le p-2$ or
 $\mathfrak{M} \in \phiM^{u,p-1}$, we set $T(\mathfrak{M}) :=
 T(\mathfrak{M}[\frac 1u])$.
  \end{enumerate}
We refer to $T(M)$ (respectively $T(\EM)$, $T(\mathfrak{D})$,
$T(\mathfrak{M})$) as the generic fibre of $M$ (respectively $\EM$,
$\mathfrak{D}$, $\mathfrak{M}$).
\end{defn}

\begin{prop}
  \label{prop:dimensions-are-correct}  We have the following.
  \begin{enumerate}
  \item If $M \in \MFh$ for $0 \le h \le p-2$ or $M \in \uMF^{u,p-1}_k$, then $$\dim_k T(M) = \dim_k M.$$

  \item If $\EM \in \strdiv^h$ for $0 \le h \le p-2$ or 
  $\EM \in  \strdiv^{u,p-1}$, then $$\dim_k T(\EM) = \rank_{\oS} \EM.$$

  \item If $\mathfrak{D} \in \phiD$, then $$\dim_k T(\mathfrak{D}) =
    \dim_{k(\!(u)\!)} \mathfrak{D}.$$
  \end{enumerate}
 \end{prop}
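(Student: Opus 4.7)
The plan is to deduce all three statements either from results already in the literature or by a dévissage that reduces the unipotent $h=p-1$ case to the range $h \le p-2$ in which everything is classical.

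For part~(3), the equality $\dim_k T(\mathfrak{D}) = \dim_{k(\!(u)\!)} \mathfrak{D}$ is Fontaine's theorem, which says that $T$ realizes an anti-equivalence between $\phiD$ and $k$-representations of $G_\infty$ of the corresponding dimension; this requires no restriction on $h$, and follows directly by combining Theorem~\ref{thm:fontaine-equivalence} with the theory of the field of norms. For parts~(1) and~(2) in the small-height range $0 \le h \le p-2$, the statements are the classical Fontaine--Laffaille theorem and its $\oS$-variant due to Breuil; in particular, in this range $T$ is a fully faithful, exact, length-preserving functor to $G_{K_0}$- (respectively $G_\infty$-) representations.

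The substantive part is therefore the extension to the unipotent subcategories at $h = p-1$. First I would establish that $T$ is exact on $\uMF^{u,p-1}_k$ and on $\strdiv^{u,p-1}$. Exactness is inherited from the functorial behaviour of $\Hom_{\Fil^\bullet,\phi_\bullet}(-, A_{\cris}/pA_{\cris})$, together with the vanishing of the relevant $\mathrm{Ext}^1$-obstruction, which in turn follows from the fact that $A_{\cris}/pA_{\cris}$ is a faithfully flat module over the appropriate base (this is the content of the results of \cite{MR1695849} that Breuil invokes in the principal series case of \cite[Prop.~A.3]{breuillatticconj}). Next I would observe that every simple object of $\uMF^{u,p-1}_k$ is automatically of Fontaine--Laffaille height $\le p-2$: indeed a one-dimensional object $M = k \cdot e$ with $\Fil^j M = M$ and $\Fil^{j+1} M = 0$ is multiplicative precisely when $j = p-1$, and unipotence rules out a nontrivial quotient in which this happens; hence simple unipotent objects have $\Fil^{p-1} = 0$ and so lie in $\MF^{f,p-2}_k$. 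By dévissage using exactness of $T$, the dimension equality for general unipotent $M$ follows from the classical case applied to Jordan--H\"older constituents.

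The analogous argument handles~(2): one shows that a simple object of $\strdiv^{u,p-1}$ is (up to isogeny) killed by $u$ on $\EM/\Fil^{p-1}\EM$, and via the functor $\EM \mapsto \EM \otimes_\oS k$ (with $u \mapsto 0$) one reduces to a simple Fontaine--Laffaille module that is unipotent, hence of height $\le p-2$. I expect the main obstacle to be the verification of exactness of $T$ on the full unipotent subcategory: in the Fontaine--Laffaille range this exactness is automatic from the theory, but at $h = p-1$ one must check that short exact sequences of unipotent objects remain short exact after applying $\Hom_{\Fil^\bullet,\phi_\bullet}(-, A_{\cris}/pA_{\cris})$, which requires a careful cohomological argument using the structure of $A_{\cris}/pA_{\cris}$; this is exactly the kind of computation carried out in the appendix of \cite{breuillatticconj} for the principal series case, and our plan is to mimic that argument while keeping track of the unipotence hypothesis. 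Once exactness is in hand, the dimension count is immediate from the simple object analysis above, and the comparison in part~(2) with part~(3) (via $\EM \mapsto \mathfrak{M}$ and $\mathfrak{M} \mapsto \mathfrak{M}[\tfrac{1}{u}]$) will be used to obtain consistency between the two dimension statements.
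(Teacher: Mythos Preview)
Your handling of parts~(1) and~(2) in the range $h \le p-2$, and of part~(3), is fine and matches the paper (all three are classical and handled by citation). But the d\'evissage argument you propose for the unipotent $h = p-1$ case has a genuine error: the claim that simple objects of $\uMF^{u,p-1}_k$ have $\Fil^{p-1} = 0$ is false. Take $M = ke \oplus kf$ with $\Fil^{p-1} M = ke$, $\phi_{p-1}(e) = f$, $\phi_0(f) = e$, $\phi_0(e) = 0$, and $\phi_i = 0$ for $1 \le i \le p-2$. One checks directly that $M$ is simple (a one-dimensional subobject would be forced to equal $ke$ by $\phi_0$, but $ke$ is not stable under $\phi_{p-1}$) and that $M$ is unipotent (its only nonzero quotient is $M$ itself, which is not multiplicative since $\Fil^{p-1} M \ne M$), yet $\Fil^{p-1} M = ke \ne 0$. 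So simple unipotent objects need not lie in $\uMF^{f,p-2}_k$, and the reduction to the classical range collapses. The same issue undermines your argument for part~(2), which rests on the same reduction.

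The paper does not attempt any such reduction. Part~(1), including the unipotent $h = p-1$ case, is already in Fontaine--Laffaille. For part~(2) the paper cites Breuil's lemma for $h \le p-2$ and then observes that Breuil's proof carries over verbatim to $\strdiv^{u,p-1}$ once one cited ingredient is supplemented by its companion lemma covering the unipotent situation. So the extension to $h = p-1$ is by direct re-examination of the original argument, not by d\'evissage to smaller height.
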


 \begin{proof}
  Parts  (1) and (3) are classical: for instance, (1) is
  \cite[Thm.~6.1]{fl}, and (3) is \cite[Prop.~1.2.6]{MR1106901}.
  (In (3) the functor $T$ is even an anti-equivalence of
  categories.)  

  As for (2), the case $0 \le h \le p-2$ is
  \cite[Lem.~3.2.1]{MR1695849}.  The proof for $\strdiv^{u,p-1}$ is
  exactly the same, except that the reference to
  \cite[2.3.2.2]{MR1468834} must be supplemented with a reference to
  \cite[Lem.~2.3.3.1]{MR1468834}.
 \end{proof}

\subsection{Equivalences.}  Regard $k\llbracket u \rrbracket$
naturally as a subring of $\oS$.  Following \cite[\S 4]{MR1695849}, if $0
\le h \le p-1$ we define a functor $\Theta_h : \phiM^h \to \strdiv^h$
as follows.  If $\mathfrak{M}$ is an object of $\phiM^h$, we define
$\Theta_h(\phiM)$ to be the object $\EM$ constructed as follows:
\begin{itemize}
\item $\EM = \oS \otimes_{\phi} \mathfrak{M}$,
\item $\Fil^h \EM = \{ y \in \EM \, : \, (\Id \otimes \phi)(y) \in
  (\Fil^h \oS)\otimes_{\phi} \mathfrak{M}\}$,
\item $\phi_h : \Fil^h \EM \to \EM$ is the composition of $\Id \otimes
  \phi : \Fil^h \EM \to  (\Fil^h \oS)\otimes_{\phi} \mathfrak{M}$ with
  $\phi_h \otimes \Id : (\Fil^h \oS)\otimes_{\phi} \mathfrak{M} \to \oS
  \otimes_{\phi} \mathfrak{M} \cong \EM$.
\end{itemize}

\begin{thm}
  \label{thm:equivalence-A}
  The functor $\Theta_h$ induces an equivalence of categories from
  $\phiM^h$ to $\strdiv^h$ for $0 \le h \le p-2$, as well as an
  equivalence of categories from $\phiM^{u,p-1}$ to $\strdiv^{u,p-1}$.
\end{thm}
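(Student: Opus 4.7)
The first part of the theorem, for $0 \le h \le p-2$, is \cite[Thm.~4.1.1]{MR1695849}, so the content here is the extension to the unipotent case at $h = p-1$. The plan is to revisit Breuil's original proof and verify that the restriction to the unipotent subcategories precisely repairs the argument at the boundary, much as unipotence repairs the analogous argument of Fontaine recalled as Theorem~\ref{thm:fontaine-equivalence}.

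The first step is to verify that $\Theta_{p-1}$ carries $\phiM^{u,p-1}$ into $\strdiv^{u,p-1}$ and conversely. Unpacking the construction, $\EM = \Theta_{p-1}(\mathfrak{M})$ satisfies $\Fil^{p-1}\EM = \EM$ exactly when $(\Id \otimes \phi)(\EM) \subseteq (\Fil^{p-1}\oS) \otimes_{\phi} \mathfrak{M}$; since $\Fil^{p-1}\oS$ is generated by the divided powers $\gamma_i(u)$ with $i \geq p-1$, and $(\Id\otimes\phi)(1 \otimes m) = 1 \otimes \phi(m)$, this translates directly into $\phi(\mathfrak{M}) \subseteq u^{p-1}\mathfrak{M}$. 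Applying the same calculation to arbitrary quotients, the multiplicative quotients of $\EM$ are in natural bijection with quotients $\mathfrak{M}'$ of $\mathfrak{M}$ for which $\phi(\mathfrak{M}') \subseteq u^{p-1}\mathfrak{M}'$, and so $\Theta_{p-1}$ identifies the respective subcategories of unipotent objects on the nose.

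For essential surjectivity and full faithfulness I would follow the scheme of Breuil's proof, which for $h \leq p-2$ proceeds by constructing, for each $\EM \in \strdiv^h$, a natural $\phi$-stable $k\llbracket u\rrbracket$-lattice $\mathfrak{M}_{\EM} \subseteq \EM$ together with a canonical comparison map $\Theta_h(\mathfrak{M}_{\EM}) \to \EM$ and showing this is an isomorphism. The construction of $\mathfrak{M}_{\EM}$ and the definition of the comparison map depend only on the $\oS$-module and filtered $\phi_h$-module data, and go through verbatim at $h = p-1$; what needs to be checked is that the comparison map remains an isomorphism in this case. I would show that any failure of this isomorphism is detected by (and indeed produces) a multiplicative quotient of $\EM$, so that the map is an isomorphism precisely when $\EM$ is unipotent. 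A symmetric argument in the opposite direction identifies $\mathfrak{M}_{\EM}$ (canonically attached to a unipotent $\EM$) as an object of $\phiM^{u,p-1}$, completing the equivalence.

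The main obstacle will be the comparison step at $h = p-1$: one must analyse carefully the interaction between the element $c \in \oS^{\times}$, the modified $\phi_{p-1}$ satisfying $\phi_{p-1}(\gamma_p(u)) = c^p/(p-1)! \in \oS^\times$, and the divided power structure, in order to pin down the kernel and cokernel of the comparison map. A parallel analysis in the integral (lifted) setting was carried out in the proof of \cite[Prop.~2.3.1.4, Lem.~2.3.3.1]{MR1468834}, and I expect that a reduction modulo $p$ of that argument, together with the observation made in the previous paragraph that multiplicative quotients are precisely the obstruction, will furnish the desired conclusion. Once this is established, the dimension count of Proposition~\ref{prop:dimensions-are-correct}(2) (for which we have already noted how to extend Breuil's proof to the unipotent case) shows that the natural map $T(\Theta_{p-1}(\mathfrak{M})) \to T(\mathfrak{M})$ of generic fibres is an isomorphism, yielding the compatibility with generic fibres that is needed in the applications in Section~\ref{sec: deformation spaces}.
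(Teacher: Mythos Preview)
Your overall strategy---revisit Breuil's proof of \cite[Thm.~4.1.1]{MR1695849} and locate exactly where the argument fails at $h=p-1$, then show that unipotence repairs it---is the right one, and is also what the paper does (citing \cite[Thm.~2.5.3]{HuiGao} for the unipotent extension). However, you have misidentified \emph{where} in Breuil's proof the repair is needed, and this is a genuine gap.

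You assert that the comparison map $\Theta_{p-1}(\mathfrak{M}_{\EM}) \to \EM$ is the step that breaks, and that it is an isomorphism ``precisely when $\EM$ is unipotent''. The paper says the opposite: essential surjectivity in \cite[Thm.~4.1.1]{MR1695849} goes through at $h=p-1$ \emph{without any unipotence condition}. So the comparison map is always an isomorphism, and your proposed analysis of its kernel and cokernel in terms of multiplicative quotients would come up empty. The place where Breuil's argument actually fails at $h=p-1$ is \emph{full faithfulness}, which you do not treat directly. Concretely, if $A$ is the matrix of $\phi$ on $\mathfrak{M}$ in some basis, Breuil's full-faithfulness argument requires the infinite product $\prod_{n\ge 1}\phi^n(u^{h}A^{-1})$ to converge to~$0$; for $h\le p-2$ this is automatic (powers of $u^p/u^h$ accumulate), but for $h=p-1$ it is exactly the condition that $\mathfrak{M}$ be unipotent. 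This same product criterion is also what one uses to show that $\Theta_{p-1}(\mathfrak{M})$ is unipotent when $\mathfrak{M}$ is, so your first step (preservation of unipotence) is handled by the same mechanism rather than by the quotient-matching argument you sketch. Your proposal would benefit from reorganising around this criterion: identify unipotence with the vanishing of the infinite product, and then observe that this is precisely the missing hypothesis in Breuil's full-faithfulness proof.
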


\begin{proof}
  For $0 \le h \le p-2$ this is \cite[Thm.~4.1.1]{MR1695849}, while 
  the unipotent case is explained in the proof of Theorem 2.5.3
  of \cite{HuiGao}.  (Note, however, that some of our terminology is
  dual to that of \cite{HuiGao}: what we call a
  multiplicative object of $\strdiv^h$ or $\MFh$, Gao calls \'etale.)
  We briefly recall the argument from \cite{HuiGao}. If $A$ is the
  matrix of $\phi$ on $\mathfrak{M}$ with respect to some fixed basis,
  the condition that $\mathfrak{M}$ is unipotent is precisely the
  condition that the product $\prod_{n=1}^{\infty} \phi^n (u^{p-1}
  A^{-1})$ converges to $0$, and this is exactly what is
  required for the proof of full faithfulness in
  \cite[Thm.~4.1.1]{MR1695849} to go through (as well as to show that
  $\Theta_h(\mathfrak{M})$ is actually unipotent); the essential
  surjectivity in  \cite[Thm.~4.1.1]{MR1695849} is true when $h=p-1$
  without any unipotence condition.
\end{proof}

\begin{prop}
  \label{prop:isomA}
  If $\mathfrak{M} \in \phiM^{u,p-1}$, we have a canonical
  isomorphism of $G_{\infty}$-representations
  $T(\mathfrak{M}) \cong T(\Theta_{p-1}(\mathfrak{M}))$.
\end{prop}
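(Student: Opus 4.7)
The plan is to follow the proof of \cite[Prop.~4.2.1]{MR1695849}, which establishes the analogous statement for $0 \le h \le p-2$, and to check that Breuil's argument carries over to the boundary case $h = p-1$ under the unipotence hypothesis on $\mathfrak{M}$.

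First I would construct Breuil's canonical $G_\infty$-equivariant comparison map $\beta : T(\mathfrak{M}) \to T(\Theta_{p-1}(\mathfrak{M}))$, using the natural $\phi$- and $G_\infty$-equivariant embedding $k(\!(u)\!)^{\sep} \hookrightarrow A_{\cris}/pA_{\cris}$ that comes from the theory of \emph{corps des normes}, together with the compatible $\overline{S}$-algebra structure on $A_{\cris}/pA_{\cris}$ defined by $\gamma_i(u) \mapsto \gamma_i(\underline{\pi})$. This construction is purely formal and does not depend on the height: it produces, from a $\phi$-equivariant map $f : \mathfrak{M}[\tfrac{1}{u}] \to k(\!(u)\!)^{\sep}$, a canonical morphism $\beta(f) : \Theta_{p-1}(\mathfrak{M}) \to A_{\cris}/p A_{\cris}$ preserving $\Fil^{p-1}$ and commuting with $\phi_{p-1}$.

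Next I would prove that $\beta$ is injective by the same argument as in \emph{loc.~cit.}: restricting $\beta(f)$ along the canonical map $\mathfrak{M} \hookrightarrow \Theta_{p-1}(\mathfrak{M})$, $m \mapsto 1 \otimes m$, recovers $f$ up to the explicit twist provided by the compatible embeddings, so $\beta(f) = 0$ forces $f = 0$. Finally, a dimension count concludes the argument: by Proposition~\ref{prop:dimensions-are-correct}(2)--(3) we have
$$\dim_k T(\mathfrak{M}) = \rank_{k\llbracket u\rrbracket} \mathfrak{M} = \rank_{\overline{S}} \Theta_{p-1}(\mathfrak{M}) = \dim_k T(\Theta_{p-1}(\mathfrak{M})),$$
so the injective map $\beta$ must be an isomorphism.

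The main subtlety is that the application of Proposition~\ref{prop:dimensions-are-correct}(2) to $\Theta_{p-1}(\mathfrak{M})$ requires $\Theta_{p-1}(\mathfrak{M}) \in \strdiv^{u,p-1}$; this is precisely where the hypothesis $\mathfrak{M} \in \phiM^{u,p-1}$ enters, via Theorem~\ref{thm:equivalence-A}, which guarantees that $\Theta_{p-1}$ carries unipotent $\phi$-modules to unipotent strongly divisible modules. Apart from this point, the content of Breuil's original comparison carries over with no essential modification; the principal obstacle is the bookkeeping needed to confirm that no hidden use of the inequality $h \le p-2$ enters into the construction of $\beta$ or the injectivity argument.
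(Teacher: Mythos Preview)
Your approach is correct and follows the same overall strategy as the paper: adapt Breuil's proof of \cite[Prop.~4.2.1]{MR1695849} to the boundary case $h=p-1$ under the unipotence hypothesis. The paper's proof is in fact a one-line pointer back to Breuil, noting only that the condition $\prod_{n\ge 1}\phi^n(u^{p-1}A^{-1})=0$ (which is precisely the unipotence of $\mathfrak{M}$, as recorded in the proof of Theorem~\ref{thm:equivalence-A}) is what makes ``the last two sentences of the argument in \cite{MR1695849}'' go through.

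The one genuine difference in emphasis is where you locate the use of unipotence. You route it through the dimension count: unipotence of $\mathfrak{M}$ ensures (via Theorem~\ref{thm:equivalence-A}) that $\Theta_{p-1}(\mathfrak{M})$ is unipotent, so Proposition~\ref{prop:dimensions-are-correct}(2) applies and the injection $\beta$ is forced to be an isomorphism. The paper instead follows Breuil's original argument verbatim and observes that the infinite-product condition is invoked directly at the end of that argument. Both routes are valid; yours has the mild advantage of packaging the unipotence input entirely into already-proved statements (Theorem~\ref{thm:equivalence-A} and Proposition~\ref{prop:dimensions-are-correct}), so that no inspection of the internals of Breuil's proof beyond the construction of $\beta$ and its injectivity is required. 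Conversely, the paper's formulation is more informative about exactly which step of Breuil's calculation fails for $h=p-1$ in the non-unipotent case.
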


\begin{proof}
  The same statement for $\Theta_h : \phiM^h \to \strdiv^h$ with $0
  \le h \le p-2$ is \cite[Prop.~4.2.1]{MR1695849}.  The proof in the
  unipotent case is identical, noting that the condition $\prod_{n=1}^{\infty} \phi^n (u^{p-1}
  A^{-1})=0$ is precisely what is needed for the last two sentences of the
  argument in \cite{MR1695849} to go through.
\end{proof}

Following \cite[\S 5]{MR1695849}, if $0
\le h \le p-1$ we define a functor $\EF_h : \MFh \to \strdiv^h$
as follows.  If $M$ is an object of $\MFh$, we define
$\EF_h(M)$ to be the object $\EM$ constructed as follows:
\begin{itemize}
\item $\EM = \oS \otimes_{k} M$,
\item $\Fil^h \EM = \sum_{i=0}^h \Fil^i \oS \otimes_k \Fil^{h-i} M$,
\item $\phi_h = \sum_{i=0}^h \phi_i \otimes \phi_{h-i}$.
\end{itemize}

While we expect it to be true that if $M$ is unipotent then so is $\EF_{p-1}(M)$,
and that the resulting functor $\EF^u_{p-1} :
  \uMF^{u,p-1}_k \to \strdiv^{u,p-1}$ is fully faithful, we will not
  need these assertions, and so we do not prove them.  Instead, we note the
  following.

\begin{prop}
  \label{prop:isomB}  If $M \in \uMF^{u,p-1}_k$ and $\EF_{p-1}(M)$
  is unipotent, then we have a canonical
  isomorphism of $G_{\infty}$-representations $T(M)|_{G_{\infty}} \cong T(\EF_{p-1}(M))$.
\end{prop}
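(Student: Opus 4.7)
The plan is to mimic the strategy of \cite[Prop.~5.x]{MR1695849}, which handles the analogous statement in the range $0 \le h \le p-2$, and to show that the two unipotence hypotheses imposed in the statement are exactly what is needed to make that argument go through when $h = p-1$.

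First I would construct the comparison map explicitly. Given $f \in T(M) = \Hom_{\Fil^{\bullet},\phi_{\bullet}}(M, A_{\cris}/p A_{\cris})$, define $\tilde f \colon \EF_{p-1}(M) = \oS \otimes_k M \to A_{\cris}/p A_{\cris}$ by $\oS$-linear extension, using the $\oS$-algebra structure on $A_{\cris}/p A_{\cris}$ sending $u \mapsto \underline{\pi}$. One checks directly that $\tilde f$ carries $\Fil^{p-1}\EF_{p-1}(M) = \sum_{i=0}^{p-1} \Fil^i \oS \otimes_k \Fil^{p-1-i} M$ into $\Fil^{p-1}(A_{\cris}/p A_{\cris})$, and that on any element of the form $s \otimes m$ with $s \in \Fil^i \oS$ and $m \in \Fil^{p-1-i} M$,
\[
\tilde f(\phi_{p-1}(s \otimes m)) = \phi_i(s)\, \phi_{p-1-i}(f(m)) = \phi_{p-1}(s \cdot f(m)) = \phi_{p-1}(\tilde f(s\otimes m)),
\]
using the multiplicativity of $\phi_{\bullet}$ on $A_{\cris}/pA_{\cris}$. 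Thus $\tilde f \in T(\EF_{p-1}(M))$. The map $f \mapsto \tilde f$ is $G_\infty$-equivariant because $G_\infty$ fixes each $\pi_n$, and hence acts trivially on the image of $\oS$ in $A_{\cris}/p A_{\cris}$.

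Next I would check injectivity of $f \mapsto \tilde f$: any $\tilde f$ is determined by its restriction to $1 \otimes M$, which recovers $f$. To conclude that the map is an isomorphism, I would count dimensions on both sides. The hypothesis that $M \in \uMF^{u,p-1}_k$ gives $\dim_k T(M) = \dim_k M$ by Proposition~\ref{prop:dimensions-are-correct}(1), while the hypothesis that $\EF_{p-1}(M)$ is unipotent (i.e.\ lies in $\strdiv^{u,p-1}$) gives $\dim_k T(\EF_{p-1}(M)) = \rank_{\oS} \EF_{p-1}(M) = \dim_k M$ by Proposition~\ref{prop:dimensions-are-correct}(2). Since both sides have the same $k$-dimension, the injection is a $G_\infty$-equivariant isomorphism.

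The main obstacle, or at least the one subtle point, is that the proof genuinely needs both unipotence hypotheses: the unipotence of $M$ is needed to invoke Proposition~\ref{prop:dimensions-are-correct}(1) (which fails for general objects of $\uMF^{f,p-1}_k$), and the unipotence of $\EF_{p-1}(M)$ is needed to invoke Proposition~\ref{prop:dimensions-are-correct}(2) (which is why we have stated it as a separate hypothesis rather than attempting to deduce it from unipotence of $M$, as remarked in the discussion preceding the proposition). Beyond this bookkeeping, the verifications above are direct analogues of those in \cite[\S 5]{MR1695849}, and no new ideas are required.
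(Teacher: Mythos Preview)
Your proof is correct and follows essentially the same approach as the paper: construct the natural map $T(M) \to T(\EF_{p-1}(M))$ by $\oS$-linear extension, verify injectivity, and then conclude by the dimension count of Proposition~\ref{prop:dimensions-are-correct}(1) and~(2). The paper simply defers the construction and injectivity to the references \cite[\S5]{MR1695849} and \cite[3.2.1.1]{MR1621389}, whereas you have written these out explicitly.
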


\begin{proof}
  The same argument as in the paragraph before Lemme~5.1 of
  \cite{MR1695849} (i.e., the proof of \cite[3.2.1.1]{MR1621389})
  shows that the natural map $T(M) \to T(\EF_{p-1}(M))$ is injective. 
  Now the result follows by parts (1) and (2) of
  Proposition~\ref{prop:dimensions-are-correct} together with the
  assumption that $\EF_{p-1}(M)$ is unipotent.
\end{proof}

\section{Remarks on the geometric Breuil--M\'ezard
  philosophy}\label{sec: appendix on geom BM}
In this appendix we will explain some
variants on the arguments of the main part of the paper, which are
either more conceptual, or which avoid the use of results from papers
such as \cite{geekisin}, \cite{blggU2}, and~\cite{GLS12}.
 Since
nothing in the main body of the paper depends on this appendix, and
since a full presentation of the arguments would be rather long, we
only sketch the proofs.
\subsection{Avoiding the use of results on the weight part of Serre's
  conjecture}\label{subsec:avoiding GK}In this section we will indicate how the main results of the
paper (namely, the application of the results of Sections~\ref{sec:
  proof of the conjecture}--\ref{sec: freeness and multiplicity one}
to the specific examples of patching functors constructed from spaces
of modular forms in Section~\ref{sec: cohomology}) could be proved
without relying on the main results of~\cite{geekisin}, and without
using the potential diagonalizability of potentially Barsotti--Tate
representations proved in~\cite{kis04} and~\cite{MR2280776}, but
rather just the explicit computations of potentially Barsotti--Tate
deformation rings in Section~\ref{sec: deformation spaces}. Of course,
the results of Section~\ref{sec: deformation spaces} depend on
\cite{geekisin} (via Theorem~\ref{thm:geometric BM for types}, which
depends on~\cite{geekisin} via \cite{emertongeerefinedBM}), so we will
need to explain how to prove the results of both Sections~\ref{sec:
  cohomology} and~\ref{sec: deformation spaces} (or at least enough of
them to prove the main results of Sections~\ref{sec: proof of the
  conjecture}--\ref{sec: freeness and multiplicity one}) without using
these results. We will still make use of~\cite{GLS12} in order to
prove Theorem~\ref{thm:geometric BM for types}, but we remark that it
should be possible to establish all of the results of
Section~\ref{sec: deformation spaces} by directly extending the
computations of Section~5 of~\cite{BreuilMezardRaffinee} to the tame
cuspidal case, so it should ultimately be possible to remove this
dependence as well.

In fact, if we examine Sections~\ref{sec: cohomology} and~\ref{sec:
  deformation spaces}, we see that there are two things that need to
be established: Theorem~\ref{thm:deformation rings principal
  series} (which only depends on the above items via
Theorem~\ref{thm:geometric BM for types}),
 and the claim that if $\sigmabar$ is a Serre weight and
$M_\infty$ is constructed from the spaces of modular forms for a
quaternion algebra using the Taylor--Wiles--Kisin method as in
Section~\ref{subsec: existence of patching functors}, then the action
of the universal lifting ring $R$ on $M_\infty(\sigmabar)$ factors
through $R^{\psi,\sigmabar}$ (or in fact through its reduction mod $\unif_E$).
(We recall that this is not immediately deducible from local-global
compatibility because of parity issues: if $\sigma$ denotes the algebraic
representation of $\GL_2(O_{F_v})$ lifting $\sigmabar$, then the weights 
of $\sigma$ may not satisfy the parity condition necessary for constructing
a local system on the congruence quotients associated to our given
quaternion algebra.)
Note that we only need to prove these claims under the
assumption that $\rhobar$ is generic (in the local case), or that
$\rhobar|_{G_{F_v}}$ is generic for all $v|p$ (in the global case),
and we will make this assumption from now on.

The claim about $M_\infty(\sigmabar)$ will follow from the
results of Section~\ref{sec: deformation spaces}, in the following
fashion. Suppose first that $\sigmabar\notin\cD(\rhobar)$; we will
show that $M_\infty(\sigmabar)=0$ by an argument similar to those
used in~\cite{geekisin}. To do this, note that by Lemma 4.5.2
of~\cite{geekisin} (and its proof), in the Grothendieck group of
mod $p$ representations of $\GL_2(k_v)$ we can write $\sigmabar$ as a
linear combination  $\sum_\tau n_\tau\sigmabar(\tau)$ where $\tau$ runs over the
tame inertial types (allowing for the moment $\tau$ to be the trivial or small
Steinberg type). Then
if $e$ denotes the Hilbert--Samuel multiplicity of a coherent sheaf on
$X^\psi_\infty$, we have \[e(M_\infty(\sigmabar))=\sum_\tau n_\tau
e(M_\infty(\sigmabar(\tau))).\] By the argument of Lemma 4.3.9
of~\cite{geekisin}, and the irreducibility of the generic fibres of
the deformation spaces (see the discussion below), $\sum_\tau n_\tau
e(M_\infty(\sigmabar(\tau)))$ is a multiple of $\sum_\tau n_\tau
e(R^{\psi,\tau})$ (and the constant of proportionality is independent
of~$\sigmabar$), so it suffices to check that this last quantity is
zero. 

To see this, note that the $e(R^{\psi,\tau})$ are completely
determined by Theorem~\ref{thm:deformation rings principal series}
and the observation that since $\rhobar$ is assumed generic,
$R^{\psi,\tau}=0$ if $\tau$ is the trivial or Steinberg type (this
reduces to checking that the reduction mod $p$ of a semistable
(possibly crystalline) Galois representation of Hodge type $0$ is not
generic, which is clear). In fact, we see that
$e(R^{\psi,\tau})=|\JH(\sigmabar(\tau))\cap\cD(\rhobar)|$. Applying the
argument of the previous paragraph to \emph{all} $\sigmabar$
simultaneously, 
and noting that again by Lemma 4.5.2
of~\cite{geekisin} (and its proof) the $e(M_\infty(\sigmabar(\tau)))$
determine the $e(M_\infty(\sigmabar))$, we see that there must
be some constant $c$ such that for any $\sigmabar$,
we have $e(M_\infty(\sigmabar))=c$ if $\sigmabar\in\cD(\rhobar)$, and $0$
otherwise. In particular, if $\sigmabar\notin\cD(\rhobar)$, then
$M_\infty(\sigmabar)=0$, as required.

 Now suppose that
$\sigmabar\in\cD(\rhobar)$. Then by Proposition~\ref{prop: types for
  elimination} there is a tame inertial type $\tau$ such that
$\JH(\sigmabar(\tau))\cap\cD(\rhobar)=\{\sigmabar\}$, so by the result of the
previous paragraph we have
$M_\infty(\sigmabar^\circ(\tau))=M_\infty(\sigmabar)$ for any lattice
$\sigma^\circ(\tau)$ in $\sigma(\tau)$. Also, by
Theorem~\ref{thm:geometric BM for types}, the reduction mod $\unif_E$ of
$R^{\psi,\sigmabar}$ is equal to the reduction mod $\unif_E$ of $R^{\psi,\tau}$,
so it suffices to note that the action of $R$ on
$M_\infty(\sigmabar^\circ(\tau))$ factors through $R^{\psi,\tau}$.

It remains to prove Theorem~\ref{thm:geometric BM for
  types}. Examining the proof of~\cite[Thm.\
5.5.4]{emertongeerefinedBM}, we see that we have to establish the
existence of a suitable globalization in the sense of Section~5.1
of~\cite{emertongeerefinedBM}), and we have to avoid the use of Lemma
4.4.1 of~\cite{geekisin} (the potential diagonalizability of
potentially Barsotti--Tate representations.). (Note that we still
appeal to Lemma 4.4.2 of~\cite{geekisin}, and that this is where we
make use of~\cite{GLS12}.) However, Lemma 4.4.1 of~\cite{geekisin} is
only used to establish that certain patched spaces of modular forms
are supported on every component of the generic fibre of each local
deformation ring which we consider, and this will be automatic
provided we know that these generic fibres are domains. By a standard
base change argument, it suffices to know this after a quadratic base
change, so we can reduce to the case of tame principal series types,
which follows from Lemma~\ref{lem: principal series def ring from BM}
(the proof of which makes no use of Theorem~\ref{thm:geometric BM for
  types}).

It remains to check the existence of a suitable globalization, which
amounts to checking Conjecture A.3 of~\cite{emertongeerefinedBM}, the
existence of a potentially diagonalizable lift of $\rhobar$. In the
proof of~\cite[Thm.\ 5.5.4]{emertongeerefinedBM}, this is done by
appealing to results of~\cite{geekisin}, which use the potential
diagonalizability of potentially Barsotti--Tate
representations. However, as we are assuming that $\rhobar$ is
generic, if we choose some weight $\sigmabar\in\cD(\rhobar)$ then
a crystalline lift of $\rhobar$ of Hodge type $\sigmabar$ is
Fontaine--Laffaille and thus potentially diagonalizable, as required.

\begin{remark}
  \label{rem: removing $A_5$ restriction}We suspect that the above
  analysis would make it possible to remove the assumption that when
  $p=5$ the projective image of $\rhobar(G_{F(\zeta_5)})$ is not
  isomorphic to $A_5$ from our main global theorems,
 but we have not attempted to check the details.
\end{remark}

\begin{remark}
  \label{reprove geebdj without ordinary assumption}In particular, if
  we apply the above analysis to Theorem~\ref{thm: generic BDJ in
    patching language}, we see that we can reprove the main result
  of~\cite{geebdj} without making use of the results on the
  components of Barsotti--Tate deformation rings proved
  in~\cite{kis04}, \cite{MR2280776}, and without needing to make an
  ordinarity assumption. (Of course, this ordinarity assumption was
  already removed by~\cite{blggordII}.)
\end{remark}

\subsection{Deformation rings and the structure of
  lattices}\label{subsec:deformation rings to compute lattices}The main theme of this paper is the close connection
between the structure of the lattices in tame types for $\GL_2(F_v)$,
and the structures of the corresponding potentially Barsotti--Tate
deformation rings for generic residual representations. We believe
that this connection is even tighter than we have been able to show;
in particular, we suspect that it should be possible to use explicit
descriptions of the deformation rings to recover Theorem~\ref{thm:
  socle filtration for tame type with irred socle}
without making use of the results
of~\cite{Breuil_Paškūnas_2012}.

Unfortunately, we have not been able to prove this, and our partial
results are fragmentary. One difficulty is that in general there will
be irregular weights occurring as Jordan--H\"older factors of the
reductions of the lattices, and it would be necessary to have an
explicit description of the deformation rings for non-generic residual
representations. However, even in cases where all the Jordan--H\"older
factors are regular, we were still unable to prove complete
results. As an illustration of what we were able to prove, we have the
following modest result, whose proof will illustrate the kind of
arguments that we have in mind.

Let $p>3$ be prime, let $F_v$ be an unramified extension of $\Qp$, let
$\tau$ be a tame type, and consider the lattice $\sigma^\circ_J(\tau)$ for
some $J\in\cP_\tau$.
\begin{prop}
  \label{prop: patching implies extension of weights is
    non-split}Suppose that $\sigmabar_{J_1}(\tau),
  \sigmabar_{J_2}(\tau)$ are two weights in adjacent layers of the
  cosocle filtration of $\sigmabar^\circ_J(\tau)$, and that there exists a
  nonsplit extension between $\sigmabar_{J_1}(\tau),
  \sigmabar_{J_2}(\tau)$ as $\GL_2(F_v)$-representations. Then the
  extension induced by the cosocle filtration of
  $\sigmabar^\circ_J(\tau)$ is also nonsplit.
\end{prop}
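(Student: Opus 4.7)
The plan is to argue by contradiction, assuming the extension induced by the cosocle filtration splits and deriving a contradiction via a patching functor applied to $\sigma^\circ_J(\tau)$. Combining the hypotheses of adjacency with the existence of a non-split $\GL_2$-extension between $\sigmabar_{J_1}(\tau)$ and $\sigmabar_{J_2}(\tau)$, together with Proposition~\ref{prop:extensions of Serre weights}, one sees that $|J_1 \triangle J_2| = 1$. This combinatorial condition is exactly what is needed to arrange a generic representation $\rhobar: G_{F_v} \to \GL_2(\F)$ with $\cD(\rhobar) \cap \JH(\sigmabar(\tau)) = \{\sigmabar_{J_1}(\tau), \sigmabar_{J_2}(\tau)\}$: in the notation of Theorem~\ref{thm:deformation rings principal series}, one takes $\Jmin = J_1 \cap J_2$ and $\Jmax = J_1 \cup J_2$. (For simplicity I will focus on the principal series case; the regular cuspidal case can be handled similarly, with minor bookkeeping involving the involution~$\base$.)

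First, I would globalize such a $\rhobar$ and construct a fixed determinant patching functor $M_\infty$ indexed by $(F_v, \rhobar)$ via the constructions of Section~\ref{sec: cohomology}. By Theorem~\ref{thm:deformation rings principal series}, the deformation ring $R^{\psi,\tau}_\infty$ is a formal power series ring over $\cO\llbracket X, Y \rrbracket/(XY - p)$, which is a local Cohen--Macaulay domain (one checks directly that $XY - p$ is irreducible in $\cO\llbracket X, Y \rrbracket$). The two components of its special fibre, cut out by $X = 0$ and $Y = 0$, correspond via the geometric Breuil--M\'ezard identification to $\Xbar^\psi(\sigmabar_{J_1}(\tau))$ and $\Xbar^\psi(\sigmabar_{J_2}(\tau))$. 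The patched module $M_\infty(\sigma^\circ_J(\tau))$ is a non-zero, $p$-torsion-free maximal Cohen--Macaulay module over $R^{\psi,\tau}_\infty$ (non-vanishing follows from Theorem~\ref{thm: generic BDJ in patching language}, since $\sigmabar_{J_1}(\tau) \in \cD(\rhobar)$ is a Jordan--H\"older factor of $\sigmabar^\circ_J(\tau)$), and since $R^{\psi,\tau}_\infty$ is a domain, both $X$ and $Y$ act on it as non-zero-divisors.

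Now suppose for contradiction that the cosocle extension splits, producing a surjection $\sigmabar^\circ_J(\tau) \twoheadrightarrow E \cong \sigmabar_{J_1}(\tau) \oplus \sigmabar_{J_2}(\tau)$. The kernel $N$ is an iterated extension of the weights $\sigmabar_{J'}(\tau)$ with $J' \notin \{J_1, J_2\}$, none of which lie in $\cD(\rhobar)$; Theorem~\ref{thm: generic BDJ in patching language} and exactness of $M_\infty$ then give $M_\infty(N) = 0$, so
$$
M_\infty(\sigmabar^\circ_J(\tau)) \;\cong\; M_\infty(E) \;\cong\; M_\infty(\sigmabar_{J_1}(\tau)) \,\oplus\, M_\infty(\sigmabar_{J_2}(\tau)),
$$
with the first summand annihilated by $X$ and the second by $Y$ (by the support property of $M_\infty$). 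Lift this decomposition to $M_\infty(\sigma^\circ_J(\tau))$ by letting $\tilde{M}_i$ denote the preimage of the $i$-th summand under reduction modulo $\varpi_E$. Taking $E$ unramified over $\Qp$, we have $\varpi_E \cdot M_\infty(\sigma^\circ_J(\tau)) = p \cdot M_\infty(\sigma^\circ_J(\tau)) = XY \cdot M_\infty(\sigma^\circ_J(\tau))$, so $X\tilde{M}_1 \subseteq XY \cdot M_\infty(\sigma^\circ_J(\tau))$; because $X$ is a non-zero-divisor, this forces $\tilde{M}_1 \subseteq Y \cdot M_\infty(\sigma^\circ_J(\tau))$, and symmetrically $\tilde{M}_2 \subseteq X \cdot M_\infty(\sigma^\circ_J(\tau))$. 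Summing, $M_\infty(\sigma^\circ_J(\tau)) = \tilde{M}_1 + \tilde{M}_2 \subseteq (X, Y) \cdot M_\infty(\sigma^\circ_J(\tau)) \subseteq \m \cdot M_\infty(\sigma^\circ_J(\tau))$, and Nakayama's lemma yields $M_\infty(\sigma^\circ_J(\tau)) = 0$, the desired contradiction. The main obstacle I anticipate is ensuring that a generic $\rhobar$ with exactly the required Serre weight set exists and admits a globalization compatible with the Taylor--Wiles--Kisin apparatus of Section~\ref{sec: cohomology}, and secondarily handling the cuspidal case; the rest is then a formal consequence of Theorem~\ref{thm:deformation rings principal series} together with standard commutative algebra. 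One might also hope to bypass the globalization step entirely by constructing a purely local patching functor in the spirit of Remark~\ref{rem: tame patching is purely local}.
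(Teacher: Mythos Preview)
Your overall strategy coincides with the paper's: produce a generic $\rhobar$ whose only Serre weights among $\JH(\sigmabar(\tau))$ are $\sigmabar_{J_1}(\tau)$ and $\sigmabar_{J_2}(\tau)$, globalise it, patch, and derive a contradiction from the direct-sum decomposition $M_\infty(\sigmabar^\circ_J(\tau)) \cong M_\infty(\sigmabar_{J_1}(\tau)) \oplus M_\infty(\sigmabar_{J_2}(\tau))$. The paper handles your anticipated obstacle exactly as you suggest: the existence of such a $\rhobar$ is an explicit computation ``dual'' to that of Proposition~\ref{prop: when there are extensions of Serre weights, and existence of types when this happens}, and since $\rhobar$ may be taken semisimple it globalises readily (e.g.\ via a CM Hilbert modular form), so the minimal fixed determinant patching functor of Section~\ref{subsec:minimal level} is available.

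Where you diverge is in the endgame. The paper works with a \emph{minimal} patching functor $M^{\min}_\infty$ with unramified coefficients; since $|\Jmax\setminus\Jmin|=1$, the ring $R^{\psi,\tau}_\infty$ is regular, and Lemma~\ref{lem: abstract version of Diamond freeness argument} (Diamond's argument) gives that $M^{\min}_\infty(\sigma^\circ_J(\tau))$ is free of rank one, whence its reduction is a cyclic module over the connected ring $\Rbar^{\psi,\tau}_\infty$ and cannot decompose. Your argument instead uses only that $R^{\psi,\tau}_\infty$ is a domain, that $M_\infty(\sigma^\circ_J(\tau))$ is maximal Cohen--Macaulay (so $X,Y$ are non-zero-divisors on it), and the relation $XY=p$, to force $M\subseteq (X,Y)M$ and invoke Nakayama. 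This is a genuine alternative: it bypasses Auslander--Buchsbaum and, more interestingly, does not require the patching functor to be minimal.

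One small imprecision: you write that the split extension yields a \emph{surjection} $\sigmabar^\circ_J(\tau)\twoheadrightarrow\sigmabar_{J_1}(\tau)\oplus\sigmabar_{J_2}(\tau)$. In general the length-two piece is only a subquotient (a quotient only when one of the $J_i$ equals $J$). This does not affect your conclusion, since $M_\infty$ vanishes on all the remaining Jordan--H\"older factors and is exact, so one still obtains $M_\infty(\sigmabar^\circ_J(\tau))\cong M_\infty(\sigmabar_{J_1}(\tau))\oplus M_\infty(\sigmabar_{J_2}(\tau))$; but the phrasing should be adjusted.
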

\begin{proof}
  By an explicit (but tedious) computation ``dual'' to that used in
  the proof of Proposition~\ref{prop: when there are extensions of Serre weights, and
    existence of types when this happens}, the existence of a  nonsplit extension between $\sigmabar_{J_1}(\tau),
  \sigmabar_{J_2}(\tau)$ implies that there is a semisimple generic
  representation $\rhobar$ such that
  $\JH(\sigmabar(\tau)) \cap\cD(\rhobar)=\{\sigmabar_{J_1}(\tau),\sigmabar_{J_2}(\tau)\}$. Since
  $\rhobar$ is semisimple, it is easy to globalise it as the local mod $p$
  representation corresponding to some CM Hilbert modular form, and
  the constructions of Section~\ref{subsec:minimal level} then give a
  minimal fixed determinant patching functor $\Mmin_\infty$ with unramified
  coefficients $\cO$, indexed by $(F_v,\rhobar)$.

As was already noted in Subsection~\ref{subsec: specific lattices and gauges},
it follows from
Lemmas~\ref{lem:uniqueness of lattices with irreducible socle}
and~\ref{lem:weights in tame types have multiplicity one} that
$\sigma^\circ_J(\tau)$ is defined over the ring of integers of an
unramified extension of $\Qp$, so extending scalars if necessary, we
can assume that it is defined over $\cO$. By
Theorem~\ref{thm:deformation rings principal series}, and the
assumptions that $|\JH(\sigmabar(\tau)) \cap \cD(\rhobar)|=2$ and $\cO$ is unramified, we see
that the deformation space $X^\psi\bigl(\tau\bigr)$ is regular, so by
Lemma~\ref{lem: abstract version of Diamond freeness argument} we see
that $\Mmin_\infty(\sigma_J^\circ(\tau))$ is free of rank one over
$X^\psi_\infty\bigl(\tau\bigr)$, and thus in particular
$\Mmin_\infty(\sigmabar_J^\circ(\tau))$ is free of rank one over $\Xbar^\psi_\infty\bigl(\tau\bigr)$.

Now, if the extension between $\sigmabar_{J_1}(\tau),
\sigmabar_{J_2}(\tau)$ induced by the cosocle filtration on
$\sigmabar^\circ_J(\tau)$ were split, we would have \[
\Mmin_\infty(\sigmabar_J^\circ(\tau))=\Mmin_\infty(\sigmabar_{J_1}(\tau))\oplus
\Mmin_\infty(\sigmabar_{J_1}(\tau))\]which would not be free of rank
one, which is a contradiction; so the extension must be nonsplit, as
claimed.
\end{proof}
\bibliographystyle{amsalpha}
\bibliography{breuil-conj.bib}

\end{document}